\setlist{leftmargin=1.6em}
\newtheorem{theorem}{Theorem}
\newtheorem{proposition}{Proposition}
\newtheorem{assumption}{Assumption}
\newtheorem{lemma}{Lemma} 
\newtheorem{corollary}{Corollary}
\theoremstyle{definition}
\newtheorem{definition}{Definition}
\theoremstyle{remark}
\newtheorem{remark}{Remark}
\def\subsubsection{\@startsection{subsubsection}{3}%
  \z@{.5\linespacing\@plus.7\linespacing}{-.5em}%
  {\normalfont\bfseries}}
\newcommand{\Cov}[0]{\mathrm{Cov}}
\newcommand{\Var}[0]{\mathrm{Var}}
\newcommand{\supp}{\mathrm{spt}}
\newcommand{\OT}{\mathsf{OT}}
\newcommand{\vertiii}[1]{{\left\vert\kern-0.25ex\left\vert\kern-0.25ex\left\vert #1 
    \right\vert\kern-0.25ex\right\vert\kern-0.25ex\right\vert}}
\newcommand{\NN}{\mathbb{N}}
\newcommand{\RR}{\mathbb{R}}
\newcommand{\vasti}{\bBigg@{3.5 }}
\newcommand{\vast}{\bBigg@{4}}
\newcommand{\Vast}{\bBigg@{5}}
\newcommand{\Vastt}{\bBigg@{7}}
\newcommand{\be}{\begin{equation}}
\newcommand{\ee}{\end{equation}}
\newcommand{\ba}{\begin{align}}
\newcommand{\ea}{\end{align}}
\newcommand{\baa}{\begin{align*}}
\newcommand{\eaa}{\end{align*}}
\newcommand{\bsigma}{\mathbf{\Sigma}}
\newcommand{\argmin}{\mathop{\mathrm{argmin}}}
\newcommand{\argmax}{\mathop{\mathrm{argmax}}}
\DeclareMathOperator{\interior}{int}
\DeclareMathOperator{\inte}{int}
\DeclareMathOperator{\conv}{conv}
\DeclareMathOperator{\Id}{Id}
\DeclareMathOperator{\Lag}{Lag}
\DeclareMathOperator{\vol}{vol}
\begin{document}

\title[Limit Laws for Gromov-Wasserstein Alignment]{Limit Laws for Gromov-Wasserstein Alignment with Applications to Testing Graph Isomorphisms}

\thanks{
Z. Goldfeld is partially supported by NSF grants CCF-1947801,  CCF-2046018, and DMS-2210368, and the 2020 IBM Academic Award.
K. Kato is partially supported by NSF grants  DMS-2210368 and DMS-2413405.
G. Rioux is partially supported by the NSERC postgraduate fellowship PGSD-567921-2022.}

\date{First version: April, 1 2024. This version: \today}

\author[G. Rioux]{Gabriel Rioux}

\address[G. Rioux]{
Center for Applied Mathematics, Cornell University.}
\email{ger84@cornell.edu}

\author[Z. Goldfeld]{Ziv Goldfeld}
\address[Z. Goldfeld]{
School of Electrical and Computer Engineering, Cornell University.
}
\email{goldfeld@cornell.edu}

\author[K. Kato]{Kengo Kato}
\address[K. Kato]{
Department of Statistics and Data Science, Cornell University.
}
\email{kk976@cornell.edu}

\begin{abstract}
The Gromov-Wasserstein (GW) distance enables comparing metric measure spaces based solely on their internal structure, making it invariant to isomorphic transformations. This property is particularly useful for comparing datasets that naturally admit isomorphic representations, such as unlabelled graphs or objects embedded in space. However, apart from the recently derived empirical convergence rates for the quadratic GW problem, a statistical theory for valid estimation and inference remains largely obscure. Pushing the frontier of statistical GW further, this work derives the first limit laws for the empirical GW distance across several settings of interest: (i)~discrete, (ii)~semi-discrete, and (iii)~general distributions under moment constraints under the entropically regularized GW distance. The derivations rely on a novel stability analysis of the GW functional in the marginal distributions. The limit laws then follow by an adaptation of the functional delta method. As asymptotic normality fails to hold in most cases, we establish the consistency of an efficient estimation procedure for the limiting law in the discrete case, bypassing the need for computationally intensive resampling methods. We apply these findings to testing whether collections of unlabelled graphs are generated from distributions that are isomorphic to each other. 
\end{abstract}

\keywords{Limit laws, hypothesis testing, graph isomorphism, Gromov-Wasserstein distance, entropic regularization}

\maketitle

\section{Introduction}

The Gromov-Wasserstein (GW) distance provides a framework, rooted in optimal transport (OT) theory \cite{villani2008optimal,santambrogio15}, for comparing probability distributions on possibly distinct spaces by aligning them with one another. The $(p,q)$-GW distance between two metric measure (mm) spaces $(\mathcal X_0,\mathsf d_0,\mu_0)$ and $(\mathcal X_1,\mathsf d_1,\mu_1)$ is defined as \cite{Memoli11} 
\begin{equation}
\label{eq:GromovWassersteinDefinition}
\mathsf D_{p,q}(\mu_0,\mu_1)\coloneqq \inf_{\pi\in\Pi(\mu_0,\mu_1)}\left(\int|\mathsf d_0^q(x,x')-\mathsf d_1^q(y,y')|^pd\pi\otimes \pi(x,y,x',y')\right)^{\frac 1 p},
\end{equation}
where $\Pi(\mu_0,\mu_1)$ is the set of all couplings of $\mu_0$ and $\mu_1$. Evidently, $\mathsf D_{p,q}(\mu_0,\mu_1)$ seeks to minimize distortion of distances over all possible alignment plans (modeled by couplings) of the considered mm spaces. This distance serves as an OT-based $L^p$ relaxation of the classical Gromov-Hausdorff distance and defines a metric on the space of all mm spaces modulo the equivalence defined by isomorphism.\footnote{Two mm spaces $(\mathcal X_0,\mathsf d_0,\mu_0)$ and  $(\mathcal X_1,\mathsf d_1,\mu_1)$ are said to be isomorphic if there exists an isometry $T:\supp(\mu_0)\to \supp(\mu_1)$ for which $\mu_0\circ T^{-1}=\mu_1$.} %
The GW framework has seen recent success in tasks involving alignment of heterogeneous datasets, such as single-cell genomics \cite{blumberg2020mrec,cao2022manifold,
demetci2020gromov}, alignment of language models \cite{alvarez2018gromov}, shape matching \cite{koehl2023computing,memoli2009spectral}, graph matching \cite{chen2020graph,petric2019got,xu2019gromov,xu2019scalable}, heterogeneous domain adaptation \cite{sejourne2021unbalanced,yan2018semi}, and generative modeling \cite{bunne2019learning}.

Despite its appealing structure and broad applications, a principled statistical and computational framework for GW alignment remained elusive since its inception more than a decade ago \cite{Memoli11}. The difficulty in analyzing the GW problem lies in its quadratic dependence on $\pi$, leaving techniques developed for the (linear) OT problem not directly applicable. Recently, a variational form of the quadratic GW distance, i.e., with $p=q=2$, between Euclidean mm spaces $(\mathbb R^{d_0},\|\cdot\|,\mu_0)$ and $(\mathbb R^{d_1},\|\cdot\|,\mu_1)$ was derived, casting it as the infimum over a class of OT problems \cite{zhang2024gromov}:
\begin{equation}
\label{eq:variationalFormIntro}
     \mathsf D(\mu_0,\mu_1)^2\coloneqq \mathsf D_{2,2}(\mu_0,\mu_1)= \mathsf S_1(\mu_0,\mu_1)+\inf_{\mathbf A\in\mathbb R^{d_0\times d_1}}\left\{32\|\mathbf A\|_{\mathrm F}^2+\mathsf{OT}_{\mathbf A}(\mu_0,\mu_1) \right\}.
\end{equation}
Here $\mathsf S_1(\mu_0,\mu_1)$ is a constant that depends only on the moments of the marginals and $\mathsf{OT}_{\mathbf A}(\mu_0,\mu_1)$ denotes the OT problem with a cost function parametrized by an auxiliary matrix $\mathbf A$. This connection to the well-understood OT problem unlocked it as a tool for the study of GW, leading to new algorithms for approximate computation with formal guarantees \cite{rioux2023entropic}, sharp rates of empirical convergence \cite{zhang2024gromov,groppe2023lower}, and advances in GW gradient flows and differential geometry \cite{zhang2024gradient}. We seek to further progress the statistical GW theory by studying the asymptotic fluctuations of the empirical GW distance around the population value, providing first limit distribution results (upon suitable scaling) and efficient algorithms for simulating the limits. These advances are leveraged for applications to graph isomorphism testing.

\subsection{Contributions} 
Utilizing the so-called \emph{dual formulation} from \eqref{eq:variationalFormIntro}, we first explore stability properties of the GW distance under perturbations of the marginal distributions. Combining this with the functional delta method, we establish the first limit theorems for GW distances at the scale of $\sqrt{n}$, under several settings of interest. Specifically, we adopt the methodology for deriving distributional limits developed in \cite{goldfeld24statistical}, which requires showing that the GW functional is right differentiable along certain directions and locally Lipschitz continuous in the sup-norm over a certain function class. This strategy is applied in three cases: (i) discrete GW, when both population distributions are finitely discrete, (ii) semi-discrete GW, when one population is finitely discrete and the other is absolutely continuous and compactly supported, and (iii) entropically regularized GW when the populations are arbitrary 4-sub-Weibull distributions. In all cases, local Lipschitz continuity follows from similar arguments whereas proving right differentiability comprises a more challenging task that requires different approaches tailored to each setting.

In the finitely discrete setting, we employ stability results for the optimal value function in nonlinear optimization \cite{bonnans2013perturbation} to prove the desired right differentiability. The main challenge in doing so stems from the general result requiring a directional regularity condition that does not hold for the GW setting. We circumvent this by exploiting the variational form \eqref{eq:variationalFormIntro} to directly establish strong duality for a linearization of the discrete GW problem. As the obtained derivative is nonlinear in general, the resulting limit distribution is not normal and the na\"ive bootstrap fails to be consistent (cf. \cite{dumbgen1993,fang2019}). To address this, we propose an efficient method for directly estimating the limit distribution under the null hypothesis (i.e., the population distributions are isomorphic) and prove its consistency. Remarkably, sampling from the proposed estimator only requires solving a linear program, while computing the test statistic requires the resolution of a nonconvex (in fact, NP-complete) quadratic program. These results serve as the cornerstone of our proposed application: testing if two inhomogeneous random graph models are isomorphic.%

In the semi-discrete setting, a central component of our analysis is to derive a simple sufficient condition for the optimal coupling of $\mathsf{OT}_{\mathbf A}(\mu_0,\mu_1)$ to be unique and induced by a deterministic map, uniformly in the choice of  $\mathbf A\in\mathbb R^{d_0\times d_1}$. Consequently, this same condition implies that the GW problem itself admits an alignment plan that is induced by a map, a so-called \emph{Gromov-Monge map}. This adds to the limited number of cases where such alignment maps are known to exist (to wit \cite[Theorem 9.21]{sturm2012space}, \cite[Theorem 4.1.2 and Proposition 4.2.4]{vayer2020contribution}, and \cite[Theorem 3.2]{dumont2022existence}). %
Using this condition, the corresponding right derivative can be computed by appealing to the GW dual form, leading to a characterization of distributional limits for semi-discrete GW. As in the discrete case, the limiting law is generally not Gaussian, and the na\"ive bootstrap is inapplicable. 

To address the setting when both distributions are non-discrete, we consider the entropically regularized GW distance between 4-sub-Weibull distributions. The analysis leverages the fact that optimal couplings and dual potentials are unique (in a suitable sense) for each of the entropic OT problems featuring in the dual form. %
The required stability can then be demonstrated by following a similar argument to the semi-discrete case. Building on the results of \cite{rioux2023entropic}, we show that the derived limit distribution is normal once the regularization parameter surpasses a given threshold that depends on the marginal moments, and may fail to be normal otherwise. When asymptotic normality holds, the bootstrap enables consistent estimation of the distributional limit. 

We apply the derived limit distribution theory to the problem of two-sample testing for graph isomorphism. As our theory account only for Euclidean mm spaces, we propose an embedding for distributions on random graphs with $N$ vertices into the space of distributions on $\mathbb R^N$. Crucially, this embedding is constructed such that the embedded distributions are identified under the GW distance if and only if the associated distributions on graphs are isomorphic. This enables us to formulate a consistent test for the graph isomorphism question with the GW distance as the test statistic. To our knowledge, these are the first results in this setting (see literature review below). 
As the GW test statistic is hard to compute, we propose efficient procedures for estimating it numerically based on the linear programming-based sampler from the discrete GW limiting variable. This methodology is empirically validated in a number of numerical experiments. The experiments also showcase asymptotic normality of empirical entropic GW and consistency of the bootstrap under certain conditions.

\subsection{Literature Review}

The statistical properties of the empirical GW distance are largely unknown, beyond the following convergence rate result (see Theorem 3 \cite{zhang2024gromov}). If $(\mu_0,\mu_1)\in\mathcal P(\mathcal X_0)\times \mathcal P(\mathcal X_1)$ where $\mathcal X_0\subset \mathbb R^{d_0}$ and $\mathcal X_1\subset \mathbb R^{d_1}$ are compact, then, for $R=\max_{i\in\{0,1\}}\mathrm{diam}(\mathcal X_i)$ and $d=\min_{i\in\{0,1\}}d_i$,     
\[
    \mathbb E\left[\left|\mathsf D(\hat \mu_{0,n},\hat \mu_{1,n})^2-\mathsf D( \mu_{0},\mu_{1})^2\right|\right]\lesssim_{d_0,d_1}{R^4}n^{-\frac 12}+(1+R^4)n^{-\frac{2}{\max\left\{4,d\right\}}}(\log n)^{\mathbbm 1_{\{d=4\}}}, 
    \]
    where $\hat \mu_{0,n},\hat \mu_{1,n}$ are the empirical measures of $n$ i.i.d. observations from $\mu_0$ and $\mu_1$, respectively. These rates are sharp up to $\mathrm{polylog}$ factors and, notably, scale with the dimension of the \emph{lower-dimensional} space. As for the entropically regularized GW distance, Theorem 2 in \cite{zhang2024gromov} establishes an $n^{-1/2}$ parametric empirical convergence rate, provided that $\mu_0$ and $\mu_1$ satisfy the aforementioned 4-sub-Weibull condition. That work also showed that entropic GW with parameter $\varepsilon>0$ approximates the unregularized distance at the level of $O(\varepsilon\log(1/\varepsilon))$, thus approaching it as $\varepsilon\to0$. In \cite{groppe2023lower}, the above empirical convergence rates were shown to adapt to the intrinsic dimension of the population. Formal guarantees for a neural estimator of the GW distance were provided in \cite{wang2024neural}, accounting for empirical estimation and function approximation errors (but not the optimization error).

The existence of Gromov-Monge maps for the semi-discrete GW problem derived herein adds to a few other known cases for which deterministic schemes are optimal. %
In the Euclidean setting with $\mathsf D_{2,2}$, results are available when (i) the distributions are uniform on the same finite number of points \cite[Theorem 4.1.2]{vayer2020contribution}, (ii) $\mu_0$ and $\mu_1$ are absolutely continuous and rotationally invariant about their barycenter  \cite[Theorem 9.21]{sturm2012space}, and (iii) when the cross-correlation matrix of some optimal alignment plan for $\mathsf D(\mu_0,\mu_1)$ is of full rank among other conditions \cite[Proposition 4.2.4]{vayer2020contribution}. Beyond the quadratic setting, the only other result concerns the inner product GW distance (i.e., when the cost function in \eqref{eq:GromovWassersteinDefinition} is $\left|\langle x,x' \rangle-\langle y,y' \rangle\right|^2$, with $p=2$) between populations $\mu_0,\mu_1$ that are compactly supported in their respective, with the lower-dimensional one being absolutely continuous.

Our approach to proving limit theorems for GW distances is based on the generic framework put forth in \cite{goldfeld24statistical} which, in turn, relies on the functional delta method (see \cref{sec:deltaMethod} for details). The delta method \cite{romisch2004,shapiro1990} has seen great success for proving limit theorems for Wasserstein distances and regularized OT.
    Notably, \cite{Sommerfeld2018} first applied this technique to establish limit theorems for the $p$-Wasserstein distance on finite metric spaces. Next, \cite{Tameling2019} extended this approach to the case of countable metric spaces, and     \cite{hundrieser2022unifying} provided a generic stability result for the OT cost and provided sufficient conditions for the CLT to hold. This approach also proved effective in studying the distributional limits of regularized optimal transport functionals—under smoothing, slicing, or entropic regularization—and associated objects (e.g., maps and dual potentials) \cite{rioux2022smooth,goldfeld2024limit,goldfeld24statistical}. %

 Testing \emph{equality} of distributions on graphs was first addressed in \cite{gretton2012kernel} based on the maximum mean discrepancy. While the experiment in that work also considered equality up to isomorphisms, their theoretical results did not cover that setting. Another approach relies on testing if certain graph statistics, such as the mean graph Laplacian, are equal at the population level based on samples \cite{ginestet2017hypothesis}. Other lines of work include the setting where the number of samples grows with the graph size 
\cite{chatterjee2023two,chen2023hypothesis}  or when the graphs are large and only a small constant number of samples from each distribution is available \cite{ghoshdastidar2020two,ghoshdastidar2017two,ghoshdastidar2018practical,jin2024optimal, li2018two,tang2017semiparametric,Tang2017nonparametric}. The scopes and/or settings of those references differ from the statistical framework and accompanying theory  we provide for graph isomorphism testing.

\section{Background and Preliminary Results}

\subsection{Notation}

For a nonempty {Borel} set $S\subset \mathbb R^d$, $\mathcal P(S)$ is the set of {Borel} probability measures on $S$, $\mathcal C(S)$ denotes the set of continuous functions on $S$, and $\ell^{\infty}(S)$ is the Banach space of bounded real functions on $S$ equipped with the supremum norm $\|\ell\|_{\infty,S}=\sup_{s\in S}|\ell(s)|$. When $S\subset\mathbb R^d$ is a compact set, we write $\|S\|_{\infty}=\sup_{s\in S}\|s\|$, where the latter is the Euclidean norm.

For a probability measure $\rho\in\mathcal P(\mathbb R^{d})$, set $\mathcal P_{\rho} \coloneqq \left\{ \nu\in\mathcal P(\mathbb R^d):\supp(\nu)\subset \supp(\rho)\right\}$. If $T:\mathbb R^{d}\to \mathbb R^{d'}$ is a measurable map, $T_{\sharp}\rho\in\mathcal P(\mathbb R^{d'})$ is the pushforward measure defined by $T_{\sharp}\rho(A)=\rho(T^{-1}(A))$ for every Borel measurable set $A\subset \mathbb R^{d}$. The centered version of $\rho$ is denoted by $\bar \rho=(\Id-\mathbb E_{\rho}[X])_{\sharp}\rho$. The $p$-th moment of $\rho$ is denoted $M_p(\rho)=\int \|\cdot\|^pd\rho$, 
$\bsigma_{\rho}=\int zz^{\intercal}d\rho(z)$ is its cross-correlation matrix, and for a $\rho$-integrable function $f:\mathbb R^{d}\to \mathbb R$, we write $\rho(f)=\int fd\rho$. A probability distribution $\rho\in\mathcal P(\mathbb R^d)$ is said to be $\beta$-sub-Weibull with parameter $\sigma^2$ for $\sigma\geq 0$, provided that $\int e^{\|\cdot\|^{\beta}/(2\sigma^2)}d\rho\leq 2$. We denote by $\mathcal P_{\beta,\sigma}(\RR^d)$ the set of all $\beta$-sub-Weibull distributions with parameter $\sigma^2$. Distributions that are $2$-sub-Weibull are called sub-Gaussian, while $X\sim\mu$ being 4-sub-Weibull is equivalent to sub-Gaussianity of $\|X\|^2$.
For measures $(\rho_0,\rho_1)\in\mathcal P(\mathbb R^{d_0})\times \mathcal P(\mathbb R^{d_1})$, $\rho_0\otimes \rho_1$ denotes the product measure. \smash{Convergence in distribution and weak convergence are denoted, respectively, by $\stackrel{d}{\to}$ and $\stackrel{w}{\to}$.}

For matrices $\mathbf{A},\mathbf{ B}\in\mathbb R^{d_0\times d_1}$ and $(i,j)\in[N_0]\times[N_1]$, $\mathbf{A}_{ij}$ is the $ij$-th entry of $\mathbf{A}$, and $\langle \mathbf{A},\mathbf{ B}\rangle_{\mathrm F}=\sum_{(i,j)\in[d_0]\times [d_1]} \mathbf{A}_{ij}\mathbf{ B}_{ij}$ is the Frobenius inner product. The induced Frobenius norm is denoted by $\|\cdot\|_{\mathrm{F}}$, with $B_{\mathrm{F}}(r)\subset \mathbb R^{d_0\times d_1}$ designating the closed Frobenius ball with radius $r>0$. A function $f:\mathbb R^{d_0\times d_1}\to \mathbb R$ is said to be Fr{\'e}chet differentiable at a point $\mathbf{A}\in\mathbb R^{d_0\times d_1}$ if $\lim_{\|\mathbf{H}\|_{\mathrm{F}}\downarrow 0}\frac{|f(\mathbf{A}+\mathbf{H})-f(\mathbf{A})-D f_{[\mathbf{A}]}(\mathbf{H})|}{\|\mathbf{H}\|_{\mathrm{F}}}=0$ for some bounded linear functional $D f_{[\mathbf{A}]}:\mathbb R^{d_0\times d_1} \to \mathbb R$ which we call the Fr{\'e}chet derivative of~$f$~at~$\mathbf{A}$.

For functions $f_0:\mathbb R^{d_0}\to \mathbb R$ and $f_1:\mathbb R^{d_1}\to \mathbb R$, their direct sum is $f_0\oplus f_1:(x,y)\in\mathbb R^{d_0}\times \mathbb R^{d_1}\mapsto f_0(x)+f_1(y)$.  For any nonnegative integer $k$ and nonempty open set $S\subset \mathbb R^d$, $\mathcal C^k(S)$ is the set of $k$-times continuously differentiable functions on $S$. For $N\in\mathbb N$, $[N]$ denotes the set $\{1,\dots,N\}$, and $\mathbbm 1_N\in\mathbb R^N$ is the vector of all ones.

\subsection{Optimal Transport}
\label{sec:OT}
For $(\mu_0,\mu_1)\in\mathcal P(\mathbb R^{d_0})\times \mathcal P(\mathbb R^{d_1})$, and a continuous cost function $c:\mathcal X_0\times \mathcal X_1\to \mathbb R$ satisfying $c\geq a\oplus b$ for some continuous functions $(a,b)\in L^1(\mu_0)\times L^1(\mu_1)$, the optimal transport (OT) problem is given by 
\[
    \mathsf{OT}_c(\mu_0,\mu_1)=\inf_{\pi\in\Pi(\mu_0,\mu_1)}\int cd\pi,
\]
where $\Pi(\mu_0,\mu_1)$ is the set of all couplings of $(\mu_0,\mu_1)$. If, in addition, $\int c d\mu_0\otimes \nu_0<\infty$, the following duality result holds (cf. e.g. Theorem 5.10 and Remark 5.14 in \cite{villani2008optimal}) 
\begin{equation}
    \label{eq:OTDuality}
\OT_c(\mu_0,\mu_1)=\sup_{\substack{(\varphi_0,\varphi_1)\in\mathcal C(\mathcal X_0)\times \mathcal C(\mathcal X_1)\\\varphi_0\oplus\varphi_1\leq c}} \left\{\int \varphi_0 d\mu_0+\int \varphi_1 d\mu_1\right\}.
\end{equation}
Both the primal (minimization) problem and the dual (maximization) problems admit solutions, which we refer to, respectively, as an \textit{OT plan} and \textit{OT potentials} for $\OT_c(\mu_0,\mu_1)$. 

Of note is that if $(\varphi_0^{\star},\varphi_1^{\star})$ is a pair of OT potentials for $\OT_c(\mu_0,\mu_1)$, then so is $(\varphi_0^{\star}+a,\varphi_1^{\star}-a)$ for any $a\in\mathbb R$; we refer to potentials related in this fashion as \textit{versions} of one another. If $\pi=(\Id,T)_{\sharp}\mu_0$ for some map $T:\mathbb R^{d_0} \to  \mathbb R^{d_1}$, we say that $\pi$ is induced by the map $T$ and, if such a $\pi$ is optimal for $\OT_c(\mu_0,\mu_1)$, then $T$ is called an \textit{OT map} (also known as a Monge/Brenier map). 

\subsection{Entropic Optimal Transport}
\label{sec:EOT}
The entropic optimal transport (EOT) problem was introduced in \cite{cuturi2013lightspeed} as a convexification of the classical setting. It is defined by regularizing the transportation cost with the (strongly convex) Kullback-Leibler divergence: %
\begin{equation}
\label{eq:EOTPrimal}
    \mathsf{OT}_{c,\varepsilon}(\mu_0,\mu_1) =\inf_{\pi\in\Pi(\mu_0,\mu_1)} \int cd\pi+\varepsilon\mathsf{D}_{\mathrm{KL}}(\pi\|\mu_0\otimes \mu_1),
\end{equation}
where %
\[
    \mathsf{D}_{\mathrm{KL}}(\alpha\|\beta)=\begin{cases}
        \int \log\left(\frac{d\alpha}{d\beta}\right)d\alpha,&\text{ if }\alpha\ll\beta,
        \\
        +\infty,&\text{otherwise.}
    \end{cases}
\]
The strong convexity of the regularized objective endows \eqref{eq:EOTPrimal} with many useful properties. First, if $c\in L^1(\mu_0\otimes \mu_1)$, \eqref{eq:EOTPrimal} is paired in strong duality with the problem 
\begin{equation}
\label{eq:EOTDual}
    \sup_{(\varphi_0,\varphi_1)\in L^1(\mu_0)\times L^1(\mu_1)}\left\{ \int \varphi_0d\mu_0+\int\varphi_1d\mu_1-\varepsilon \int e^{\frac{\varphi_0\oplus \varphi_1-c}\varepsilon}d\mu_0\otimes \mu_1+\varepsilon\right\}.
\end{equation}
A solution, $(\varphi_0^{\star},\varphi_1^{\star})$, to \eqref{eq:EOTDual} is called a pair of \emph{EOT potentials} for $\mathsf{OT}_{c,\varepsilon}(\mu_0,\mu_1)$ and is known to be a.s. unique up to additive constants (i.e. any other solution $(\psi_0^{\star},\psi_1^{\star})$ satisfies $(\psi_0^{\star},\psi_1^{\star})=(\varphi_0^{\star}+a,\varphi_1^{\star}-a)$ $\mu_0\otimes \mu_1$-a.s. for some $a\in\mathbb R$). Moreover, \eqref{eq:EOTPrimal} has a unique solution $\pi^{\star}\in\Pi(\mu_0,\mu_1)$, given in terms of the marginal measures and the EOT potentials via
\[
    \frac{d\pi^{\star}}{d\mu_0\otimes \mu_1}=e^{\frac{\varphi_0^{\star}\oplus \varphi_1^{\star}-c}{\varepsilon}},\quad \mu_0\otimes \mu_1\text{-a.s.}
\]
This further implies that $(\varphi_0,\varphi_1)\in L^1(\mu_0)\times L^1(\mu_1)$ solves \eqref{eq:EOTDual} if and only if $(\varphi_0,\varphi_1)$ solves the Schr\"odinger system
\begin{equation}
\label{eq:SchrodingerSystem}
    \int e^{\frac{\varphi_0(x)+\varphi_1-c(x,\cdot)}\varepsilon}d\mu_1=1,\text{ for } \mu_0 \text{-a.e. $x\in\mathbb R^{d_0}$,}
    \quad
    \int e^{\frac{\varphi_0+\varphi_1(y)-c(\cdot,y)}\varepsilon}d\mu_0=1,\text{ for } \mu_1 \text{-a.e. $y\in\mathbb R^{d_1}$.}
\end{equation}
See \cite{nutz2021introduction} for a comprehensive introduction to EOT.

\subsection{Gromov-Wasserstein Distances}
\label{sec:GWDistance}

Throughout, we focus on the (squared) $(2,2)$-GW distance between the Euclidean mm spaces $(\RR^{d_0},\|\cdot\|,\mu_0)$ and $(\RR^{d_1},\|\cdot\|,\mu_1)$, henceforth identified with the probability measures themselves. The distance between  $\mu_0\in\mathcal P(\mathbb R^{d_0}),\mu_1\in\mathcal P(\mathbb R^{d_1})$ with finite fourth moments is 
\begin{equation}
    \label{eq:GWPrimal}
    \mathsf D(\mu_0,\mu_1)^2=\inf_{\pi\in\Pi(\mu_0,\mu_1)}\|\Delta\|_{L^2(\pi\otimes\pi)},\ \ \text{ for }\ \ \Delta(x,y,x',y')\coloneqq\left|\|x-x'\|^2-\|y-y'\|^2\right|. 
\end{equation}
A coupling $\pi\in\Pi(\mu_0,\mu_1)$ solving \eqref{eq:GWPrimal} always exists  \cite[Corollary 10.1]{Memoli11} and is called an \emph{GW plan}. If a GW plan is induced by a measurable map, $T:\mathbb R^{d_0}\to \mathbb R^{d_1}$
, we call $T$ a Gromov-Monge map.  
Of note is that $\mathsf D$ nullifies if and only if there exists an isometry $\iota:\supp(\mu_0)\to \supp(\mu_1)$ %
for which $\mu_1=\iota_{\sharp}\mu_0$. When such a relationship holds, we say that the mm spaces $(\mathbb R^{d_0},\|\cdot\|,\mu_0)$ and  $(\mathbb R^{d_1},\|\cdot\|,\mu_1)$ are isomorphic. Furthermore, $\mathsf D$ defines a metric on the quotient space of all Euclidean mm spaces whose measures have finite fourth moments modulo the aforementioned isomorphism relation.

Recalling that $\bar\mu_i=(\Id-\mathbb E_{\mu_i}[X])_{\sharp}\mu_i$, for $i=0,1$, denotes the centered measures, it was recently shown in \cite{zhang2024gromov} (see Corollary 1 in that work) that $\mathsf D$ admits the following variational form,
\begin{equation}
    \label{eq:GWVariational}
    \begin{gathered} 
    \mathsf D(\mu_0,\mu_1)^2=
    \mathsf D(\bar\mu_0,\bar\mu_1)^2=\mathsf S_{1}(\bar\mu_0,\bar\mu_1)+\mathsf S_{2}(\bar\mu_0,\bar\mu_1)
    \\
    \mathsf S_1(\mu_0,\mu_1)=\int \|x-x'\|^4d\mu_0\otimes \mu_0(x,x')+\int \|y-y'\|^4d\mu_1\otimes \mu_1(y,y')-4M_2(\mu_0)M_2(\mu_1)
    \\
    \mathsf S_{2}(\mu_0,\mu_1)=\inf_{\mathbf{A}\in\mathbb R^{d_0\times d_1}}\Big\{32\|\mathbf{A}\|_{\mathrm{F}}^2+\OT_{\mathbf{A}}(\mu_0,\mu_1)  \Big\},
\end{gathered}
\end{equation}
where $\OT_{\mathbf{A}}(\mu_0,\mu_1)\coloneqq \OT_{c_{\mathbf{A}}}(\mu_0,\mu_1)$ for $c_{\mathbf{A}}:(x,y)\mapsto -4\|x\|^2\|y\|^2-32x^{\intercal}\mathbf{A}y$ and the equality $\mathsf D(\mu_0,\mu_1)^2=\mathsf D(\bar\mu_0,\bar\mu_1)^2$ is due to translation invariance. In particular, $\mathsf S_1$ is an explicit constant whereas $\mathsf S_{2}$ is a minimization problem with objective function, 
\begin{equation}
    \label{eq:Objective}
    \Phi_{(\mu_0,\mu_1)}:\mathbf{A}\in\mathbb R^{d_0\times d_1}\mapsto 32\|\mathbf{A}\|^2_{\mathrm F}+\OT_{\mathbf{A}}(\mu_0,\mu_1).
\end{equation}
Further, if $\pi^{\star}$ solves \eqref{eq:GWPrimal}, then $\mathbf{A}^{\star}=\frac{1}{2}\int xy^{\intercal}d\pi^{\star}(x,y)$ solves \eqref{eq:Objective} provided that $\mu_0,\mu_1$ are centered (see the proof of Theorem 1 in \cite{zhang2024gromov}). Our first result further refines the characterization of minimizers of $\eqref{eq:Objective}$. 

\begin{theorem}[On minimizers of \eqref{eq:Objective}]
    \label{thm:VariationalMinimizers}
    Let $(\mu_0,\mu_1)\in\mathcal P(\mathbb R^{d_0})\times \mathcal P(\mathbb R^{d_1})$ be compactly supported. The following statements hold:
    \begin{enumerate}
        \item $\Phi_{(\mu_0,\mu_1)}$ is locally Lipschitz continuous and coercive. If all OT plans, $\pi_{\mathbf{A}}$, for $\OT_{\mathbf{A}}(\mu_0,\mu_1)$ admit the same cross-correlation matrix, $\int xy^{\intercal}d\pi_{\mathbf{A}}(x,y)$, then 
            $\Phi_{(\mu_0,\mu_1)}$ is Fr{\'e}chet differentiable at $\mathbf{A}\in\mathbb R^{d_0\times d_1}$ with $\left(D\Phi_{(\mu_0,\mu_1)}\right)_{[\mathbf{A}]}(\mathbf{B})=64\langle\mathbf{A}-\frac{1}{2}\int xy^{\intercal}d\pi_{\mathbf{A}}(x,y),\mathbf{B}\rangle_{\mathrm F}$. 
        \item  If $\mathbf{A}^{\star}$ minimizes \eqref{eq:Objective}, then $2\mathbf{A}^{\star}=\int xy^{\intercal}d\pi^{\star}(x,y)\in {B_{\mathrm{F}}(\sqrt{M_2(\mu_0)M_2(\mu_1)})}$ for some OT plan $\pi^{\star}$ for $\OT_{\mathbf{A}^{\star}}(\mu_0,\mu_1)$. If $\mu_0,\mu_1$ are centered, then $\pi^{\star}$ solves \eqref{eq:GWPrimal}.  
    \end{enumerate} 
\end{theorem}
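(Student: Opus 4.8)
The plan is to exploit that the cost $c_{\mathbf A}$ in $\OT_{\mathbf A}(\mu_0,\mu_1)$ is \emph{affine} in $\mathbf A$, and to write $\Phi\coloneqq\Phi_{(\mu_0,\mu_1)}=q+h$ with $q(\mathbf A)=32\|\mathbf A\|_{\mathrm F}^2$ smooth and convex ($\nabla q(\mathbf A)=64\mathbf A$) and $h(\mathbf A)=\OT_{\mathbf A}(\mu_0,\mu_1)$. Set $M(\pi)\coloneqq\int xy^{\intercal}\,d\pi$. Since compact support makes $c_{\mathbf A}$ bounded and continuous on $\supp\mu_0\times\supp\mu_1$, $h$ is a finite infimum of affine functions of $\mathbf A$, hence concave; from $\int c_{\mathbf A}\,d\pi-\int c_{\mathbf A'}\,d\pi=-32\langle\mathbf A-\mathbf A',M(\pi)\rangle_{\mathrm F}$ and $\|M(\pi)\|_{\mathrm F}\le\int\|x\|\|y\|\,d\pi\le\sqrt{M_2(\mu_0)M_2(\mu_1)}$ (Cauchy--Schwarz), $h$ is globally $32\sqrt{M_2(\mu_0)M_2(\mu_1)}$-Lipschitz, so $\Phi$ is locally Lipschitz; taking $\mathbf A'=\mathbf 0$ in the same estimate gives $\Phi(\mathbf A)\ge 32\|\mathbf A\|_{\mathrm F}^2-32\sqrt{M_2(\mu_0)M_2(\mu_1)}\,\|\mathbf A\|_{\mathrm F}+\OT_{\mathbf 0}(\mu_0,\mu_1)$, which is coercive (so $\argmin\Phi\neq\emptyset$).

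For the Fr\'echet differentiability in (1), it suffices to show $h$ is Fr\'echet differentiable at $\mathbf A$ with $\nabla h(\mathbf A)=-32M(\pi_{\mathbf A})$, as then $D\Phi_{[\mathbf A]}(\mathbf B)=\langle 64\mathbf A-32M(\pi_{\mathbf A}),\mathbf B\rangle_{\mathrm F}$. Testing the infimum in $h(\mathbf A+\mathbf H)$ against a fixed optimal $\pi_{\mathbf A}$ gives $h(\mathbf A+\mathbf H)-h(\mathbf A)\le-32\langle M(\pi_{\mathbf A}),\mathbf H\rangle_{\mathrm F}$, while writing $h(\mathbf A+\mathbf H)=\int c_{\mathbf A}\,d\pi_{\mathbf A+\mathbf H}-32\langle M(\pi_{\mathbf A+\mathbf H}),\mathbf H\rangle_{\mathrm F}\ge h(\mathbf A)-32\langle M(\pi_{\mathbf A+\mathbf H}),\mathbf H\rangle_{\mathrm F}$ for an optimal $\pi_{\mathbf A+\mathbf H}$ gives the reverse bound up to the error $-32\langle M(\pi_{\mathbf A+\mathbf H})-M(\pi_{\mathbf A}),\mathbf H\rangle_{\mathrm F}$. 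The crux, which also promotes G\^ateaux to Fr\'echet differentiability, is that $\varepsilon(\mathbf H)\coloneqq\sup\{\|M(\pi)-M(\pi_{\mathbf A})\|_{\mathrm F}:\pi\text{ optimal for }\OT_{\mathbf A+\mathbf H}(\mu_0,\mu_1)\}\to0$ as $\|\mathbf H\|_{\mathrm F}\to0$, whence $|h(\mathbf A+\mathbf H)-h(\mathbf A)+32\langle M(\pi_{\mathbf A}),\mathbf H\rangle_{\mathrm F}|\le 32\varepsilon(\mathbf H)\|\mathbf H\|_{\mathrm F}=o(\|\mathbf H\|_{\mathrm F})$. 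This is a compactness/stability argument: $\Pi(\mu_0,\mu_1)$ is weakly compact, $\pi\mapsto M(\pi)$ is weakly continuous on it, and any weak limit point of optimal plans for $\OT_{\mathbf A+\mathbf H_k}$ with $\mathbf H_k\to0$ is optimal for $\OT_{\mathbf A}$ (uniform convergence $c_{\mathbf A+\mathbf H_k}\to c_{\mathbf A}$ on the compact support plus continuity of $\mathbf A\mapsto\OT_{\mathbf A}$), hence has cross-correlation $M(\pi_{\mathbf A})$ by the hypothesis of (1); a routine contradiction yields $\varepsilon(\mathbf H)\to0$.

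For (2), I would read off a first-order condition at $\mathbf A^{\star}\in\argmin\Phi$. The same compactness/stability ingredients (a Danskin-type argument, cf.\ \cite{bonnans2013perturbation}) show $h$ is directionally differentiable at $\mathbf A^{\star}$ with $h'(\mathbf A^{\star};\mathbf H)=\min_{\pi\in\Pi^{\star}}\langle-32M(\pi),\mathbf H\rangle_{\mathrm F}$, where $\Pi^{\star}$ is the nonempty, compact, convex set of OT plans for $\OT_{\mathbf A^{\star}}(\mu_0,\mu_1)$. Minimality forces $\Phi'(\mathbf A^{\star};\mathbf H)=\langle 64\mathbf A^{\star},\mathbf H\rangle_{\mathrm F}+h'(\mathbf A^{\star};\mathbf H)\ge0$ for all $\mathbf H$; adding the inequalities for $\mathbf H$ and $-\mathbf H$ gives $\min_{\pi\in\Pi^{\star}}\langle M(\pi),\mathbf H\rangle_{\mathrm F}=\max_{\pi\in\Pi^{\star}}\langle M(\pi),\mathbf H\rangle_{\mathrm F}$ for every $\mathbf H$, so $M$ is constant on $\Pi^{\star}$, say $M(\pi)\equiv\mathbf M^{\star}$; then $\Phi'(\mathbf A^{\star};\mathbf H)=\langle 64\mathbf A^{\star}-32\mathbf M^{\star},\mathbf H\rangle_{\mathrm F}\ge0$ for all $\mathbf H$ forces $\mathbf M^{\star}=2\mathbf A^{\star}$. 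Thus $2\mathbf A^{\star}=M(\pi^{\star})=\int xy^{\intercal}\,d\pi^{\star}$ for every OT plan $\pi^{\star}$ of $\OT_{\mathbf A^{\star}}(\mu_0,\mu_1)$ (one exists), and $\|2\mathbf A^{\star}\|_{\mathrm F}=\|M(\pi^{\star})\|_{\mathrm F}\le\int\|x\|\|y\|\,d\pi^{\star}\le\sqrt{M_2(\mu_0)M_2(\mu_1)}$ by Cauchy--Schwarz, i.e.\ $2\mathbf A^{\star}\in B_{\mathrm F}(\sqrt{M_2(\mu_0)M_2(\mu_1)})$.

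For the final claim, suppose $\mu_0,\mu_1$ are centered. Expanding $\Delta^2=(\|x-x'\|^2-\|y-y'\|^2)^2$ and integrating against $\pi\otimes\pi$, centering kills every cross term odd in a single variable, and the remaining bookkeeping (the one underlying \eqref{eq:GWVariational}) leaves, for all $\pi\in\Pi(\mu_0,\mu_1)$,
\[
\int\Delta^2\,d\pi\otimes\pi=\mathsf S_1(\mu_0,\mu_1)+32\big\|\tfrac12 M(\pi)\big\|_{\mathrm F}^2+\int c_{\frac12 M(\pi)}\,d\pi.
\]
Taking $\pi=\pi^{\star}$, using $\tfrac12 M(\pi^{\star})=\mathbf A^{\star}$ and $\int c_{\mathbf A^{\star}}\,d\pi^{\star}=\OT_{\mathbf A^{\star}}(\mu_0,\mu_1)$ (optimality of $\pi^{\star}$), the right-hand side equals $\mathsf S_1(\mu_0,\mu_1)+\Phi_{(\mu_0,\mu_1)}(\mathbf A^{\star})=\mathsf S_1(\mu_0,\mu_1)+\mathsf S_2(\mu_0,\mu_1)=\mathsf D(\mu_0,\mu_1)^2$; hence $\pi^{\star}$ attains the infimum in \eqref{eq:GWPrimal} and is a GW plan. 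The \textbf{main obstacles} I anticipate are (i) the compactness/stability step making the difference-quotient estimate in (1) uniform in the increment $\mathbf H$ (G\^ateaux $\Rightarrow$ Fr\'echet), and (ii) that $\Phi$ is a difference-of-convex function, so the first-order condition in (2) cannot come from convex subdifferential calculus but must be argued via one-sided directional derivatives and the Danskin-type formula for $h'$.
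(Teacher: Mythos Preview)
Your argument is correct. For Part~(1) your treatment of Lipschitzness, coercivity, and Fr\'echet differentiability via the two-sided estimate and the stability of optimal plans is essentially identical to the paper's (its Lemmas on the derivative, the coupling norm bound, and local Lipschitzness/coercivity).

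For Part~(2) you take a genuinely different route. The paper works through the Clarke subdifferential: it shows
\[
\partial\Phi_{(\mu_0,\mu_1)}(\mathbf A)=\bigl\{64\mathbf A-32\!\int xy^{\intercal}d\pi_{\mathbf A}:\pi_{\mathbf A}\text{ optimal for }\OT_{\mathbf A}(\mu_0,\mu_1)\bigr\},
\]
and then invokes the generalized Fermat rule $0\in\partial\Phi(\mathbf A^{\star})$ to produce \emph{some} $\pi^{\star}$ with $2\mathbf A^{\star}=M(\pi^{\star})$. You instead exploit directly that $h=\OT_{(\cdot)}(\mu_0,\mu_1)$ is concave with Danskin directional derivative $h'(\mathbf A^{\star};\mathbf H)=\min_{\pi\in\Pi^{\star}}\langle-32M(\pi),\mathbf H\rangle$, and deduce from $\Phi'(\mathbf A^{\star};\pm\mathbf H)\ge0$ that $\langle M(\pi),\mathbf H\rangle$ is constant on $\Pi^{\star}$ for every $\mathbf H$. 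This is more elementary (no Clarke calculus) and in fact yields a slightly stronger byproduct than the paper states: at any global minimizer $\mathbf A^{\star}$, \emph{every} optimal plan for $\OT_{\mathbf A^{\star}}(\mu_0,\mu_1)$ has cross-correlation $2\mathbf A^{\star}$, not merely some. What the paper's approach buys is the explicit description of $\partial\Phi$ at \emph{every} $\mathbf A$, which it reuses later (e.g., in the subgradient method of Algorithm~\ref{alg:subgradient}); your argument is localized to minimizers. The final step (optimality of $\pi^{\star}$ for \eqref{eq:GWPrimal} under centering) matches the paper's, which defers the algebra to \cite{zhang2024gromov}.
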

\cref{thm:VariationalMinimizers}, whose proof is provided in \cref{app:proofOfthm:VariationalMinimizers}, shows that all minimizers of \eqref{eq:Objective} are contained in ${B_{\mathrm{F}}\big(\frac{1}{2}\sqrt{M_2(\mu_0)M_2(\mu_1)}\big)}$ and, given such a minimizer, a solution to \eqref{eq:GWPrimal} can be obtained. Although Point 1 appears to directly imply Point 2, since a global minimizer of a differentiable coercive function must be a critical point, we stress that if $\OT_{\mathbf{A}}(\mu_0,\mu_1)$ admits multiple OT plans, $\Phi_{(\mu_0,\mu_1)}$ may fail to be differentiable at $\mathbf{A}$. To formalize this fact, 
the proof of \cref{thm:VariationalMinimizers} characterizes the Clarke subdifferential \cite{clarke1975generalized} of $\Phi_{(\mu_0,\mu_1)}$.  

\begin{remark}[Uniqueness of cross-correlation matrix] 
\label{rmk:uniqueCrosscorrelation}
The condition that all OT plans for $\OT_{\mathbf{A}}(\mu_0,\mu_1)$ share the same cross-correlation matrix seems nontrivial to verify in general and, in fact, fails in some cases. For instance, example given in Figure 1 of \cite{zhang2024gromov} indicates that  $\Phi_{(\mu_0,\mu_1)}$ is nonsmooth for simple discrete measures on the line. With this, it appears reasonable to impose the stronger condition that, for every $\mathbf{A}\in \mathbb R^{d_0\times d_1}$, $\OT_{\mathbf{A}}(\mu_0,\mu_1)$ admits a unique optimal plan. However, standard results guaranteeing uniqueness of OT plans generally require injectivity of $\nabla_x c(x,\cdot)$, known as the twist condition, (see e.g. Theorem 10.28 in \cite{villani2008optimal}) among other assumptions. As $\nabla_xc_{\mathbf{A}}(x,y)=-8x\|y\|^2-32\mathbf{A}y$, this condition can fail even in anodyne situations (e.g.
$0\in\inte(\supp(\mu_0))$ and there exists $y,y'\in\supp(\mu_1)\cap \ker(\mathbf{A})$). The failure of this condition constitutes a roadblock to proving the existence of Gromov-Monge alignment maps for \eqref{eq:GWPrimal}. Nevertheless, we demonstrate in \cref{sec:semidiscreteGW} that, in the semi-discrete case, a simple condition guarantees uniqueness of OT plans and existence of OT maps
for $\OT_{\mathbf{A}}(\mu_0,\mu_1)$. This enables us to prove existence of Gromov-Monge maps for the semi-discrete GW problem, which adds to the few known cases for which optimal alignment maps exist; 
\cite[Theorem 9.21]{sturm2012space} and \cite[Proposition 4.2.4]{vayer2020contribution}.
\end{remark}

The entropic GW distance between the Euclidean mm spaces $(\RR^{d_0},\|\cdot\|,\mu_0)$ and $(\RR^{d_1},\|\cdot\|,\mu_1)$ is obtained by regularizing \eqref{eq:GWPrimal} using the Kullback-Leibler divergence, analogously to EOT. Fixing $\varepsilon>0$, the distance between  $\mu_0\in\mathcal P(\mathbb R^{d_0}),\mu_1\in\mathcal P(\mathbb R^{d_1})$ with finite fourth moments is given by 
\begin{equation}
    \label{eq:entropicGWPrimal}
    \mathsf D_{\varepsilon}(\mu_0,\mu_1)^2=\inf_{\pi\in\Pi(\mu_0,\mu_1)}\left\{\|\Delta\|_{L^2(\pi\otimes \pi)}^2+\varepsilon\mathsf D_{\mathrm{KL}}(\pi\|\mu_0\otimes \mu_1)\right\}. 
\end{equation}
This modification was introduced in \cite{peyre2016gromov,solomon2016entropic} due to computational considerations, resulting in a heuristic iterative algorithm which involves solving an EOT problem at each iterate. %
More recently, a gradient-based algorithm for solving the entropic GW problem subject to non-asymptotic convergence guarantees was provided in \cite{rioux2023entropic} by leveraging \cref{thm:entropicVariationalMinimizers} ahead.
We underscore that $\mathsf D_{\varepsilon}(\mu_0,\mu_0)\neq 0$ unless $\mu_0$ is a Dirac measure\footnote{$\mathsf D_{\mathrm{KL}}(\pi\|\mu_0\otimes \mu_0)\geq 0$ with equality if and only if $\pi=\mu_0\otimes \mu_0$ whereas $\int\Delta d\pi\otimes\pi\geq 0$ with equality if and only if $\pi$ is induced by an isomorphism.} so that $\mathsf D_{\varepsilon}$ does not define a metric. Nevertheless, $\mathsf D_{\varepsilon}$ is still invariant to isometric transformations of the marginal spaces and always admits a solution.\footnote{The proof of Corollary 10.1 in \cite{Memoli11} shows that the functional $\varpi\in\mathcal P(\mathbb R^{d_0}\times \mathbb R^{d_1})\mapsto \int \Delta d\varpi\otimes \varpi$ is continuous in the topology of weak convergence of probability measures. The conclusion follows by noting that $\varpi\mapsto \mathsf{D}_{\mathrm{KL}}(\varpi\|\mu_0\otimes \mu_1)$ is lower semicontinuous and that $\Pi(\mu_0,\mu_1)$ is compact in this topology.} By analogy with \eqref{eq:GWVariational}, the entropic GW problem also admits a variational form (obtained by replacing $\mathsf{OT}_{\mathbf A}$ by  $\mathsf{OT}_{{\mathbf A},\varepsilon}\coloneqq \mathsf{OT}_{c_{\mathbf A},\varepsilon}$ in \eqref{eq:GWVariational}) and \cref{thm:VariationalMinimizers} carries over to the entropic case with the advantage that the measures need only be $4$-sub-Weibull and that the objective $\Phi_{(\mu_0,\mu_1)}$ is everywhere Fr{\'e}chet differentiable; see \cref{sec:entropicGWVar} for details.

\section{Discrete Gromov-Wasserstein Distance}
\label{sec:DiscreteGW}
We first consider the GW distance between finitely discrete measures $\mu_0\in\mathcal P(\mathbb R^{d_0})$ and $\mu_1\in\mathcal P(\mathbb R^{d_1})$. Throughout, we let $\mathcal X_0=(x^{(i)})_{i=1}^{N_0}=\supp(\mu_0)$ and $\mathcal X_1=(y^{(j)})_{j=1}^{N_1}=\supp(\mu_1)$. Our analysis hinges on the variational formulation of the GW distance and the fact that $\mathsf D(\mu_0,\mu_1)^2$ can be identified with the finite-dimensional quadratic program 
\begin{equation}
    \label{eq:discreteQP}
    \begin{aligned}
        \inf_{\mathbf{P}\in\mathbb R^{N_0\times N_1}}\; &\left\langle \mathbf{P},\Delta(\mathbf{P}) \right\rangle_{\text{F}},
        \\
        \text{s.t. }\;&\mathbf{P}\mathbbm 1_{N_1}=m_0,
        \\
        &\mathbbm 1_{N_0}^{\intercal}\mathbf{P}=m_1,
        \\
        &\hspace{1.2em}\mathbf{P}_{ij}\geq 0,\;\forall (i,j)\in[N_0]\times[N_1].
    \end{aligned}
\end{equation}
where $\Delta(\mathbf{P})\in\mathbb R^{N_0\times N_1}$ is the matrix with $ij$-th entry 
\[
(\Delta(\mathbf{P}))_{ij}=\sum_{(k,l)\in[N_0]\times [N_1]}\Delta\left(x^{(i)},y^{(j)},x^{(k)},y^{(l)}\right) \mathbf{P}_{kl},
\]
and $(m_0,m_1)\in\mathbb R^{N_0}\times \mathbb R^{N_1}$ are the weight vectors for $\mu_0$ and $\mu_1$, respectively, i.e., $(m_0)_{i}=\mu_0(\{x^{(i)}\})$ and $(m_1)_{j}=\mu_1(\{y^{(j)}\})$ for $(i,j)\in[N_0]\times[N_1]$. Evidently, any $\mathbf{P}\in\mathbb R^{d_0\times d_1}$ satisfying the constraints in \eqref{eq:discreteQP} can be identified with a coupling $\pi_{\mathbf{P}}\in\Pi(\mu_0,\mu_1)$ via $\pi_{\mathbf{P}}\left(\left\{(x^{(i)},y^{(j)})\right\}\right)=\mathbf{P}_{ij}$ and it is clear that $\langle \mathbf{P},\Delta(\mathbf{P})\rangle_{\text{F}}=\int \Delta d\pi_{\mathbf{P}}\otimes \pi_{\mathbf{P}}$.

\subsection{Stability analysis}\label{subsec:discrete_stability}

We explore the stability of the discrete GW functional along perturbations of the marginal distributions. Combining this with the methodology from \cite{goldfeld24statistical} will then result in distributional limits for empirical GW in the discrete setting. In order to preserve the structure of \eqref{eq:discreteQP} along the perturbations \[\mu_{0,t}\coloneqq\mu_0+t(\nu_0-\mu_0),\quad \mu_{1,t}\coloneqq\mu_1+t(\nu_1-\mu_1), \quad t\in [0,1]\] appearing in \cref{prop:unified} ahead, we require that $\nu_i\in\mathcal P(\mathcal X_i)$, for $i\in\{0,1\}$, is such that $\mathsf{D}(\mu_{0,t},\mu_{1,t})^2$ can be written as \eqref{eq:discreteQP}  with the corresponding weight vectors in place of $m_0,m_1$. For any $(\nu_0,\nu_1)\in\mathcal P(\mathcal X_0)\times\mathcal P(\mathcal X_1)$, we have $\frac 12 \sqrt{M_2(\nu_0)M_2(\nu_1)}\leq \frac 12 \|\mathcal X_0\|_{\infty}\|\mathcal X_1\|_{\infty}$,   
so that all minimizers of $\Phi_{(\nu_0,\nu_1)}$ are contained in $B_{\mathrm{F}}(M)$ for $M\coloneqq \frac{1}{2}\|\mathcal X_0\|_{\infty}\|\mathcal X_1\|_{\infty}$.
Moreover, $M_2(\bar\nu_i)=M_2(\nu_i)-\|\mathbb E_{\nu_i}[X]\|^2\leq M_2(\nu_i)$, for $i\in\{0,1\}$, so that the  minimizers of $\Phi_{(\bar \nu_0,\bar \nu_1)}$ are also contained in $B_{\mathrm{F}}(M)$.  

Stability properties of $\mathsf D(\mu_0,\mu_1)^2$ along valid perturbations of the marginals are established by studying a type of linearization of this quadratic programs which is related to $\OT_{\mathbf{A}}(\mu_0,\mu_1)$, as defined in \cref{sec:GWDistance}, for some choice of $\mathbf{A}\in\mathbb R^{d_0\times d_1}$. %
 To state the subsequent stability property, set 
    \begin{align*}
        \mathcal F_i&\coloneqq\left\{f :\mathcal X_i \to
        \mathbb R:\|f\|_{\infty,\mathcal X_i}\leq 1\right\},\quad i\in\{0,1\},
        \\
        \mathcal F^{\oplus}&\coloneqq \left\{f_0\oplus f_1:f_0\in\mathcal F_0,f_1\in\mathcal F_1\right\}.
    \end{align*}

\begin{theorem}[Stability for discrete GW]
    \label{thm:discreteGWStability}
    Let  
    $\mathcal A=\argmin_{B_{\mathrm{F}}(M)}\Phi_{(\bar\mu_0,\bar\mu_1)}$.
    Then, for any pairs $(\nu_0, \nu_1),(\rho_0, \rho_1)\in\mathcal P_{\mu_0}\times \mathcal P_{\mu_1}$, we have $\left|\mathsf D(\nu_0,\nu_1)^2-\mathsf D(\rho_0,\rho_1)^2\right|\leq C\|\nu_0\otimes \nu_1-\rho_0\otimes\rho_1\|_{\infty,\mathcal F^{\oplus}}$, for some constant $C>0$ which is independent of $(\nu_0, \nu_1),(\rho_0, \rho_1)$.  Furthermore, %
       
    \[
    \begin{aligned}
        &\frac{d}{dt}\mathsf D(\mu_{0,t},\mu_{1,t})^2\big\vert_{t=0}
        \\
        &\hspace{5em}=  \inf_{\mathbf{A}\in\mathcal A}\inf_{\pi \in \bar \Pi^{\star}_{\mathbf{A}}}\sup_{(\varphi_0,\varphi_1)\in\bar{\mathcal D}_{\mathbf A}}\left\{
             \int (f_0+ g_{0,\pi}+\bar \varphi_0)d(\nu_0-\mu_0)+\int (f_1+ g_{1,\pi}+\bar \varphi_1)d(\nu_1-\mu_1)
            \right\},
    \end{aligned}
    \]
    where, for $x\in\mathbb R^{d_0}$, %
    \[
            f_0(x)=2\int \|x-x'\|^4d\mu_0(x')-4M_2(\bar \mu_1)\|x-\mathbb E_{\mu_0}[X]\|^2,
            \quad g_{0,\pi}(x)= 8x^\intercal\mathbb E_{(X,Y)\sim \pi}[\|Y\|^2X],
    \]
    with $f_1,g_{1,\pi}$ defined symmetrically by interchanging $x\in\mathbb R^{d_0}$ and $y\in\mathbb R^{d_1}$, $X$ and $Y$, and $\mu_0$ and~$\mu_1$. Further,
    $\bar\varphi_i=\varphi_i(\cdot-\mathbb E_{\mu_i}[X])$ for $i\in\{0,1\}$, and, for $\mathbf{A}\in B_{\mathrm{F}}(M)$, $\bar \Pi^{\star}_{\mathbf{A}}\subset\Pi(\bar \mu_0,\bar \mu_1)$ is the set of all OT plans, $\pi^{\star}$, for $\OT_{\mathbf{A}}(\bar \mu_0,\bar \mu_1)$ satisfying $\mathbf{A}=\frac{1}{2}\int xy^{\intercal}d\pi^{\star}(x,y)$, while $\bar{\mathcal D}_{\mathbf{A}}$ is the set of all corresponding OT potentials.
\end{theorem}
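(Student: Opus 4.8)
The overall strategy is to work through the variational form \eqref{eq:GWVariational}, writing $\mathsf D(\nu_0,\nu_1)^2=\mathsf D(\bar\nu_0,\bar\nu_1)^2=\mathsf S_1(\bar\nu_0,\bar\nu_1)+\inf_{\mathbf A\in B_{\mathrm F}(M)}\Phi_{(\bar\nu_0,\bar\nu_1)}(\mathbf A)$, where restricting the infimum to $B_{\mathrm F}(M)$ is justified by \cref{thm:VariationalMinimizers} and the moment bound recorded just before the theorem. The term $\mathsf S_1(\bar\nu_0,\bar\nu_1)$ is an explicit polynomial in the low-order moments of $\nu_0,\nu_1$, while $\inf_{B_{\mathrm F}(M)}\Phi$ depends on the marginals only through $\OT_{\mathbf A}(\bar\nu_0,\bar\nu_1)$. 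I would treat the two pieces separately and recombine by the triangle inequality, relying on two elementary observations: $\|\nu_0\otimes\nu_1-\rho_0\otimes\rho_1\|_{\infty,\mathcal F^{\oplus}}=\|\nu_0-\rho_0\|_{\infty,\mathcal F_0}+\|\nu_1-\rho_1\|_{\infty,\mathcal F_1}$ (the suprema decouple because $\mathcal F_i$ is closed under negation), and $|\int h\,\dd(\nu_i-\rho_i)|\le\|h\|_{\infty,\mathcal X_i}\,\|\nu_i-\rho_i\|_{\infty,\mathcal F_i}$ for every $h$ bounded on $\mathcal X_i$.

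For the Lipschitz bound, I would control the $\mathsf S_1$-difference term by term, telescoping the quadratic-in-measure expressions via $\nu_0\otimes\nu_0-\rho_0\otimes\rho_0=(\nu_0-\rho_0)\otimes\nu_0+\rho_0\otimes(\nu_0-\rho_0)$ and $ab-a'b'=a(b-b')+(a-a')b'$, using compactness of $\mathcal X_0,\mathcal X_1$ to bound the remaining factors (the centering corrections $\|\mathbb E_{\nu_i}[X]\|^2$ handled identically). For the $\inf\Phi$-difference, the $32\|\mathbf A\|_{\mathrm F}^2$ cancels, so it is at most $\sup_{\mathbf A\in B_{\mathrm F}(M)}|\OT_{\mathbf A}(\bar\nu_0,\bar\nu_1)-\OT_{\mathbf A}(\bar\rho_0,\bar\rho_1)|$. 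Writing $\OT_{\mathbf A}(\bar\nu_0,\bar\nu_1)=\OT_{c^{\nu}_{\mathbf A}}(\nu_0,\nu_1)$ for the translated cost $c^{\nu}_{\mathbf A}(x,y)=c_{\mathbf A}(x-\mathbb E_{\nu_0}[X],\,y-\mathbb E_{\nu_1}[Y])$, I would split into a marginal perturbation (cost $c^{\nu}_{\mathbf A}$, marginals $(\nu_0,\nu_1)$ versus $(\rho_0,\rho_1)$) and a cost perturbation (marginals $(\rho_0,\rho_1)$, costs $c^{\nu}_{\mathbf A}$ versus $c^{\rho}_{\mathbf A}$). The marginal perturbation is bounded by the OT duality \eqref{eq:OTDuality}, inserting the optimal potentials (taken as $c$-transforms) of one problem into the dual of the other; these potentials are Lipschitz, and after normalization bounded, uniformly over $\mathbf A\in B_{\mathrm F}(M)$ and over the means (which lie in a fixed compact), so the bound is $C(\|\nu_0-\rho_0\|_{\infty,\mathcal F_0}+\|\nu_1-\rho_1\|_{\infty,\mathcal F_1})$. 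The cost perturbation is at most $\|c^{\nu}_{\mathbf A}-c^{\rho}_{\mathbf A}\|_{\infty,\mathcal X_0\times\mathcal X_1}\le C(\|\mathbb E_{\nu_0}[X]-\mathbb E_{\rho_0}[X]\|+\|\mathbb E_{\nu_1}[Y]-\mathbb E_{\rho_1}[Y]\|)$ by Lipschitzness of $c_{\mathbf A}$ on the relevant compact, and the mean differences are again controlled by $\|\nu_i-\rho_i\|_{\infty,\mathcal F_i}$. Collecting everything gives the inequality with $C$ depending only on $\mathcal X_0,\mathcal X_1$ and the dimensions.

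For the (right) directional derivative at $t=0$, the sum rule reduces matters to differentiating $t\mapsto\mathsf S_1(\bar\mu_{0,t},\bar\mu_{1,t})$ and $t\mapsto\inf_{\mathbf A}\Phi_{(\bar\mu_{0,t},\bar\mu_{1,t})}(\mathbf A)$. The first is a polynomial in $t$, and a direct computation---using $\frac{\dd}{\dd t}M_2(\bar\mu_{i,t})\big|_{t=0}=\int\|x-\mathbb E_{\mu_i}[X]\|^2\,\dd(\nu_i-\mu_i)$, the constant term dropping because $\nu_i-\mu_i$ has zero total mass---reproduces exactly the $f_0,f_1$ terms. For the second, since $\mu_{i,t}$ stays supported on $\mathcal X_i$ for $t\in[0,1)$, one has $\OT_{\mathbf A}(\bar\mu_{0,t},\bar\mu_{1,t})=\min\{\langle\mathbf P,C_{\mathbf A}(t)\rangle_{\mathrm F}:\mathbf P\mathbbm 1_{N_1}=m_0(t),\ \mathbbm 1_{N_0}^{\intercal}\mathbf P=m_1(t),\ \mathbf P\ge 0\}$, a linear program with right-hand side $m_i(t)$ affine in $t$ and cost matrix $C_{\mathbf A}(t)_{ij}=c_{\mathbf A}(x^{(i)}-\mathbb E_{\mu_{0,t}}[X],\,y^{(j)}-\mathbb E_{\mu_{1,t}}[Y])$ polynomial in $t$. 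I would compute its right derivative in the spirit of sensitivity analysis for parametric linear programs (cf.\ \cite{bonnans2013perturbation}), obtaining a $\min$ over primal optima and $\sup$ over dual optima of the linearized objective, and then pass the derivative through the outer minimization over $\mathbf A$ by a Danskin-type argument (near-minimizers of $\Phi$ remain in a fixed compact by coercivity from \cref{thm:VariationalMinimizers}, and the $t$-remainder is $o(t)$ uniformly by the polynomial/affine dependence). Treating the minimization jointly in $(\mathbf A,\mathbf P)$---with $\mathbf A$ unconstrained, so the optimal $\mathbf A$ equals $\frac12\int xy^{\intercal}\,\dd\pi_{\mathbf P}$ after completing the square---forces the relevant primal optima to lie in $\bar\Pi^{\star}_{\mathbf A}$, which is nonempty for $\mathbf A\in\mathcal A$ by \cref{thm:VariationalMinimizers}. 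It then remains to identify the pieces: reading the dual vectors as OT potentials for $\OT_{\mathbf A}(\bar\mu_0,\bar\mu_1)$ translated back to $\mathcal X_i$ yields $\int\bar\varphi_0\,\dd(\nu_0-\mu_0)+\int\bar\varphi_1\,\dd(\nu_1-\mu_1)$, while differentiating $C_{\mathbf A}(t)$ through the moving centering---noting that the two terms carrying the factors $\mathbb E_{\bar\mu_1}[Y]=0$ and $\mathbb E_{\bar\mu_0}[X]=0$ vanish---yields $\int g_{0,\pi}\,\dd(\nu_0-\mu_0)+\int g_{1,\pi}\,\dd(\nu_1-\mu_1)$.

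The main obstacle will be the sensitivity step for the inner linear program: the generic theorem on directional differentiability of optimal value functions requires a directional regularity (constraint-qualification) condition that, as noted in the introduction, does not hold in the GW setting. I expect the fix to be the one flagged there---write out the \emph{linearization} of the discrete GW program \eqref{eq:discreteQP} explicitly and establish strong duality for it directly, exploiting the variational identity \eqref{eq:GWVariational} rather than invoking an abstract qualification---which is precisely what legitimizes the clean $\inf_{\pi}\sup_{(\varphi_0,\varphi_1)}$ form of the derivative. The remaining work, mostly bookkeeping, is the uniformity in $\mathbf A$ for the Danskin step and the explicit matching of constants.
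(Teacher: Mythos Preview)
Your proposal is correct and would yield the result, but the paper takes a markedly different route for the discrete case---one that avoids the $\mathsf S_1+\mathsf S_2$ decomposition and the Danskin step entirely. For the Lipschitz bound, the paper simply observes that $\mathsf D(\nu_0,\nu_1)^2$ and $\mathsf D(\rho_0,\rho_1)^2$ are optimal values of the same finite-dimensional quadratic program \eqref{eq:discreteQP} with different right-hand sides, then invokes a black-box Lipschitz-stability result for QPs (Theorem~3 of \cite{Klatte1985Lipschitz}) and translates the weight-vector norm into $\|\cdot\|_{\infty,\mathcal F^{\oplus}}$ by finiteness of the support. Your route through the variational form and OT duality works too but is longer here.

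For the derivative, the paper again stays at the QP level. The upper bound perturbs a fixed GW plan $\bar\pi$ by $\bar\pi+t\gamma$, optimizes the linear term $2\int\Delta\,\dd\bar\pi\otimes\gamma$ over admissible signed measures $\gamma$, and shows (by an explicit change of variables) that the dual of this linearized LP is exactly the OT dual for $\OT_{\mathbf A_{\bar\pi}}(\bar\mu_0,\bar\mu_1)$ with $\mathbf A_{\bar\pi}=\tfrac12\int xy^{\intercal}\,\dd\bar\pi$; the potential normalization from \cref{prop:discreteGWPotentials} then delivers strong duality directly, bypassing the failed directional regularity (as you anticipated). The lower bound is where the approaches diverge most: the paper uses Klatte's theorem for the \emph{solution set} of the QP, obtaining that any perturbed optimizer $\mathbf P_{t_n}$ is $O(t_n)$-close to some unperturbed optimizer $\mathbf P^0_{n}\in S_0$, and then a second-order Taylor expansion of the quadratic form plus complementary slackness gives the matching bound---no outer minimization over $\mathbf A$ ever appears, because the QP already bundles the $\mathbf A$- and $\pi$-variables. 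Your Danskin-plus-LP-sensitivity plan is valid (and is essentially the route the paper adopts in the semi-discrete and entropic proofs, where no finite QP is available), but in the discrete case the paper's QP argument is shorter and sidesteps the subsequence bookkeeping that Danskin requires when the inner problem has nonunique optimizers.
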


The proof of \cref{thm:discreteGWStability} (see \cref{sec:proof:thm:discreteGWStability}) employs a linearization of the GW quadratic program \eqref{eq:discreteQP} along certain perturbations of its marginals. %
The approach builds on directional differentiability results for the optimal value function in nonlinear programming (see Section 4.3.2 in \cite{bonnans2013perturbation}) while utilizing stability properties of solutions to perturbed (finite-dimensional) quadratic programs \cite{Klatte1985Lipschitz}. We note that the results from \cite{bonnans2013perturbation} do not apply directly to the problem at hand as the directional regularity condition, used to enforce strong duality for the linearized problem, does not hold due to the {structure of the GW coupling set} (see \cref{rmk:directionalRegularityFailure}). Nevertheless, we show that the dual variables correspond to OT potentials for $\mathsf{OT}_{\mathbf A}(\bar\mu_0,\bar\mu_1)$, where $\mathbf A=\frac{1}{2}\int xy^{\intercal}d\pi(x,y)$ for some GW plan $\pi$, and use the fact that dual OT potentials can always be replaced with a bounded version thereof without changing the objective to restrict optimization to a bounded constraint set. %

\begin{remark}[Comparison with OT stability]
To compare GW stability to that of OT, let $(\mathcal X,\mathsf d_{\mathcal X})$ be a finite metric space and set $\mu_0,\mu_1\in\mathcal P(\mathcal X)$ and  $(\nu_0,\nu_1)\in \mathcal P_{\mu_0}\times \mathcal P_{\mu_1}$.
Adopting the notation of \cref{thm:discreteGWStability}, Theorem 3.1 in \cite{gal2012advances} yields 
\[
\frac{d}{dt}\mathsf{OT}_{\mathsf d^p_{\mathcal X}}(\mu_{0,t},\mu_{1,t})\big\vert_{t=0}=\sup_{(\varphi_0,\varphi_1)\in \mathcal D_{p}}\left\{\int \varphi_0d(\nu_0-\mu_0)+\int \varphi_1d(\nu_1-\mu_1)\right\},
\]
where $\mathcal D_p$ is the set of OT potentials for $\mathsf{OT}_{\mathsf d^p_{\mathcal X}}(\mu_0,\mu_1)$. This observation was used in \cite{Sommerfeld2018} to derive limit distributions for the $p$-Wasserstein distance on a finite space via the delta method. A similar term appears in the GW stability result (\cref{thm:discreteGWStability}), along with (i) a minimization over $\mathcal A$, (ii) a minimization over corresponding OT plans, and (iii) the terms $f_0$ and $f_1$ which depend only on $\mu_0$ and $\mu_1$. The first occurs due to the minimization over matrices in the variational form, the second is due to the centering required to access the variational form, while the third accounts for the term $\mathsf S_1$ in the variational form \eqref{eq:GWVariational}.  
\end{remark}

\subsection{Distributional limits}\label{subsec:discrete_limits}

\medskip
The characterization of the stability profile of the discrete GW problem with respect to perturbations of the weights given in \cref{thm:discreteGWStability} is the driving force behind the subsequent limit theorem once the following assumption is made.\footnote{\cref{assn:weakConvergence} will be used when stating limit laws beyond the discrete case, the function class $\mathcal F^{\oplus}$ should be understood based on the context.}

\begin{assumption}
   \label{assn:weakConvergence}  $(\mu_{0,n},\mu_{1,n})_{n\in\mathbb N}\subset\mathcal P_{\mu_0}\times \mathcal P_{\mu_1}$ is such that  $\sqrt n(\mu_{0,n}\otimes \mu_{1,n}-\mu_0\otimes \mu_1)\stackrel{d}{\to}\chi_{\mu_0\otimes \mu_1}$ in $\ell^{\infty}(\mathcal F^{\oplus})$ where $\chi_{\mu_0\otimes \mu_1}$ is tight.
\end{assumption}

The canonical statistical setting that fulfills \cref{assn:weakConvergence}  is when
$\hat \mu_{0,n}$ and $\hat \mu_{1,n}$ are the empirical measures of i.i.d. observations $X_{0,1},\dots, X_{0,n}$ from $\mu_0$ and $X_{1,1},\dots, X_{1,n}$ from $\mu_1$, where the two collections of  samples are independent.  

\begin{theorem}[Limit distributions for discrete GW]
    \label{thm:discreteGWLimitDistribution} In the setting of \cref{thm:discreteGWStability}, if $(\mu_{0,n},\mu_{1,n})_{n\in\NN}$ satisfy \cref{assn:weakConvergence}, then
    \[
        \begin{aligned}
        \sqrt n\left( \mathsf D(\mu_{0,n},\mu_{1,n})^2-\mathsf D(\mu_{0},\mu_{1})^2 \right)&
        \\
        &\hspace{-1.8em}
        \stackrel{d}{\to}
\inf_{\mathbf{A}\in\mathcal A}
            \inf_{\pi \in \bar \Pi^{\star}_{\mathbf{A}}} \sup_{(\varphi_0,\varphi_1)\in\bar {\mathcal D}_{\mathbf{A}}}\left\{\chi_{\mu_0\otimes \mu_1} \big( (f_0+g_{0,\pi}+\bar \varphi_0)\oplus (f_1+g_{1,\pi}+\bar\varphi_1)\big)
        \right\}.
    \end{aligned} 
    \]
    In particular, if  $(\mu_{0,n},\mu_{1,n})=(\hat \mu_{0,n},\hat \mu_{1,n})$, we have %
    $\chi_{\mu_0\otimes \mu_1}(g_0\oplus g_1)= G_{\mu_0}(g_0)+G_{\mu_0}(g_1)$ for any $g_0\oplus g_1\in\mathcal F^{\oplus}$, where $G_{\mu_i}$ a tight $\mu_i$-Brownian bridge in $\ell^{\infty}(\mathcal F_i)$ for $i\in\{0,1\}$. 
\end{theorem}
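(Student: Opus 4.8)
The plan is to derive \cref{thm:discreteGWLimitDistribution} as a direct application of the functional delta method of \cite{goldfeld24statistical}, using \cref{thm:discreteGWStability} as the analytic input. First I would recall the precise setup of that framework (see \cref{sec:deltaMethod}): one needs a map $\mu\mapsto \mathsf D(\cdot)^2$ defined on a neighborhood of $(\mu_0,\mu_1)$ inside the probability simplices on $\mathcal X_0,\mathcal X_1$, viewed as a subset of $\ell^\infty(\mathcal F^\oplus)$ via the identification $\rho_0\otimes\rho_1\mapsto (f\mapsto \rho_0\otimes\rho_1(f))$, and one must verify two properties: (a) \emph{local Lipschitz continuity} of the functional in the $\|\cdot\|_{\infty,\mathcal F^\oplus}$ (sup-)norm, and (b) \emph{Hadamard directional differentiability} at $\mu_0\otimes\mu_1$ tangentially to the set of feasible directions (differences of probability measures supported on $\mathcal X_0\times\mathcal X_1$ of product form). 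Both of these are furnished by \cref{thm:discreteGWStability}: the first inequality there is exactly the local Lipschitz bound with an explicit constant $C$, and the displayed formula for $\frac{d}{dt}\mathsf D(\mu_{0,t},\mu_{1,t})^2|_{t=0}$ identifies the directional derivative along any admissible direction $(\nu_0-\mu_0,\nu_1-\mu_1)$.

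The key steps, in order, are: (i) observe that under \cref{assn:weakConvergence} the sequence $\sqrt n(\mu_{0,n}\otimes\mu_{1,n}-\mu_0\otimes\mu_1)$ converges weakly in $\ell^\infty(\mathcal F^\oplus)$ to a tight limit $\chi_{\mu_0\otimes\mu_1}$, whose support is contained in the closed linear span of admissible directions (because each $\mu_{0,n}\otimes\mu_{1,n}-\mu_0\otimes\mu_1$ lies in that span and the span is closed); (ii) define the derivative map $\mathsf D'_{(\mu_0,\mu_1)}:\ell^\infty(\mathcal F^\oplus)\to\mathbb R$ by extending the formula of \cref{thm:discreteGWStability} — i.e., for a generic element $\chi$ in the tangent cone, $\mathsf D'_{(\mu_0,\mu_1)}(\chi)=\inf_{\mathbf A\in\mathcal A}\inf_{\pi\in\bar\Pi^\star_{\mathbf A}}\sup_{(\varphi_0,\varphi_1)\in\bar{\mathcal D}_{\mathbf A}}\chi\big((f_0+g_{0,\pi}+\bar\varphi_0)\oplus(f_1+g_{1,\pi}+\bar\varphi_1)\big)$, where one checks the relevant functions are bounded on $\mathcal X_0,\mathcal X_1$ (OT potentials can be taken bounded, as used in the proof of \cref{thm:discreteGWStability}, and $f_0,f_1,g_{0,\pi},g_{1,\pi}$ are continuous on compacts, so the functionals inside are genuine evaluations of $\chi$ against elements of a bounded multiple of $\mathcal F^\oplus$); (iii) verify that this $\mathsf D'_{(\mu_0,\mu_1)}$ is continuous on $\ell^\infty(\mathcal F^\oplus)$ — continuity is inherited from the sup-norm Lipschitz estimate in \cref{thm:discreteGWStability} combined with the fact that $\inf$-$\inf$-$\sup$ of a family of linear functionals that are uniformly bounded (over the constraint sets $\mathcal A$, $\bar\Pi^\star_{\mathbf A}$, $\bar{\mathcal D}_{\mathbf A}$, which are compact) is Lipschitz with the same modulus; (iv) apply the functional delta method (e.g. the version in \cite{romisch2004} or \cite[Theorem~2.1]{shapiro1990}, as invoked in \cite{goldfeld24statistical}) to conclude $\sqrt n(\mathsf D(\mu_{0,n},\mu_{1,n})^2-\mathsf D(\mu_0,\mu_1)^2)\stackrel{d}{\to}\mathsf D'_{(\mu_0,\mu_1)}(\chi_{\mu_0\otimes\mu_1})$; and (v) specialize to the empirical case: when $(\mu_{0,n},\mu_{1,n})=(\hat\mu_{0,n},\hat\mu_{1,n})$ with independent i.i.d. samples, Donsker's theorem on the finite classes $\mathcal F_0,\mathcal F_1$ gives $\sqrt n(\hat\mu_{i,n}-\mu_i)\stackrel{d}{\to} G_{\mu_i}$, a tight $\mu_i$-Brownian bridge in $\ell^\infty(\mathcal F_i)$, and independence of the two samples together with the $\oplus$-structure yields $\chi_{\mu_0\otimes\mu_1}(g_0\oplus g_1)=G_{\mu_0}(g_0)+G_{\mu_1}(g_1)$; a short computation (expand $\sqrt n(\hat\mu_{0,n}\otimes\hat\mu_{1,n}-\mu_0\otimes\mu_1)$ as the sum of $\sqrt n(\hat\mu_{0,n}-\mu_0)\otimes\mu_1$, $\mu_0\otimes\sqrt n(\hat\mu_{1,n}-\mu_1)$, and a cross term of order $O_{\PP}(n^{-1/2})$ that vanishes in the limit) confirms this, and substituting into the general formula gives the stated limit.

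The main obstacle is step (iii)–(iv): one must ensure the hypotheses of the functional delta method are met, and in particular that the directional derivative formula from \cref{thm:discreteGWStability} — which a priori is established only along straight-line perturbations $t\mapsto\mu_0+t(\nu_0-\mu_0)$ with $\nu_i\in\mathcal P_{\mu_i}$ — upgrades to \emph{Hadamard} directional differentiability tangentially to the relevant cone, i.e. robustness to the perturbation direction itself varying with $t$. The standard route (used in \cite{goldfeld24statistical,Sommerfeld2018}) is to combine the Gateaux directional derivative with the global sup-norm Lipschitz bound: a functional that is Gateaux directionally differentiable along all admissible rays \emph{and} locally Lipschitz is automatically Hadamard directionally differentiable tangentially to the tangent cone, with the same derivative. \cref{thm:discreteGWStability} supplies exactly these two ingredients, so the upgrade is routine but must be stated carefully, paying attention to the fact that the tangent cone here is the set of signed measures $\chi=\chi_0\oplus\chi_1$ arising as limits of feasible directions and that $\mathsf D'_{(\mu_0,\mu_1)}$ is positively homogeneous but not linear (hence the limit is non-Gaussian in general, as remarked in the paper). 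The remaining verifications — compactness of $\mathcal A$ (closed subset of the ball $B_{\mathrm F}(M)$), of $\bar\Pi^\star_{\mathbf A}$ (closed subset of $\Pi(\bar\mu_0,\bar\mu_1)$, itself compact since $\mathcal X_0,\mathcal X_1$ are finite), and that the bounded versions of OT potentials can be chosen so that $\bar{\mathcal D}_{\mathbf A}$ is a bounded, hence effectively compact, set of functions — are straightforward given the finite discreteness of the supports.
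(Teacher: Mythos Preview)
Your proposal is correct and follows essentially the same approach as the paper: invoke \cref{prop:unified} (the delta-method framework from \cite{goldfeld24statistical}) with the Lipschitz bound and directional derivative supplied by \cref{thm:discreteGWStability}, then for the empirical case use the multinomial CLT and independence to obtain the product Brownian-bridge limit via the continuous mapping theorem. The additional details you spell out (the Gateaux-to-Hadamard upgrade via local Lipschitzness, compactness of $\mathcal A$, $\bar\Pi^\star_{\mathbf A}$, and the bounded versions of potentials) are exactly what is packaged into \cref{prop:unified}, so the paper's proof is terser but substantively identical.
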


Given \cref{assn:weakConvergence} and \cref{thm:discreteGWStability}, \cref{thm:discreteGWLimitDistribution} is a direct consequence of the delta method-based framework developed in  \cite[Proposition 1]{goldfeld24statistical}, included herein for completeness as \cref{prop:unified} in \cref{sec:deltaMethod}). Complete details are provided in \cref{sec:proof:thm:discreteGWLimitDistribution}. 

\begin{remark}[Extension of limits]
\label{rmk:extensionBrownianBridge}
In \cref{thm:discreteGWLimitDistribution}, the distributional limit $\chi_{\mu_0\otimes \mu_1}$ is evaluated at $\xi_0\oplus \xi_1$ for $\xi_0\coloneqq f_0+g_{0,\pi}+\bar \varphi_0$ and $\xi_1\coloneqq f_1+g_{1,\pi}+\bar \varphi_1$, respectively, for some choice of OT plan and pair of OT potentials. Though $\xi_0$ and $\xi_1$ are not necessarily elements of $\mathcal F_0$ and $\mathcal F_1$, it is easy to see that $\|\xi_0\|_{\infty,\mathcal X_0}$ and $\|\xi_1\|_{\infty,\mathcal X_1}$ can be bounded by a constant $C$ which depends only on $\|\mathcal X_0\|_{\infty}$ and $\|\mathcal X_1\|_{\infty}$ (up to choosing the versions of the OT potentials which are bounded and noting that the direct sum is the same for all versions, see \cref{lem:discretePotentialsNull}). It follows that $(C^{-1}\xi_0,C^{-1}\xi_1)\in\mathcal F_0\times \mathcal F_1$ whereby $\sqrt n( \mu_{0,n}\otimes \mu_{1,n}-\mu_0\otimes \mu_1)(\xi_0\oplus \xi_1)=C\sqrt n( \mu_{0,n}\otimes \mu_{1,n}-\mu_0\otimes \mu_1)(C^{-1}\xi_0\oplus C^{-1} \xi_1)\stackrel{d}{\to}C\chi_{\mu_0\otimes \mu_1}(C^{-1}\xi_0\oplus C^{-1} \xi_1).$ We thus identify $\chi_{\mu_0\otimes \mu_1}(\xi_0\oplus \xi_1)$ with $C\chi_{\mu_0\otimes \mu_1}(C^{-1}\xi_0\oplus C^{-1} \xi_1)$.%
\end{remark}

\begin{remark}[Removing the square] 
\label{rmk:removingSquare}
Although the limit distribution in \cref{thm:discreteGWLimitDistribution} is presented for the squared distance $\mathsf{D}^2$, the asymptotics for $\mathsf{D}$ can be obtained by a second application of the delta method with the function $\sqrt{(\cdot)}$ under the assumption that $\mathsf D(\mu_0,\mu_1)\neq 0$.  
\end{remark}

\begin{remark}[Estimating the limit] 
\label{rmk:estimatingLimit}
  When the derivative established in \cref{thm:discreteGWStability} fails to be linear as a function of $(\nu_0-\mu_0,\nu_1-\mu_1)$, the limit distribution in \cref{thm:discreteGWLimitDistribution} cannot be estimated via the na\"ive bootstrap \cite{dumbgen1993nondifferentiable}. To guarantee that the derivative be linear, it would suffice to show that $\mathcal A=\{\mathbf A^{\star}\}$ is a singleton, that  the OT plan for $\mathsf{OT}_{\mathbf A^{\star}}(\bar\mu_0,\bar\mu_1)$ is unique, and that the corresponding OT potentials are unique (up to versions). In this discrete setting, these conditions imply that $\mu_0$ and $\mu_1$ are Dirac delta distributions, rendering the limit trivial. We overcome this predicament by developing a direct and efficient method for estimating the limit under the null using linear programming, described in \cref{sec:directEstimation} ahead.
\end{remark}

Under the null hypothesis $\mathsf D(\mu_0,\mu_1)=0$ (i.e. $\mu_0=\mu_1$ up to isometries), the obtained limit distribution simplifies as follows.

\begin{corollary}[Limit distribution under the null]
\label{cor:discreteGWLimitDistributionNull}
   In the setting of \cref{thm:discreteGWLimitDistribution}, assume that $\mu_0$ is not a point mass and that $\mathsf D(\mu_0,\mu_1)=0$. If $\bar{\mathcal T}$ is the set of all GW plans between $\bar \mu_0$ and $\bar \mu_1$, then  %
   \[
   \begin{gathered}
        \sqrt n \mathsf D(\mu_{0,n},\mu_{1,n})^2
        \stackrel{d}{\to}
\inf_{T\in\bar {\mathcal T}}\sup_{h\in{\mathcal H}}\left\{\chi_{\mu_0\otimes \mu_1}\big( h(\cdot-\mathbb E_{\mu_0}[X])\oplus -h( T^{-1}(\cdot-\mathbb E_{\mu_0}[X]))\big)\right\}\eqqcolon L_{\mu_0},
\\
    \mathcal H=\left\{h:\supp(\bar\mu_0)\to \mathbb R\:|\: h(x)-h(x')\leq 2(\|x\|^2-\|x'\|^2)^2+8(x-x')^{\intercal}\bsigma_{\bar \mu_0}(x-x
    '),\forall x,x'\in \supp(\bar \mu_0)\right\}.
   \end{gathered} 
    \] 
    If $(\mu_{0,n},\mu_{1,n})=(\hat \mu_{0,n},\hat \mu_{1,n})$, the above result holds and $L_{\mu_0}= \sqrt 2\|G_{\mu_0}\|_{\infty,\bar{\mathcal H}}$, where $\bar {\mathcal H}=\{ h(\cdot-\mathbb E_{\mu_0}[X]):h\in\mathcal H\}$.%
\end{corollary}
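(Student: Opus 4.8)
The plan is to specialize Theorem~\ref{thm:discreteGWLimitDistribution} to the null case $\mathsf D(\mu_0,\mu_1)=0$ and identify the objects $\mathcal A$, $\bar\Pi^\star_{\mathbf A}$, and $\bar{\mathcal D}_{\mathbf A}$ explicitly in this regime, showing that the various nuisance terms either collapse or can be absorbed into a single optimization over GW plans $T$ and a cleanly described function class $\mathcal H$. First I would recall that, by translation invariance, $\mathsf D(\mu_0,\mu_1)=0$ forces $\mathsf D(\bar\mu_0,\bar\mu_1)=0$, so there is an isometry $\iota$ with $\bar\mu_1=\iota_\sharp\bar\mu_0$; since centered measures on Euclidean space related by an isometry differ by an orthogonal transformation, $\iota$ is linear and the GW plans between $\bar\mu_0$ and $\bar\mu_1$ are exactly the couplings induced by such orthogonal maps $T$ (this is where I would invoke $\bar{\mathcal T}$). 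For any such $T$, the cross-correlation matrix $\frac12\int xy^\intercal\,d\pi_T = \frac12\int x(Tx)^\intercal\,d\bar\mu_0(x)$ is a minimizer of $\Phi_{(\bar\mu_0,\bar\mu_1)}$ by Theorem~\ref{thm:VariationalMinimizers}(2), and conversely by the same theorem every $\mathbf A^\star\in\mathcal A$ arises this way with $\pi^\star\in\bar\Pi^\star_{\mathbf A^\star}$ a GW plan; hence the double infimum $\inf_{\mathbf A\in\mathcal A}\inf_{\pi\in\bar\Pi^\star_{\mathbf A}}$ in Theorem~\ref{thm:discreteGWStability} collapses to a single infimum over $T\in\bar{\mathcal T}$.

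Next I would simplify the integrand. Since $\mathsf D(\bar\mu_0,\bar\mu_1)^2=0$ means $\mathsf S_1(\bar\mu_0,\bar\mu_1)+\mathsf S_2(\bar\mu_0,\bar\mu_1)=0$ with both summands nonnegative at the optimum in a way that makes the first-order terms $f_0,f_1$ and $g_{0,\pi},g_{1,\pi}$ combine: for a GW plan $\pi_T$ one has $\mathbb E_{(X,Y)\sim\pi_T}[\|Y\|^2X]=\mathbb E_{\bar\mu_0}[\|Tx\|^2 x]=\mathbb E_{\bar\mu_0}[\|x\|^2x]$ by the isometry property, and similarly $M_2(\bar\mu_1)=M_2(\bar\mu_0)$, $\int\|y-y'\|^4\,d\bar\mu_1^{\otimes2}=\int\|x-x'\|^4\,d\bar\mu_0^{\otimes2}$. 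I would bundle $f_0+g_{0,\pi}$ (evaluated on the centered variable) and the centered potential $\bar\varphi_0$ into one function $h$ on $\supp(\bar\mu_0)$, and show that on the $\mu_1$-side the corresponding bundle equals $-h\circ T^{-1}$ composed with centering — this is forced by the Schrödinger-type/complementary-slackness relation tying $\varphi_0$ and $\varphi_1$ together when the transport cost $c_{\mathbf A^\star}$ achieves its structured optimum along $\pi_T$. The constraint set $\mathcal H$ is then read off from the inequality $\varphi_0\oplus\varphi_1\le c_{\mathbf A^\star}$ (from OT duality \eqref{eq:OTDuality}) rewritten in the centered variables and using $\mathbf A^\star=\frac12\int x(Tx)^\intercal d\bar\mu_0$; tracking the constants $-4\|x\|^2\|y\|^2$ and $-32x^\intercal\mathbf A^\star y$ through, together with the $\mathsf S_1$-contribution, yields exactly $h(x)-h(x')\le 2(\|x\|^2-\|x'\|^2)^2+8(x-x')^\intercal\bsigma_{\bar\mu_0}(x-x')$.

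For the final empirical-measure claim, I would substitute $\chi_{\mu_0\otimes\mu_1}(g_0\oplus g_1)=G_{\mu_0}(g_0)+G_{\mu_1}(g_1)$ and use that when $\bar\mu_1=\iota_\sharp\bar\mu_0$ the bridge $G_{\mu_1}$ can be realized as the pushforward of $G_{\mu_0}$ under $\iota$, so that $G_{\mu_1}\big(-h(T^{-1}(\cdot-\mathbb E))\big)=-G_{\mu_0}\big(h(\cdot-\mathbb E)\big)$ in distribution jointly; the two contributions then add to $2G_{\mu_0}(\bar h)$ for $\bar h\in\bar{\mathcal H}$, the supremum over the symmetric class $\mathcal H$ turns the signed quantity into $\sqrt2\,\|G_{\mu_0}\|_{\infty,\bar{\mathcal H}}$ after noting the $\inf_T$ is vacuous (every $T$ gives the same law, since $T$ is an isometry and $\bar{\mathcal H}$ is $T$-invariant), and the factor $\sqrt2$ comes from $\mathrm{Var}(G_{\mu_0}(\bar h)+G_{\mu_0}(\bar h))=4\mathrm{Var}(G_{\mu_0}(\bar h))$ versus the single-bridge normalization. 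The main obstacle I anticipate is the rigorous identification of the $\mu_1$-side integrand as $-h\circ T^{-1}$: this requires showing that, at a null configuration, every pair of OT potentials for $\OT_{\mathbf A^\star}(\bar\mu_0,\bar\mu_1)$ is (up to a version) of the form $(\psi, -\psi\circ T^{-1})$ for some $\psi\in\mathcal H$ and that conversely every such $\psi$ is admissible — i.e. that the sup over $\bar{\mathcal D}_{\mathbf A^\star}$ really is a sup over all of $\mathcal H$ and not a strict subset — which hinges on the degenerate structure of the transport problem when the cost is minimized by an isometric coupling and will need the boundedness/version argument from Lemma~\ref{lem:discretePotentialsNull} to keep everything in a compact function class.
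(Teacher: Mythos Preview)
Your overall architecture is the same as the paper's: specialize Theorem~\ref{thm:discreteGWLimitDistribution} under the null, identify $\mathcal A$ and $\bar\Pi^\star_{\mathbf A}$ with the set of isometric GW plans $T$, and rewrite the OT-dual supremum as a supremum over a function class $\mathcal H$ determined by the constraint $\varphi_0\oplus\varphi_1\le c_{\mathbf A^\star}$. That much is right, and your invocation of Lemma~\ref{lem:discretePotentialsNull} is on target.

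There is, however, a genuine error in the empirical-measure step. You claim that ``the bridge $G_{\mu_1}$ can be realized as the pushforward of $G_{\mu_0}$ under $\iota$ \dots\ jointly'' and conclude that ``the two contributions then add to $2G_{\mu_0}(\bar h)$'', justifying the $\sqrt 2$ via $\mathrm{Var}(G_{\mu_0}(\bar h)+G_{\mu_0}(\bar h))=4\mathrm{Var}(G_{\mu_0}(\bar h))$. This is wrong: by construction the samples from $\mu_0$ and $\mu_1$ are \emph{independent}, so $G_{\mu_0}$ and $G_{\mu_1}$ are independent processes. What holds is only the \emph{marginal} equality in law $G_{\mu_1}\big(g(\cdot-\mathbb E_{\mu_1}[X])\big)\stackrel{d}{=}G_{\mu_0}'\big(g(T(\cdot-\mathbb E_{\mu_0}[X]))\big)$ for an independent copy $G_{\mu_0}'$ of $G_{\mu_0}$ (this is exactly what the paper checks via the covariance computation). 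Hence the sum is $G_{\mu_0}(\bar h)-G_{\mu_0}'(\bar h)$ with $G_{\mu_0},G_{\mu_0}'$ i.i.d., which has the law of $\sqrt 2\,G_{\mu_0}(\bar h)$, not $2G_{\mu_0}(\bar h)$. Your variance bookkeeping double-counts because you are summing the \emph{same} Gaussian twice; with independent copies one gets $\mathrm{Var}=2\,\mathrm{Var}$, whence the $\sqrt 2$.

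A secondary imprecision: your plan to ``bundle $f_0+g_{0,\pi}$ and $\bar\varphi_0$ into one function $h$'' obscures the mechanism that makes the nuisance terms disappear. The paper does not absorb $f_0+g_{0,\pi}$ into $h$; rather it writes $\varphi_0=\psi_0+h$ with $\psi_0(x)=-2\|x\|^4-8x^\intercal\bsigma_{\bar\mu_0}x$ fixed, and then verifies by an explicit expansion (equation~\eqref{eq:expansionSumFcns}) that $f_0+g_{0,\pi}+\bar\psi_0$ is a \emph{constant}, so it vanishes when integrated against $\nu_0-\mu_0$. The constraint set $\mathcal H$ then drops out cleanly from $\varphi_0\oplus\varphi_1\le c_{\mathbf A^\star}$ after the algebraic identities $x^\intercal\mathbf A^\star T_\pi(x)=\tfrac12 x^\intercal\bsigma_{\bar\mu_0}x$ and $T_\pi^{-1}(y)^\intercal\mathbf A^\star T_\pi(x)=x^\intercal\mathbf A^\star y$ are used (Lemma~\ref{lem:discretePotentialsNull}). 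Your sketch gestures at ``tracking the constants'' but does not isolate this constancy, which is the crux; without it you cannot be sure the resulting class is exactly $\mathcal H$ and not a translate or distortion of it.
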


\cref{cor:discreteGWLimitDistributionNull} follows from \cref{thm:discreteGWLimitDistribution} upon characterizing the OT potentials under the null and simplifying the resulting expressions. Note that if $h\in\mathcal H$, so too is $h-a$, and the value of the direct sum in the limit distribution is invariant to translation. Thus, the direct sum can always be chosen such that it lies in $\mathcal F^{\oplus}$ up to a dilation by e.g. setting $h(x)=0$ for some $x\in\supp(\bar\mu_0)$.  %
See \cref{sec:proof:cor:discreteGWLimitDistributionNull} for complete details.

\subsection{Direct estimation of the limit}
\label{sec:directEstimation}

As the distribution $L_{\mu_0}$ in \cref{cor:discreteGWLimitDistributionNull} admits a reasonably simple expression in the case of empirical measures, we may consider estimating it directly as an alternative to the bootstrap. To this end, let $(\bar x^{(i)})_{i=1}^{N_n}=\supp(\bar{\hat \mu}_{0,n})$ and set  
\[
{\mathcal H}_n=\big\{u\in\mathbb R^{N_n}\:|\: u_i-u_j\leq 2(\|\bar x^{(i)}\|^2-\|\bar x^{(j)}\|^2)^2
    +8(\bar x^{(i)}-\bar x^{(j)})^{\intercal}\bsigma_{\bar{\hat \mu}_{0,n}}(\bar x^{(i)}-\bar x^{(j)}
    ),%
    i\neq j\in[N_{n}]\big\}.
\]
Recalling that $m_0$ denotes the weight vector for $\mu_0$, it is convenient to set $u_f=(f(x^{(1)}),\dots, f(x^{(N_0)}))$ for any $f\in\mathcal F_0$ so that $\mu_0(f)=m_0^{\intercal}u_f$. With this,  
$G_{\mu_0}(f)=Z^{\intercal}u_f$ for every $f\in\mathcal F_0$ and $Z\sim N(0,\bsigma_{m_0})$, where, for a vector $r\in\mathbb R^{N}$, $\bsigma_{r}\in\mathbb R^{N\times N}$ is the covariance matrix of the multinomial distribution with $1$ trial and probability vector $r$, whose entries are given by
\begin{equation}
\label{eq:multinomialCovariance}
    \left(\bsigma_{r}\right)_{ij}=\begin{cases}
       r_i(1-r_i),&{\text{if } i=j},
       \\
       -r_ir_j,&{\text{if } i\neq j},
    \end{cases}
\end{equation}
Taking $Z_n\sim N(0,\bsigma_{m_n})$ and writing $m_n$ for the weight vector associated with  $\hat \mu_{0,n}$, the above suggest
estimating $L_{\mu_0}$ as 
\begin{equation}
\begin{gathered}
    L_n =\sqrt{2} \sup_{u\in\mathcal H_n} Z_n^{\intercal},%
\end{gathered}
\end{equation}
\cref{alg:directEstimator} summarizes the procedure for sampling from $L_n$.  It is worth mentioning that, as soon as $ \hat \mu_{0,n}$ is supported on more than one point, ${\mathcal H}_n$ has non-empty interior (see \cref{lem:Hnonempty}), but is unbounded. However, the samples $Z_n$ are almost surely orthogonal to the vector of ones,  %
so that each instance of the linear program admits a bounded solution.\footnote{In practice, it is recommended to add a box constraint on the first variable, $-\delta\leq h(x^{(1)})\leq \delta$ for any $\delta>0$, even if the entries of $Z_n$ sum to one with probability one  to account for  numerical error. This modification results in a bounded feasible region, but does not effect the optimal value obtained, see the proof of \cref{thm:directEstimator}.}

\begin{algorithm}[!t]
\caption{Sampling from $ L_n$}\label{alg:directEstimator}
\begin{algorithmic}[1]
\Statex Given samples $X_{1},\dots, X_n$ from $\mu_0$, construct $\hat \mu_{0,n}$,  
\State Let $(\bar x^{(i)})_{i=1}^{N_{n}}=\supp(\bar \mu_{0,n})$.   
\State Let  $b_n\in\mathbb R^{N_n(N_n-1)}$ and $\mathbf{A}\in\mathbb R^{N_n(N_n-1)\times N_n}$ be such that ${\mathcal H}_n=\{u\in\mathbb R^{N_n}:\,\mathbf{A}u\leq  b_n\} $.
\State Sample $Z_n\sim N(0,\bsigma_{m_n})$ and solve $\sqrt 2\sup_{u\in{\mathcal H}_n}Z_n^{\intercal}u\sim  L_n$; repeat Step $3$ to generate new~samples. 
\end{algorithmic}
\end{algorithm}

\medskip
We now establish that this procedure is consistent for estimating the distribution of $L_{\mu_0}$.

\begin{theorem}
\label{thm:directEstimator}
In the setting of \cref{cor:discreteGWLimitDistributionNull} and \cref{assn:simplifiedForm}, if $\mu_0$ is not a point mass, then %
\[
\lim_{n\to \infty}\sup_{t\geq 0}\left|\mathbb P(L_n\leq t|X_1,X_2,\dots,X_n)-\mathbb P( L_{\mu_0}\leq t)\right|= 0,
\]
for almost every realization of $X_1,X_2,\dots$  %
\end{theorem}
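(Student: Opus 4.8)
The plan is to view $L_n$ and $L_{\mu_0}$ through their representations as suprema of a Gaussian process over a (random, resp.\ deterministic) constraint polytope, and to establish convergence of the conditional law of $L_n$ to the law of $L_{\mu_0}$ by a two-step argument: (i) couple the Gaussian vectors $Z_n\sim N(0,\bsigma_{m_n})$ to a realization of the $\mu_0$-Brownian bridge $G_{\mu_0}$ on $\mathcal F_0$, and (ii) show that the empirical constraint sets $\mathcal H_n$ converge, in a sense strong enough to pass to the supremum, to $\mathcal H$. Throughout we work on the almost-sure event where $\hat\mu_{0,n}\to\mu_0$ weakly and the empirical support eventually exhausts (a superset of) $\supp(\mu_0)$; since $\mu_0$ is finitely discrete with $N_0$ atoms, for $n$ large $\supp(\hat\mu_{0,n})=\mathcal X_0$ and $m_n\to m_0$ entrywise, with all atoms eventually present. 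This reduces everything to a fixed finite index set, which is the key simplification afforded by the discrete setting.

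Concretely, once $\supp(\hat\mu_{0,n})=\mathcal X_0=(x^{(i)})_{i=1}^{N_0}$, both $\mathcal H_n$ and $\mathcal H$ become polytopes in $\RR^{N_0}$ described by finitely many linear inequalities of the form $u_i-u_j\le 2(\|\bar x_n^{(i)}\|^2-\|\bar x_n^{(j)}\|^2)^2+8(\bar x_n^{(i)}-\bar x_n^{(j)})^\intercal\bsigma_{\bar{\hat\mu}_{0,n}}(\bar x_n^{(i)}-\bar x_n^{(j)})$, where $\bar x_n^{(i)}=x^{(i)}-\EE_{\hat\mu_{0,n}}[X]$. Since $\EE_{\hat\mu_{0,n}}[X]\to\EE_{\mu_0}[X]$ and $\bsigma_{\bar{\hat\mu}_{0,n}}\to\bsigma_{\bar\mu_0}$ almost surely, the right-hand sides converge to the corresponding coefficients defining $\mathcal H$. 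I would formalize the needed set convergence via the support function: for the Gaussian vector $Z_n$ (which is a.s.\ orthogonal to $\mathbbm 1_{N_0}$, hence the LP is bounded on the translation-invariant polytope once one coordinate is pinned), $\sup_{u\in\mathcal H_n} Z_n^\intercal u$ is the support function of $\mathcal H_n$ (restricted to the hyperplane $\{u:u_1=0\}$) evaluated at $Z_n$. Using \cref{lem:Hnonempty} to guarantee $\mathcal H$ and $\mathcal H_n$ have nonempty interior, and the fact that the defining data converge, I would argue that $\mathcal H_n\to\mathcal H$ in the Hausdorff sense after pinning $u_1=0$, and that the support functions converge uniformly on compacta; combined with $Z_n\dconv N(0,\bsigma_{m_0})$ (entrywise convergence of covariances) this gives $\sqrt2\sup_{u\in\mathcal H_n}Z_n^\intercal u\dconv \sqrt2\sup_{u\in\mathcal H}Z^\intercal u$, where $Z\sim N(0,\bsigma_{m_0})$. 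Identifying $Z^\intercal u_h=G_{\mu_0}(h)$ for $h\in\mathcal H$ with $h(x^{(1)})=0$, the limit is exactly $\sqrt2\|G_{\mu_0}\|_{\infty,\bar{\mathcal H}}=L_{\mu_0}$ by \cref{cor:discreteGWLimitDistributionNull}. The upgrade from convergence in distribution to uniform convergence of CDFs ($\sup_{t\ge0}|\cdot|$) is automatic because $L_{\mu_0}$, as the supremum of a nondegenerate Gaussian process over a set with nonempty interior, has a continuous distribution function on $(0,\infty)$ (and $\mathbb P(L_{\mu_0}=0)=0$ since $\mu_0$ is not a point mass), so weak convergence implies convergence in Kolmogorov distance by P\'olya's theorem.

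The main obstacle I anticipate is handling the \emph{conditional} nature of the statement — the convergence must hold for almost every realization of the sequence $X_1,X_2,\dots$, not merely in probability or in an averaged sense. The resolution is that, for discrete $\mu_0$, the only randomness entering $\mathcal H_n$ through the data is the finite-dimensional vector $(\EE_{\hat\mu_{0,n}}[X],\bsigma_{\bar{\hat\mu}_{0,n}}, m_n, \mathcal X_0)$, which by the strong law of large numbers converges almost surely to its population counterpart; on that full-measure event the polytopes $\mathcal H_n$ converge deterministically to $\mathcal H$, and the only remaining randomness in $L_n$ is the \emph{fresh} Gaussian draw $Z_n$ whose law $N(0,\bsigma_{m_n})$ converges to $N(0,\bsigma_{m_0})$ in total variation (densities on the hyperplane $\mathbbm 1^\perp$ converge, once one checks $\bsigma_{m_0}$ restricted to $\mathbbm 1^\perp$ is invertible, which holds because $\mu_0$ has at least two atoms). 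Thus, conditionally on the data, $L_n$ is a deterministic continuous function (the support function of a converging polytope) of a Gaussian vector with converging law, and the conditional CDF converges pointwise, hence uniformly by P\'olya. A secondary technical point is \cref{assn:simplifiedForm}, invoked in the theorem statement: I would use it exactly where \cref{cor:discreteGWLimitDistributionNull} is used, namely to ensure the closed-form identification $L_{\mu_0}=\sqrt2\|G_{\mu_0}\|_{\infty,\bar{\mathcal H}}$ is valid (this is presumably the hypothesis under which the inner infimum over GW plans $T\in\bar{\mathcal T}$ collapses), so that the target distribution is the one our sampler reproduces. No genuinely hard analysis is needed beyond these bookkeeping steps — the finite-dimensionality of the discrete problem does the heavy lifting.
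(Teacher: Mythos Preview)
Your proposal is correct and follows essentially the same architecture as the paper's proof: use the SLLN to get almost-sure convergence of the constraint data $(b_n\to b,\ \bsigma_{m_n}\to\bsigma_{m_0})$, combine LP stability with the continuous mapping theorem to obtain conditional weak convergence $L_n\dconv L_{\mu_0}$, and upgrade to Kolmogorov distance via continuity of the limiting CDF. The only difference is the vehicle for LP stability---the paper invokes Lipschitz continuity of the optimal value function $(c,b)\mapsto v_\delta(c,b)$ from parametric LP theory (specifically \cite{Gisbert2019}) to get uniform convergence on compacts, whereas you argue via Hausdorff convergence of the pinned polytopes and the induced support-function convergence; these are equivalent formulations of the same continuity fact.
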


The proof of \cref{thm:directEstimator}, included in \cref{sec:proof:thm:directEstimator}, %
leverages the stability properties of the optimal value function for linear programs when viewed as a function of the cost and constraint vectors  \cite{Gisbert2019}. Using this result we show that, conditionally on $X_1,X_2,\dots$, we have \smash{$L_n\stackrel{d}{\to}L_{\mu_0}$} for almost every realization of the data. The result is established by noting that $L_{\mu_0}$ has a continuous distribution function. We highlight that this result holds beyond the setting of empirical measure, i.e., when other weakly convergent estimators of the population distributions are employed. Indeed, the proof is carried out under general conditions which encompass both the empirical measure and the estimator used for testing graph isomorphisms, introduced in \cref{sec:graphApplication}.  %

We find it noteworthy that while $L_{\mu_0}$ is the weak limit of a sequence of optimal values for random nonconvex quadratic programs (NP-hard), \cref{thm:directEstimator} shows that it can be approximated by solving linear programs. By contrast, the na\"ive bootstrap estimator may fail to be consistent as the directional derivative provided in \cref{thm:discreteGWLimitDistribution} is nonlinear (see Proposition 1 in \cite{dumbgen1993nondifferentiable} or Corollary 3.1 in \cite{fang2019}). The $m$-out-of-$n$ bootstrap \cite{politis1994large,bickel2012resampling} may prove consistent, but its computational overhead and the required tuning of $m$ render it computationally costly and unattractive compared to the direct estimator. 

\begin{remark}[Computational considerations]
Estimating the distribution of $L_n$ given $X_1,\dots, X_n$ requires solving multiple instances of a linear program with a fixed feasible set, but with varying cost vectors.  Software for numerical resolution of linear programs, both open-source and commercial, are plentiful (see e.g. \cite{sandia,highs,clarabel} for benchmarks of linear programming solvers) and are based on  variants of the interior-point method \cite{karmarkar1984new}, which has polynomial time complexity and the simplex method \cite{dantzig1951maximization}, which has exponential worst case complexity, but performs as well or better than the interior-point method on most problems.%

\end{remark}

\section{Semi-Discrete Gromov-Wasserstein Distance} 
\label{sec:semidiscreteGW}

In this section, we consider the semi-discrete setting for the GW distance, where $\mu_0\in\mathcal P(\mathbb R^{d_0})$ is supported in $\mathcal X_0$, an open ball centered at $0$ with finite radius, and $\mu_1\in\mathcal P(\mathbb R^{d_1})$ is supported on the points $\mathcal X_1\coloneqq\left(y^{(i)}\right)_{i=1}^{N}$. To establish limit distributions for the semi-discrete GW cost, we again require stability of the solution set. However, unlike the discrete case, we can no longer rely on an analysis of finite-dimensional quadratic programs. Rather, we prove that, under mild conditions, the semi-discrete $(2,2)$-GW problem is solved by a Gromov-Monge map---a result of independent interest. This, %
 in~turn, allows us to deduce the desired stability. %

To arrive at the main structural results concerning the existence of  Gromov-Monge maps, we start by specializing some  results for semi-discrete OT to the family of costs $c_{\mathbf{A}}$ figuring in the variational form of the GW distance \eqref{eq:GWVariational}. To that end, recall the so-called semi-dual form of the semi-discrete OT problem,\footnote{This representation is obtained by recasting the dual OT problem in \eqref{eq:OTDuality} as 
$\sup_{\substack{\varphi_1\in \mathcal C(\mathcal X_1)}} \left\{\int \varphi_1^c d\mu_0+\int \varphi_1 d\mu_1\right\}$, where $\varphi_1^c:x\in\mathcal X_0\mapsto \inf_{y\in\mathcal X_1}\left\{c(x,y)-\varphi_1(y)\right\}$ is the $c$-transform of $\varphi_1$. In the semi-discrete case, one then identifies the OT potential $\varphi_1$ with a vector $z\in\mathbb R^{N}$ to arrive at \eqref{eq:SemidiscreteDual}.}
\begin{equation}
    \label{eq:SemidiscreteDual}
    \OT_{c}(\mu_0,\mu_1)=\sup_{z\in\mathbb R^N}\left\{\sum_{i=1}^Nz_i\mu_1\big(\big\{y^{(i)}\big\}\big)+\int\min_{1\leq i\leq N}\left\{ c\big(\cdot,y^{(i)}\big)-z_i \right\} d\mu_0  \right\}.
\end{equation}
We call a solution to \eqref{eq:SemidiscreteDual} an \emph{optimal vector} for $\OT_{c}(\mu_0,\mu_1)$. %

\subsection{Existence of Gromov-Monge maps}
\label{sec:semidiscreteStructural}

A standard approach for establishing the existence of OT maps for $\mathsf{OT}_c(\mu_0,\mu_1)$ employs the so-called twist condition, which guarantees injectivity of $\nabla_x c(x,\cdot)$  \cite[Theorem 10.28]{villani2008optimal}. We leverage 
a version of the twist condition for semi-discrete OT with a generic cost from \cite{kitagawa2019convergence} to identify a primitive condition for $\mathsf{OT}_{\mathbf A}(\mu_0,\mu_1)$ to admit a Monge map, uniformly in $\mathbf A$. Once this proviso is met,  \cref{thm:VariationalMinimizers} guarantees the existence of a Gromov-Monge map for the semi-discrete GW problem.

\begin{theorem}[Existence of Gromov-Monge maps]
    \label{thm:existenceMongeMap} Assume without loss of generality that $(\mu_0,\mu_1)$ are centered and let $\mathbf{A}^{\star}$ minimize $\Phi_{(\mu_0,\mu_1)}$. If $\mathcal X_1$ is such that $y^{(i)}-y^{(j)}\not\in\ker(\mathbf{A}^{\star})$ for every $i\neq j\in[N]$ with $\|y^{(i)}\|=\|y^{(j)}\|$, %
     then there exists a GW plan for $\mathsf D(\mu_0,\mu_1)$ which is induced by the $\mu_0$-a.e. unique map 
     \[
       T_{\mathbf{A}^{\star}}:x\in\mathcal X\mapsto y^{(I_{z^{\mathbf{A}^{\star}}}(x))},\text{ where } I_{z^{\mathbf{A}^{\star}}}(x)\in\argmin_{1\leq i \leq N}\left( c_{\mathbf{A}^{\star}}(x,y^{(i)})-z^{\mathbf{A}^{\star}}_i \right),
    \]
    where $z^{\mathbf{A}^{\star}}_i$ is an optimal vector for $\mathsf{OT}_{\mathbf A^{\star}}(\mu_0,\mu_1)$.    %
     Furthermore, 
    $   \mathbf{A}^{\star}=\frac{1}{2}\sum_{i=1}^N\int_{\Lag_{z^{\mathbf{A}^{\star}},i}}xd\mu_0(x)\left(y^{(i)}\right)^{\intercal}$, where $\Lag_{z^{\mathbf{A}^{\star}},i}\coloneqq\{x\in\mathcal X_0:i\in \argmin_{1\leq i \leq N}\left( c_{\mathbf{A}^{\star}}(x,y^{(i)})-z^{\mathbf{A}^{\star}}_i\right)\}$. A sufficient condition for the  assumption on $\mathcal X_1$ to holds at every $\mathbf A\in\mathbb R^{d_0\times d_1}$
    is that $\|y^{(i)}\|\neq \|y^{(j)}\|$ for every $i\neq j\in[N]$. 
    \end{theorem}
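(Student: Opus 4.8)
The plan is to combine the variational form \eqref{eq:GWVariational} (via \cref{thm:VariationalMinimizers}) with a semi-discrete OT twist condition applied to the cost $c_{\mathbf{A}^\star}$. First I would recall the semi-discrete twist condition from \cite{kitagawa2019convergence}: if the cost $c(x,y^{(i)})$ is such that the functions $x\mapsto c(x,y^{(i)})-c(x,y^{(j)})$ have, for each $i\neq j$, a level set (the common boundary $\partial\Lag_{z,i}\cap\partial\Lag_{z,j}$) that is $\mu_0$-negligible, then the semi-discrete OT problem $\mathsf{OT}_{c}(\mu_0,\mu_1)$ admits a $\mu_0$-a.e. unique Monge map of the stated form $x\mapsto y^{(I_z(x))}$, where $z$ is an optimal vector for \eqref{eq:SemidiscreteDual}. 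For $c=c_{\mathbf{A}^\star}$ with $c_{\mathbf{A}^\star}(x,y)=-4\|x\|^2\|y\|^2-32x^\intercal\mathbf{A}^\star y$, one computes
\[
c_{\mathbf{A}^\star}(x,y^{(i)})-c_{\mathbf{A}^\star}(x,y^{(j)}) = -4\|x\|^2\big(\|y^{(i)}\|^2-\|y^{(j)}\|^2\big)-32x^\intercal\mathbf{A}^\star\big(y^{(i)}-y^{(j)}\big),
\]
so the level set $\{c_{\mathbf{A}^\star}(\cdot,y^{(i)})=c_{\mathbf{A}^\star}(\cdot,y^{(j)})+\text{const}\}$ is the solution set of a quadratic equation in $x$ of the form $\alpha_{ij}\|x\|^2+\langle\beta_{ij},x\rangle=\gamma_{ij}$ with $\alpha_{ij}=-4(\|y^{(i)}\|^2-\|y^{(j)}\|^2)$ and $\beta_{ij}=-32\mathbf{A}^\star(y^{(i)}-y^{(j)})$.

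Second, I would verify that this level set is Lebesgue-negligible (hence $\mu_0$-negligible, since $\mu_0$ is absolutely continuous — being supported on an open ball, though strictly one needs $\mu_0\ll\mathrm{Leb}$; I suspect the semi-discrete framework implicitly assumes this, or the conclusion is for such $\mu_0$). The quadratic $\alpha_{ij}\|x\|^2+\langle\beta_{ij},x\rangle-\gamma_{ij}$ vanishes on a set of positive Lebesgue measure only if it is identically zero, which forces $\alpha_{ij}=0$ and $\beta_{ij}=0$. Now $\alpha_{ij}=0$ is exactly $\|y^{(i)}\|=\|y^{(j)}\|$, and under that equality the hypothesis $y^{(i)}-y^{(j)}\notin\ker(\mathbf{A}^\star)$ gives $\beta_{ij}=-32\mathbf{A}^\star(y^{(i)}-y^{(j)})\neq 0$. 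So the degenerate case is excluded by assumption, and the level set is always a proper quadric (an affine hyperplane when $\alpha_{ij}=0,\beta_{ij}\neq0$, or a genuine quadric surface when $\alpha_{ij}\neq0$), hence Lebesgue-null. This yields the Monge map $T_{\mathbf{A}^\star}$ and its a.e. uniqueness for $\mathsf{OT}_{\mathbf{A}^\star}(\mu_0,\mu_1)$.

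Third, I would invoke \cref{thm:VariationalMinimizers}(2): since $\mathbf{A}^\star$ minimizes $\Phi_{(\mu_0,\mu_1)}$, there is an OT plan $\pi^\star$ for $\mathsf{OT}_{\mathbf{A}^\star}(\mu_0,\mu_1)$ with $2\mathbf{A}^\star=\int xy^\intercal\,d\pi^\star$, and — as $\mu_0,\mu_1$ are centered — this $\pi^\star$ solves the GW primal \eqref{eq:GWPrimal}. By the uniqueness just established, $\pi^\star$ must equal $(\Id,T_{\mathbf{A}^\star})_\sharp\mu_0$, so the GW plan is induced by $T_{\mathbf{A}^\star}$; this is the Gromov-Monge map. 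The cross-correlation identity $\mathbf{A}^\star=\frac12\sum_i\int_{\Lag_{z^{\mathbf{A}^\star},i}}x\,d\mu_0(x)\,(y^{(i)})^\intercal$ then follows by writing $\int xy^\intercal\,d\pi^\star=\int x\,T_{\mathbf{A}^\star}(x)^\intercal\,d\mu_0(x)$ and partitioning $\mathcal X_0$ into the Laguerre cells $\Lag_{z^{\mathbf{A}^\star},i}$ on which $T_{\mathbf{A}^\star}\equiv y^{(i)}$. Finally, the sufficient condition is immediate: if $\|y^{(i)}\|\neq\|y^{(j)}\|$ for all $i\neq j$, then the hypothesis "$y^{(i)}-y^{(j)}\notin\ker(\mathbf{A})$ for all $i\neq j$ with $\|y^{(i)}\|=\|y^{(j)}\|$" is vacuously true for every $\mathbf{A}$.

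\textbf{Main obstacle.} The delicate point is matching the abstract twist/regularity hypothesis of \cite{kitagawa2019convergence} to the cost $c_{\mathbf{A}^\star}$: one must check that cost is $C^1$ in $x$ (clear), that the Laguerre cell boundaries are $\mu_0$-negligible (the quadric argument above), and that $\mu_0$'s absolute continuity suffices to promote "Lebesgue-null" to "$\mu_0$-null" — together with confirming that the optimal vector $z^{\mathbf{A}^\star}$ exists and the a.e.-defined index map $I_{z^{\mathbf{A}^\star}}$ is genuinely single-valued off a null set. The other subtlety is ensuring \cref{thm:VariationalMinimizers}(2) is applicable here, i.e. that compact support (the closed ball) is in force so the variational form and its minimizer characterization hold.
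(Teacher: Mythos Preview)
Your proposal is correct and follows essentially the same route as the paper: the paper packages your level-set computation as a lemma showing the hypothesis on $\mathcal X_1$ is equivalent to the semi-discrete twist condition $\mu_0\big((c_{\mathbf A}(\cdot,y^{(i)})-c_{\mathbf A}(\cdot,y^{(j)}))^{-1}(t)\big)=0$, then applies Theorem~5.30 of \cite{villani2008optimal} (rather than directly \cite{kitagawa2019convergence}) to obtain the unique Monge map for $\mathsf{OT}_{\mathbf A^\star}$, and finishes via \cref{thm:VariationalMinimizers}(2) exactly as you outline. Your concern about absolute continuity is well-placed: the paper imposes $\mu_0\ll\mathrm{Leb}$ explicitly as part of the standing semi-discrete assumptions, so that step goes through.
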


The proof of \cref{thm:existenceMongeMap}, given in \cref{sec:proof:thm:existenceMongeMap}, shows that the condition on the support of $\mu_1$ is equivalent to a variant of the standard twist condition. This, in turn, guarantees that the corresponding OT problem admits a unique OT plan that is induced by a map and the result follows using the dual form \eqref{eq:GWVariational}. We emphasize that, for given $\mu_0,\mu_1$, it is easy to verify the sufficient condition that $\|y^{(i)}\|\neq \|y^{(j)}\|$ for every $i\neq j\in[N]$, whereas the general condition may be difficult to check as $\argmin_{\mathbb R^d}\Phi_{(\mu_0,\mu_1)}$ is \emph{a priori} unknown.%

\begin{remark}[Semi-discrete Gromov-Monge map]
Existence (and optimality) of Gromov-Monge maps for the GW problem \eqref{eq:GWPrimal} is known only in a few cases. Theorem 4.1.2 in \cite{vayer2020contribution} shows that it suffices to optimize over permutations when $\mu_0,\mu_1$ are both uniform probability measures supported on the same (finite) number of points, while Theorem 3.2 in
\cite{dumont2022existence} treats the inner product GW problem (i.e., when the cost function in \eqref{eq:GWPrimal} is set to $\Delta(x,y,x',y')=\langle x,x' \rangle-\langle y,y' \rangle$) between compactly supported and absolutely continuous distributions. The only results for the quadratic cost considered herein require additional high-level assumptions, such as that $\mu_0$ and $\mu_1$ be absolutely continuous and rotationally invariant about their barycenter \cite[Theorem 9.21]{sturm2012space} or \emph{a priori} knowledge that the cross-correlation matrix of some GW plan for $\mathsf D(\mu_0,\mu_1)$ is of full rank \cite[Proposition 4.2.4]{vayer2020contribution}. While such conditions are restrictive and hard to verify in practice, they may hold beyond the semi-discrete setting. \cref{thm:existenceMongeMap} presents a new instance of the existence of Gromov-Monge maps for the quadratic GW problem, assuming the semi-discrete setting with the primitive geometric condition on the magnitude of the discrete support points. 
\end{remark}

\subsection{Stability and limit distributions}

We now derive the asymptotic distribution of the empirical semi-discrete GW distance. To that end, we first study stability of $\mathsf D$ by leveraging the fact that the same condition guaranteeing existence of Gromov-Monge maps for the semi-discrete case, also imply the uniqueness of the OT plan for $\mathsf{OT}_{\mathbf A}(\bar\mu_0,\bar\mu_1)$ for every $\mathbf A\in \argmin_{\mathbb R^{d_0\times d_1}}\Phi_{(\mu_0,\mu_1)}$. As in the fully discrete case, we set $M:=\frac{1}{2}\|\mathcal X_0\|_{\infty}\|\mathcal X_1\|_{\infty}$ 
so that, for any $(\nu_0,\nu_1)\in\mathcal P(\mathcal X_0)\times\mathcal P(\mathcal X_1)$, all minimizers of $\Phi_{(\bar{\nu}_0,\bar{\nu}_1)}$ are contained in $B_{\mathrm{F}}(M)$

{To apply the unified approach described in} \cref{prop:unified}, we define the function class  
\[
\begin{aligned}
    \mathcal F^{\oplus} &\coloneqq \left\{f_0 \oplus f_1 : (f_0,f_1)\in\mathcal F_0\times \mathcal F_1\right\},
    \text{ for }
    \mathcal F_0 = B_{\mathcal C^k(\mathcal X_0)} + \mathcal G\cup\{0\},\\
    \mathcal F_1 &\coloneqq \left\{ f:\mathcal X_1^{\circ}\to \mathbb R : \|f\|_{\infty,\mathcal X_1}\leq 1
    \right\},
\end{aligned}
\]
where $k=\lfloor d/2\rfloor+1$, $B_{\mathcal C^k(\mathcal X_0)}$ denotes the unit ball in $\mathcal C^k(\mathcal X_0)$, and      
\[
\begin{aligned}
    &\mathcal G = \Big\{x\in\mathcal X_0\mapsto \min_{1\leq i\leq N}\left\{c_{\mathbf{A}}\big(x-\xi,y^{(i)}-\zeta\big)-z_i\right\}\left.\right.\\&\hspace{10em}\left.\right.:\mathbf{A}\in B_{\mathrm F}(M),z\in\mathbb R^N,\|z\|_{\infty}\leq K,\zeta\in\mathbb R^{d_1}, \|\zeta\|\leq \|\mathcal X_1\|_{\infty},\xi\in\mathcal X_0\Big\}, 
\end{aligned}
\]
These definitions are motivated by the regularity properties of the OT potentials (see \cref{prop:semidiscretePotentialProperties}).

\begin{theorem}[Stability for semi-discrete GW]
    \label{thm:semidiscreteGWStability}
    Let  
    $\mathcal A=\argmin_{B_{\mathrm{F}}(M)}\Phi_{(\bar\mu_0,\bar\mu_1)}$.
Then, for any pairs $(\nu_0, \nu_1),(\rho_0, \rho_1)\in\mathcal P_{\mu_0}\times \mathcal P_{\mu_1}$, $\left|\mathsf D(\nu_0,\nu_1)^2-\mathsf D(\rho_0,\rho_1)^2\right|\leq C\|\nu_0\otimes \nu_1-\rho_0\otimes\rho_1\|_{\infty,\mathcal F^{\oplus}}$ for some constant $C>0$ which is independent of $(\nu_0, \nu_1),(\rho_0, \rho_1)$, and, if $\bar \mu_0,\bar\mu_1$ verify the conditions of \cref{thm:existenceMongeMap} %
at every $\mathbf{A}\in\mathcal A$ and $\inte(\supp(\bar\mu_0))$ is connected with negligible boundary, we further have 
    \[
    \begin{aligned}
        &\frac{d}{dt}\mathsf D\big(\mu_{0,t},\mu_{1,t}\big)^2\big\vert_{t=0}
        = \inf_{\mathbf{A}\in\mathcal A}\left\{
             \int (f_0+ g_{0,\pi^{\mathbf{A}}}+\bar{\varphi}_0^{\mathbf{A}})d(\nu_0-\mu_0)+\int (f_1+g_{1,\pi^{\mathbf{A}}}+\bar{\varphi}_1^{\mathbf{A}})d(\nu_1-\mu_1)
            \right\},
    \end{aligned}
    \]  
       where $f_0$, $f_1$, $g_{0,\pi}$,  $g_{1,\pi}$, $\mu_{0,t}$, and $\mu_{1,t}$ are as defined in \cref{thm:discreteGWStability}, 
$\bar\varphi_i^{\mathbf{A}}=\varphi_i^{\mathbf{A}}(\cdot-\mathbb E_{\mu_i}[X])$ for $i=0,1$, and, for $\mathbf{A}\in \mathcal A$, $\pi^{\mathbf{A}}$ is the unique OT plan for $\OT_{\mathbf{A}}(\bar \mu_0,\bar \mu_1)$, and $(\varphi_0^{\mathbf{A}},\varphi_1^{\mathbf{A}})$ is any choice of corresponding OT potentials.
\end{theorem}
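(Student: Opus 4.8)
The plan is to follow the same two-part structure as the proof of \cref{thm:discreteGWStability}: first establish the global Lipschitz bound using the variational form \eqref{eq:GWVariational}, then compute the directional derivative by differentiating through the infimum over $\mathbf{A}$ and through the OT problem. For the Lipschitz estimate, I would write $\mathsf D(\nu_0,\nu_1)^2 = \mathsf S_1(\bar\nu_0,\bar\nu_1) + \inf_{\mathbf{A}\in B_{\mathrm{F}}(M)}\Phi_{(\bar\nu_0,\bar\nu_1)}(\mathbf{A})$ (the restriction to $B_{\mathrm{F}}(M)$ being valid by \cref{thm:VariationalMinimizers} and the a priori moment bound noted before the theorem). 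Since $\mathsf S_1$ is a polynomial in the low-order moments of the marginals, its variation is controlled by $\|\nu_0\otimes\nu_1 - \rho_0\otimes\rho_1\|_{\infty,\mathcal F^\oplus}$ after rescaling the monomials $x\mapsto\|x\|^2$, $x\mapsto\|x\|^4$ into $\mathcal F_0$ (they are bounded on $\mathcal X_0$, compact). For the $\inf_{\mathbf{A}}\Phi$ term, I would use that $\mathbf{A}\mapsto \OT_{\mathbf{A}}(\bar\nu_0,\bar\nu_1)$ differs from $\OT_{\mathbf{A}}(\bar\rho_0,\bar\rho_1)$ by at most $\|\bar\nu_0\otimes\bar\nu_1-\bar\rho_0\otimes\bar\rho_1\|_{\infty,\widetilde{\mathcal F}^\oplus}$ for the OT-potential-adapted class — this is where the class $\mathcal G$ enters: the semi-dual form \eqref{eq:SemidiscreteDual} shows the optimal potentials of $\OT_{\mathbf{A}}$ (after recentering) lie in $\mathcal G$ up to bounded modification, and \cref{prop:semidiscretePotentialProperties} gives the $\mathcal C^k$ regularity of the $\mathcal X_0$-side potential, so test functions appearing in the duality bound belong to $C\cdot\mathcal F^\oplus$. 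Taking infima over $\mathbf{A}\in B_{\mathrm{F}}(M)$ preserves the bound, and translating between $\nu_i$ and $\bar\nu_i$ is harmless since $\mathcal F^\oplus$ is translation-stable up to the diameter bounds built into $\mathcal G$.

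For the derivative formula, the clean route is to invoke \cref{prop:unified} (the delta-method framework of \cite{goldfeld24statistical}): once Hadamard directional differentiability of $(\nu_0,\nu_1)\mapsto\mathsf D(\nu_0,\nu_1)^2$ along the perturbation direction is shown, with the displayed derivative, the limit law follows. So the core task is to verify that $t\mapsto\mathsf D(\mu_{0,t},\mu_{1,t})^2$ is right-differentiable at $t=0$ with the stated one-sided derivative. I would proceed as follows. Write $\psi(t)\coloneqq\inf_{\mathbf{A}\in B_{\mathrm{F}}(M)}\Phi_{(\bar\mu_{0,t},\bar\mu_{1,t})}(\mathbf{A})$. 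Because $\Phi$ is jointly continuous, $\mathcal A=\argmin_{B_{\mathrm{F}}(M)}\Phi_{(\bar\mu_0,\bar\mu_1)}$ is nonempty compact, and by a standard envelope/Danskin argument the right derivative of $\psi$ at $0$ equals $\inf_{\mathbf{A}\in\mathcal A}\frac{d}{dt}\Phi_{(\bar\mu_{0,t},\bar\mu_{1,t})}(\mathbf{A})\big|_{t=0^+}$, provided the inner derivative exists and the family $\{t\mapsto\Phi_{(\bar\mu_{0,t},\bar\mu_{1,t})}(\mathbf{A})\}_{\mathbf{A}\in\mathcal A}$ is equi-differentiable. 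The inner derivative, for fixed $\mathbf{A}\in\mathcal A$, is the derivative of $t\mapsto 32\|\mathbf{A}\|_{\mathrm{F}}^2 + \OT_{\mathbf{A}}(\bar\mu_{0,t},\bar\mu_{1,t})$; since the centering map and the moment normalization contribute smooth-in-$t$ corrections, and since the cost $c_{\mathbf{A}}$ has $\OT_{\mathbf{A}}$ admitting a unique OT plan $\pi^{\mathbf{A}}$ (by the hypothesis that $\bar\mu_0,\bar\mu_1$ satisfy the conditions of \cref{thm:existenceMongeMap} at every $\mathbf{A}\in\mathcal A$), the OT-cost derivative is linear in $(\nu_0-\mu_0,\nu_1-\mu_1)$ and given by integrating the OT potentials $(\varphi_0^{\mathbf{A}},\varphi_1^{\mathbf{A}})$ — this is the content of the OT stability result (cf. \cite{gal2012advances} and \cref{thm:discreteGWStability}), with the $\bar\varphi_i^{\mathbf{A}}$ being the recentered potentials and the $f_i$, $g_{i,\pi^{\mathbf{A}}}$ terms arising exactly as in \cref{thm:discreteGWStability} from $\mathsf S_1$ and from the $t$-dependence of the centering.

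The key technical points I expect to be the main obstacle are (a) the \emph{equi-differentiability} of $t\mapsto\OT_{\mathbf{A}}(\bar\mu_{0,t},\bar\mu_{1,t})$ over $\mathbf{A}\in\mathcal A$ — needed so that the infimum and the $\frac{d}{dt}$ can be interchanged — and (b) establishing that the OT-cost derivative formula holds with a \emph{single} (recentered) potential pair rather than a sup over potentials. Point (b) is where uniqueness of the OT plan is essential: when the plan $\pi^{\mathbf{A}}$ is unique, the directional derivative of $\mathbf{A}$-parametrized OT along marginal perturbations collapses to a single linear functional, because the sub/super-derivative bounds (obtained by plugging suboptimal couplings/potentials) pinch together. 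For (a), I would argue uniform differentiability by using the semi-discrete structure: the potentials $z^{\mathbf{A}}$ vary continuously in $\mathbf{A}$ over the compact set $\mathcal A$ (the semi-dual \eqref{eq:SemidiscreteDual} is a concave program with, under the twist condition, a unique maximizer up to the constant), the Laguerre cells $\Lag_{z^{\mathbf{A}},i}$ have $\mu_0$-negligible boundaries uniformly (using that $\inte(\supp(\bar\mu_0))$ is connected with negligible boundary and $\bar\mu_0$ is absolutely continuous), and hence the remainder terms in the first-order expansion of $\OT_{\mathbf{A}}(\bar\mu_{0,t},\bar\mu_{1,t})$ are $o(t)$ uniformly in $\mathbf{A}\in\mathcal A$. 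Assembling these, $\frac{d}{dt}\mathsf D(\mu_{0,t},\mu_{1,t})^2|_{t=0}$ equals $\frac{d}{dt}\mathsf S_1|_{t=0} + \inf_{\mathbf{A}\in\mathcal A}\frac{d}{dt}\OT_{\mathbf{A}}|_{t=0^+}$, and regrouping the $\mathsf S_1$-derivative into the $f_i$ terms and the centering corrections into the $g_{i,\pi^{\mathbf{A}}}$ terms yields exactly the displayed expression; the global Lipschitz bound already established then upgrades right-differentiability to Hadamard directional differentiability in the sense required by \cref{prop:unified}.
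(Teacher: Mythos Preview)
Your high-level plan matches the paper's: decompose via the variational form \eqref{eq:GWVariational}, handle $\mathsf S_1$ and $\mathsf S_2$ separately for both the Lipschitz bound and the right derivative, and exploit uniqueness of the OT plan (from \cref{thm:existenceMongeMap}) plus uniqueness of potentials (from the connectedness assumption on $\supp(\bar\mu_0)$) to collapse the sup over potentials. Two points are worth flagging.

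First, a factual slip: \cref{prop:semidiscretePotentialProperties} does \emph{not} give $\mathcal C^k$ regularity of the $\mathcal X_0$-side potential. The potential $\varphi_0^{\mathbf A}(x)=\min_{1\le i\le N}\{c_{\mathbf A}(x,y^{(i)}-\zeta)-z_i\}$ is only Lipschitz --- it is a pointwise minimum of smooth functions and fails to be $\mathcal C^1$ across Laguerre-cell boundaries. This is precisely why $\mathcal F_0$ is built as $B_{\mathcal C^k(\mathcal X_0)}+\mathcal G\cup\{0\}$: the $\mathcal G$ piece captures the (non-smooth) potentials directly as a parametric Lipschitz class, while the $\mathcal C^k$ ball handles the smooth test functions $f_0$, $g_{0,\pi}$, moments, etc. Donskerness of $\mathcal G$ is then obtained by a covering-number argument on the finite-dimensional parameter $(\mathbf A,\xi,\zeta,z)$, not from smoothness.

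Second, your route for interchanging $\inf_{\mathbf A}$ and $\frac{d}{dt}$ via equi-differentiability (continuity of $z^{\mathbf A}$ in $\mathbf A$, uniform negligibility of Laguerre boundaries) is plausible but more delicate than necessary. The paper sidesteps equi-differentiability: the upper bound follows by fixing any $\mathbf A^\star\in\mathcal A$ and applying the single-$\mathbf A$ derivative; for the lower bound, one takes minimizers $\mathbf A_t$, shows $\mathbf A\mapsto\Phi_{(\bar\mu_{0,t},\bar\mu_{1,t})}(\mathbf A)$ converges \emph{uniformly} on $B_{\mathrm F}(M)$ to $\Phi_{(\bar\mu_0,\bar\mu_1)}$ (using the $O(t)$ bounds from the potential estimates), so every cluster point of $(\mathbf A_t)$ lies in $\mathcal A$, and then passes to subsequences using Arzel\`a--Ascoli on the equibounded, equi-Lipschitz potentials. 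This compactness-based argument is more robust since it never requires differentiating in $\mathbf A$ or tracking $z^{\mathbf A}$ continuously. Your approach could be made to work, but you would need to justify continuity of the (normalized) optimal vector $z^{\mathbf A}$ in $\mathbf A$, which is not established in the paper. One further bookkeeping detail you gloss over: the centered perturbation $\bar\mu_{i,t}$ is not a linear interpolation (its center moves with $t$), so the paper splits $\mathsf{OT}_{\mathbf A}(\bar\mu_{0,t},\bar\mu_{1,t})-\mathsf{OT}_{\mathbf A}(\bar\mu_0,\bar\mu_1)$ through the intermediate $\tilde\mu_{i,t}=(\Id-\mathbb E_{\mu_i}[X])_\sharp\mu_{i,t}$; the $g_{i,\pi^{\mathbf A}}$ terms emerge from the cost translation in the first piece.
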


The proof of \cref{thm:semidiscreteGWStability} 
relies on %
the decomposition $\mathsf D(\bar\nu_0,\bar\nu_1)^2=\mathsf S_1(\bar\nu_0,\bar\nu_1)+\mathsf S_2(\bar\nu_0,\bar\nu_1)$. %
The desired right differentiability and Lipschitz continuity of $\mathsf S_1$ is relatively straightforward, whereas that of $\mathsf S_2$ requires a more careful analysis. %
This is since $\mathsf S_2$ includes an infimum, and the fact that $\bar{\mu}_{0,t}$ and $\bar{\mu}_{1,t}$ may not share the same support as $\bar \mu_0$ and $\bar \mu_1$, respectively. %
This complicates the comparison of OT potentials corresponding to different marginals. We address this issue by first computing the right derivative of $\mathsf{OT}_{\mathbf{A}}(\bar \nu_0,\bar \nu_1)$ at $(\bar \mu_0,\bar \mu_1)$ for a fixed $\mathbf{A}$, and then use the fact that $\mathbf{A}\in B_{\mathrm{F}}(M)\mapsto \mathsf{OT}_{\mathbf{A}}(\bar\mu_{0,t},\bar\mu_{1,t})$ converges uniformly to $\mathbf{A}\in B_{\mathrm{F}}(M)\mapsto \mathsf{OT}_{\mathbf{A}}(\bar\mu_0,\bar\mu_1)$, which enables evaluating the derivative of the infimum. 
Complete details are included in \cref{sec:proof:thm:semidiscreteGWStability}.

Compared to \cref{thm:discreteGWStability}, \cref{thm:semidiscreteGWStability} includes neither an infimum over OT couplings nor a supremum over OT potentials. This distinction is a consequence of the uniqueness of OT couplings for $\mathsf{OT}_{\mathbf A}(\bar \mu_0,\bar\mu_1)$ for $\mathbf A\in\mathcal A$ under the conditions of \cref{thm:existenceMongeMap} and the uniqueness (up to versions) of the OT potentials resulting from the assumption on $\supp(\bar \mu_0)$, see \cref{prop:semidiscretePotentialProperties}. This latter condition is standard in the literature \cite{delbarrio2021,del2024central}.

 \medskip
Armed with the stability result, we obtain the distributional limit for semi-discrete GW by invoking the unified approach from \cref{prop:unified}. 

\begin{theorem}[Limit distributions for semi-discrete GW]
    \label{thm:semidiscreteGWLimitDistribution} In the setting of \cref{thm:semidiscreteGWStability}, let $(\mu_{0,n},\mu_{1,n})$ satisfy \cref{assn:weakConvergence}. Then,
    \[
        \begin{aligned}
        \sqrt n\left( \mathsf D(\mu_{0,n},\mu_{1,n})^2-\mathsf D(\mu_{0},\mu_{1})^2 \right)            \stackrel{d}{\to}
\inf_{\mathbf{A}\in\mathcal A}
            \big\{\chi_{\mu_0\otimes \mu_1} \big(( f_0+g_{0,\pi}+\bar\varphi_0^{\mathbf{A}})\oplus (f_1+g_{1,\pi}+\bar\varphi_1^{\mathbf{A}})\big)\big\}.
    \end{aligned} 
    \]
    If $(\mu_{0,n},\mu_{1,n})=(\hat \mu_{0,n},\hat \mu_{1,n})$,  \cref{assn:weakConvergence} is met and $\chi_{\mu_0\otimes \mu_1}(g_0\oplus g_1)= G_{\mu_0}(g_0)+G_{\mu_0}(g_1)$ for any $g_0\oplus g_1\in\mathcal F^{\oplus}$, where $G_{\mu_i}$ a tight $\mu_i$-Brownian bridge in $\ell^{\infty}(\mathcal F_i)$ for $i\in\{0,1\}$.
\end{theorem}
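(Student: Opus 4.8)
The plan is to obtain the limit law as a direct application of the abstract functional delta-method result \cref{prop:unified}, feeding it the two ingredients supplied by \cref{thm:semidiscreteGWStability}: (i) the local Lipschitz estimate $|\mathsf D(\nu_0,\nu_1)^2-\mathsf D(\rho_0,\rho_1)^2|\le C\|\nu_0\otimes\nu_1-\rho_0\otimes\rho_1\|_{\infty,\mathcal F^{\oplus}}$ over $\mathcal P_{\mu_0}\times\mathcal P_{\mu_1}$, and (ii) the one-sided derivative of $t\mapsto\mathsf D(\mu_{0,t},\mu_{1,t})^2$ at $t=0$ along each admissible direction $(\nu_0-\mu_0,\nu_1-\mu_1)$, given there in closed form. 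Together these are exactly the hypotheses of \cref{prop:unified}: local Lipschitz continuity in $\|\cdot\|_{\infty,\mathcal F^{\oplus}}$ plus right differentiability along the admissible cone upgrades to the Hadamard-directional differentiability the delta method needs, so that, with the tight weak limit $\chi_{\mu_0\otimes\mu_1}$ of $\sqrt n(\mu_{0,n}\otimes\mu_{1,n}-\mu_0\otimes\mu_1)$ in $\ell^{\infty}(\mathcal F^{\oplus})$ granted by \cref{assn:weakConvergence}, we get $\sqrt n(\mathsf D(\mu_{0,n},\mu_{1,n})^2-\mathsf D(\mu_0,\mu_1)^2)\stackrel{d}{\to}\mathsf D'(\chi_{\mu_0\otimes\mu_1})$, where $\mathsf D'$ is the derivative map.

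The next step is to identify $\mathsf D'(\chi_{\mu_0\otimes\mu_1})$ with the expression in the statement. Writing $\xi_i^{\mathbf A}\coloneqq f_i+g_{i,\pi^{\mathbf A}}+\bar\varphi_i^{\mathbf A}$, the derivative in \cref{thm:semidiscreteGWStability} is $\inf_{\mathbf A\in\mathcal A}\{\int\xi_0^{\mathbf A}\,d(\nu_0-\mu_0)+\int\xi_1^{\mathbf A}\,d(\nu_1-\mu_1)\}$. Since $\nu_0\otimes\nu_1$ and $\mu_0\otimes\mu_1$ are both probability measures, for bounded $\xi_0,\xi_1$ one has the identity $(\nu_0\otimes\nu_1-\mu_0\otimes\mu_1)(\xi_0\oplus\xi_1)=\int\xi_0\,d(\nu_0-\mu_0)+\int\xi_1\,d(\nu_1-\mu_1)$, so the derivative is the evaluation of the (linearly extended) Gaussian limit at $\xi_0^{\mathbf A}\oplus\xi_1^{\mathbf A}$, minimized over $\mathbf A\in\mathcal A$. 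As in \cref{rmk:extensionBrownianBridge}, I would check that $\|\xi_0^{\mathbf A}\|_{\infty,\mathcal X_0}$ and $\|\xi_1^{\mathbf A}\|_{\infty,\mathcal X_1}$ are bounded by a constant $C$ depending only on $\|\mathcal X_0\|_{\infty}$, $\|\mathcal X_1\|_{\infty}$, $N$ (and the truncation level defining $\mathcal G$), uniformly over $\mathbf A\in\mathcal A$ and over the choice of version of $\varphi_i^{\mathbf A}$: $f_i$ is a polynomial restricted to a bounded set, $g_{i,\pi}$ is linear, and the centered potential $\bar\varphi_i^{\mathbf A}$ — up to an additive constant, which leaves the direct sum unchanged — belongs to a fixed dilate of $\mathcal G$ by the structural regularity of semi-discrete OT potentials (\cref{prop:semidiscretePotentialProperties}). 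Hence $(C^{-1}\xi_0^{\mathbf A},C^{-1}\xi_1^{\mathbf A})\in\mathcal F_0\times\mathcal F_1$, so $\chi_{\mu_0\otimes\mu_1}(\xi_0^{\mathbf A}\oplus\xi_1^{\mathbf A})$ is well defined via $C\,\chi_{\mu_0\otimes\mu_1}(C^{-1}\xi_0^{\mathbf A}\oplus C^{-1}\xi_1^{\mathbf A})$, which yields the claimed formula.

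For the empirical case, I would verify \cref{assn:weakConvergence} from scratch. By independence of the two samples and the direct-sum identity $(\hat\mu_{0,n}\otimes\hat\mu_{1,n}-\mu_0\otimes\mu_1)(f_0\oplus f_1)=(\hat\mu_{0,n}-\mu_0)(f_0)+(\hat\mu_{1,n}-\mu_1)(f_1)$, it suffices that each $\mathcal F_i$ be $\mu_i$-Donsker; joint weak convergence $\sqrt n(\hat\mu_{0,n}-\mu_0,\hat\mu_{1,n}-\mu_1)\stackrel{d}{\to}(G_{\mu_0},G_{\mu_1})$ to independent tight Gaussian bridges then gives $\sqrt n(\hat\mu_{0,n}\otimes\hat\mu_{1,n}-\mu_0\otimes\mu_1)\stackrel{d}{\to}\chi_{\mu_0\otimes\mu_1}$, tight, with $\chi_{\mu_0\otimes\mu_1}(f_0\oplus f_1)=G_{\mu_0}(f_0)+G_{\mu_1}(f_1)$. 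Here $\mathcal F_1$ is a uniformly bounded family of functions on the finite set $\mathcal X_1$, hence trivially Donsker, and $\mathcal F_0=B_{\mathcal C^k(\mathcal X_0)}+\mathcal G\cup\{0\}$ with $k=\lfloor d/2\rfloor+1$: the ball $B_{\mathcal C^k(\mathcal X_0)}$ is $\mu_0$-Donsker by the Kolmogorov–Tikhomirov entropy bound $\log N(\delta,B_{\mathcal C^k(\mathcal X_0)},\|\cdot\|_{\infty})\lesssim\delta^{-d/k}$ together with $d/k<2$ (this is precisely why the order of smoothness is taken to be $\lfloor d/2\rfloor+1$), while $\mathcal G$ is a uniformly bounded class indexed by the compact Euclidean parameter set $(\mathbf A,z,\zeta,\xi)$, on which $x\mapsto\min_{i}\{c_{\mathbf A}(x-\xi,y^{(i)}-\zeta)-z_i\}$ is jointly Lipschitz, so $\mathcal G$ has finite uniform entropy and is Donsker; the sum of two Donsker classes is Donsker.

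I expect the substantive mathematical work to already be contained in \cref{thm:semidiscreteGWStability} and \cref{thm:existenceMongeMap}; the residual obstacle here is bookkeeping rather than new analysis — namely, confirming that the directional derivative of the stability theorem is of the precise Hadamard type \cref{prop:unified} requires (correct tangent cone of admissible perturbations), and establishing the uniform-in-$\mathbf A$, version-independent sup-norm control of the integrands $\xi_i^{\mathbf A}$ that is needed to evaluate the Gaussian limit at them and to pass the infimum over $\mathcal A$ through the extended-continuous-mapping step. The Donsker verification of $\mathcal F_0$ via the $\mathcal C^k$-entropy estimate is the only point at which genuine empirical-process input enters, and it is routine given the calibration $k=\lfloor d/2\rfloor+1$.
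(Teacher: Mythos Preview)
Your proposal is correct and follows essentially the same approach as the paper: apply \cref{prop:unified} using the Lipschitz and right-differentiability ingredients from \cref{thm:semidiscreteGWStability}, then verify \cref{assn:weakConvergence} for the empirical measures by showing $\mathcal F_0$ and $\mathcal F_1$ are Donsker (the $\mathcal C^k$-ball via the entropy bound with $k=\lfloor d/2\rfloor+1$, $\mathcal G$ via Lipschitz dependence on a compact Euclidean parameter, $\mathcal F_1$ trivially as bounded functions on a finite set). The paper handles $\mathcal G$ via bracketing numbers (Theorem~2.7.11 in \cite{van1996weak}) rather than uniform entropy, but this is a cosmetic difference.
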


As in the discrete case, we slightly abuse notation by identifying $G_{\mu_0}$ and $G_{\mu_1}$ with their extensions to a certain dilation of $\mathcal F_0$ and $\mathcal F_1$, see \cref{rmk:extensionBrownianBridge} for details. The approach outlined in \cref{rmk:removingSquare} can also be applied in this case to obtain a limit distribution for $\mathsf D$ (without the square).  Finally, as discussed in \cref{rmk:estimatingLimit}, the consistency of the na\"ive bootstrap for the limit in \cref{thm:semidiscreteGWLimitDistribution} can be established if $\mathcal A$ is a singleton. At present, sufficient conditions for such a result to hold are unknown.%

\section{Entropic Gromov-Wasserstein Distance}
\label{sec:entropicGW}

To go beyond the fully discrete and semi-discrete settings, we consider the entropic GW distance under the sole assumption that $\mu_0,\mu_1$ are $4$-sub-Weibull with parameter~$\sigma^2$. Note that if $(\mu_0,\mu_1)\in\mathcal P_{4,\sigma}(\mathbb R^{d_0})\times P_{4,\sigma}(\mathbb R^{d_1})$, then $\bar\mu_0$ and $\bar \mu_1$ are $4$-sub-Weibull with parameter $\bar\sigma^2\coloneqq 8\frac{2+\log 2}{\log 2}\sigma^2$ and $\frac{1}{2}\sqrt{M_2(\bar \mu_0)M_2(\bar \mu_1)}\leq \sigma$, these auxiliary results are proved in \cref{proof:thm:entropicGWStability}. We thus set $M=\sigma$ so that $B_{\mathrm F}(M)$ contains all minimizers of $\Phi_{(\bar\mu_0,\bar\mu_1)}$ (see \cref{thm:entropicVariationalMinimizers}). The derivation is similar to the semi-discrete case, as the EOT problem that appears in the entropic GW variational form (see \cref{sec:entropicGWVar}) enjoys uniqueness of EOT plans and potentials. %

\medskip
 The sub-Weibull assumption stems from the regularity theory for EOT potentials from Lemma 4 of \cite{zhang2024gromov} (restated in \cref{prop:entropicGWPotentials} ahead), which motivates the choice of the function class as $\mathcal F^{\oplus}\coloneqq\left\{f_0\oplus f_1:(f_0,f_1)\in\mathcal F_{0}\times \mathcal F_{1}\right\}$, where
 \[
\mathcal F_{i}=\left\{f\in\mathcal C^{\bar k}(\mathbb R^{d_i}):|f|\leq 1+\|\cdot\|^4,|D^{\alpha}f|\leq 1+\|\cdot\|^{3\bar k},\forall \alpha \in\mathbb N_0^d\text{ with }0<|\alpha|\leq  \bar k  \right\},
\]
for $\bar k=\lfloor (d_0\vee d_1)/2\rfloor +1$. We have the following stability theorem.

\begin{theorem}[Stability for entropic GW]
    \label{thm:entropicGWStability}
    Let  
    $\mathcal A=\argmin_{B_{\mathrm{F}}(M)}\Phi_{(\bar\mu_0,\bar\mu_1)}$.
   Then, for any pairs $(\nu_0, \nu_1),(\rho_0, \rho_1)\in\mathcal P_{4,\sigma}(\mathbb R^{d_0})\times \mathcal P_{4,\sigma}(\mathbb R^{d_1})$, $\left|\mathsf D_{\varepsilon}(\nu_0,\nu_1)^2-\mathsf D_{\varepsilon}(\rho_0,\rho_1)^2\right|\leq C\|\nu_0\otimes \nu_1-\rho_0\otimes\rho_1\|_{\infty,\mathcal F^{\oplus}}$ for some constant $C>0$ which is independent of $(\nu_0, \nu_1),(\rho_0, \rho_1)$, and,     
    \[
    \begin{aligned}
        &
        \frac{d}{dt}\mathsf D_{\varepsilon}\big(\mu_{0,t},\mu_{1,t}\big)^2\big\vert_{t=0}
        &= \inf_{\mathbf{A}\in\mathcal A}\left\{
             \int(f_0+ g_{0,\pi^{\mathbf{A}}}+\bar{\varphi}_0^{\mathbf{A}})d(\nu_0-\mu_0)+\int (f_1+g_{1,\pi^{\mathbf{A}}}+\bar{\varphi}_1^{\mathbf{A}})d(\nu_1-\mu_1)
            \right\},
    \end{aligned}
    \]  
           where $f_i$, $g_{i,\pi}$, $\mu_{i,t}$ $\bar\varphi_i^{\mathbf{A}}$, for $i=0,1$, as well as $\pi^{\mathbf{A}}$ are all defined as in \cref{thm:semidiscreteGWStability} with $\mathsf{OT}_{\mathbf A,\varepsilon}(\bar \mu_0,\bar\mu_1)$ in place of $\mathsf{OT}_{\mathbf A}(\bar \mu_0,\bar\mu_1)$. 
\end{theorem}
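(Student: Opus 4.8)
The plan is to mirror the strategy used for the semi-discrete case (\cref{thm:semidiscreteGWStability}), exploiting the fact that the entropic variational form (\cref{sec:entropicGWVar}) replaces each $\mathsf{OT}_{\mathbf A}$ by $\mathsf{OT}_{\mathbf A,\varepsilon}$, and that the latter enjoys uniqueness of the optimal plan $\pi^{\mathbf A}$ and of the dual potentials $(\varphi_0^{\mathbf A},\varphi_1^{\mathbf A})$ (up to additive constants), as recalled in \cref{sec:EOT}. I would first record the auxiliary facts quoted in the text: for $(\mu_0,\mu_1)\in\mathcal P_{4,\sigma}(\mathbb R^{d_0})\times\mathcal P_{4,\sigma}(\mathbb R^{d_1})$ the centered measures are $4$-sub-Weibull with parameter $\bar\sigma^2$ and $\tfrac12\sqrt{M_2(\bar\mu_0)M_2(\bar\mu_1)}\le\sigma$, so by the entropic analogue of \cref{thm:VariationalMinimizers} (i.e.\ \cref{thm:entropicVariationalMinimizers}) all minimizers of $\Phi_{(\bar\mu_0,\bar\mu_1)}$ lie in $B_{\mathrm F}(M)$ with $M=\sigma$, and $\Phi_{(\bar\mu_0,\bar\mu_1)}$ is everywhere Fr\'echet differentiable with gradient $64(\mathbf A-\tfrac12\int xy^\intercal\,d\pi^{\mathbf A})$. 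The translation invariance $\mathsf D_\varepsilon(\nu_0,\nu_1)^2=\mathsf D_\varepsilon(\bar\nu_0,\bar\nu_1)^2$ reduces everything to the centered problem, and I decompose $\mathsf D_\varepsilon(\bar\nu_0,\bar\nu_1)^2=\mathsf S_1(\bar\nu_0,\bar\nu_1)+\mathsf S_{2,\varepsilon}(\bar\nu_0,\bar\nu_1)$, the first an explicit polynomial in moments and the second the entropic variational infimum.

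For the Lipschitz bound I would argue exactly as in the semi-discrete/discrete proofs: $\mathsf S_1$ is a fixed polynomial in the moments up to order four of $\nu_0,\nu_1$, each of which is an integral of a function controlled (after normalization) by a member of $\mathcal F_i$, so $|\mathsf S_1(\bar\nu_0,\bar\nu_1)-\mathsf S_1(\bar\rho_0,\bar\rho_1)|\lesssim\|\nu_0\otimes\nu_1-\rho_0\otimes\rho_1\|_{\infty,\mathcal F^\oplus}$; for $\mathsf S_{2,\varepsilon}$ one writes $\mathsf{OT}_{\mathbf A,\varepsilon}(\bar\nu_0,\bar\nu_1)$ via its dual \eqref{eq:EOTDual}, uses the regularity theory for entropic potentials (\cref{prop:entropicGWPotentials}, from Lemma~4 of \cite{zhang2024gromov}) to confine the relevant potentials to a fixed dilation of $\mathcal F_i$ uniformly over $\mathbf A\in B_{\mathrm F}(M)$, and then bounds the difference of the two infima by the difference of the dual objectives evaluated at a common near-optimizer — the resulting integrands again being (normalized) elements of $\mathcal F^\oplus$. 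The $4$-sub-Weibull hypothesis is precisely what makes these potential bounds and the attendant moment integrals finite.

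For the right derivative I would fix $\mathbf A\in B_{\mathrm F}(M)$ and first differentiate $t\mapsto\mathsf{OT}_{\mathbf A,\varepsilon}(\bar\mu_{0,t},\bar\mu_{1,t})$ at $t=0$: by uniqueness of the entropic dual solution and the envelope/Danskin principle for the concave dual problem, the derivative equals $\int\varphi_0^{\mathbf A}\,d(\dot{\bar\mu}_0)+\int\varphi_1^{\mathbf A}\,d(\dot{\bar\mu}_1)$ plus the contribution of the $\mathbf A$-independent parts of $c_{\mathbf A}$ through the changing marginals — this is where the $g_{i,\pi^{\mathbf A}}$ terms (coming from the $-4\|x\|^2\|y\|^2$ part of $c_{\mathbf A}$, averaged against $\pi^{\mathbf A}$) and, after re-adding $\mathsf S_1$, the $f_i$ terms arise. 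One must also account for the fact that $\bar\mu_{i,t}$ is the centering of $\mu_{i,t}$, so the support/centering shifts as $t$ varies; I handle this by composing with the translation $(\cdot-\mathbb E_{\mu_i}[X])$, which produces the barred potentials $\bar\varphi_i^{\mathbf A}$ and a first-order correction in the mean that gets absorbed into $f_i$ (the derivative of the mean in direction $\nu_i-\mu_i$ is a bounded-linear functional). Then, to differentiate the infimum over $\mathbf A\in B_{\mathrm F}(M)$, I invoke uniform convergence of $\mathbf A\mapsto\mathsf{OT}_{\mathbf A,\varepsilon}(\bar\mu_{0,t},\bar\mu_{1,t})$ to $\mathbf A\mapsto\mathsf{OT}_{\mathbf A,\varepsilon}(\bar\mu_0,\bar\mu_1)$ on the compact ball (which follows from the Lipschitz estimate above together with weak convergence $\bar\mu_{i,t}\to\bar\mu_i$), so that Danskin's theorem for the infimum applies and yields the stated $\inf_{\mathbf A\in\mathcal A}$; the absence of an inner $\inf$ over plans and $\sup$ over potentials — in contrast to \cref{thm:discreteGWStability} — is exactly due to the uniqueness guarantees of the entropic problem, so this part is, if anything, cleaner than the discrete case.

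I expect the main obstacle to be the non-compactness of the supports: unlike the semi-discrete setting, here both $\bar\mu_0$ and $\bar\mu_1$ may have unbounded support, so every uniformity statement (uniform-in-$\mathbf A$ potential bounds, uniform convergence of the $\mathbf A$-section of $\mathsf{OT}_{\mathbf A,\varepsilon}$, interchange of limit and integral when computing the derivative of $\mathsf{OT}_{\mathbf A,\varepsilon}$) has to be justified by $4$-sub-Weibull tail control rather than by compactness. Concretely, the delicate point is to show that the entropic potentials $\varphi_i^{\mathbf A}$ and their differences grow at most polynomially (of degree dictated by $\bar k$ in the definition of $\mathcal F_i$) uniformly over $\mathbf A\in B_{\mathrm F}(M)$ and over the perturbed marginals $\bar\mu_{i,t}$, and that these growth bounds are integrable against all the measures in play — this is what \cref{prop:entropicGWPotentials} is designed to supply, and assembling it with the Schr\"odinger system \eqref{eq:SchrodingerSystem} to get the requisite stability of potentials under marginal perturbation is the crux of the argument. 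Once that uniform regularity is in hand, the envelope-theorem differentiation and the Danskin step for the outer infimum go through as sketched, and the proof concludes by matching terms. Complete details are deferred to \cref{proof:thm:entropicGWStability}.
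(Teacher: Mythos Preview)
Your proposal is correct and follows essentially the same route as the paper: decompose $\mathsf D_\varepsilon^2=\mathsf S_1+\mathsf S_{2,\varepsilon}$, handle $\mathsf S_1$ as a moment polynomial, differentiate $\mathsf{OT}_{\mathbf A,\varepsilon}(\bar\mu_{0,t},\bar\mu_{1,t})$ at fixed $\mathbf A$ via the uniqueness of entropic potentials/plans and the regularity of \cref{prop:entropicGWPotentials}, then pass the derivative through the outer infimum using uniform convergence on $B_{\mathrm F}(M)$. The only cosmetic difference is that the paper does not invoke Danskin/envelope theorems in name: it obtains the derivative by explicit upper and lower sandwich inequalities (plugging the potentials of one problem into the other, and using the Schr\"odinger system to kill the exponential term), and it handles the moving center by splitting $\bar\mu_{i,t}$ versus $\tilde\mu_{i,t}\coloneqq(\Id-\mathbb E_{\mu_i}[X])_\sharp\mu_{i,t}$ and expanding $c_{\mathbf A}\circ\tau^t$ explicitly---this is precisely where the $g_{i,\pi^{\mathbf A}}$ terms emerge, while the $f_i$ come purely from $\mathsf S_1$ (not from a centering correction to $\mathsf S_2$, as your wording might suggest).
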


The proof of \cref{thm:entropicGWStability} follows similar lines as that of \cref{thm:semidiscreteGWStability} with the important distinction that the measures need only be $4$-sub-Weibull. The regularity of the EOT potentials %
and the strong structural properties of solutions to the EOT problem  enable treating measures with unbounded support. See \cref{proof:thm:entropicGWStability} for complete details. 

\medskip
Invoking \cref{prop:unified} now leads to the following characterization of distributional limits for the entropic GW  cost and bootstrap consistency when $\mathcal{A}$ is a singleton. For $X_{0,1}^B,\dots, X_{0,n}^B$ and $X_{1,1}^B,\dots, X_{1,n}^B$  independent collections of i.i.d. samples from $\hat \mu_{0,n}$ and $\hat \mu_{1,n}$, let $\hat\mu_{k,n}^B=\frac{1}{n}\sum_{i=1}^n\delta_{X_{k,i}^B}$ for $k\in\{0,1\}$ denote the bootstrap measures and  $\mathbb P^B$ the conditional probability given the data.

\begin{theorem}[Limit distributions for entropic GW and bootstrap consistency]
    \label{thm:entropicGWLimitDistribution} In the setting of \cref{thm:entropicGWStability}, let $(\mu_{0,n},\mu_{1,n})_{n\in\mathbb N}$ satisfy \cref{assn:weakConvergence} with $\mathcal P_{4,\sigma}(\mathbb R^{d_0})\times \mathcal P_{4,\sigma}(\mathbb R^{d_1})$ in place of $\mathcal P_{\mu_0}\times \mathcal P_{\mu_1}$, then
    \[
        \begin{aligned}
        \sqrt n\left( \mathsf D_{\varepsilon}(\mu_{0,n},\mu_{1,n})^2-\mathsf D_{\varepsilon}(\mu_{0},\mu_{1})^2 \right)            \stackrel{d}{\to}
\inf_{\mathbf{A}\in\mathcal A}
            \big\{\chi_{\mu_0\otimes \mu_1} \big((f_0+g_{0,\pi}+\bar\varphi_0^{\mathbf{A}})\oplus (f_1+g_{1,\pi}+\bar\varphi_1^{\mathbf{A}})\big)\big\}.
    \end{aligned} 
    \]
    Moreover, if $\varepsilon>16\sqrt{M_4(\bar\mu_0)M_4(\bar\mu_1)}$,  $\mathcal A$ is  a singleton and the limit is a point evaluation. 

    If $(\mu_{0,n},\mu_{1,n})=(\hat \mu_{0,n},\hat \mu_{1,n})$,  then \cref{assn:weakConvergence} is met and $\chi_{\mu_0\otimes \mu_1}(g_0\oplus g_1)= G_{\mu_0}(g_0)+G_{\mu_0}(g_1)$ for any $g_0\oplus g_1\in\mathcal F^{\oplus}$, where $G_{\mu_i}$ a tight $\mu_i$-Brownian bridge in $\ell^{\infty}(\mathcal F_i)$ for $i\in\{0,1\}$.  In this case, if $\mathcal A=\{\mathbf A\}$ is a singleton, then the limit is normal and the bootstrap is consistent 
    \[
       \sup_{t\in\mathbb R}\left|\mathbb P^B\left(\sqrt n\left(\mathsf D(\hat \mu_{0,n}^B,\hat \mu_{1,n}^B)^2-\mathsf D(\hat \mu_{0,n},\hat \mu_{1,n})^2\right)\leq t\right)-\mathbb P\left(N(0,v^2_0+v^2_1)\leq t\right)\right|\stackrel{\mathbb P}{\to}0,
    \]
    where  $v^2_0=\Var_{\mu_0}(f_0+g_{0,\pi}+\bar\varphi_0^{\mathbf A})$ and  $v^2_1=\Var_{\mu_1}(f_1+g_{1,\pi}+\bar\varphi_1^{\mathbf A})$ provided that $v_0^2+v_1^2>0$.
\end{theorem}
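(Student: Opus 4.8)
The plan is to read off the weak-convergence statement as a direct application of the unified delta-method framework of \cref{prop:unified} to the squared entropic GW functional $(\nu_0,\nu_1)\mapsto\mathsf D_{\varepsilon}(\nu_0,\nu_1)^2$, regarded as a map from a neighbourhood of $(\mu_0,\mu_1)$ (with marginals perturbed in the sup-norm over $\mathcal F^{\oplus}$) into $\mathbb R$. Indeed, \cref{thm:entropicGWStability} supplies precisely the two ingredients \cref{prop:unified} requires: local Lipschitz continuity of $\mathsf D_{\varepsilon}(\cdot,\cdot)^2$ with respect to $\|\cdot\|_{\infty,\mathcal F^{\oplus}}$, and right (Hadamard directional) differentiability along $(\nu_0-\mu_0,\nu_1-\mu_1)$ with derivative $\inf_{\mathbf A\in\mathcal A}\big\{\int(f_0+g_{0,\pi^{\mathbf A}}+\bar\varphi_0^{\mathbf A})d(\nu_0-\mu_0)+\int(f_1+g_{1,\pi^{\mathbf A}}+\bar\varphi_1^{\mathbf A})d(\nu_1-\mu_1)\big\}$. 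Combining this with \cref{assn:weakConvergence} and invoking \cref{prop:unified} yields the stated convergence in distribution; one additionally appeals to \cref{rmk:extensionBrownianBridge} to legitimize evaluating $\chi_{\mu_0\otimes\mu_1}$ at the functions $\xi_i\coloneqq f_i+g_{i,\pi^{\mathbf A}}+\bar\varphi_i^{\mathbf A}$, which by the growth bounds of \cref{prop:entropicGWPotentials} lie in a fixed dilation of $\mathcal F_i$.

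Next I would establish the singleton claim. By \cref{thm:entropicVariationalMinimizers} and the second-order analysis of the entropic GW objective in \cite{rioux2023entropic}, the map $\mathbf A\mapsto\mathsf{OT}_{\mathbf A,\varepsilon}(\bar\mu_0,\bar\mu_1)$ is twice Fréchet differentiable with Hessian controlled in operator norm by a quantity proportional to $\varepsilon^{-1}\sqrt{M_4(\bar\mu_0)M_4(\bar\mu_1)}$; hence as soon as $\varepsilon>16\sqrt{M_4(\bar\mu_0)M_4(\bar\mu_1)}$ the curvature of the quadratic term $32\|\cdot\|_{\mathrm F}^2$ dominates and $\Phi_{(\bar\mu_0,\bar\mu_1)}$ is strictly convex, so $\mathcal A=\argmin_{B_{\mathrm F}(M)}\Phi_{(\bar\mu_0,\bar\mu_1)}=\{\mathbf A\}$ is a singleton. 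The infimum over $\mathcal A$ in the limit then collapses to the point evaluation $\chi_{\mu_0\otimes\mu_1}\big((f_0+g_{0,\pi^{\mathbf A}}+\bar\varphi_0^{\mathbf A})\oplus(f_1+g_{1,\pi^{\mathbf A}}+\bar\varphi_1^{\mathbf A})\big)$, which, being a continuous linear functional of the tight Gaussian element $\chi_{\mu_0\otimes\mu_1}$, is centered Gaussian.

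For the empirical case, I would verify \cref{assn:weakConvergence} by showing each $\mathcal F_i$ is $\mu_i$-Donsker. Since $(\hat\mu_{0,n}\otimes\hat\mu_{1,n})(f_0\oplus f_1)=\hat\mu_{0,n}(f_0)+\hat\mu_{1,n}(f_1)$, one has $\sqrt n(\hat\mu_{0,n}\otimes\hat\mu_{1,n}-\mu_0\otimes\mu_1)(f_0\oplus f_1)=\sqrt n(\hat\mu_{0,n}-\mu_0)(f_0)+\sqrt n(\hat\mu_{1,n}-\mu_1)(f_1)$, reducing the claim to the joint functional CLT for the two \emph{independent} empirical processes indexed by $\mathcal F_0$ and $\mathcal F_1$. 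Each $\mathcal F_i$ consists of $\mathcal C^{\bar k}$ functions with $\bar k=\lfloor(d_0\vee d_1)/2\rfloor+1>d_i/2$ and envelope $1+\|\cdot\|^4$; combining metric-entropy bounds for balls of smooth functions on compact sets (Kolmogorov–Tikhomirov) with a truncation argument that controls the tails via $\int(1+\|\cdot\|^4)^2d\mu_i<\infty$ (from the $4$-sub-Weibull hypothesis) shows $\mathcal F_i$ is $\mu_i$-Donsker, so $\chi_{\mu_0\otimes\mu_1}(g_0\oplus g_1)=G_{\mu_0}(g_0)+G_{\mu_1}(g_1)$ with $G_{\mu_0},G_{\mu_1}$ independent. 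When moreover $\mathcal A=\{\mathbf A\}$, the derivative in \cref{thm:entropicGWStability} is the \emph{linear} functional $\dot\phi:(\eta_0,\eta_1)\mapsto\eta_0(\xi_0)+\eta_1(\xi_1)$, so the limit equals $G_{\mu_0}(\xi_0)+G_{\mu_1}(\xi_1)$; using the Brownian-bridge covariance $\E[G_{\mu_i}(\xi_i)^2]=\mu_i(\xi_i^2)-\mu_i(\xi_i)^2=\Var_{\mu_i}(\xi_i)$ and independence, this is $N(0,v_0^2+v_1^2)$. For the bootstrap, $\sqrt n(\hat\mu_{k,n}^B-\hat\mu_{k,n})$ converges conditionally on the data, in probability, to the same $G_{\mu_k}$ (bootstrap functional CLT for Donsker classes); since $\dot\phi$ is linear and continuous, the bootstrap functional delta method (as in \cite{fang2019}, or the bootstrap clause of \cref{prop:unified}) transfers this to $\sqrt n(\mathsf D_{\varepsilon}(\hat\mu_{0,n}^B,\hat\mu_{1,n}^B)^2-\mathsf D_{\varepsilon}(\hat\mu_{0,n},\hat\mu_{1,n})^2)$, which therefore converges conditionally in probability to $N(0,v_0^2+v_1^2)$; assuming $v_0^2+v_1^2>0$ this limit law has a continuous CDF, so Pólya's theorem upgrades the convergence to the claimed uniform-in-$t$ statement. (Removing the square throughout is handled as in \cref{rmk:removingSquare}.)

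The main obstacle is the Donsker verification for the unbounded smooth classes $\mathcal F_i$: because the support is all of $\mathbb R^{d_i}$, one cannot appeal directly to compact-domain entropy bounds and must instead patch together a countable exhaustion by balls with uniform tail control supplied by the sub-Weibull envelope, and likewise confirm the bootstrap empirical process inherits the same limit over this class. A secondary delicate point is pinning down the sharp constant $16\sqrt{M_4(\bar\mu_0)M_4(\bar\mu_1)}$ governing the strict-convexity threshold, which rests on the precise operator-norm bound for the EOT Hessian from \cite{rioux2023entropic}.
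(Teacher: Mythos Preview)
Your proposal is correct and follows essentially the same route as the paper: invoke \cref{prop:unified} using the stability from \cref{thm:entropicGWStability}, cite \cite{rioux2023entropic} (Theorem 6 there) for strict convexity of $\Phi_{(\bar\mu_0,\bar\mu_1)}$ when $\varepsilon>16\sqrt{M_4(\bar\mu_0)M_4(\bar\mu_1)}$, verify Donskerness of $\mathcal F_i$ (the paper defers this to Lemma~E.24 of \cite{goldfeld24statistical}), and appeal to the bootstrap functional delta method for Donsker classes (the paper cites Theorem~7 of \cite{goldfeld24statistical}). One small point you omit but the paper addresses explicitly is that $(\hat\mu_{0,n},\hat\mu_{1,n})\in\mathcal P_{4,\tilde\sigma}(\mathbb R^{d_0})\times\mathcal P_{4,\tilde\sigma}(\mathbb R^{d_1})$ for some $\tilde\sigma>\sigma$ with probability approaching one, which is needed to place the empirical pair in the domain $\mathfrak P$ where the Lipschitz bound of \cref{thm:entropicGWStability} applies and follows from the law of large numbers applied to $\int e^{\|\cdot\|^4/(2\tilde\sigma^2)}d\hat\mu_{i,n}$.
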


Again, we identify $G_{\mu_0}$ and $G_{\mu_1}$ with their extensions to a certain dilation of $\mathcal F_0$ and $\mathcal F_1$, see \cref{rmk:extensionBrownianBridge}. To obtain a limit distribution for $\mathsf D_{\varepsilon}$ itself, the technique from \cref{rmk:removingSquare} can be used. The condition on $\varepsilon$ that guarantees that  $\mathcal{A}$ is a singleton stems from Theorem 6 in \cite{rioux2023entropic}, which shows that in that case the objective of the entropic GW variational is strictly convex in $\mathbf A$. Beyond this condition, it is unclear when $\mathcal A$ is a singleton so that the consistency of the na\"ive bootstrap cannot be established in general (see  \cref{rmk:estimatingLimit}).

\section{Testing for Graph Isomorphism}
\label{sec:graphApplication}

A key property of the GW distance is that it identifies distributions that are related through an isometry. Armed with our limit theorems for GW distances, we consider an application to testing if two random graph models are isomorphic, based on samples from them. As the limit distribution theory accounts for Euclidean mm space, to employ it for graph isomorphism testing, we propose a framework for embedding distributions of random graphs on $N$ vertices into $\mathcal{P}(\mathbb R^N)$. Crucially, the embedding is constructed so that the random graph are isomorphic if and only if the GW distance between the embedded distributions nullifies. This allows casting the graph isomorphism testing question as a test for equality of the embedded distributions under the GW distance.

\subsection{Embedding of random graph distributions}

Consider the set $\mathcal G_N$ of all weighted undirected graphs on $N$ vertices with nonnegative weights and no self-loops. A graph $G\in\mathcal G_N$ is specified by its matrix of weights $\mathbf W^G\in\mathbb R^{N\times N}$, which is defined as $\mathbf W^G_{ij}=\mathbf W^G_{ji}=w$ if the nodes $i$ and $j$ are connected by an edge with  weight $w$ for $i\neq j\in[N]$. We adopt the convention that a weight of $0$ indicates the absence of an edge, which implies $\mathbf W_{ii}^G=0$ for every $i\in[N]$ since there are no self-loops. With this, a distribution $\nu\in\mathcal P(\mathcal G_N)$ can be specified by fixing $N(N-1)/2$ univariate \emph{edge distributions}, $(\rho_{ij})_{i,j\in[N]:\,i<j}\subset \mathcal P([0,\infty))$, and setting $\mathrm{law}(\mathbf W^{G}_{ij})=\rho_{ij}$ for $G\sim \nu$. %
Such a distribution treats each edge independently, and  we write $\mathcal P_{\text{ind}}(\mathcal G_N)$ to denote the set of all such distributions to emphasize this fact.  

To formulate the comparison of two distributions $\nu_0,\nu_1\in\mathcal P_{\text{ind}}(\mathcal G_N)$ with associated edge distributions $(\rho_{0,ij})_{i,j\in[N]:\,i<j}, (\rho_{1,ij})_{i,j\in[N]:\,i<j}$  in terms of the Euclidean GW distance considered herein, we introduce the following embedding 
 \begin{equation}
 \label{eq:embedding}
    \iota : (\rho_{ij})_{\substack{i,j\in[N]\\i<j}}\in \mathcal P([0,\infty))^{N(N-1)/2}\mapsto \frac{1}{N(N+1)(N-1)} 
    \sum_{\substack{i,j=1\\i\neq j}}^N\eta_{\rho_{ij}}+\frac{1}{N+1}\sum_{i=1}^N\delta_{-e_i}\in \mathcal P(\mathbb R^N),
 \end{equation}
 where $\eta_{\rho_{ij}}(B)= \int \mathbbm 1_{B}(-e_i+te_j)d\rho_{ij}(t)$, for every Borel set $B\subset \mathbb R^N$. To simplify notation,
 we identify $\nu\in\mathcal P_{\mathrm{ind}}(\mathcal G_N)$ with its edge distributions, $(\rho_{ij})_{i,j\in[N]:\,i<j}$, and write $\iota(\nu)$ in place of $\iota$ applied to the edge distributions.  

This construction enables a comparison of $\nu_0$ and $\nu_1$ up to a relabelling of the graph nodes. Precisely, $\nu_0$ and $\nu_1$ are 
 said to be isomorphic if there exists a permutation $\sigma:[N]\to [N]$ for which $\rho_{1,ij}=\rho_{0,\sigma(i)\sigma(j)}$ for every $i\neq j\in[N]$ under the convention that $\rho_{0,ij}= \rho_{0,ji}$ for every $i\neq j$. This notion of isomorphic graph distributions   coincides with the standard notion of isomorphic  graphs when $\nu_0$ and $\nu_1$ are Dirac distributions. We now establish that $\nu_0$ and $\nu_1$ are isomorphic if and only if $\iota(\nu_0)$ and $\iota(\nu_1)$ are identified under the GW distance.  
 \begin{proposition}[Identification of isomorphic graph distributions]
\label{prop:permContinuous}
For $\nu_0,\nu_1\in\mathcal P_{\mathrm{ind}}(\mathcal G_N)$ and $\mu_0=\iota(\nu_0)$ and $\mu_1=\iota(\nu_1)$, we have that $\nu_0$ and $\nu_1$ are isomorphic if and only if  $\mathsf D(\mu_0,\mu_1)=0$.
\end{proposition}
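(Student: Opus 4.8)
The plan is to use the characterization recalled in \cref{sec:GWDistance}, namely that $\mathsf D(\mu_0,\mu_1)=0$ precisely when there is a measure-preserving isometry $T\colon\supp(\mu_0)\to\supp(\mu_1)$ (necessarily bijective), combined with a direct analysis of the geometry of the embedding \eqref{eq:embedding}. Write $\phi_{ij}\colon t\mapsto -e_i+te_j$, so that $\eta_{\rho_{ij}}=(\phi_{ij})_\sharp\rho_{ij}$ is carried by the ray $R_{ij}=\phi_{ij}([0,\infty))$; its relative interior $R^{\circ}_{ij}=\phi_{ij}((0,\infty))$ consists of vectors with two nonzero coordinates, and a one-line check shows the sets $R^{\circ}_{ij}$ are pairwise disjoint, are disjoint from $\{-e_1,\dots,-e_N\}$, and that $R_{ij}\cap R_{i'j'}\subseteq\{-e_i\}$ whenever $(i,j)\neq(i',j')$. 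I will take for granted that $\mathsf D(\mu_0,\mu_1)$ is finite-valued (equivalently, the edge distributions have finite fourth moments) so that the characterization applies.

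For the ``if'' direction, suppose $\nu_0,\nu_1$ are isomorphic via a permutation $\sigma$, so that $\rho_{1,ij}=\rho_{0,\sigma(i)\sigma(j)}$, and let $P$ be the orthogonal permutation matrix with $Pe_k=e_{\sigma(k)}$. Then $P\circ\phi_{ij}=\phi_{\sigma(i)\sigma(j)}$, so pushing \eqref{eq:embedding} forward by $P$ and reindexing the sums over $i\neq j$ and over $k$ gives $P_\sharp\iota(\nu_1)=\iota(\nu_0)$; hence $P^{-1}$ is a measure-preserving isometry of $\supp(\mu_0)$ onto $\supp(\mu_1)$, and $\mathsf D(\mu_0,\mu_1)=0$.

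The content is the ``only if'' direction, which I would prove in three steps. \emph{Step 1 (identify the vertex points).} From \eqref{eq:embedding}, $\mu_0(\{-e_i\})\geq\frac1{N+1}$ for every $i$, whereas the disjointness facts above force any atom of $\mu_0$ located off $\{-e_1,\dots,-e_N\}$ to have mass at most $\frac1{N(N+1)(N-1)}<\frac1{N+1}$ (here $N\geq2$); the same holds for $\mu_1$, and the set $\{-e_1,\dots,-e_N\}$ is the \emph{same} for both embeddings. Since $T$ is a mass-preserving bijection it must carry $\{p:\mu_0(\{p\})\geq\frac1{N+1}\}=\{-e_1,\dots,-e_N\}$ onto $\{q:\mu_1(\{q\})\geq\frac1{N+1}\}=\{-e_1,\dots,-e_N\}$, producing a permutation $\sigma$ with $T(-e_i)=-e_{\sigma(i)}$. \emph{Step 2 (identify $T$ on the rays).} For $p=-e_i+te_j\in\supp(\mu_0)$ with $t>0$ one computes $\|p-(-e_i)\|=t$, $\|p-(-e_j)\|=\sqrt{t^2+2t+2}$ and $\|p-(-e_k)\|=\sqrt{t^2+2}$ for $k\neq i,j$, so among the $N$ vertex points $-e_i$ is the unique nearest (at distance $t$) and $-e_j$ is the unique farthest (when $N\geq3$) or simply the other one (when $N=2$). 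As $T$ is an isometry permuting the vertex points and $T(p)$ is not a vertex point (else $p=T^{-1}(T(p))$ would be one, contradicting $t>0$), $T(p)$ lies on some $R^{\circ}_{ab}$ and the identical distance pattern forces $a=\sigma(i)$, $b=\sigma(j)$, and nearest-distance $t$; hence $T(p)=-e_{\sigma(i)}+te_{\sigma(j)}$, i.e. $T\circ\phi_{ij}=\phi_{\sigma(i)\sigma(j)}$ on $\supp(\rho_{0,ij})$. \emph{Step 3 (read off the edge laws).} By Step 2, $T_\sharp\eta_{\rho_{0,ij}}=(\phi_{\sigma(i)\sigma(j)})_\sharp\rho_{0,ij}$; restricting $T_\sharp\mu_0=\mu_1$ to the Borel set $R^{\circ}_{ab}$ removes every $\delta_{-e_k}$ summand and, by disjointness of the $R^{\circ}$'s, every $\eta$-summand except a single one on each side, leaving $(\phi_{ab})_\sharp\big(\rho_{0,\sigma^{-1}(a)\sigma^{-1}(b)}|_{(0,\infty)}\big)=(\phi_{ab})_\sharp\big(\rho_{1,ab}|_{(0,\infty)}\big)$. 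Injectivity of $\phi_{ab}$ gives $\rho_{0,\sigma^{-1}(a)\sigma^{-1}(b)}=\rho_{1,ab}$ on $(0,\infty)$, and since both are probability measures on $[0,\infty)$ they also agree at $\{0\}$; thus $\rho_{1,ab}=\rho_{0,\sigma^{-1}(a)\sigma^{-1}(b)}$ for all $a\neq b$, which is exactly the assertion that $\nu_0$ and $\nu_1$ are isomorphic.

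I expect Step 2 to be the crux: it is where the \emph{a priori} arbitrary metric isometry $T$ is upgraded to an honest relabelling of coordinates. The reason it goes through is structural — the vertex points $-e_1,\dots,-e_N$ all share the same norm and the same pairwise distances, so they form a rigid configuration that any isometry must permute, and both the scalar $t$ and the incident pair $(i,j)$ of a ray point are determined by its distances to those vertex points. Given Step 2, the disentangling in Step 3 — separating the superposed measures $\sum_{i\neq j}\eta_{\rho_{ij}}$ — is routine thanks to the disjointness of the open rays, the only mild subtlety being the treatment of the atom at $0$ of each edge distribution, which I handle at the end via total mass.
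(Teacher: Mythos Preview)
Your proof is correct and shares the paper's overall architecture---identify the vertex atoms by mass, pin down $T$ on the rays, then read off the edge laws---but you and the paper diverge in the middle step. The paper extends the isometry $T$ to all of $\mathbb R^N$, invokes Mazur--Ulam to write it as $x\mapsto Ux+b$ with $U$ orthogonal, uses $\|T(-e_i)\|=1$ to narrow $b$ to $0$ or $-\tfrac{2}{N}\mathbbm 1$, and then rules out the nonzero translation by a measure argument; only afterwards does it conclude $T(-e_i+te_j)=-e_{\sigma'(i)}+te_{\sigma'(j)}$. Your Step~2 bypasses all of this: you work intrinsically on $\supp(\mu_0)$, observing that a ray point $p=-e_i+te_j$ is uniquely determined among support points by its distance profile $(t,\sqrt{t^2+2t+2},\sqrt{t^2+2},\dots)$ to the $N$ vertex atoms, and that any isometry permuting those atoms must preserve this profile. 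This is more elementary---no extension step, no Mazur--Ulam, no spurious translation to eliminate---and arguably cleaner, since the paper's extension of a support-level isometry to a global rigid motion is asserted without justification. The paper's route, on the other hand, delivers the explicit affine form of $T$, which may be conceptually useful elsewhere (e.g., in \cref{lem:innerProductPreserving}). Your handling of the $t=0$ atom via total mass at the end of Step~3 is also slightly tidier than the paper's separate treatment of the $\delta_0$ term.
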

The proof of \cref{prop:permContinuous}, included in \cref{proof:prop:permContinuous}, uses the particular construction of the embedding in \eqref{eq:embedding}. Notably, the first term in \eqref{eq:embedding} serves to compare the individual edge distributions, whereas the second term ensures that the only possible isometry relating $\mu_0$ and $\mu_1$ is a permutation of coordinate axes. Without this final term, $\mathsf D(\mu_0,\mu_1)$ may nullify spuriously, e.g. if, for every $i,j\in[N]$ with $i<j$, $\rho_{0,ij}$ and $\rho_{1,ij}$ are related by a common translation.

\subsection{GW testing for graph isomorphism}
We are ready to set up the statistical GW-based test for graph isomorphism. Given mutually independent collections of sample graphs $G_{0,1},\dots,G_{0,n}\sim \nu_0\in\mathcal P_{\mathrm{ind}}(\mathcal G_N)$ and $G_{1,1},\dots,G_{1,n}\sim \nu_1\in\mathcal P_{\mathrm{ind}}(\mathcal G_N)$, we seek to test between the hypotheses:
\[
H_0:\,\nu_0\text{ and }\nu_1\text{ are isomorphic}\qquad \text{ vs. }\qquad H_1:\, \nu_0\text{ and }\nu_1\text{ are not isomorphic}.
\]
Given \cref{prop:permContinuous}, this amounts to testing $H_0:\mathsf D(\iota(\nu_0),\iota(\nu_1))= 0$ versus the alternative $H_1:\mathsf D(\iota(\nu_0),\iota(\nu_1))> 0$. To formulate the proposed test statistic, let $\hat\rho_{k,ij,n}=\frac{1}{n}\sum_{l=1}^n\delta_{\mathbf W_{ij}^{G_{k,l}}}$, for $k\in\{0,1\}$ and $i,j\in[N]$ with $i<j$, which corresponds to the empirical edge distribution, and let $\nu_{0,n}$ and $\nu_{1,n}$ denote the corresponding distributions on $\mathcal G_N$. With this, the test statistic is given by  $\sqrt n\mathsf D\big(\iota(\nu_{0,n}),\iota(\nu_{1,n})\big)^2$ and the critical value for the asymptotic level $\alpha \in(0,1)$ is given  by $q_{n,1-\alpha}$, the conditional $(1-\alpha)$ quantile of the direct estimator $L_n = \sqrt 2 \sup_{u\in\mathcal H_n}Z_n^{\intercal}u$ for $Z_n\sim N(0,\bsigma_n)$ as defined in \eqref{eq:directEstimatorGeneral},   where  $\mu_{0,n}=\iota(\nu_{0,n})$ and $\bsigma_n$ is the empirical analogue of $\bsigma_Z$ defined in \eqref{eq:covarianceStruct}. Using the machinery developed in our study of the empirical discrete GW distance from \cref{sec:DiscreteGW}, we prove in \cref{proof:thm:statsContinuousWeights} that the proposed test is consistent.
   
\begin{theorem}[Test consistency and asymptotic distribution]
\label{thm:statsContinuousWeights}
Let $\nu_0,\nu_1 \in\mathcal P_{\mathrm{ind}}(\mathcal G_{N})$ be such that their edge distributions, $(\rho_{0,ij})_{i,j=1:\,i<j}^N$ and $(\rho_{1,ij})_{i,j=1:\,i<j}^N$, are supported in $[0,K]$. Then, with $\mu_{k}= \iota(\nu_{k})$ and
 $\mu_{k,n}=\iota(\nu_{k,n})$ for $k\in\{0,1\}$, we have
 \[
    \mathbb E[|\mathsf D(\mu_{0,n},\mu_{1,n})^2-\mathsf D(\mu_{0},\mu_{1})^2|]\lesssim (1+2K^4)n^{-\frac 12}.
 \]
 Furthermore, if $(\rho_{0,ij})_{i,j=1:\,i<j}^N$ and $(\rho_{0,ij})_{i,j=1:\,i<j}^N$ are each finitely supported, then with the notation of \cref{thm:discreteGWLimitDistribution},  
    there exists Gaussian processes $\chi_{\mu_0}$ in $\ell^{\infty}(\mathcal F_0)$ and $\chi_{\mu_1}$ in $\ell^{\infty}(\mathcal F_1)$ for which  
    \[
        \begin{aligned}
        \sqrt n\left( \mathsf D(\mu_{0,n},\mu_{1,n})^2\mspace{-1mu}-\mspace{-1mu}\mathsf D(\mu_{0},\mu_{1})^2 \right)\stackrel{d}{\to}
\inf_{\mathbf{A}\in\mathcal A}
            \inf_{\pi \in \bar \Pi^{\star}_{\mathbf{A}}} \sup_{(\varphi_0,\varphi_1)\in\bar {\mathcal D}_{\mathbf{A}}}\left\{\chi_{\mu_0} ( f_0\mspace{-1mu}+\mspace{-1mu}g_{0,\pi}\mspace{-1mu}+\mspace{-1mu}\bar\varphi_0)\mspace{-1mu}+\mspace{-1mu}\chi_{\mu_1}(f_1\mspace{-1mu}+\mspace{-1mu}g_{1,\pi}\mspace{-1mu}+\mspace{-1mu}\bar\varphi_1)
        \right\}.
    \end{aligned} 
    \]
    If, in addition, $\mathsf D(\mu_0,\mu_1)=0$, then 
    $
        \sqrt n \mathsf D(\mu_{0,n},\mu_{1,n})^2
    \stackrel{d}{\to}
\sqrt{2}\|\chi_{\mu_0}\|_{\infty,\bar{\mathcal H}},
    $
    where $\bar{\mathcal H}$ is as defined in \cref{cor:discreteGWLimitDistributionNull} and the test which rejects the null hypothesis, $H_0:\nu_0$ and $\nu_1$ are isomorphic, if $\sqrt n \mathsf D(\mu_{0,n},\mu_{1,n})^2>q_{n,1-\alpha}$ is asymptotically consistent with level $\alpha$.
\end{theorem}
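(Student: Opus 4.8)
The plan is to prove the rate bound from the variational form of $\mathsf D$, then obtain the limit laws by feeding the embedded estimators into \cref{thm:discreteGWLimitDistribution,cor:discreteGWLimitDistributionNull}, and finally combine the null limit with \cref{thm:directEstimator} to control the level and power of the test. Throughout write $\mu_k=\iota(\nu_k)$, $\mu_{k,n}=\iota(\nu_{k,n})$, and set $S\coloneqq\bigcup_{i\neq j}\{-e_i+te_j:t\in[0,K]\}\cup\{-e_i:i\in[N]\}$, which contains the supports of all these measures, lies in the ball of radius $\sqrt{1+K^2}$, and is a compact one-dimensional set. Since $\rho\mapsto\eta_\rho$ is affine, \eqref{eq:embedding} yields the key identity $\mu_{k,n}-\mu_k=\tfrac1{N+1}\big(\tfrac1n\sum_{l=1}^n\tilde G_{k,l}-\mathbb E[\tilde G_{k,1}]\big)$, where $\tilde G_{k,l}\coloneqq\tfrac1{N(N-1)}\sum_{i\neq j}\delta_{-e_i+\mathbf W^{G_{k,l}}_{ij}e_j}$ are i.i.d.\ (in $l$) $\mathcal P(S)$-valued random measures with mean $\tfrac1{N(N-1)}\sum_{i\neq j}\eta_{\rho_{k,ij}}$, and the collections $(\tilde G_{0,l})_l$, $(\tilde G_{1,l})_l$ are independent; thus each $\mu_{k,n}-\mu_k$ is $\tfrac1n$ times a sum of i.i.d.\ centered $\mathcal P(S)$-valued variables.

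For the rate bound, split $\mathsf D(\mu_0,\mu_1)^2=\mathsf S_1(\bar\mu_0,\bar\mu_1)+\mathsf S_2(\bar\mu_0,\bar\mu_1)$ via \eqref{eq:GWVariational}. The term $\mathsf S_1$ is an explicit function of the second and fourth moments of the marginals, hence Lipschitz in $\|\mu_{k,n}-\mu_k\|_{\infty,\mathcal G}$ for $\mathcal G$ the fixed class of degree $\le 4$ polynomials restricted to $S$, with Lipschitz constant $\lesssim 1+K^4$. For $\mathsf S_2$, the discussion preceding \cref{thm:discreteGWStability} confines all minimizers of $\Phi_{(\bar\nu_0,\bar\nu_1)}$, $(\nu_0,\nu_1)\in\mathcal P(S)\times\mathcal P(S)$, to $B_{\mathrm F}(M)$ with $M\lesssim 1+K^2$, so an envelope argument reduces the fluctuation of $\mathsf S_2$ to $\sup_{\mathbf A\in B_{\mathrm F}(M)}|\mathsf{OT}_{\mathbf A}(\bar\mu_0,\bar\mu_1)-\mathsf{OT}_{\mathbf A}(\bar\mu_{0,n},\bar\mu_{1,n})|$; via the dual \eqref{eq:OTDuality}, replacing the OT potentials by bounded Lipschitz versions on a fixed compact neighbourhood of $S$ (with sup-norm and Lipschitz constant $\lesssim 1+K^4$ uniformly over $\mathbf A\in B_{\mathrm F}(M)$), this is $\lesssim_K\sum_k\|\mu_{k,n}-\mu_k\|_{\infty,\mathcal L}$ for a fixed class $\mathcal L$ of Lipschitz functions on $S$. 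Because $S$ is compact and one-dimensional, $\mathcal G\cup\mathcal L$ has a finite uniform-entropy integral, so the i.i.d.\ identity above and a standard maximal inequality give $\mathbb E\|\mu_{k,n}-\mu_k\|_{\infty,\mathcal G\cup\mathcal L}\lesssim n^{-1/2}$; collecting the $K$-dependence yields $\mathbb E|\mathsf D(\mu_{0,n},\mu_{1,n})^2-\mathsf D(\mu_0,\mu_1)^2|\lesssim(1+2K^4)n^{-1/2}$.

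Suppose now the edge distributions are finitely supported, so $\mu_0,\mu_1$ are finitely discrete and $\mu_{k,n}\in\mathcal P_{\mu_k}$ a.s.\ (observed edge weights lie in $\supp(\rho_{k,ij})$). With $\mathcal F^\oplus$ as in \cref{thm:discreteGWStability} (direct sums of unit-bounded functions on the finite sets $\mathcal X_k=\supp(\mu_k)$, trivially Donsker), the i.i.d.\ identity and the multivariate CLT give $\sqrt n(\mu_{k,n}-\mu_k)\stackrel{d}{\to}\chi_{\mu_k}$ for tight centered Gaussian processes $\chi_{\mu_k}$ in $\ell^\infty(\mathcal F_k)$, and independence of the two sample collections yields $\sqrt n(\mu_{0,n}\otimes\mu_{1,n}-\mu_0\otimes\mu_1)\stackrel{d}{\to}\chi_{\mu_0\otimes\mu_1}$ with $\chi_{\mu_0\otimes\mu_1}(g_0\oplus g_1)=\chi_{\mu_0}(g_0)+\chi_{\mu_1}(g_1)$. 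Thus \cref{assn:weakConvergence} holds and \cref{thm:discreteGWLimitDistribution} applies verbatim, giving the stated limit. When in addition $\mathsf D(\mu_0,\mu_1)=0$ — i.e.\ $H_0$, by \cref{prop:permContinuous} — $\mu_0$ is not a point mass (its support contains $-e_1,\dots,-e_N$), so \cref{cor:discreteGWLimitDistributionNull} applies and produces the $\inf_T\sup_h$ form with $\chi_{\mu_0\otimes\mu_1}$ in place of $\chi$; the further collapse to $\sqrt2\|\chi_{\mu_0}\|_{\infty,\bar{\mathcal H}}$ is exactly the computation carried out there for empirical measures, as it uses only that $\chi_{\mu_0\otimes\mu_1}$ is a direct sum of the independent processes $\chi_{\mu_0},\chi_{\mu_1}$, which are equal in law up to the coordinate-permutation isometry relating $\mu_0$ and $\mu_1$, together with the fact that the set $\mathcal H$ is translation invariant and symmetric ($h\in\mathcal H\Leftrightarrow-h\in\mathcal H$, since the right-hand side defining $\mathcal H$ is symmetric in its two arguments).

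For the test, under $H_0$ the statistic satisfies $\sqrt n\mathsf D(\mu_{0,n},\mu_{1,n})^2\stackrel{d}{\to}L_{\mu_0}\coloneqq\sqrt2\|\chi_{\mu_0}\|_{\infty,\bar{\mathcal H}}$, while \cref{thm:directEstimator} (whose hypotheses, as stated there, cover the estimator $\mu_{0,n}=\iota(\nu_{0,n})$ under \cref{assn:simplifiedForm}, with the covariance $\bsigma_n$ of the graph-testing setup) gives $\sup_{t\ge0}|\mathbb P(L_n\le t\mid\text{data})-\mathbb P(L_{\mu_0}\le t)|\to0$ a.s.; since $L_{\mu_0}$ has a continuous distribution function, strictly increasing at $q_{1-\alpha}>0$ (as $\|\chi_{\mu_0}\|_{\infty,\bar{\mathcal H}}>0$ a.s., $\mu_0$ not being a point mass), the conditional quantile satisfies $q_{n,1-\alpha}\to q_{1-\alpha}$ a.s., and Slutsky gives rejection probability $\to\mathbb P(L_{\mu_0}>q_{1-\alpha})=\alpha$. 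Under $H_1$, the rate bound forces $\sqrt n\mathsf D(\mu_{0,n},\mu_{1,n})^2\to\infty$ a.s., whereas $q_{n,1-\alpha}$ depends on $\mu_{0,n}$ alone and still converges to the finite $q_{1-\alpha}$ (its feasible set and covariance depend continuously on $\supp(\bar\mu_0)$ and $\bsigma_{\bar\mu_0}$, estimated consistently regardless of which hypothesis holds), so rejection probability $\to1$. The main obstacle is the rate bound for general (continuous) edge distributions: existing empirical-GW rates presume $\mu_{k,n}$ to be the empirical measure of i.i.d.\ draws from $\mu_k$, which $\iota(\nu_{k,n})$ is not, so one must exploit the exact i.i.d.\ decomposition of $\mu_{k,n}-\mu_k$ into centered $\mathcal P(S)$-valued summands together with the one-dimensionality of $S$ — ensuring the polynomial and OT-potential classes are Donsker with $n^{-1/2}$-scale fluctuation — and marry it to the Lipschitz stability of $\mathsf S_1+\mathsf S_2$; a secondary subtlety is checking that the structural collapse behind \cref{cor:discreteGWLimitDistributionNull} survives replacing the Brownian bridge by the nonstandard Gaussian limit $\chi_{\mu_0}$.
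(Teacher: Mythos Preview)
Your proposal is correct and follows essentially the same architecture as the paper: rate via the variational form plus an entropy bound, limit via \cref{thm:discreteGWLimitDistribution}, null collapse via \cref{cor:discreteGWLimitDistributionNull}, and test validity via \cref{thm:directEstimator}. The one substantive difference is in the rate argument: you keep the $\mu_{k,n}-\mu_k$ as a graph-wise i.i.d.\ average of $\mathcal P(S)$-valued summands and invoke a maximal inequality over Lipschitz functions on the one-dimensional set $S$, whereas the paper decomposes one step further, edge-wise, into the scalar empirical processes $(\hat\rho_{k,ij,n}-\rho_{k,ij})$ and then works with the class of \emph{convex} $1$-Lipschitz functions on $[0,K]$ (using the Bronshtein-type entropy bound $\log N\lesssim\epsilon^{-1/2}$), after which it simply ports in the proof of the Zhang et al.\ rate theorem. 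Both routes give the $n^{-1/2}$ rate; the paper's edge-wise reduction is cleaner in that it lands directly on a standard one-dimensional class with a sharp off-the-shelf entropy bound, while your graph-wise route is slightly more direct but requires you to argue the entropy of Lipschitz functions on the union-of-segments set $S$ yourself. For the null collapse, you correctly identify that what is needed is the equality in law $\chi_{\mu_1}(g(\cdot-\mathbb E_{\mu_1}[X]))\stackrel d=\chi_{\mu_0}(g\circ T(\cdot-\mathbb E_{\mu_0}[X]))$ for any Gromov--Monge map $T$; the paper verifies this explicitly (their \cref{assn:simplifiedForm}) by computing the edge-wise covariance structure, which also yields the orthogonality $Z^\intercal\mathbbm 1=0$ needed for the linear program underlying $L_n$ to be well-posed.
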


\subsection{Computational framework} 
\label{sec:computation}

For the purposes of testing if two random graph models are isomorphic by using the GW-based approach, the distance $\mathsf D(\mu_{0,n},\mu_{1,n})$ should be computed exactly to obtain statistically significant results. As computing this statistic is NP-complete in general, we propose to apply a local subgradient-based solver with a particular warm-start strategy. %

Recalling \eqref{eq:GWVariational}, $\mathsf D(\eta_0,\eta_1)^2=\mathsf S_1(\bar \eta_0,\bar \eta_1)+\inf_{\mathbf A\in\mathbb R^{d_0\times d_1}}\Phi_{(\eta_0,\eta_1)}(\mathbf A)$ for any $(\eta_0,\eta_1)\in\mathcal P(\mathbb R^{d_0})\times \mathcal P(\mathbb R^{d_1})$, whereby $\mathsf D(\eta_0,\eta_1)^2$ can be computed by  minimizing $\Phi_{(\eta_0,\eta_1)}$. This approach was explored in the context of the entropic GW problem in \cite{rioux2023entropic}, where the underlying minimization problem is smooth, but possibly nonconvex. We adapt Algorithm 2 in that work to the nonsmooth case by substituting the subgradient of $\Phi_{\bar\eta_0,\bar\eta_1}$, derived in  \cref{lem:ClarkeSubdifferential}, in place of the gradient (see \cref{alg:subgradient}). Though the convergence guarantees presented in \cite[Theorem 11]{rioux2023entropic} no longer apply in the current setting, in the experiments presented in \cref{sec:NumericalExperiments}, \cref{alg:subgradient} always returns a point at which a subgradient with  norm less than $5\times10^{-8}$ exists when $L$ is set to $128$. 

A warm-start for \cref{alg:subgradient} consists of a particular choice for the initialization $\mathbf C_0$. Our proposed warm-start methods are motivated by the fact that,  under the null, the set of optimizers for $\Phi_{(\bar \mu_0,\bar \mu_1)}$ is a subset of all matrices of the  form $\mathbf A_T=\frac{1}{2}\int xT(x)^{\intercal}d\bar \mu_0(x)$, where $T:\mathbb R^N\to \mathbb R^N$ is an isometry corresponding to a permutation of the coordinate axes. We write $\mathcal{T}$ for the set of all such isometries. The fact that any limit point of a  sequence of minimizers of $\Phi_{(\bar \mu_{0,n},\bar \mu_{1,n})}$ minimizes $\Phi_{(\bar \mu_{0},\bar \mu_{1})}$ with probability $1$ (see \cref{app:gammaConvergence}) suggests numerically estimating $\mathsf D(\mu_{0,n},\mu_{1,n})^2$ via an \emph{exhaustive search}. Namely, initialize \cref{alg:subgradient} at all such matrices $\mathbf A_T$, record the output of the subgradient method as $\Lambda_T$, and return $\min_{T\in\mathcal T}\Lambda_T$ as an estimate for the GW distance.
The exhaustive search is accurate under the null for large $n$, but onerous given that it requires solving $N!$ optimization problems, which limits its  usage to small scale problems.

\begin{algorithm}[!h]
\caption{Subgradient method for estimating $\mathsf D{(\eta_0,\eta_1)}^2$}\label{alg:subgradient}
\begin{algorithmic}[1]
\Statex Given the initialization $\mathbf C_0\in\mathcal D_M$,  tolerance $\delta>0$, and $L>0$, fix the step sequences $\beta_k=\frac{1}{2L}$, $\gamma_k=\frac{k}{4L}$, and $\tau_k=\frac{2}{k+2}$. 
\State $k\gets 1$
\State $\mathbf A_1\gets \mathbf C_0$
\State $\mathbf G_1\gets 64 \mathbf A_1+32\int xy^{\intercal} d\pi_{\mathbf A_1}(x,y)$ for any OT plan $\pi_{\mathbf A_1}$ of $\mathsf{OT}_{\mathbf A_1}(\bar\eta_0,\bar\eta_1)$
\While{$\|\mathbf G_k\|_{\mathrm F}\geq \delta$}
\State $\mathbf B_{k}\gets \min\left(1,\frac{M}{2\|\mathbf A_k-\beta_k\mathbf G_k\|_F}\right)(\mathbf A_k-\beta_k\mathbf G_k)$
\State $\mathbf C_{k}\gets 
\min\left(1,\frac{M}{2\|\mathbf C_{k-1}-\gamma_k\mathbf G_k\|_F}\right)(\mathbf C_{k-1}-\gamma_k\mathbf G_k)$
\State $\mathbf A_{k+1}\gets\tau_k\mathbf C_k+(1-\tau_k) \mathbf B_k$
\State $\mathbf G_{k+1}\gets 64 \mathbf A_{k+1}+32\int xy^{\intercal} d\pi_{\mathbf A_{k+1}}(x,y)$ for any OT plan $\pi_{\mathbf A_{k+1}}$ of $\mathsf{OT}_{\mathbf A_{k+1}}(\bar\eta_0,\bar\eta_1)$
\State $k\gets k+1$
\EndWhile
\State \textbf{return} $\mathsf S_1(\bar \eta_0,\bar \eta_1)+\Phi_{(\bar\eta_0,\bar\eta_1)}(\mathbf B_k)$
\end{algorithmic}
\end{algorithm}

\subsubsection{Relaxed graph matching} To scale up the computation of $\mathsf D(\mu_{0,n},\mu_{1,n})^2$ to larger problems, we proposed an alternative approach to the exhaustive search, applicable for unweighted graphs. Consider the related problem of directly aligning the empirical probabilities $\hat p_{0,ij}$ and $\hat p_{1,ij}$ of two unweighted graphs by defining  
\[
    \hat {\mathbf{P}}_0= \begin{pmatrix}
       0&\hat p_{0,12}&\hat p_{0,13}&\dots&\hat p_{0,1N}
       \\\hat  
       p_{0,21}&0&\hat p_{0,23}&\dots&\hat p_{0,2N}
       \\
       \vdots &\vdots & \vdots &\vdots&\vdots 
       \\\hat  
       p_{0,N1}&\hat p_{0,N2}&\hat p_{0,N3}&\dots&0
    \end{pmatrix} \quad \hat {\mathbf{P}}_0= \begin{pmatrix}
       0&\hat p_{1,12}&\hat p_{1,13}&\dots&\hat p_{1,1N}
       \\\hat  
       p_{1,21}&0&\hat p_{1,23}&\dots&\hat p_{1,2N}
       \\
       \vdots &\vdots & \vdots &\vdots&\vdots 
       \\\hat  
       p_{1,N1}&\hat p_{1,N2}&\hat p_{1,N3}&\dots&0
    \end{pmatrix},
\]
and solving the problem $\min_{\mathbf E\in \mathcal E}\|\mathbf E\hat{\mathbf P}_0-\hat{\mathbf P}_1\mathbf E\|_{\mathrm F}^2$, where $\mathcal E$ is the set of all permutation matrices on $\mathbb R^N$. This is known as the graph matching problem and, whose exact solution generally requires an exhaustive search, though the objective can be evaluated quickly.

A common relaxation of the graph matching problem replaces $\mathcal E$ by its convex hull, known as the \emph{Birkhoff polytope},
\[
\conv(\mathcal E)=\left\{ \mathbf E \in\mathbb R^{N\times N}: \mathbf E \geq 0, \mathbf E\mathbbm 1_{N_0}=\mathbf E^{\intercal}\mathbbm 1_{N_0}=\mathbbm 1_{N_0}\right\}, 
\]
which yields the convex relaxation $\min_{\mathbf R\in \conv(\mathcal E)}\|\mathbf R\hat{\mathbf P}_0-\hat{\mathbf P}_1\mathbf R\|_{\mathrm F}^2$.
 As a solution, ${\mathbf R^{\star}}$, to this relaxed problem is not necessarily a permutation matrix, a 
 projection onto the set of permutation matrices is performed by solving $\max_{\mathbf E\in\mathcal E}\langle \mathbf R^{\star},\mathbf E\rangle_{\text{F}}$, which is known as the linear assignment problem and can be solved efficiently. The permutation matrix $\mathbf E^{\star}$ obtained by performing these two steps is called a \emph{relaxed graph matching}. The approach is summarized in \cref{alg:relaxedGraph} below.

 \begin{algorithm}
\caption{Relaxed graph matching}\label{alg:relaxedGraph}
\begin{algorithmic}[1]
\Statex 

Given $\mu_{0,n},\mu_{1,n}$, construct $\hat {\mathbf{P}}_0$ and $\hat {\mathbf{P}}_1$,
\State Obtain $\mathbf R^{\star}\in\textstyle{\argmin_{\mathbf   R\in\conv(\mathcal E)}}\|\mathbf R{\hat{\mathbf{P}}}_0-\hat{{\mathbf{P}}}_1\mathbf R\|_{\mathrm F}^2$ using convex optimization software.
\State Obtain $\mathbf E^{\star}\in\argmax_{\mathbf E\in\mathcal E}\langle \mathbf R^{\star},\mathbf E\rangle_{\mathrm F}$ using a linear assignment problem solver.
\State \Return the output of \cref{alg:subgradient} initialized at $\mathbf A_{T^{\star}}$, where $T^{\star}$ is the permutation of axes corresponding to $\mathbf E^{\star}$. 
\end{algorithmic}
\end{algorithm}

 Remarkably, there are settings where this relaxed graph matching is known to coincide with the solution of the original graph matching problem. 

\begin{proposition}[Theorem 2 in \cite{aflalo2015convex}]
\label{prop:almostIsomorphism}
 Fix $\delta,\epsilon>0$ and suppose that the eigenvalues of $\hat{\mathbf P}_0$, $(\lambda_i)_{i=1}^N$, satisfy $\min_{i,j\in [N], \ i\neq j}|\lambda_i-\lambda_j|> \delta$, $\max_{i\in[N]}|\lambda_i|=1$, and its eigenvectors $(v_i)_{i=1}^N$ satisfy $\epsilon \leq v_i^{\intercal}\mathbbm 1_{N}\leq\frac 1 \epsilon$ for $i\in[N]$. If 
  $\|\mathbf E^{\star}\hat{\mathbf P}_0-\hat{\mathbf P}_1\mathbf E^{\star}\|_{\mathrm F}  < \frac{\delta^2\epsilon^4}{12 N^{1.5}}$, then the graph matching problem and  its relaxation are equivalent.     
\end{proposition}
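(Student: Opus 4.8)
This proposition is \cite[Theorem~2]{aflalo2015convex}; for completeness I outline the route I would take to reprove it. The goal is to show that the convex relaxation $\min_{\mathbf R\in\conv(\mathcal E)}\|\mathbf R\hat{\mathbf P}_0-\hat{\mathbf P}_1\mathbf R\|_{\mathrm F}^2$ is \emph{tight}, i.e.\ its optimal value equals $\min_{\mathbf E\in\mathcal E}\|\mathbf E\hat{\mathbf P}_0-\hat{\mathbf P}_1\mathbf E\|_{\mathrm F}^2$ with the rounded point $\mathbf E^{\star}$ attaining it. Since $\mathcal E\subset\conv(\mathcal E)$, it suffices to prove that the relaxed minimizer $\mathbf R^{\star}$ is itself a permutation matrix; then $\mathbf E^{\star}=\mathbf R^{\star}$ solves the exact graph matching problem and the two values coincide.

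The mechanism is a quantitative ``friendly graph'' rigidity. In the noiseless case $\hat{\mathbf P}_1=\mathbf Q\hat{\mathbf P}_0\mathbf Q^{\intercal}$, any doubly stochastic $\mathbf R$ with $\mathbf R\hat{\mathbf P}_0=\hat{\mathbf P}_1\mathbf R$ yields $\mathbf S:=\mathbf Q^{\intercal}\mathbf R$ commuting with $\hat{\mathbf P}_0$; since the spectrum is simple, $\mathbf S=\sum_i d_iv_iv_i^{\intercal}$, and $\mathbf S\mathbbm 1_N=\mathbbm 1_N$ compared with $\mathbbm 1_N=\sum_i(v_i^{\intercal}\mathbbm 1_N)v_i$ forces $d_i(v_i^{\intercal}\mathbbm 1_N)=v_i^{\intercal}\mathbbm 1_N$; the hypothesis $v_i^{\intercal}\mathbbm 1_N\neq0$ then gives $d_i=1$, i.e.\ $\mathbf R=\mathbf Q$. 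To transport this to the near-isomorphic regime I would replace $\mathbf Q$ by $\mathbf E^{\star}$, set $\hat{\mathbf P}_0':=\mathbf E^{\star}\hat{\mathbf P}_0(\mathbf E^{\star})^{\intercal}$ (which has the same eigenvalues and eigenvectors $\mathbf E^{\star}v_i$ still satisfying $\epsilon\le(\mathbf E^{\star}v_i)^{\intercal}\mathbbm 1_N\le\epsilon^{-1}$ because $(\mathbf E^{\star})^{\intercal}\mathbbm 1_N=\mathbbm 1_N$), and combine optimality of $\mathbf R^{\star}$, feasibility of $\mathbf E^{\star}$, $\|\hat{\mathbf P}_1-\hat{\mathbf P}_0'\|_{\mathrm F}=\|\mathbf E^{\star}\hat{\mathbf P}_0-\hat{\mathbf P}_1\mathbf E^{\star}\|_{\mathrm F}=:r^{\star}$, and $\|\mathbf R^{\star}\|_{\mathrm{op}}\le1$ to get $\|\mathbf S\hat{\mathbf P}_0-\hat{\mathbf P}_0\mathbf S\|_{\mathrm F}\le 2r^{\star}$ for $\mathbf S=(\mathbf E^{\star})^{\intercal}\mathbf R^{\star}$.

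The estimate is then propagated through the eigenbasis: the commutator acts entrywise by multiplication by $\lambda_j-\lambda_i$, so the gap $\delta$ forces $\mathbf S$ to within $O(r^{\star}/\delta)$ of the matrices diagonal in the eigenbasis, and the bistochasticity relations together with $|v_i^{\intercal}\mathbbm 1_N|\ge\epsilon$ then pin the diagonal coefficients near $1$, placing $\mathbf S$ within $O(N^{3/2}r^{\star}/(\delta\epsilon^{2}))$ of $\mathrm{Id}$; the stated threshold $\delta^2\epsilon^4/(12N^{3/2})$ is (a convenient value of) the regime in which this neighborhood has radius below the separation $2$ between distinct permutation matrices, forcing the relevant bistochastic matrices onto $\mathrm{Id}$.

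The step I expect to be the main obstacle is precisely this final collapse. Running the commutator bound on a genuine exact optimizer $\mathbf E_{\mathrm{opt}}$ (whose objective is $\le\|\mathbf E^{\star}\hat{\mathbf P}_0-\hat{\mathbf P}_1\mathbf E^{\star}\|_{\mathrm F}^2$) produces a \emph{permutation} $(\mathbf E^{\star})^{\intercal}\mathbf E_{\mathrm{opt}}$ within Frobenius distance $<2$ of $\mathrm{Id}$, hence equal to it, so $\mathbf E^{\star}$ already solves the exact problem. Upgrading this to $\mathbf R^{\star}=\mathbf E^{\star}$ (which is what yields value-tightness in the strict sense) is more delicate, since $\mathbf S=(\mathbf E^{\star})^{\intercal}\mathbf R^{\star}$ is only doubly stochastic and the estimate alone gives it close, not equal, to $\mathrm{Id}$; this requires combining convexity of $\mathbf R\mapsto\|\mathbf R\hat{\mathbf P}_0-\hat{\mathbf P}_1\mathbf R\|_{\mathrm F}^2$ with the first-order optimality of the linear-assignment rounding $\mathbf E^{\star}\in\argmax_{\mathbf E\in\mathcal E}\langle\mathbf R^{\star},\mathbf E\rangle_{\mathrm F}$ to exclude $\mathbf R^{\star}$ lying in the relative interior of a face, which is the bookkeeping carried out in \cite{aflalo2015convex}.
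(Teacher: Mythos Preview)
The paper does not prove this proposition at all: it is stated verbatim as Theorem~2 of \cite{aflalo2015convex} and merely invoked, with no accompanying argument. So there is nothing to compare your proof against in the present paper. Your outline is a reasonable reconstruction of the original argument from \cite{aflalo2015convex} (simple spectrum plus the ``friendly'' condition $v_i^{\intercal}\mathbbm 1_N\neq 0$ forcing any commuting doubly-stochastic matrix to be the identity, then a perturbative version thereof), and you correctly flag the only nontrivial step, namely upgrading the near-identity bound on $(\mathbf E^{\star})^{\intercal}\mathbf R^{\star}$ to an exact collapse; for the purposes of this paper, however, the proposition is simply imported.
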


The assumptions of \cref{prop:almostIsomorphism} effectively guarantee that both the relaxed and original problems admit a unique solution (in particular, the only isomorphism relating $\hat{\mathbf  P}_0$ to itself must be the identity). Observe that the conditions concerning  $\hat{\mathbf P}_0$ can be verified \emph{a priori}, and both $\hat{\mathbf P}_0$ and $\hat{\mathbf P}_1$ can be normalized such that the condition $\max_{i\in[N]}|\lambda_i|=1$ holds. Consequently,  upon obtaining $\mathbf E^{\star}$, it suffices to verify that $\|\mathbf E^{\star}\hat {\mathbf P}_0-\hat {\mathbf P}_1\mathbf E^{\star}\|_{\text{F}}$ satisfies the final condition to confirm if the relaxation $\mathbf E^{\star}$ solves the graph matching problem. 

Although \cref{prop:almostIsomorphism} is only applicable in certain settings, the underlying optimization problems can be solved efficiently thus enabling the matching of moderately sized graphs. Moreover, this approach may still yield good solutions beyond the theoretical guarantees. This graph matching approach is well-suited for unweighted graphs, but appears difficult to generalize to weighted case. By contrast, exhaustive search can easily be applied to weighted graphs, but is much more costly to implement and hence is limited to small-scale problems. A thorough numerical study of the performance of the exhaustive search procedure and \cref{alg:relaxedGraph} in provided in \cref{sec:extraExperiments}.

\section{Numerical Experiments}
\label{sec:NumericalExperiments}

The experiments in this section serve to  verify the different results presented in the paper. We examine settings for graph isomorphism testing and present results that numerically validate the developed limit distribution theory.

\subsection{Testing for graph isomorphism}

 The following experiments serve to   empirically validate \cref{thm:statsContinuousWeights} by studying the type 1 and type 2 errors of the proposed test for graph isomorphism.  As the problem is inherently discrete, we operate under the framework of \cref{sec:DiscreteGW} and focus on the consistency of the direct estimator $L_n$ of the limit, which employs linear programming (see \cref{sec:directEstimation}). We implement the test which rejects the null hypothesis ($\nu_0$ and $\nu_1$ are isomorphic) if $\sqrt n\mathsf D(\mu_{0,n},\mu_{1,n})^2>q_{n,1-\alpha}$, where $q_{n,1-\alpha}$ is the $(1-\alpha)$ quantile of $L_n$ and $\alpha$ is the significance level.   
 
\subsubsection{Unweighted graphs}
\label{sec:testingBinary}

 Figure \ref{fig:testingGraphs} presents the underlying  distributions $\nu_0,\nu_1,$ and $\nu_2$ on the set of binary graphs on $10$ vertices. Here, $\nu_0$ and $\nu_1$ are chosen to be isomorphic whereas $\nu_2$ is set to be equal to $\nu_1$ save for the fact that we set $p_{2,12}=p_{1,13}$, resulting in $\nu_0$ and $\nu_2$ not being isomorphic.   
 
 \begin{figure}[!t]
    \centering
    \includegraphics[width=.8\textwidth,height=.33\textwidth]{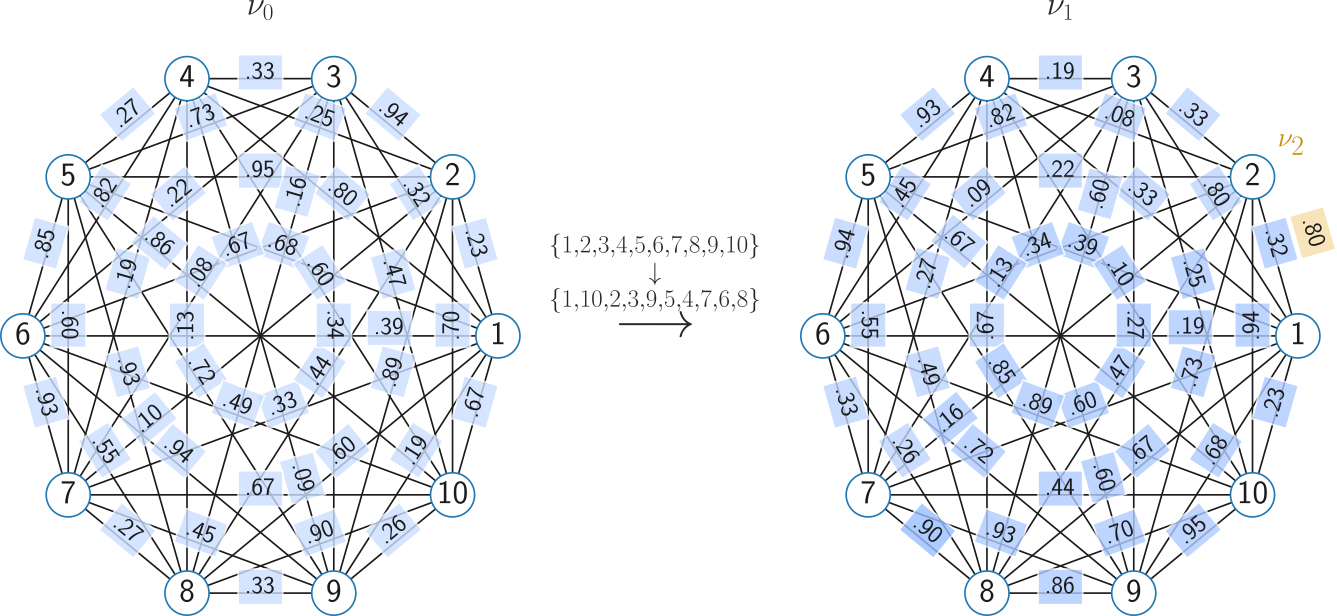}
    \caption{Distributions from \cref{sec:testingBinary}. $\nu_0$ is generated by   sampling $(p_{0,ij})_{\substack{i,j\in[N]\\i<j}}$ from the uniform distribution on $[0,1]$. $\nu_1$ is obtained from $\nu_0$ by a randomly chosen permutation. The modified probability for $\nu_2$ is highlighted in orange. Edge probabilities are presented with two significant figures for ease of reading.}
    \label{fig:testingGraphs}
\end{figure}

  We generate datasets of  $n\in\{10,25,50,100,500,1000,5000\}$ graphs from each of these distributions and construct the estimators $\mu_{0,n},\mu_{1,n},$ and $\mu_{2,n}$ as $\iota(\nu_{0,n}),\iota(\nu_{1,n}), $ and $\iota(\nu_{2,n})$. We then compute $\mathsf D(\mu_{0,n},\mu_{1,n})^2$ and $\mathsf D(\mu_{0,n},\mu_{2,n})^2$ approximately via \cref{alg:relaxedGraph}. 
 Next, we generate $200$ samples from $L_n$ using \cref{alg:directEstimatorGeneral} (an extension of \cref{alg:directEstimator} to estimators other than the empirical measures, as employed herein) and estimate $q_{n,1-\alpha}$ for $\alpha \in \{k/9\}_{k=0}^9$. With this,~we record if $\sqrt n\mathsf D(\mu_{0,n},\mu_{1,n})^2>q_{n,1-\alpha}$, i.e., the test spuriously rejects the null hypothesis, or if $\sqrt n\mathsf D(\mu_{0,n},\mu_{2,n})^2\leq q_{n,1-\alpha}$, whence the test fails to reject the null when the alternative hypothesis is~true.  To estimate the type 1 and type 2 errors, we perform $100$ repetitions of this procedure and report the  fraction of repetitions for which $\sqrt n\mathsf D(\mu_{0,n},\mu_{1,n})^2>q_{n,1-\alpha}$ as the estimated type 1 error and the fraction for which $\sqrt n\mathsf D(\mu_{0,n},\mu_{2,n})^2\leq q_{n,1-\alpha}$ as the estimated type 2 error. These results are compiled in  
  Figure \ref{fig:type12errorBinary}.

\begin{figure}[!t]
    \centering
    \includegraphics[width=0.8\textwidth]{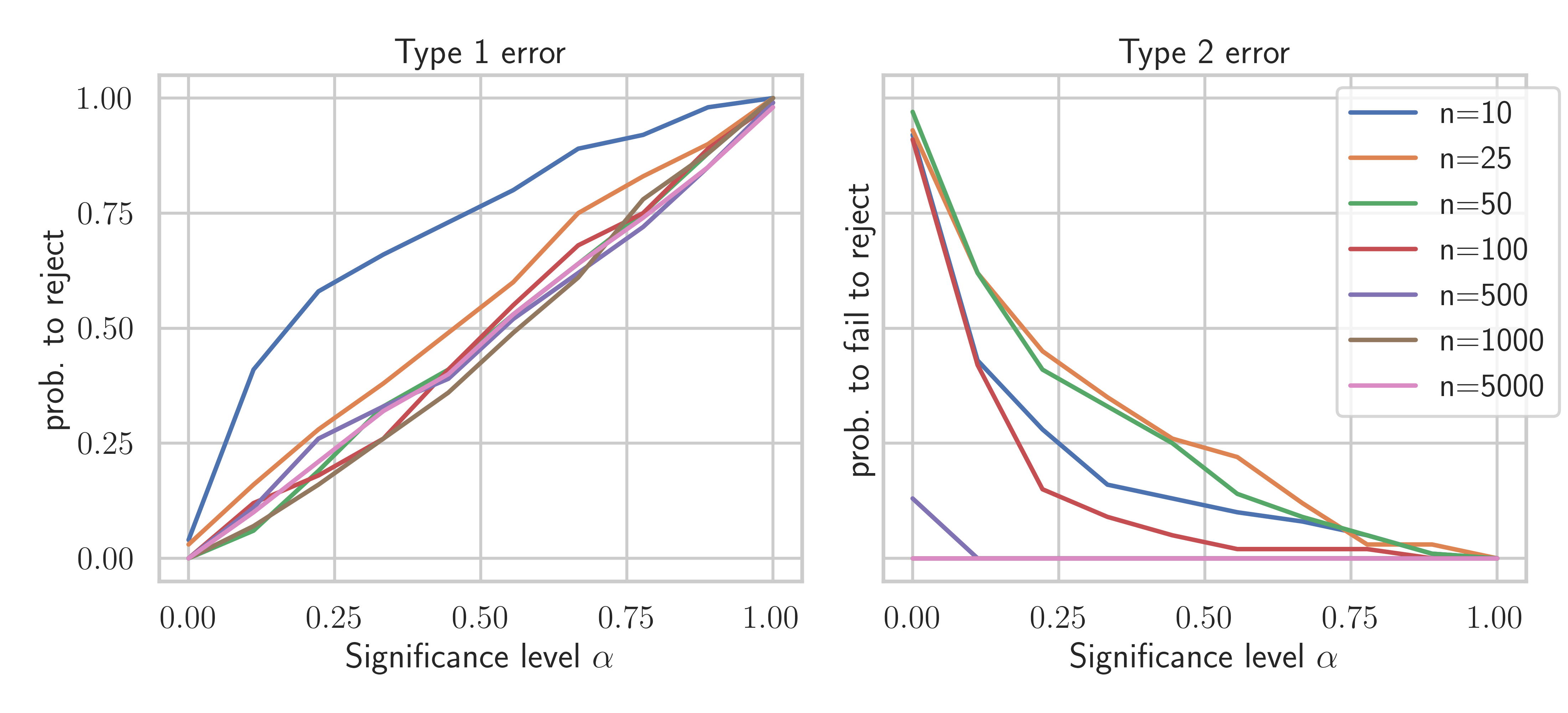}
    \caption{These plots compile the type 1 and type 2 error of the proposed test for varying numbers of samples and desired significance levels by following the methodology described in \cref{sec:testingBinary}.}
    \label{fig:type12errorBinary}
\end{figure}

Figure \ref{fig:type12errorBinary} validates \cref{thm:statsContinuousWeights}. Notably, even for a relatively small number of samples, the graph of the estimated rejection probability as a function of the significance level approaches the graph of the identity function, as expected. In the low sample regime, $n=10$, we note that the type $1$ error is significantly higher than expected whereas the type $2$ error is lower than that for $n=25$ and $n=50$. This can be explained by the fact that \cref{alg:relaxedGraph} is prone to overestimating the GW distance when few samples are available (see \cref{sec:comparisonWarmStart} for additional discussion).

\subsubsection{Graphs with finitely many weights}
\label{sec:testingWeighted}

We now consider the case of weighted graphs whose edges may take on the values $\{0,1,2,3\}$. Figure \ref{fig:testingGraphsWeighted} depicts isomorphic distributions $\nu_0,\nu_1$ on this space of graphs along with a distribution, $\nu_2$, which is identical to $\nu_1$ except for $\rho_{2,12}$ which is set to equal $\rho_{1,13}$ so that $\nu_0$ and $\nu_2$ are not isomorphic. The methodology for estimating the type $1$ and type $2$ error of this statistic is identical to \cref{sec:testingBinary}, up to the fact that we use the exhaustive search to estimate the GW distances (given that the relaxed graph matching approach is limited to unweighted graphs). The results are compiled in Figure \ref{fig:type12errorWeigthed}, presenting similar trends to those discussed above with the notable exception that the performance of the test in the case $n=10$ does not deviate heavily from the performance obtained with larger sample sizes.

\begin{figure}[!htb]
    \centering
    \includegraphics[width=.7\textwidth]{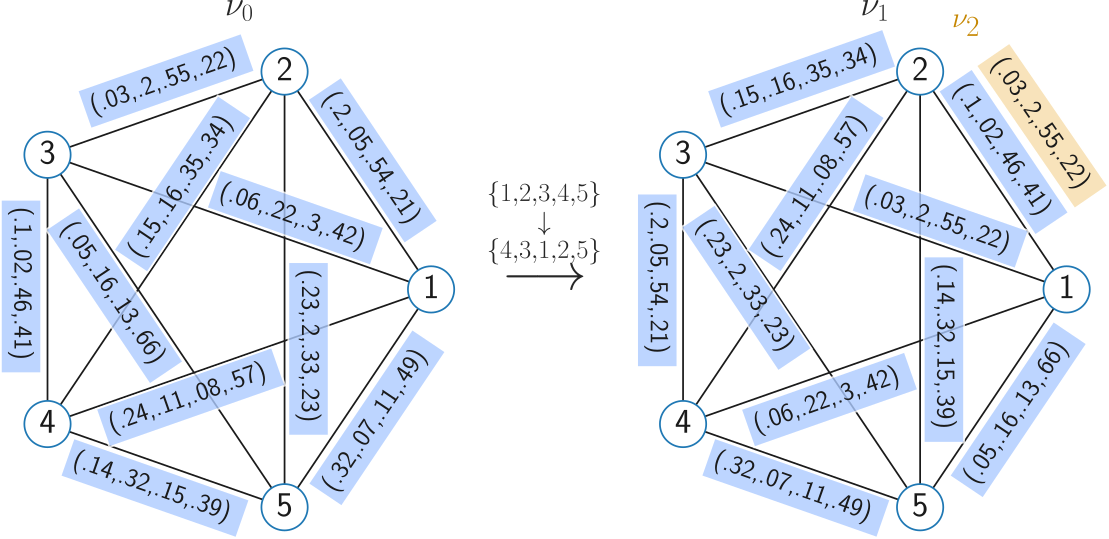}
    \caption{Distributions from \cref{sec:testingWeighted}. First, edge weight probabilities for $\nu_0$ were sampled from the Dirichlet distribution, then a random permutation was chosen to generate $\nu_1$ from $\nu_0$ as depicted. Edge weight probabilities are given as vectors whose $k$-th entry represents the probability (truncated to two significant figures) that the edge takes the weight $k-1$.}
    \label{fig:testingGraphsWeighted}
\end{figure}

\begin{figure}[!htb]
    \centering
    \includegraphics[width=0.75\textwidth]{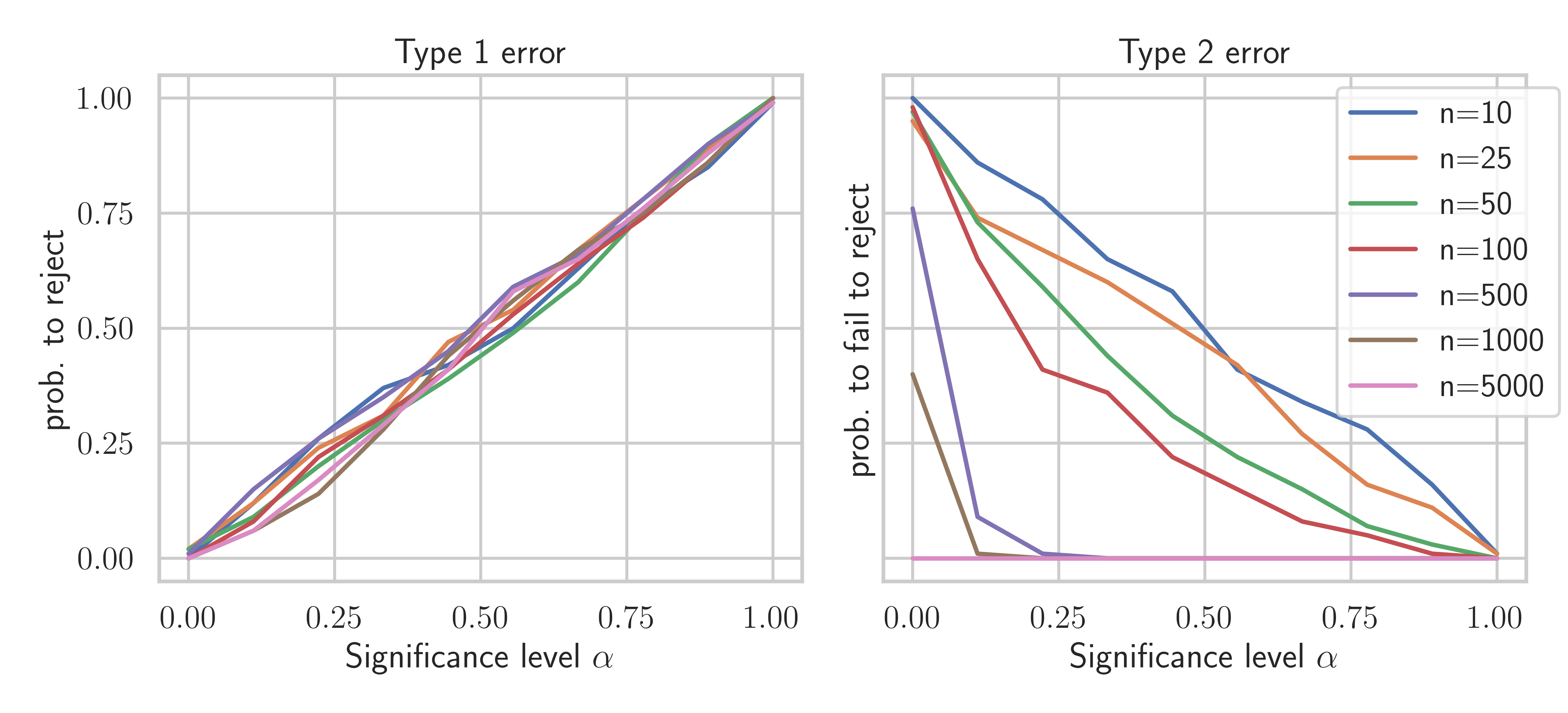}
    \caption{These plots compile the type 1 and type 2 error of the proposed test for varying numbers of samples and desired significance levels by following the methodology described in \cref{sec:testingWeighted}.}
    \label{fig:type12errorWeigthed}
\end{figure}

\subsection{Simulating limit distributions}

We next set to illustrate the structure of the limit distributions under the different settings studied in this paper.

\subsubsection{Graphs with compactly supported weight distributions}
\label{sec:compactDists}
    This experiment explores the law of $\sqrt n \mathsf D(\mu_{0,n},\mu_{1,n})^2$ when the underlying distributions on the edge weights are isomorphic. While \cref{thm:entropicGWLimitDistribution} only accounts for finitely supported distributions, we go beyond that setting and consider here continuous, compactly supported distributions, as shown in Figure \ref{fig:testingContinuousGraphs}. The ground truth permutation relating these distributions is taken to be the identity, and, given $n$ sampled graphs, we estimate $\mathsf D(\mu_{0,n},\mu_{1,n})^2$ using the subgradient method (\cref{alg:subgradient}) initialized at $\frac {1}{2}\int xx^{\intercal} d\bar{\mu}_{0,n}(x)$. To generate additional samples from $\mathsf D(\mu_{0,n},\mu_{1,n})^2$, we draw a new set of $n$ graphs and repeat this procedure. As the underlying distributions are isomorphic, $\mathsf D(\mu_0,\mu_1)=0$ and there is no need to center the statistic. 
  
\begin{figure}[!htb]
    \centering
    \includegraphics[width=.7\textwidth]{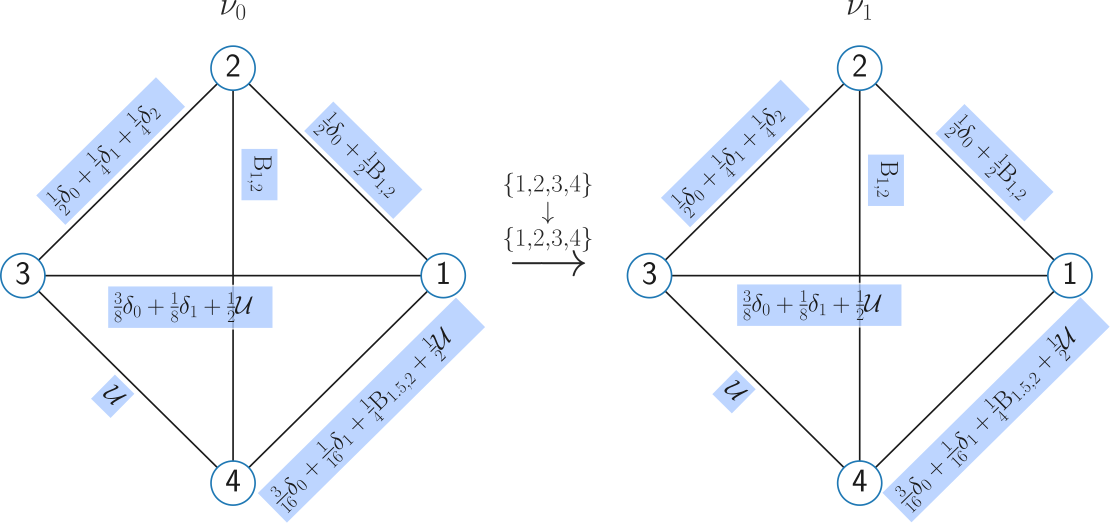}
    \caption{Distributions from \cref{sec:compactDists}. Here $\mathrm B_{a,b}$ denotes the beta distribution with parameters $(a,b)$ whereas $\mathcal U$ is the uniform distribution on $[0,1]$.}
    \label{fig:testingContinuousGraphs}
\end{figure}

\begin{figure}[!htb]
    \centering
    \includegraphics[width=\textwidth]{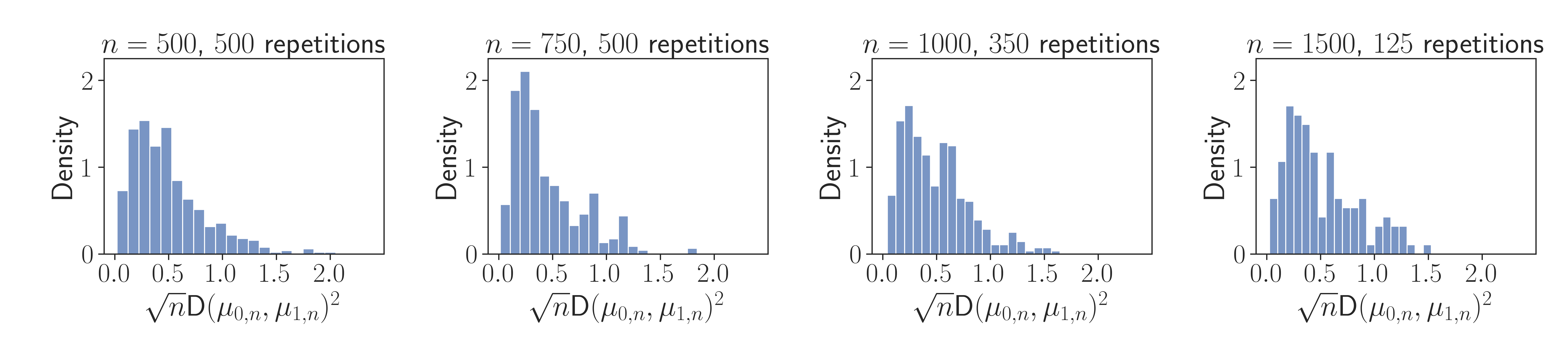}
    \caption{Estimated distribution of $\sqrt n\mathsf D(\mu_{0,n},\mu_{1,n})^2$ for different values of $n$.}
    \label{fig:type12errorContinuous}
\end{figure}

    The results of this experiment, presented in Figure \ref{fig:type12errorContinuous}, appear to indicate that the distribution of $\sqrt n\mathsf D(\mu_{0,n},\mu_{1,n})^2$ is stable as $n$ increases, this finding indicates that the derived limit distribution results from \cref{thm:statsContinuousWeights} may extend to the compact case. Moreover, the mean rate of convergence provided in that result is in line with these simulations.
\subsubsection{Entropic Gromov-Wasserstein distance}
\label{sec:entropicGWExperiment}

As noted in \cref{thm:entropicGWLimitDistribution}, $\mathsf D_{\varepsilon}(\hat \mu_{0,n},\hat \mu_{1,n})^2$ is asymptotically normal under proper scaling and centering  once $\varepsilon>16\sqrt{M_4(\bar\mu_0)M_4(\bar\mu_1)}$. With this, the na\"ive bootstrap can be seen to be consistent by following the proof of Theorem 7 in \cite{goldfeld24statistical}.

To illustrate this finding numerically, we fix $\mu_0$ to be the uniform distribution on $[-1,1]^3$ and $\mu_1$ to be the uniform distribution on the unit sphere in $\mathbb R^3$ and let $\varepsilon = 16.2\sqrt{\frac{19}{15}}>16\sqrt{M_4(\bar \mu_0)M_4(\bar \mu_1)}$, where $\frac{19}{15}$ is the value of the product of the fourth moments. Then, we generate $n=2500$ samples from both distributions to form the empirical measures $\hat \mu_{0,n},\hat\mu_{1,n}$ and compute $\mathsf D_{\varepsilon}(\hat \mu_{0,n},\hat \mu_{1,n})^2$ using Algorithm 1 in \cite{rioux2023entropic}. It is verified that this choice of $\varepsilon$ satisfies $\varepsilon>16\sqrt{M_4(\bar\mu_{0,n})M_4(\bar\mu_{1,n})}$ for each computation so that Theorem 11 in \cite{rioux2023entropic} guarantees that a nearly optimal value is obtained using this procedure. Finally, the boostrapped empirical measures are constructed by sampling $X_{0,1}^B,\dots, X_{0,n}^B$ from $\hat \mu_{0,n}$, $X_{1,1}^B,\dots, X_{1,n}^B$ from $\hat \mu_{1,n}$, and setting $\hat \mu_{0,n}^B\coloneqq\frac 1n \sum_{i=1}^n\delta_{X_{0,i}^B}$, $\hat \mu_{1,n}^B\coloneqq\frac 1n \sum_{i=1}^n\delta_{X_{1,i}^B}$. The distribution of $\sqrt n \left(\mathsf D_{\varepsilon}(\hat \mu_{0,n}^B,\hat \mu_{1,n}^B)^2-\mathsf D_{\varepsilon}(\hat \mu_{0,n},\hat \mu_{1,n})^2\right)$ for fixed $X_1,\dots,X_n$ is then estimated from $1500$ repetitions. The resulting histogram is presented in Figure \ref{fig:entropicHistogram} and is seen to be approximately normal.       

\begin{figure}[!htb]
    \centering
    \includegraphics[width=0.45\textwidth]{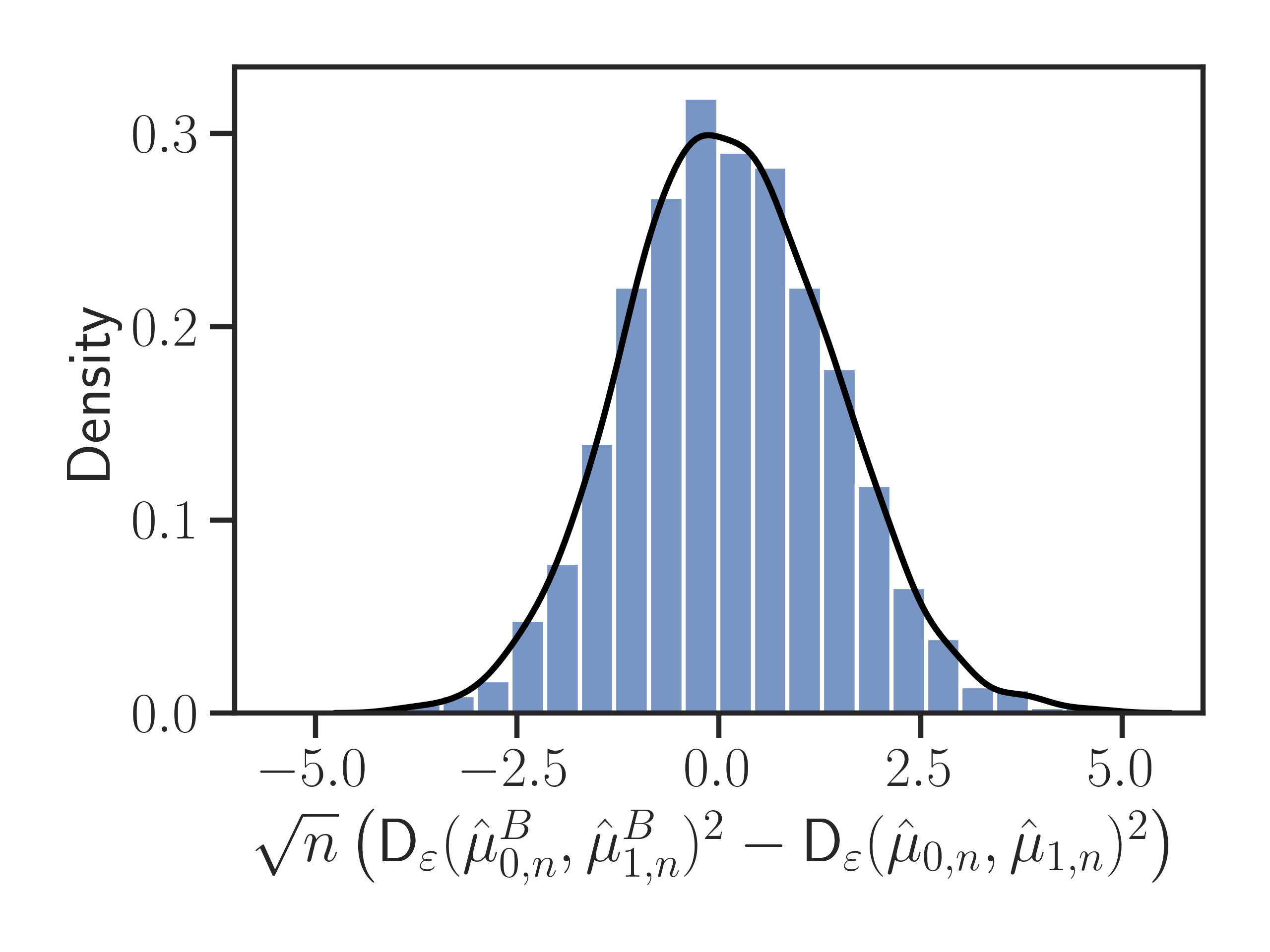}
    \caption{Histogram of the bootstrap estimator for the entropic GW distance limit distribution in the setting of \cref{sec:entropicGWExperiment} along with a kernel density estimator.}
    \label{fig:entropicHistogram}
\end{figure}

\section{Concluding Remarks}

This work initiated the study of distributional limits for the GW problem, covering both discrete and semi-discrete cases, as well as general 4-sub-Weibull distributions for the entropically regularized GW distance. To this end, we characterized the stability properties of the GW and entropic GW distances along perturbations of the marginal distributions, which may be of independent interest and offer utility beyond our statistical exploration. As the derived limits are generally non-Gaussian, we prove the consistency of a direct estimator of the limiting variable in the case of discrete distributions. Remarkably, we can efficiently sample from the direct estimator using linear programming, in spite of the fact that computing the empirical GW value is NP-complete in general. The developed limit distribution theory was leveraged for an application to graph isomorphism testing. To facilitate that, we proposed a framework for embedding distributions of random graphs into $\mathcal{P}(\RR^N)$, i.e., distributions over a Euclidean space. This allows casting the graph isomorphism testing question as a test for equality of the embedded distributions under the GW distance. We then leverage the derived limit laws to furnish an asymptotically consistent test, and propose efficient procedures for (approximate) evaluation of the GW test statistic. Numerical results validating the test performance on several random graph models were provided, along with simulations of the derived limiting laws.

Future research directions stemming from this work are abundant. Firstly, although we did not provide limit distributions for the empirical GW distance between compactly supported distributions in low dimensions, the numerical experiments in \cref{sec:compactDists} suggest that such a result may hold, at least under the null. Extending our current proof technique to the compactly supported case would require, among other results, proving that the OT potentials $(\varphi_{0,t}^{\mathbf A},\varphi_{1,t}^{\mathbf A})$ for $\mathsf{OT}_{\mathbf A}((\cdot-\mathbb E_{\mu_0}[X])_{\sharp}\mu_{0,t},(\cdot-\mathbb E_{\mu_1}[X])_{\sharp}\mu_{1,t})$ where $\mathbf A\in\argmin_{\mathbf A\in\mathbb R^{d_0\times d_1}}\Phi_{(\bar \mu_0,\bar\mu_1)}$ converge pointwise as $t\downarrow 0$ to a pair of OT potentials for $\mathsf{OT}_{\mathbf A}(\bar \mu_0,\bar\mu_1)$ which maximizes $\Upsilon:(\psi_0,\psi_1)\mapsto \int \bar\psi_0d(\nu_0-\mu_0)+\int \bar\psi_1d(\nu_1-\mu_1)$ over all such OR potentials. While it is straightforward to show that the sequence of OT potentials converges along a subsequence to a pair of OT potentials for $\mathsf{OT}_{\mathbf A}(\bar \mu_0,\bar\mu_1)$, the main challenge lies in proving that this pair is maximal for $\Upsilon$. %

Another promising direction pertains to isometry testing of heterogeneous datasets using the GW distance. Our approach to testing for graph isomorphisms, presented in \cref{sec:graphApplication}, relies on a particular embedding of random graph distributions into the space of distributions on a Euclidean space. The key property of this construction is that the embeddings are identified under the GW distance if and only if the distributions on graphs are isomorphic. This embedding, provided in  \eqref{eq:embedding}, consists of two terms: the first compares the individual edge distributions, while the second ensures that the embeddings can only be related by specific isometries. This hints at a broader strategy for testing whether two datasets are generated from distributions that are related by particular isometric transformations, but not others. We plan to explore this direction further, aiming a general isometry testing methodology between heterogenuous datasets via the GW distance.

\bibliographystyle{alpha}
\bibliography{ref}

\pagebreak

\appendix
\section{Proofs of Main Results}
\subsection{Proof of \texorpdfstring{\cref{thm:VariationalMinimizers}}{Theorem 1}}
\label{app:proofOfthm:VariationalMinimizers}

To simplify the proof of \cref{thm:VariationalMinimizers}, we separately prove each statement as its own lemma. 
\begin{lemma}
    \label{lem:FrechetDerivativeObjective}
    $\Phi_{(\mu_0,\mu_1)}$ is Fr{\'e}chet differentiable at $\mathbf{A}\in\mathbb R^{d_0\times d_1}$ with derivative $\left(D\Phi_{(\mu_0,\mu_1)}\right)_{[\mathbf{A}]}(\mathbf{B})=64\langle\mathbf{A}-\frac{1}{2}\int xy^{\intercal}d\pi_{\mathbf{A}}(x,y),\mathbf{B}\rangle_{\mathrm F}$ provided all optimal couplings for $\OT_{\mathbf{A}}(\mu_0,\mu_1)$ admit the same cross-correlation matrix $\frac{1}{2}\int xy^{\intercal}d\pi_{\mathbf{A}}(x,y)$.
\end{lemma}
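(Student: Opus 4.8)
The plan is to exploit the fact that $\Phi_{(\mu_0,\mu_1)}(\mathbf{A}) = 32\|\mathbf{A}\|_{\mathrm F}^2 + \OT_{\mathbf{A}}(\mu_0,\mu_1)$, where the first term is a smooth quadratic whose Fréchet derivative at $\mathbf{A}$ in direction $\mathbf{B}$ is $64\langle \mathbf{A},\mathbf{B}\rangle_{\mathrm F}$, so the entire content of the lemma reduces to showing that $\mathbf{A}\mapsto \OT_{\mathbf{A}}(\mu_0,\mu_1)$ is Fréchet differentiable at $\mathbf{A}$ with derivative $\mathbf{B}\mapsto -32\langle \int xy^{\intercal}\,d\pi_{\mathbf{A}}(x,y),\,\mathbf{B}\rangle_{\mathrm F}$. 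The key structural observation is that the cost $c_{\mathbf{A}}(x,y) = -4\|x\|^2\|y\|^2 - 32\,x^\intercal\mathbf{A}y$ is \emph{affine} in $\mathbf{A}$: for a perturbation $\mathbf{H}$ we have $c_{\mathbf{A}+\mathbf{H}} = c_{\mathbf{A}} - 32\,x^\intercal\mathbf{H}y$, so $\OT_{\mathbf{A}}(\mu_0,\mu_1)$ is an infimum (over the fixed coupling set $\Pi(\mu_0,\mu_1)$) of functions that are affine in $\mathbf{A}$, hence concave in $\mathbf{A}$. Concavity immediately gives one-sided control; the uniqueness-of-cross-correlation hypothesis is what upgrades this to genuine differentiability.

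The steps I would carry out, in order: (1) Record that $\mathbf{A}\mapsto \OT_{\mathbf{A}}(\mu_0,\mu_1)$ is concave and finite (finiteness and local boundedness follow from compact support, as in Point~1 of \cref{thm:VariationalMinimizers}), hence locally Lipschitz, hence differentiable in a direction $\mathbf{H}$ iff its one-sided directional derivatives agree. (2) Compute the one-sided directional derivative using a Danskin/envelope-type argument: for any optimal $\pi_{\mathbf{A}}$ one has $\OT_{\mathbf{A}+t\mathbf{H}} \le \int c_{\mathbf{A}+t\mathbf{H}}\,d\pi_{\mathbf{A}} = \OT_{\mathbf{A}} - 32t\langle \int xy^\intercal d\pi_{\mathbf{A}},\mathbf{H}\rangle_{\mathrm F}$, giving an upper bound on the right derivative and a lower bound on the left derivative. (3) For the matching bounds, take a sequence $t_k\downarrow 0$ and optimal couplings $\pi_{\mathbf{A}+t_k\mathbf{H}}$; by weak compactness of $\Pi(\mu_0,\mu_1)$ (marginals fixed, supports compact) extract a weakly convergent subsequence with limit $\pi_\infty$, and by stability of optimal transport plans under cost perturbation — $c_{\mathbf{A}+t_k\mathbf{H}}\to c_{\mathbf{A}}$ uniformly on the compact support — conclude $\pi_\infty$ is optimal for $\OT_{\mathbf{A}}$. (4) Invoke the hypothesis: \emph{every} optimal plan for $\OT_{\mathbf{A}}$ has the same cross-correlation matrix $\int xy^\intercal d\pi_{\mathbf{A}}$, so $\int xy^\intercal d\pi_{\mathbf{A}+t_k\mathbf{H}} \to \int xy^\intercal d\pi_{\mathbf{A}}$ (the integrand $xy^\intercal$ is continuous and bounded on the compact support, so weak convergence transfers). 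Passing to the limit in the first-order expansion $\OT_{\mathbf{A}+t_k\mathbf{H}} = \OT_{\mathbf{A}} - 32 t_k\langle \int xy^\intercal d\pi_{\mathbf{A}+t_k\mathbf{H}},\mathbf{H}\rangle_{\mathrm F} + o(t_k)$ then pins down the right derivative as $-32\langle\int xy^\intercal d\pi_{\mathbf{A}},\mathbf{H}\rangle_{\mathrm F}$; the left derivative is handled symmetrically. (5) Since the resulting directional derivative is linear in $\mathbf{H}$ and the function is locally Lipschitz, Fréchet differentiability follows (a locally Lipschitz function with a linear Gâteaux derivative in finite dimensions is Fréchet differentiable), and assembling with the quadratic term gives the stated formula.

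The main obstacle I anticipate is step~(3)–(4): making precise that optimal plans for the perturbed costs accumulate (weakly) only on optimal plans for the limiting cost, and that this forces convergence of the cross-correlation matrices \emph{given only} the uniqueness-of-cross-correlation hypothesis rather than uniqueness of the plan itself. The subtlety is that the selected optimal plans $\pi_{\mathbf{A}+t_k\mathbf{H}}$ need not converge as a whole, but every subsequential limit is $\OT_{\mathbf{A}}$-optimal, and all such limits share the \emph{same} cross-correlation matrix by hypothesis; hence the sequence of cross-correlation matrices has a unique accumulation point and therefore converges. This is exactly the point where the weaker-than-uniqueness assumption is used, and it must be argued carefully to avoid a circular appeal to differentiability; I would phrase it via a standard subsequence-of-every-subsequence argument combined with the stability of OT under uniform cost convergence (which in turn rests on $\Gamma$-convergence of the transport functionals on the compact product support).
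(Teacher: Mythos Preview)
Your proposal is correct and follows essentially the same approach as the paper: both sandwich $\OT_{\mathbf{A}+\mathbf{H}}-\OT_{\mathbf{A}}$ between $-32\langle\int xy^\intercal d\pi_{\mathbf{A}},\mathbf{H}\rangle_{\mathrm F}$ and $-32\langle\int xy^\intercal d\pi_{\mathbf{A}+\mathbf{H}},\mathbf{H}\rangle_{\mathrm F}$, use uniform convergence $c_{\mathbf{A}+\mathbf{H}}\to c_{\mathbf{A}}$ on the compact support together with stability of optimal plans (Theorem~5.20 in \cite{villani2008optimal}) to show every subsequential weak limit of $\pi_{\mathbf{A}+\mathbf{H}}$ is optimal for $\OT_{\mathbf{A}}$, and then invoke the hypothesis to conclude $\int xy^\intercal d\pi_{\mathbf{A}+\mathbf{H}}\to\int xy^\intercal d\pi_{\mathbf{A}}$. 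The only packaging difference is that the paper bounds the Fr\'echet remainder $\|\mathbf{H}\|_{\mathrm F}^{-1}\bigl|\OT_{\mathbf{A}+\mathbf{H}}-\OT_{\mathbf{A}}+32\langle\int xy^\intercal d\pi_{\mathbf{A}},\mathbf{H}\rangle_{\mathrm F}\bigr|\le 32\|\int xy^\intercal d\pi_{\mathbf{A}+\mathbf{H}}-\int xy^\intercal d\pi_{\mathbf{A}}\|_{\mathrm F}$ directly for arbitrary $\mathbf{H}\to 0$, avoiding your intermediate detour through directional derivatives and the Lipschitz-plus-linear-G\^ateaux upgrade.
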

\begin{proof}
    It is easy to see that $\|\cdot\|_{\mathrm{F}}^2$ is Fr{\'e}chet differentiable at $\mathbf{A}$ with derivative $2\langle \mathbf{A},\cdot\rangle_{\mathrm F}$.
    For any $\mathbf{H}\in\mathbb R^{d_0\times d_1}$, we have that 
    \begin{equation}
        \label{eq:upperBound}
        \OT_{\mathbf{A}+\mathbf{H}}(\mu_0,\mu_1)-\OT_{\mathbf{A}}(\mu_0,\mu_1)\leq \int c_{\mathbf{A}+\mathbf{H}}d\pi_{\mathbf{A}}-\int c_{\mathbf{A}}d\pi_{\mathbf{A}}=-32\int x^{\intercal}\mathbf{H}yd\pi_{\mathbf{A}}(x,y), 
    \end{equation}
    for any choice of OT plan $\pi_{\mathbf{A}}$ for $\OT_{\mathbf{A}}(\mu_0,\mu_1)$.
    Similarly, 
    \begin{equation}
        \label{eq:lowerBound}
        \OT_{\mathbf{A}+\mathbf{H}}(\mu_0,\mu_1)-\OT_{\mathbf{A}}(\mu_0,\mu_1)\geq -32\int x^{\intercal}\mathbf{H}yd\pi_{\mathbf{A}+\mathbf{H}}(x,y),
    \end{equation}
    for any choice of optimal coupling $\pi_{\mathbf{A}+\mathbf{H}}$ for $\OT_{\mathbf{A}+\mathbf{H}}(\mu_0,\mu_1)$.
    Now, consider an arbitrary sequence $\mathbf{H}_n$ converging to $0$. Note that  
    \[
        \sup_{\substack{x\in\supp(\mu_0)\\y\in\supp(\mu_1)}}\left|c_{\mathbf{A}+\mathbf{H}_n}(x,y)-c_{\mathbf{A}}(x,y)\right|=\sup_{\substack{x\in\supp(\mu_0)\\y\in\supp(\mu_1)}}\left|32x^{\intercal}\mathbf{H}_ny\right|\leq 32\sup_{\supp(\mu_0)}\|\cdot\|\sup_{\supp(\mu_1)}\|\cdot\|\|\mathbf{H}_n\|_{\mathrm{F}}\to 0,
    \]      
    hence $c_{\mathbf{A}+\mathbf{H}_n}\to c_{\mathbf{A}}$ uniformly on $\supp(\mu_0)\times \supp(\mu_1)$. It follows from Theorem 5.20 in \cite{villani2008optimal} that, for any subsequence $n'$ of $n$ there exists a further subsequence $n''$ along which $\pi_{\mathbf{A}+\mathbf{H}_{n''}}\stackrel{w}{\to} \pi$  for some optimal coupling $\pi$ for $\OT_{\mathbf{A}}(\mu_0,\mu_1)$. Thus $\int xy^{\intercal}d\pi_{\mathbf{A}+\mathbf{H}_{n''}}(x,y)\to\int xy^{\intercal}d\pi(x,y)=\int xy^{\intercal}d\pi_{\mathbf{A}}(x,y)$ by
    assumption. As the limit is the same regardless of the choice of subsequence, conclude that $\int xy^{\intercal}d\pi_{\mathbf{A}+\mathbf{H}}(x,y)\to\int xy^{\intercal}d\pi_{\mathbf{A}}(x,y)$ as $\mathbf{H}\to 0$, thus 
    \begin{align*}
        &\|\mathbf{H}\|^{-1}_{F}\left|\OT_{\mathbf{A}+\mathbf{H}}(\mu_0,\mu_1)-\OT_{\mathbf{A}}(\mu_0,\mu_1)+32\int x^{\intercal}\mathbf{H}yd\pi_{\mathbf{A}}(x,y)\right|
\\
&\leq 32\left\|\int xy^{\intercal}d\pi_{\mathbf{A}+\mathbf{H}}(x,y)-\int xy^{\intercal}d\pi_{\mathbf{A}}(x,y)\right\|_{\mathrm{F}}\to 0, 
    \end{align*}
    which proves the claim.
\end{proof}

\begin{lemma}
    \label{lem:couplingFrobeniusNorm}
    Let $\pi\in\Pi(\mu_0,\mu_1)$ be arbitrary, then $\int xy^{\intercal}d\pi(x,y)\in{B_{\mathrm{F}}(\sqrt{M_2(\mu_0)M_2(\mu_1)})}$.
\end{lemma}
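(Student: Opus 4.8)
The plan is to bound the Frobenius norm of the cross‑correlation matrix entrywise via the Cauchy–Schwarz inequality applied to the coupling $\pi$, and then to invoke the marginal constraints $\pi\in\Pi(\mu_0,\mu_1)$.

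First, if $M_2(\mu_0)=\infty$ or $M_2(\mu_1)=\infty$ there is nothing to prove, so assume both are finite. By Cauchy–Schwarz in $L^2(\pi)$ together with the marginal constraints,
\[
\int \|x\|\,\|y\|\,d\pi(x,y)\leq \Big(\int \|x\|^2 d\pi\Big)^{1/2}\Big(\int \|y\|^2 d\pi\Big)^{1/2}=\sqrt{M_2(\mu_0)M_2(\mu_1)}<\infty,
\]
so $xy^{\intercal}$ is $\pi$‑integrable and $\mathbf{A}\coloneqq\int xy^{\intercal}d\pi(x,y)$ is well defined. Writing $x=(x_1,\dots,x_{d_0})$ and $y=(y_1,\dots,y_{d_1})$, expand the squared Frobenius norm componentwise and apply Cauchy–Schwarz in $L^2(\pi)$ to each entry:
\[
\|\mathbf{A}\|_{\mathrm F}^2=\sum_{i=1}^{d_0}\sum_{j=1}^{d_1}\Big(\int x_i y_j\,d\pi(x,y)\Big)^2\leq \sum_{i=1}^{d_0}\sum_{j=1}^{d_1}\Big(\int x_i^2\,d\pi\Big)\Big(\int y_j^2\,d\pi\Big).
\]
Factoring the double sum and using $\sum_i\int x_i^2 d\pi=\int\|x\|^2 d\pi$ and $\sum_j\int y_j^2 d\pi=\int\|y\|^2 d\pi$, and then the marginal constraints once more,
\[
\|\mathbf{A}\|_{\mathrm F}^2\leq \Big(\int\|x\|^2 d\pi\Big)\Big(\int\|y\|^2 d\pi\Big)=M_2(\mu_0)\,M_2(\mu_1).
\]
Hence $\|\mathbf{A}\|_{\mathrm F}\leq \sqrt{M_2(\mu_0)M_2(\mu_1)}$, i.e. $\mathbf{A}\in B_{\mathrm F}\big(\sqrt{M_2(\mu_0)M_2(\mu_1)}\big)$, as claimed.

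The argument is entirely routine; the only point requiring a moment's care is the well‑definedness of the cross‑correlation matrix, which is handled by the integrability bound in the first step. (An equivalent route avoids components altogether: use $\|\mathbf{A}\|_{\mathrm F}=\sup_{\|\mathbf{B}\|_{\mathrm F}\leq 1}\langle\mathbf{A},\mathbf{B}\rangle_{\mathrm F}=\sup_{\|\mathbf{B}\|_{\mathrm F}\leq 1}\int x^{\intercal}\mathbf{B}y\,d\pi$, bound $|x^{\intercal}\mathbf{B}y|\leq \|x\|\,\|y\|\,\|\mathbf{B}\|_{\mathrm F}$, and then apply Cauchy–Schwarz and the marginal constraints to reach the same conclusion.) Accordingly, there is no genuine obstacle here; this lemma is the elementary input feeding Point~2 of \cref{thm:VariationalMinimizers}.
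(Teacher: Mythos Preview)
Your proof is correct. The paper's argument is essentially your parenthetical alternative: it applies Jensen's inequality for the convex norm $\|\cdot\|_{\mathrm F}$ to get $\big\|\int xy^{\intercal}\,d\pi\big\|_{\mathrm F}\leq \int \|xy^{\intercal}\|_{\mathrm F}\,d\pi=\int\|x\|\,\|y\|\,d\pi$, and then uses Cauchy--Schwarz in $L^2(\pi)$ exactly as you do. Your primary entrywise route reaches the same bound by a slightly different (equally elementary) path.
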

\begin{proof}
    By Jensen's inequality, $\left\|\int xy^{\intercal}d\pi(x,y)\right\|_{\mathrm{F}}\leq \int\left\| xy^{\intercal}\right\|_{\mathrm{F}}d\pi(x,y)=\int\| x\|\|y\|d\pi(x,y)$. This final term is bounded above by $\sqrt{\int \|x\|^2d\pi(x,y)\int \|y\|^2d\pi(x,y)}=\sqrt{M_2(\mu_0)M_2(\mu_1)}$ by the Cauchy-Schwarz inequality.
\end{proof}

\begin{lemma}
    \label{prop:Lipschitz}
    $\Phi_{(\mu_0,\mu_1)}$ is locally Lipschitz continuous and coercive. 
\end{lemma}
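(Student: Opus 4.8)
The plan is to exploit the explicit structure $\Phi_{(\mu_0,\mu_1)}(\mathbf{A}) = 32\|\mathbf{A}\|_{\mathrm{F}}^2 + \OT_{\mathbf{A}}(\mu_0,\mu_1)$ and the fact that the cost $c_{\mathbf{A}}(x,y) = -4\|x\|^2\|y\|^2 - 32 x^\intercal\mathbf{A}y$ depends on $\mathbf{A}$ only through the bilinear term $-32 x^\intercal\mathbf{A}y$, which is linear and Lipschitz in $\mathbf{A}$ uniformly over the compact supports. The term $32\|\mathbf{A}\|_{\mathrm{F}}^2$ is smooth, clearly locally Lipschitz, and coercive on its own, so the whole argument reduces to controlling the OT term.

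\medskip
\textbf{Step 1 (Local Lipschitz continuity of $\mathbf{A}\mapsto\OT_{\mathbf{A}}(\mu_0,\mu_1)$).} I would use the standard sandwich bound from the proof of \lemref{lem:FrechetDerivativeObjective}: for any $\mathbf{A},\mathbf{A}'\in\mathbb{R}^{d_0\times d_1}$ and any OT plans $\pi_{\mathbf{A}}$, $\pi_{\mathbf{A}'}$ for $\OT_{\mathbf{A}}$ and $\OT_{\mathbf{A}'}$ respectively, plugging the other problem's optimal plan into each objective gives
\[
-32\int x^\intercal(\mathbf{A}'-\mathbf{A})y\, d\pi_{\mathbf{A}'}(x,y)\ \le\ \OT_{\mathbf{A}'}(\mu_0,\mu_1)-\OT_{\mathbf{A}}(\mu_0,\mu_1)\ \le\ -32\int x^\intercal(\mathbf{A}'-\mathbf{A})y\, d\pi_{\mathbf{A}}(x,y).
\]
Since $\supp(\mu_0)$, $\supp(\mu_1)$ are compact, writing $R_i = \sup_{\supp(\mu_i)}\|\cdot\|$, both bounding terms are at most $32 R_0 R_1 \|\mathbf{A}'-\mathbf{A}\|_{\mathrm{F}}$ in absolute value by Cauchy--Schwarz. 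Hence $|\OT_{\mathbf{A}'}(\mu_0,\mu_1)-\OT_{\mathbf{A}}(\mu_0,\mu_1)|\le 32 R_0 R_1\|\mathbf{A}'-\mathbf{A}\|_{\mathrm{F}}$ globally, which is in fact global Lipschitz continuity, and adding the locally Lipschitz quadratic term yields local Lipschitz continuity of $\Phi_{(\mu_0,\mu_1)}$.

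\medskip
\textbf{Step 2 (Coercivity).} I would lower-bound $\OT_{\mathbf{A}}(\mu_0,\mu_1)$ independently of $\mathbf{A}$ in a way that the quadratic term dominates. For any coupling $\pi\in\Pi(\mu_0,\mu_1)$,
\[
\int c_{\mathbf{A}}\, d\pi = -4\int\|x\|^2\|y\|^2 d\pi - 32\int x^\intercal\mathbf{A}y\, d\pi \ \ge\ -4 R_0^2 R_1^2 - 32 R_0 R_1 \|\mathbf{A}\|_{\mathrm{F}},
\]
using Cauchy--Schwarz on the bilinear term as in \lemref{lem:couplingFrobeniusNorm} (indeed $|\int x^\intercal \mathbf{A} y\, d\pi|\le \|\mathbf{A}\|_{\mathrm{F}}\int\|x\|\|y\|d\pi \le R_0 R_1\|\mathbf{A}\|_{\mathrm{F}}$). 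Taking the infimum over $\pi$ gives $\OT_{\mathbf{A}}(\mu_0,\mu_1)\ge -4R_0^2 R_1^2 - 32 R_0 R_1\|\mathbf{A}\|_{\mathrm{F}}$, so
\[
\Phi_{(\mu_0,\mu_1)}(\mathbf{A})\ \ge\ 32\|\mathbf{A}\|_{\mathrm{F}}^2 - 32 R_0 R_1\|\mathbf{A}\|_{\mathrm{F}} - 4R_0^2 R_1^2\ \xrightarrow[\|\mathbf{A}\|_{\mathrm{F}}\to\infty]{}\ +\infty,
\]
establishing coercivity.

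\medskip
I do not expect a serious obstacle here: both parts are direct consequences of the bilinear-in-$\mathbf{A}$ structure of $c_{\mathbf{A}}$ and compactness of the supports. The only point requiring a little care is making sure the Lipschitz bound from Step 1 is uniform (it is, since the constant $32 R_0 R_1$ does not depend on $\mathbf{A}$), and that in Step 2 the coercive quadratic genuinely overpowers the linear growth of the OT lower bound, which it does since the coefficient of $\|\mathbf{A}\|_{\mathrm{F}}^2$ is positive.
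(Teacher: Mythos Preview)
Your proposal is correct and follows essentially the same approach as the paper: the sandwich bound from \lemref{lem:FrechetDerivativeObjective} for Lipschitz continuity of $\mathbf{A}\mapsto\OT_{\mathbf{A}}(\mu_0,\mu_1)$, and a pointwise lower bound on $c_{\mathbf{A}}$ to show the quadratic term dominates for coercivity. The only cosmetic difference is that the paper expresses the constants through second and fourth moments ($\sqrt{M_2(\mu_0)M_2(\mu_1)}$ and $\sqrt{M_4(\mu_0)M_4(\mu_1)}$) rather than the sup-norm radii $R_0,R_1$, which under the compact-support hypothesis are equivalent up to the exact numerical value.
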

\begin{proof}
    Fix a compact set $K\subset \mathbb R^{d_0\times d_1}$. For any $\mathbf{A},\mathbf{A}'\in K$, it follows from \eqref{eq:upperBound} and \eqref{eq:lowerBound} that, \begin{align*}
        \OT_{\mathbf{A}'}(\mu_0,\mu_1)-\OT_{\mathbf{A}}(\mu_0,\mu_1)&\leq -32\int x^{\intercal}(\mathbf{A}'-\mathbf{A})yd\pi_{\mathbf{A}}(x,y)
        \\&\leq 32\|\mathbf{A}'-\mathbf{A}\|_{\mathrm{F}}\left\|\int xy^{\intercal}d\pi_{\mathbf{A}}(x,y)\right\|_{\mathrm{F}} , 
   \\ 
        \OT_{\mathbf{A}'}(\mu_0,\mu_1)-\OT_{\mathbf{A}}(\mu_0,\mu_1)&\geq- 32\|\mathbf{A}'-\mathbf{A}\|_{\mathrm{F}}\left\|\int xy^{\intercal}d\pi_{\mathbf{A}'}(x,y)\right\|_{\mathrm{F}},
    \end{align*}
    that is, 
    \[
        \left|\OT_{\mathbf{A}'}(\mu_0,\mu_1)-\OT_{\mathbf{A}}(\mu_0,\mu_1)\right|\leq 32 \|\mathbf{A}'-\mathbf{A}\|_{\mathrm{F}}\left(\left\|\int xy^{\intercal}d\pi_{\mathbf{A}'}(x,y)\right\|_{\mathrm{F}}\bigvee \left\|\int xy^{\intercal}d\pi_{\mathbf{A}}(x,y)\right\|_{\mathrm{F}}\right).
    \]
    Applying \cref{lem:couplingFrobeniusNorm}, we obtain 
    \[
        \left|\OT_{\mathbf{A}'}(\mu_0,\mu_1)-\OT_{\mathbf{A}}(\mu_0,\mu_1)\right|\leq 32\sqrt{M_2(\mu_0)M_2(\mu_1)} \|\mathbf{A}'-\mathbf{A}\|_{\mathrm{F}}.
    \]
    Further, $\left|\|\mathbf{A}\|^2_{\mathrm F}-\|\mathbf{A}'\|^2_{\mathrm F}\right|=(\|\mathbf{A}\|_{\mathrm{F}}+\|\mathbf{A}'\|_{\mathrm{F}})\left|\|\mathbf{A}\|_{\mathrm{F}}-\|\mathbf{A}'\|_{\mathrm{F}}\right|\leq 2\sup_{K}\|\cdot\|_{\mathrm{F}}\|\mathbf{A}-\mathbf{A}'\|_{\mathrm{F}}$.

    To show coercivity, observe that, for any $\mathbf{A}\in\mathbb R^{d_0\times d_1}$ and $\pi\in \Pi(\mu_0,\mu_1)$, 
    \[
        \begin{aligned}
        \int -4\|x\|^2\|y\|^2-32x^{\intercal}\mathbf{A}yd\pi(x,y)\geq -4\sqrt{M_4(\mu_0)M_4(\mu_1)}-32\sqrt{M_2(\mu_0)M_2(\mu_1)}\|\mathbf{A}\|_{\mathrm{F}}  
        \end{aligned}    
    \]
    Hence $32\|\mathbf{A}\|_{\mathrm{F}}+\frac{\OT_{\mathbf{A}}(\mu_0,\mu_1)}{\|\mathbf{A}\|_{\mathrm{F}}}\to \infty$ as $\|\mathbf{A}\|_{\mathrm{F}}\to\infty$ proving coercivity.
\end{proof}

\begin{lemma}
\label{lem:ClarkeSubdifferential}
    The Clarke subdifferential of $\Phi_{(\mu_0,\mu_1)}$ at $\mathbf A\in\mathbb R^{d_0\times d_1}$ is given by 
    \begin{equation}
    \label{eq:subdiff}
    \partial \Phi_{(\mu_0,\mu_1)}(\mathbf A)=
       \left\{ 64\mathbf A-32\int xy^{\intercal}d\pi_{\mathbf A}(x,y): \pi_{\mathbf A} \text{ is an OT plan for }\mathsf{OT}_{\mathbf A}(\mu_0,\mu_1)\right\}. 
    \end{equation}
\end{lemma}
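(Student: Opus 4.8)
The plan is to exploit the decomposition $\Phi_{(\mu_0,\mu_1)} = \Phi_1 + \Phi_2$ with $\Phi_1(\mathbf A) = 32\|\mathbf A\|_{\mathrm F}^2$ and $\Phi_2(\mathbf A) = \OT_{\mathbf A}(\mu_0,\mu_1)$, together with the observation that $\Phi_2$ is concave. Indeed, for each fixed $\pi \in \Pi(\mu_0,\mu_1)$ the map $\mathbf A \mapsto \int c_{\mathbf A}\,d\pi = -4\int\|x\|^2\|y\|^2\,d\pi - 32\big\langle\mathbf A,\int xy^{\intercal}d\pi(x,y)\big\rangle_{\mathrm F}$ is affine, so $\Phi_2$, being a pointwise infimum of affine functions, is concave; it is also locally Lipschitz by \cref{prop:Lipschitz}. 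Since $\Phi_1$ is continuously differentiable with $\nabla\Phi_1(\mathbf A) = 64\mathbf A$, the Clarke sum rule holds with equality \cite{clarke1975generalized}, so $\partial\Phi_{(\mu_0,\mu_1)}(\mathbf A) = 64\mathbf A + \partial\Phi_2(\mathbf A)$, and for the concave locally Lipschitz function $\Phi_2$ the Clarke subdifferential coincides with the superdifferential $\partial\Phi_2(\mathbf A) = \{\mathbf G : \Phi_2(\mathbf A') \le \Phi_2(\mathbf A) + \langle\mathbf G, \mathbf A' - \mathbf A\rangle_{\mathrm F}\ \forall\,\mathbf A'\}$. It therefore remains to show that this superdifferential equals $\mathcal S_{\mathbf A} := \{-32\int xy^{\intercal}d\pi_{\mathbf A}(x,y) : \pi_{\mathbf A}\ \text{an OT plan for}\ \OT_{\mathbf A}(\mu_0,\mu_1)\}$.

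The inclusion $\mathcal S_{\mathbf A} \subseteq \partial\Phi_2(\mathbf A)$ is immediate: if $\pi_{\mathbf A}$ is an OT plan, the affine map $\mathbf A' \mapsto \int c_{\mathbf A'}\,d\pi_{\mathbf A}$ dominates $\Phi_2$ globally and agrees with it at $\mathbf A$, so its constant gradient $-32\int xy^{\intercal}d\pi_{\mathbf A}(x,y)$ is a supergradient of $\Phi_2$ at $\mathbf A$. For the reverse inclusion I would compute the one-sided directional derivative of $\Phi_2$. Because $\mu_0,\mu_1$ are compactly supported, $\Pi(\mu_0,\mu_1)$ is weakly compact (Prokhorov), and both $\pi \mapsto \int c_{\mathbf A}\,d\pi$ and $\pi \mapsto \int xy^{\intercal}d\pi$ are weakly continuous; a Danskin-type argument for the value function of a linear program under linear perturbations of the cost (see Section~4.3.2 in \cite{bonnans2013perturbation}) then gives, for every $\mathbf H \in \mathbb R^{d_0\times d_1}$,
\[
\Phi_2'(\mathbf A;\mathbf H) = \min_{\pi\in\Pi^{\star}_{\mathbf A}}\Big\langle\nabla_{\mathbf A}\!\!\int c_{\mathbf A}\,d\pi,\, \mathbf H\Big\rangle_{\mathrm F} = -32\max_{\pi\in\Pi^{\star}_{\mathbf A}}\Big\langle\int xy^{\intercal}d\pi(x,y),\, \mathbf H\Big\rangle_{\mathrm F},
\]
where $\Pi^{\star}_{\mathbf A}$ is the nonempty set of OT plans for $\OT_{\mathbf A}(\mu_0,\mu_1)$, which is convex and weakly compact (a weakly closed subset of $\Pi(\mu_0,\mu_1)$). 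Consequently $K_{\mathbf A} := \{\int xy^{\intercal}d\pi(x,y) : \pi\in\Pi^{\star}_{\mathbf A}\}$, the image of $\Pi^{\star}_{\mathbf A}$ under an affine weakly continuous map, is a nonempty compact convex subset of $\mathbb R^{d_0\times d_1}$.

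To conclude, I would argue by contradiction: if $\mathbf G = -32\mathbf M \in \partial\Phi_2(\mathbf A)$ with $\mathbf M \notin K_{\mathbf A}$, then strict separation of $\mathbf M$ from the compact convex set $K_{\mathbf A}$ furnishes $\mathbf H$ with $\langle\mathbf M,\mathbf H\rangle_{\mathrm F} > \max_{\mathbf N\in K_{\mathbf A}}\langle\mathbf N,\mathbf H\rangle_{\mathrm F}$, while the supergradient inequality (divide by $t>0$ and let $t\downarrow 0$) gives $\langle\mathbf G,\mathbf H\rangle_{\mathrm F} \ge \Phi_2'(\mathbf A;\mathbf H) = -32\max_{\mathbf N\in K_{\mathbf A}}\langle\mathbf N,\mathbf H\rangle_{\mathrm F}$, i.e. $\langle\mathbf M,\mathbf H\rangle_{\mathrm F} \le \max_{\mathbf N\in K_{\mathbf A}}\langle\mathbf N,\mathbf H\rangle_{\mathrm F}$, a contradiction. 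Hence $\mathbf M\in K_{\mathbf A}$, so $\mathbf G\in\mathcal S_{\mathbf A}$, which establishes $\partial\Phi_2(\mathbf A)\subseteq\mathcal S_{\mathbf A}$ and completes the argument.

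The main obstacle is the directional-derivative formula: it relies on the weak compactness of $\Pi(\mu_0,\mu_1)$ and the weak continuity of $\pi\mapsto\int c_{\mathbf A}\,d\pi$ (so that the set of minimizers is upper semicontinuous/stable under small cost perturbations) together with the identification of the active index set with the OT plan set $\Pi^{\star}_{\mathbf A}$. An alternative route invokes Rademacher's theorem to write $\partial\Phi_{(\mu_0,\mu_1)}(\mathbf A) = \conv\{\lim_k\nabla\Phi_{(\mu_0,\mu_1)}(\mathbf A_k) : \mathbf A_k\to\mathbf A,\ \Phi_{(\mu_0,\mu_1)}\ \text{differentiable at}\ \mathbf A_k\}$, and uses \cref{lem:FrechetDerivativeObjective} together with the weak-stability argument in its proof (and the uniform bound of \cref{lem:couplingFrobeniusNorm}) to identify every such limit with an element of $\mathcal S_{\mathbf A}$; but proving the reverse inclusion this way again reduces to the same sensitivity analysis of $\OT_{(\cdot)}(\mu_0,\mu_1)$, so it does not genuinely circumvent the difficulty.
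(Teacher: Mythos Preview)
Your argument is correct. Both you and the paper hinge on the concavity of $\Phi_2(\mathbf A)=\OT_{\mathbf A}(\mu_0,\mu_1)$, and the inclusion $\mathcal S_{\mathbf A}\subseteq\partial\Phi_2(\mathbf A)$ is handled identically (each OT plan yields an affine majorant touching at $\mathbf A$). The difference lies in the reverse inclusion. The paper uses the Rademacher representation $\partial\Phi_{(\mu_0,\mu_1)}(\mathbf A)=\conv\{\lim_k\nabla\Phi_{(\mu_0,\mu_1)}(\mathbf A_k)\}$ together with \cref{lem:FrechetDerivativeObjective} and the weak stability of OT plans (Theorem~5.20 in \cite{villani2008optimal}) to show every such limit lies in $64\mathbf A-32K_{\mathbf A}$; convexity of $K_{\mathbf A}$ then absorbs the convex hull. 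You instead stay within convex analysis: Danskin gives the directional derivative $\Phi_2'(\mathbf A;\mathbf H)=\min_{\mathbf G\in\mathcal S_{\mathbf A}}\langle\mathbf G,\mathbf H\rangle_{\mathrm F}$, and since the superdifferential of a concave function is exactly the closed convex set whose support function (from below) is the directional derivative, separation pins it down as $\mathcal S_{\mathbf A}$. Your route is more unified and avoids the gradient-limit bookkeeping, at the cost of invoking Danskin; the paper's route trades Danskin for Rademacher but still needs the same OT-plan stability underneath. Your hedging about the directional-derivative formula is unwarranted here: the hypotheses of Danskin (weak compactness of $\Pi(\mu_0,\mu_1)$, joint continuity of $(\mathbf A,\pi)\mapsto\int c_{\mathbf A}\,d\pi$) are plainly satisfied for compactly supported marginals.
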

\begin{proof}
   We first establish that $\partial \Phi_{(\mu_0,\mu_1)}$ is a subset of the right hand side in \eqref{eq:subdiff}. By Rademacher's theorem, local Lipschitz continuity of $\Phi_{(\mu_0,\mu_1)}$ (\cref{prop:Lipschitz}) guarantees that it is differentiable on a set $\Lambda$ of full measure and, by Theorem 8.1 in \cite{clarke2008nonsmooth}, the Clarke subdifferential of $\Phi_{(\mu_0,\mu_1)}$ at $\mathbf{A}\in\mathbb
R^{d_0\times d_1}$ can be defined as
\[\partial \Phi_{(\mu_0,\mu_1)}(\mathbf{A})=\conv\left(\left\{\lim_{\Omega\ni\mathbf{A}_n\to \mathbf{A}} (D\Phi_{(\mu_0,\mu_1)})_{[\mathbf{A}_n]}\right\}  \right)
\]
where $\Omega$ is any subset of $\Lambda$ for which $\Lambda\backslash\Omega$ is negligible and it is presupposed that the limit converges (here $\conv$ denotes the convex hull operation on a set). From \cref{lem:FrechetDerivativeObjective}, $(D\Phi_{(\mu_0,\mu_1)})_{[\mathbf{A}_n]}$ can be identified with $64\left(\mathbf{A}_n-\frac{1}{2}\int xy^{\intercal}d\pi_{\mathbf{A}_n}(x,y)  \right)$ where $\pi_{\mathbf{A}_n}$ is
any OT plan for $\OT_{\mathbf{A}_n}(\mu_0,\mu_1)$ (by assumption all such cross-correlation matrices are identical). As $\mathbf{A}_n\to \mathbf{A}$, it follows from the proof of \cref{lem:FrechetDerivativeObjective} that $\pi_{\mathbf{A}_n}\stackrel{w}{\to} \pi_{\mathbf{A}}$ up to a subsequence, where $\pi_{\mathbf{A}}$ is some OT plan for $\OT_{\mathbf{A}}(\mu_0,\mu_1)$. It follows that $\partial \Phi_{(\mu_0,\mu_1)}(\mathbf{A})=64\mathbf{A}-32\conv\left( \left\{ \int xy^{\intercal}d\pi_{\mathbf{A}}(x,y):\exists\; \pi_{\mathbf{A}_n}\stackrel{w}{\to}\pi_{\mathbf{A}},\mathbf{A}_n\in\Omega\right\}\right)$ such that $\partial \Phi_{(\mu_0,\mu_1)}(\mathbf{A})=D\left(\Phi_{(\mu_0,\mu_1)}\right)_{[\mathbf{A}]}$ for $\mathbf{A}\in\Lambda$. If $\mathbf{A}\not\in\Lambda$, the previous convex hull is simply a subset of all cross-correlation matrices for some OT plan for $\OT_{\mathbf{A}}(\mu_0,\mu_1)$, proving the claim.

Now, we show that $\partial \Phi_{(\mu_0,\mu_1)}$ is a superset of the right hand side in \eqref{eq:subdiff}.
To this end, note that $\mathbf A\in\mathbb R^{d_0\times d_1}\mapsto -\mathsf{OT}_{\mathbf A}(\mu_0,\mu_1)$ is convex by Theorem 5.5 in \cite{rockafellar1997convex} as it can be expressed as the pointwise supremum of a collection of linear functions of $\mathbf A$. By \eqref{eq:upperBound}, it follows that 
\[
    -\mathsf{OT}_{\mathbf A'}(\mu_0,\mu_1)-(-\mathsf{OT}_{\mathbf A}(\mu_0,\mu_1))\geq \left\langle32\int xy^{\intercal}d\pi_{\mathbf A}(x,y),\mathbf A'-\mathbf A\right\rangle_{\mathrm{F}},
\]
for any $\mathbf A'\in\mathbb R^{d_0\times d_1}$ and any choice of OT plan, $\pi_{\mathbf A}$, for $\mathsf{OT}_{\mathbf A}(\mu_0,\mu_1)$ so that the subdifferential of $-\mathsf{OT}_{(\cdot)}(\mu_0,\mu_1)$ at $\mathbf A$ (in the sense of convex analysis) contains all matrices of the form $32\int xy^{\intercal}d\pi_{\mathbf A}(x,y)$. Note, however, that the Clarke subdifferential coincides with the (convex) subdifferential in this context (see Proposition 2.2.7 in \cite{clarke1990optimization}) and that $\partial \Phi_{(\mu_0,\mu_1)}(\mathbf A)= 64\mathbf A-\partial(-\mathsf{ OT}_{(\cdot)}(\mu_0,\nu_1))(\mathbf A)$ by Proposition 2.3.1 and Corollary 1 on p.39 of \cite{clarke1990optimization}, proving the complementary inclusion. 
\end{proof}

We now prove point $2$, concluding the proof of \cref{thm:VariationalMinimizers}.

\begin{proof}[Proof of \cref{thm:VariationalMinimizers} (2)]
     By Proposition 2.3.2 in \cite{clarke1990optimization}, if $\bar{\mathbf{A}}$ is a local minimizer for $\Phi_{(\mu_0,\mu_1)}$, then
$0\in\partial \Phi_{(\mu_0,\mu_1)}(\bar{\mathbf{A}})$, characterized in \cref{lem:ClarkeSubdifferential}. Thus, there exists an OT plan $\pi_{\bar{\mathbf{A}}}$ for $\OT_{\bar{\mathbf{A}}}(\mu_0,\mu_1)$ satisfying $2\bar{\mathbf{A}}=\int xy^{\intercal}d\pi_{\bar{\mathbf{A}}}(x,y)\in{B_{\mathrm{F}}(\sqrt{M_2(\mu_0)M_2(\mu_1)})}$ by \cref{lem:couplingFrobeniusNorm}. By coercivity, proved in \cref{prop:Lipschitz}, at least one local minimizer is globally optimal. 

Now, assume that $\mu_0,\mu_1$ are centered. It is straightforward to see that if $\mathbf{A}^{\star}$ solves \eqref{eq:Objective}, then the associated OT plan $\pi_{\mathbf{A}^{\star}}$ satisfies 
\[
    \mathsf S_1(\mu_0,\mu_1)+\mathsf S_2(\mu_0,\mu_1)=\iint \left|\|x-x'\|^2-\|y-y'\|^2\right|d\pi_{\mathbf{A}^{\star}}\otimes\pi_{\mathbf{A}^{\star}}(x,y,x',y'),
\]
such that $\pi_{\mathbf{A}^{\star}}$ is optimal for \eqref{eq:GWPrimal} (see Section 5.1 in \cite{zhang2024gromov} for details).
\end{proof}

\subsection{Proofs for \texorpdfstring{\cref{sec:DiscreteGW}}{Section 3}}

\subsubsection{Proof of \texorpdfstring{\cref{thm:discreteGWStability}}{Theorem 2}}
\label{sec:proof:thm:discreteGWStability}

We first show that the OT potentials for $\mathsf{OT}_{\mathrm A}(\nu_0,\nu_1)$ can be chosen as to satisfy uniform bounds for any choice of $(\nu_0,\nu_1)\in\mathcal P_{\mu_0}\times\mathcal P_{\mu_1}$ and $\mathbf{A}\in B_{\mathrm{F}}(M)$.

\begin{lemma}
    \label{prop:discreteGWPotentials}
    For any  $(\nu_0,\nu_1)\in\mathcal P_{\mu_0}\times\mathcal P_{\mu_1}$, $\mathbf{A}\in B_{\mathrm{F}}(M)$, and any choice of OT potentials $(\varphi_0,\varphi_1)$ for $\OT_{\mathbf{A}}(\nu_0,\nu_1)$, there exists a version, $(\tilde \varphi_0,\tilde \varphi_1)$, of $(\varphi_0,\varphi_1)$ satisfying 
    $\|\tilde\varphi_0\|_{\infty,\mathcal X_0}\vee\|\tilde\varphi_1\|_{\infty,\mathcal X_1}\leq K$ where $K$ depends only on $\|\mathcal X_0\|_{\infty}$ and $\|\mathcal X_1\|_{\infty}$.
\end{lemma}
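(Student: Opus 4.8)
This is the standard $c$-concavity normalization, made quantitative by the observation that the cost $c_{\mathbf A}$ is uniformly bounded on the compact set $\mathcal X_0\times\mathcal X_1$ as $\mathbf A$ ranges over $B_{\mathrm F}(M)$. The first step is to record this bound: for $(x,y)\in\mathcal X_0\times\mathcal X_1$ and $\|\mathbf A\|_{\mathrm F}\le M=\tfrac12\|\mathcal X_0\|_\infty\|\mathcal X_1\|_\infty$, the Cauchy--Schwarz inequality (together with $|x^\intercal\mathbf A y|=|\langle xy^\intercal,\mathbf A\rangle_{\mathrm F}|\le\|x\|\|y\|\|\mathbf A\|_{\mathrm F}$) yields
\[
|c_{\mathbf A}(x,y)|\le 4\|x\|^2\|y\|^2+32\,|x^\intercal\mathbf A y|\le 4\|\mathcal X_0\|_\infty^2\|\mathcal X_1\|_\infty^2+32M\|\mathcal X_0\|_\infty\|\mathcal X_1\|_\infty=:\Lambda,
\]
a constant depending only on $\|\mathcal X_0\|_\infty$ and $\|\mathcal X_1\|_\infty$.

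The second step is to replace a given pair of OT potentials $(\varphi_0,\varphi_1)$ for $\OT_{\mathbf A}(\nu_0,\nu_1)$ by its $c_{\mathbf A}$-conjugate pair. Set $\psi_0:=\varphi_1^{c_{\mathbf A}}$, i.e. $\psi_0(x)=\min_{1\le j\le N_1}\{c_{\mathbf A}(x,y^{(j)})-\varphi_1(y^{(j)})\}$, and then $\psi_1(y):=\min_{1\le i\le N_0}\{c_{\mathbf A}(x^{(i)},y)-\psi_0(x^{(i)})\}$. Feasibility of $(\varphi_0,\varphi_1)$ gives $\varphi_0\le\psi_0$ and $\varphi_1\le\psi_1$ pointwise, while optimality of $(\varphi_0,\varphi_1)$ together with finiteness of the supports forces $\varphi_0=\psi_0$ $\nu_0$-a.s.\ and $\varphi_1=\psi_1$ $\nu_1$-a.s.\ (otherwise one could strictly increase the dual objective at an atom of $\nu_0$ or $\nu_1$). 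Hence $(\psi_0,\psi_1)$ is again a pair of OT potentials for $\OT_{\mathbf A}(\nu_0,\nu_1)$, coinciding with $(\varphi_0,\varphi_1)$ on $\supp\nu_0\times\supp\nu_1$ and giving the same dual value --- in particular a version of it whenever $\nu_0,\nu_1$ are fully supported, which is the only case needed when the result is later applied at $\bar\mu_0,\bar\mu_1$. Since $\psi_0$ is a $c_{\mathbf A}$-transform it is $c_{\mathbf A}$-concave, so for $x,x'\in\mathcal X_0$, choosing $j$ attaining the minimum defining $\psi_0(x')$, one gets $\psi_0(x)-\psi_0(x')\le c_{\mathbf A}(x,y^{(j)})-c_{\mathbf A}(x',y^{(j)})\le 2\Lambda$; thus $\sup_{x,x'\in\mathcal X_0}|\psi_0(x)-\psi_0(x')|\le 2\Lambda$, and symmetrically $\sup_{y,y'\in\mathcal X_1}|\psi_1(y)-\psi_1(y')|\le 2\Lambda$.

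The final step is normalization: with $a:=-\min_{\mathcal X_0}\psi_0$, set $(\tilde\varphi_0,\tilde\varphi_1):=(\psi_0+a,\psi_1-a)$, which is again a version of $(\psi_0,\psi_1)$. Then $0\le\tilde\varphi_0\le 2\Lambda$ on $\mathcal X_0$, and since $\tilde\varphi_1(y)=\min_{1\le i\le N_0}\{c_{\mathbf A}(x^{(i)},y)-\tilde\varphi_0(x^{(i)})\}$ we obtain $\|\tilde\varphi_1\|_{\infty,\mathcal X_1}\le\Lambda+2\Lambda=3\Lambda$. Taking $K:=3\Lambda$, which depends only on $\|\mathcal X_0\|_\infty$ and $\|\mathcal X_1\|_\infty$, proves the claim. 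The only point requiring care is the bookkeeping in the second step --- verifying that the passage to the $c_{\mathbf A}$-conjugate pair preserves both feasibility and the dual value, and that the resulting pair is a bona fide version of the original; everything else is routine.
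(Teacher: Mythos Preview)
Your argument is correct and follows essentially the same route as the paper's proof: both first bound $|c_{\mathbf A}|\le\Lambda$ on $\mathcal X_0\times\mathcal X_1$ uniformly in $\mathbf A\in B_{\mathrm F}(M)$, then use that any optimal pair agrees with its $c_{\mathbf A}$-transform on the supports (the paper phrases this via complementary slackness, you via the variational inequality $\psi_0\ge\varphi_0$ plus optimality), and finally normalize by an additive constant. The paper, assuming $\supp(\nu_i)=\mathcal X_i$, shifts so that $\sup_{\mathcal X_0}\tilde\varphi_0=\|c_{\mathbf A}\|_\infty$ and obtains $K=2\Lambda$; your oscillation-then-shift bookkeeping gives $K=3\Lambda$, which is immaterial. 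Your caveat about full support is exactly the same implicit assumption the paper makes in its proof.
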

\begin{proof}
    In the discrete setting, $\OT_{\mathbf{A}}(\nu_0,\nu_1)$ can be identified with a finite dimensional linear program.
By the complementary slackness conditions, any pair  OT potentials $(\varphi_0,\varphi_1)$ for $\OT_{\mathbf{A}}(\nu_0,\nu_1)$ satisfies $\varphi_0(x)+\varphi_1(y)=c_{\mathbf{A}}(x,y)$ at all points $(x,y)\in\mathcal X_0\times \mathcal X_1$ where $\pi(\{(x,y)\})>0$ for any choice of OT plan $\pi$ for $\OT_{\mathbf{A}}(\nu_0,\nu_1)$. As $\pi\in\Pi(\nu_0,\nu_1)$ and $\supp(\nu_i)=\mathcal X_i$ for $i=0,1$, for every $x\in\mathcal X_0$ there exists $y_x\in\mathcal X_1$ for which $\varphi_0(x)+\varphi_1(y_x)=c_{\mathbf{A}}(x,y_x)$ and, similarly, for every $y\in\mathcal X_1$ there exists $x_y\in\mathcal X_0$ for which $\varphi_0(x_y)+\varphi_1(y)=c_{\mathbf{A}}(x_y,y)$. Moreover, $y_x\in\argmin_{\mathcal X_1}\left\{c_{\mathbf{A}}(x,\cdot)-\varphi_1\right\}$ and $x_y\in\argmin_{\mathcal X_0}\left\{c_{\mathbf{A}}(\cdot,y)-\varphi_0\right\}$, as $\varphi_0\oplus\varphi_1\leq c_{\mathbf{A}}$.  Whence,
\[
\begin{gathered}
-\|c_{\mathbf{A}}\|_{\infty,\mathcal X_0\times \mathcal X_1}-\sup_{\mathcal X_1} \varphi_1
    \leq\varphi_0(x)=\inf_{\mathcal X_1}\left\{c_{\mathbf{A}}(x,\cdot)-\varphi_1\right\}\leq \|c_{\mathbf{A}}\|_{\infty,\mathcal X_0\times \mathcal X_1}-\sup_{\mathcal X_1} \varphi_1,
    \\
    -\|c_{\mathbf{A}}\|_{\infty,\mathcal X_0\times \mathcal X_1}-\sup_{\mathcal X_0} \varphi_0
    \leq\varphi_1(y)=\inf_{\mathcal X_0}\left\{c_{\mathbf{A}}(\cdot,y)-\varphi_0\right\}\leq \|c_{\mathbf{A}}\|_{\infty,\mathcal X_0\times \mathcal X_1}-\sup_{\mathcal X_0} \varphi_0.
\end{gathered}
\]
Let $(\tilde \varphi_0,\tilde \varphi_1)= (\varphi_0+C, \varphi_1-C)$ for $C=-\sup_{\mathcal X_0}\varphi_0+\|c_{\mathbf{A}}\|_{\infty,\mathcal X_0\times \mathcal X_1}$ such that $\sup_{\mathcal X_0}\tilde \varphi_0=\|c_{\mathbf{A}}\|_{\infty,\mathcal X_0\times \mathcal X_1}$. From the prior display, we have  $\|\tilde\varphi_1\|_{\infty,\mathcal X_1}\leq 2\|c_{\mathbf{A}}\|_{\infty,\mathcal X_0\times \mathcal X_1}$, $\|\tilde\varphi_0\|_{\infty,\mathcal X_0}\leq \|c_{\mathbf{A}}\|_{\infty,\mathcal X_0\times \mathcal X_1}$. 

It suffices, therefore, to bound $\|c_{\mathbf{A}}\|_{\infty,\mathcal X_0\times \mathcal X_1}$. To this end, for any $(x,y)\in\mathcal X_0\times \mathcal X_1$, 
\begin{equation}
\label{eq:uniformCostBound}
    |c_{\mathbf{A}}(x,y)|=|-4\|x\|^2\|y\|^2-32x^{\intercal} \mathbf{A}y|\leq 4\|\mathcal X_0\|^2_{\infty}\|\mathcal X_1\|^2_{\infty}+32\|\mathcal X_0\|_{\infty}\|\mathcal X_1\|_{\infty}M,
\end{equation}
where the inequality is due to the triangle inequality and the fact that $|x^{\intercal} \mathbf{A}y|=|\langle xy^{\intercal}, \mathbf{A}\rangle_{\mathrm F}|\leq \|xy^{\intercal}\|_{\mathrm{F}}\|\mathbf{A}\|_{\mathrm{F}}=\|x\|\|y\|\|\mathbf{A}\|_{\mathrm{F}}\leq\|\mathcal X_0\|_{\infty}\|\mathcal X_1\|_{\infty}M $ as  $\mathbf{A}\in B_{\mathrm{F}}(M)$.
\end{proof}

Next,  we address the failure of the directional regularity condition from \cite{bonnans2013perturbation}. In what follows, we assume without loss of generality that $\mu_0$ and $\mu_1$ are centered. In the case that $\mu_0,\mu_1$ are not centered, observe that the limit in \eqref{eq:rightDerivativeDiscrete} can be written as 
    \[
        \lim_{t\downarrow 0}\frac{\mathsf D(\mu_{0,t},\mu_{1,t})^2-\mathsf D(\mu_{0},\mu_{1})^2}{t}= \lim_{t\downarrow 0}\frac{\mathsf D((\Id-\mathbb E_{\mu_0}[X])_{\sharp}\mu_{0,t},(\Id-\mathbb E_{\mu_1}[X])_{\sharp}\mu_{1,t})^2-\mathsf D(\bar \mu_{0},\bar \mu_{1})^2}{t},
    \]
    by translation invariance of the Gromov-Wasserstein distance. As the subsequent arguments only require that the perturbed measures and the base measures are supported on the same points and that the
    base measures are centered, we may apply the same result to the limit on the right hand side to prove the general claim.

    Our proof technique is inspired by the arguments presented in Section 4.3.2 of \cite{bonnans2013perturbation}. However, we highlight that the results contained therein are not applicable to the current setting as the so-called directional regularity condition does not hold. 

\begin{remark}[Failure of directional regularity]
\label{rmk:directionalRegularityFailure}
 Consider the perturbed problem
\begin{equation}
    \label{eq:discreteQPPerturb}
    \begin{aligned}
        \inf_{\mathbf{P}\in\mathbb R^{N_0\times N_1}}\; &\left\langle \mathbf{P},\Delta(\mathbf{P}) \right\rangle_{\text{F}},
        \\
        \text{s.t. }\;&\mathbf{P}\mathbbm {1}_{N_1}=m_{0,t},
        \\
        &\mathbf{P}^{\intercal}\mathbbm{ 1}_{N_0}=m_{1,t},
        \\
        &\hspace{1.2em}\mathbf{P}_{ij}\geq 0,\;\forall (i,j)\in[N_0]\times[N_1],
    \end{aligned}
\end{equation}
where $(m_{0,t},m_{1,t})\in\mathbb R^{N_0}\times \mathbb R^{N_1}$ are the weight vectors for $\mu_{0,t}$ and $\mu_{1,t}$ for $t\in[0,1]$. To establish stability of the optimal value of \eqref{eq:discreteQPPerturb} at $t=0$, Theorem 4.24 in \cite{bonnans2013perturbation} can be applied provided that the directional regularity condition holds.  

According to Theorem 4.9 in \cite{bonnans2013perturbation}, the directional regularity condition for the problem \eqref{eq:discreteQPPerturb} holds at a point $\mathbf P\in\mathbb R^{N_0\times N_1}$ which is feasible for \eqref{eq:discreteQPPerturb} at $t=0$ in the direction $d=(n_0-m_0,n_1-m_1,0)$ where $n_0,n_1$ are the weight vectors for $\nu_0,\nu_1$ if and only if
\begin{equation}
\label{eq:directionalRegularity}
    0\in \interior\left( G(\mathbf P,(m_0,m_1))+ DG_{[(\mathbf P,(m_0,m_1))]}(\mathbb R^{N_0\times N_1},\left\{t(n_0-m_0,n_1-m_1) :t\geq 0\right\})-K\right),
\end{equation}
where $G(\mathbf Q,(r_0,r_1))=(\mathbf{Q}\mathbbm {1}_{N_1}-r_0,\mathbf{Q}^{\intercal}\mathbbm{ 1}_{N_0}-r_1,\mathbf Q)$ and $K=\left\{0\right\}\times\left\{0\right\}\times [0,\infty)^{N_0\times N_1}$. It is easy to see that 
\[
\begin{aligned}
DG_{[(\mathbf P,(m_0,m_1))]}(\mathbb R^{N_0\times N_1},\left\{t(n_0-m_0,n_1-m_1) :t\geq 0\right\})&\\&\hspace{-15em}=\left\{(\mathbf{Q}\mathbbm {1}_{N_1}-t(n_0-m_0),\mathbf{Q}^{\intercal}\mathbbm{ 1}_{N_0}-t(n_1-m_1),\mathbf Q):\mathbf Q\in\mathbb R^{N_0\times N_1},t\geq0\right\},
\end{aligned}
\]
so that condition \eqref{eq:directionalRegularity} reads 
\[
\begin{aligned}
0\in\interior\left(\left\{(\mathbf{Q}\mathbbm {1}_{N_1}-t(n_0-m_0),\mathbf{Q}^{\intercal}\mathbbm{ 1}_{N_0}-t(n_1-m_1),\mathbf P + \mathbf Q-\mathbf S)\right.\right.\\&\hspace{-5em}\left.\left.:\mathbf Q\in\mathbb R^{N_0\times N_1}, \mathbf S \in [0,\infty)^{N_0\times N_1}, t\geq0\right\}\right).
\end{aligned}
\]
We remark that the first and second coordinates of any triple in the above set satisfy 
\[
    \mathbbm {1}_{N_1}^{\intercal}\left(\mathbf{Q}\mathbbm {1}_{N_1}-t(n_0-m_0)\right)=\mathbbm {1}_{N_1}^{\intercal}\mathbf{Q}\mathbbm {1}_{N_0}=\mathbbm {1}_{N_1}^{\intercal}\mathbf{Q}^{\intercal}\mathbbm 1_{N_0}=\mathbbm {1}_{N_1}^{\intercal}\left(\mathbf{Q}^{\intercal}\mathbbm{ 1}_{N_0}-t(n_1-m_1)\right),
\]
as $n_0-m_0$ and $n_1-m_1$ have total mass zero. It follows that this set has empty interior and thus condition \eqref{eq:directionalRegularity} fails. 
\end{remark}

For ease of presentation, we separate the proof of \cref{thm:discreteGWStability} into a number of lemmas. 
\begin{lemma}
    \label{lem:GateauxDerivativeDiscrete}
    Fix $(\nu_0, \nu_1)\in\mathcal P_{\mu_0}\times \mathcal P_{\mu_1}$ and let $\rho_i=\nu_i-\mu_i$, $\mu_{i,t}=\mu_i+t\rho_i$ for $t\in[0,1]$ and $i\in\{0,1\}$. Then,
\begin{equation}
 \label{eq:rightDerivativeDiscrete}
    \begin{aligned}
        \frac{d}{dt}\mathsf D(\mu_{0,t},\mu_{1,t})^2\big\vert_{t=0}&= +\inf_{\mathbf{A}\in\mathcal A} \inf_{\pi \in \bar \Pi^{\star}_{\mathbf{A}}} 
\sup_{(\varphi_0,\varphi_1)\in\bar {\mathcal D}_{\mathbf{A}}}\left\{
            \int (f_0+ g_{0,\pi}+\bar\varphi_0)d\rho_0+\int (f_1+g_{1,\pi}+\bar\varphi_1)d\rho_1
        \right\},
    \end{aligned}
    \end{equation}  
    where 
    \[
        \begin{aligned}
            f_0&=2\int \|\cdot-x\|^4d\mu_0(x)-4M_2(\bar \mu_1)\|\cdot-\mathbb E_{\mu_0}[X]\|^2,
            \\
            f_1&=2\int \|\cdot-y\|^4d\mu_0(y)-4M_2(\bar \mu_0)\|\cdot-\mathbb E_{\mu_1}[X]\|^2,
            \\
            g_{0,\pi}&= 8\mathbb E_{(X,Y)\sim \pi}[\|Y\|^2X]^{\intercal}(\cdot),
            \\
            g_{1,\pi}&=8\mathbb E_{(X,Y)\sim \pi}[\|X\|^2Y]^{\intercal}(\cdot),
        \end{aligned}
    \]
    $\bar\varphi_i=\varphi_i(\cdot-\mathbb E_{\mu_i}[X])$ for $i=0,1$, and, for $\mathbf{A}\in B_{\mathrm{F}}(M)$, $\bar \Pi^{\star}_{\mathbf{A}}$ is the set of all OT plans for $\OT_{\mathbf{A}}(\bar \mu_0,\bar \mu_1)$, and $\bar{\mathcal D}_{\mathbf{A}}$ is the set of all corresponding OT potentials.
\end{lemma}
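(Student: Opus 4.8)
The plan is to reduce to the centered case (as in the paragraph preceding this lemma), recast $t\mapsto\mathsf D(\mu_{0,t},\mu_{1,t})^2$ as the optimal value $v(t)$ of the perturbed quadratic program \eqref{eq:discreteQPPerturb}, and perform a sensitivity analysis of $v$ at $t=0$, bypassing the failure of directional regularity recorded in \cref{rmk:directionalRegularityFailure}. Write $q(\mathbf P)=\langle\mathbf P,\Delta(\mathbf P)\rangle_{\mathrm F}$ for the (smooth, generally nonconvex) objective, which is fixed along the perturbation; since $\Delta$ is a self-adjoint linear operator, $\nabla q(\mathbf P)=2\Delta(\mathbf P)$ and $q(\mathbf P+\mathbf H)=q(\mathbf P)+2\langle\Delta(\mathbf P),\mathbf H\rangle_{\mathrm F}+\langle\mathbf H,\Delta(\mathbf H)\rangle_{\mathrm F}$ exactly. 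Following Section~4.3.2 of \cite{bonnans2013perturbation}, and invoking the Lipschitz stability of solution sets of parametric quadratic programs \cite{Klatte1985Lipschitz} to pass from minimizers of the $t$-perturbed problem back to minimizers at $t=0$, the right derivative of $v$ exists and equals $\inf_{\mathbf P^\star\in S}\mathrm{val}(LP(\mathbf P^\star))$, where $S$ denotes the solution set of \eqref{eq:discreteQP} and $LP(\mathbf P^\star)$ is the linear program of minimizing $\langle 2\Delta(\mathbf P^\star),\mathbf H\rangle_{\mathrm F}$ over $\mathbf H\in\mathbb R^{N_0\times N_1}$ subject to $\mathbf H\mathbbm 1_{N_1}=n_0-m_0$, $\mathbf H^{\intercal}\mathbbm 1_{N_0}=n_1-m_1$, and $\mathbf H_{ij}\ge 0$ whenever $\mathbf P^\star_{ij}=0$ (here $n_0,n_1$ are the weight vectors of $\nu_0,\nu_1$).

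The key step is strong duality for $LP(\mathbf P^\star)$. It is primal feasible: for any coupling $\mathbf R$ of $(\nu_0,\nu_1)$, the matrix $\mathbf R-\mathbf P^\star$ has the prescribed marginals and agrees with $\mathbf R\ge 0$ on $\{\mathbf P^\star_{ij}=0\}$. Its dual is feasible too: since $\mathbf P^\star$ is a global minimizer of $q$ over the convex polytope $\Pi(\mu_0,\mu_1)$, first-order optimality gives $\langle 2\Delta(\mathbf P^\star),\mathbf P-\mathbf P^\star\rangle_{\mathrm F}\ge 0$ for all $\mathbf P\in\Pi(\mu_0,\mu_1)$, so $\pi_{\mathbf P^\star}$ is an OT plan for $\OT_{2\Delta(\mathbf P^\star)}(\mu_0,\mu_1)$; any pair of OT potentials for the latter is then a feasible point of the dual of $LP(\mathbf P^\star)$, the equality constraints of the dual on $\supp(\mathbf P^\star)$ being automatic from complementary slackness. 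LP strong duality then yields $\mathrm{val}(LP(\mathbf P^\star))=\sup\bigl\{\int\alpha\,d(\nu_0-\mu_0)+\int\beta\,d(\nu_1-\mu_1)\bigr\}$, the supremum over all OT potentials $(\alpha,\beta)$ for $\OT_{2\Delta(\mathbf P^\star)}(\mu_0,\mu_1)$.

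It remains to identify these quantities. A direct computation with the GW objective in \eqref{eq:discreteQP}, i.e.\ $\langle\mathbf P^\star,\Delta(\mathbf P^\star)\rangle_{\mathrm F}=\int(\|x-x'\|^2-\|y-y'\|^2)^2\,d\pi_{\mathbf P^\star}\otimes\pi_{\mathbf P^\star}$, differentiating in $\mathbf P^\star$, integrating the primed coordinate against $\pi_{\mathbf P^\star}$, and using that $\mu_0,\mu_1$ are centered while $\pi_{\mathbf P^\star}$ has marginals $\mu_0,\mu_1$ and cross-correlation $\int xy^{\intercal}\,d\pi_{\mathbf P^\star}$, shows that, with $\mathbf A^\star\coloneqq\tfrac12\int xy^{\intercal}\,d\pi_{\mathbf P^\star}$, one has $2\Delta(\mathbf P^\star)_{ij}=f_0(x^{(i)})+f_1(y^{(j)})+g_{0,\pi_{\mathbf P^\star}}(x^{(i)})+g_{1,\pi_{\mathbf P^\star}}(y^{(j)})+c_{\mathbf A^\star}(x^{(i)},y^{(j)})+\mathrm{const}$. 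Since adding a separable function to an OT cost merely shifts the potentials by the same separable function and leaves optimal plans unchanged, the OT potentials of $\OT_{2\Delta(\mathbf P^\star)}(\mu_0,\mu_1)$ are precisely $\bigl(f_0+g_{0,\pi_{\mathbf P^\star}}+\varphi_0,\,f_1+g_{1,\pi_{\mathbf P^\star}}+\varphi_1\bigr)$ with $(\varphi_0,\varphi_1)\in\bar{\mathcal D}_{\mathbf A^\star}$, and the additive constant is annihilated upon integration against the zero-mass signed measures $\nu_i-\mu_i$. Finally, \cref{thm:VariationalMinimizers} identifies $S$ with $\{\pi\in\bar\Pi^\star_{\mathbf A}:\mathbf A\in\mathcal A\}$ (each optimal $\mathbf P^\star$ corresponds to a GW plan $\pi_{\mathbf P^\star}$ whose cross-correlation lies in $\mathcal A$ and which is an OT plan for $\OT_{\mathbf A}$ with that $\mathbf A$), so $\inf_{\mathbf P^\star\in S}$ becomes $\inf_{\mathbf A\in\mathcal A}\inf_{\pi\in\bar\Pi^\star_{\mathbf A}}$; substituting yields \eqref{eq:rightDerivativeDiscrete} in the centered case (with $\bar\varphi_i=\varphi_i$), and the general case follows from the translation-invariance reduction upon changing variables $x\mapsto x-\mathbb E_{\mu_i}[X]$ in the integrands. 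Throughout, since OT potentials are defined only up to additive constants and need not be bounded, \cref{prop:discreteGWPotentials} lets us replace each pair by a uniformly bounded version without changing any of the above values, so the supremum over $\bar{\mathcal D}_{\mathbf A}$ runs over a bounded set.

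The main obstacle is exactly the one flagged in \cref{rmk:directionalRegularityFailure}: the generic parametric-programming sensitivity theorem (Theorem~4.24 in \cite{bonnans2013perturbation}) relies on directional regularity to guarantee strong duality of the linearized problem, and this fails because the row- and column-sum structure of the coupling constraints forces the relevant linearized image set to have empty interior. Circumventing it requires the ad hoc strong-duality argument above, whose linchpin is the non-obvious identification of the gradient matrix $2\Delta(\mathbf P^\star)$ with a cost of the form ``separable $+\ c_{\mathbf A^\star}$''—this is precisely what turns the Lagrange multipliers of the quadratic program into genuine OT potentials and, via \cref{thm:VariationalMinimizers}, ties them to the set $\mathcal A$. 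A secondary technical point is establishing directional differentiability of $v$ itself despite the nonconvexity of $q$, which rests on the quantitative stability of quadratic-program solution sets from \cite{Klatte1985Lipschitz}.
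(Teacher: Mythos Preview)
Your outline is correct and follows essentially the same route as the paper's proof: reduce to the centered case, linearize the quadratic program \eqref{eq:discreteQP} at each minimizer to obtain the LP in \eqref{eq:discreteLP}, establish strong duality for that LP by recognizing the gradient cost $2\Delta(\mathbf P^\star)$ as separable-plus-$c_{\mathbf A^\star}$ (so that dual variables are OT potentials for $\OT_{\mathbf A^\star}$, bounded via \cref{prop:discreteGWPotentials}), and use Klatte's Lipschitz stability \cite{Klatte1985Lipschitz} together with a second-order Taylor expansion to obtain the matching lower bound. The only cosmetic difference is that the paper carries out the upper and lower bounds explicitly rather than packaging them as a single sensitivity statement, precisely because Theorem~4.24 of \cite{bonnans2013perturbation} cannot be invoked directly.
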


\begin{proof}
    Assume without loss of generality that $\mu_0$ and $\mu_1$ are centered.
    We begin by establishing an upper bound on the limit. To this end, let $\bar\pi\in\Pi(\mu_0,\mu_1)$ be an arbitrary GW plan for $\mathsf D(\mu_{0},\mu_{1})$, and let $\gamma$ be a finite signed measure on $\mathcal X_0\times \mathcal X_1$  with first marginal $\nu_0-\mu_0$, second marginal $\nu_1-\mu_1$, and  with $\gamma(\{(x^{(i)},y^{(j)})\})\geq 0$ for every $(i,j)\in \mathcal I(\bar \pi):=\left\{ (i,j)\in[N_0]\times [N_1]: \bar
    \pi(\{(x^{(i)},y^{(j)})\})=0 \right\}$. With this, $\bar \pi+t\gamma \in \Pi(\mu_{0,t},\mu_{1,t})$ for every $t\in[0,1]$ sufficiently small, whence
    \begin{equation}
    \label{eq:upperBoundDiff}
        \begin{aligned}
            \mathsf D(\mu_{0,t},\mu_{1,t})^2-\mathsf D(\mu_{0},\mu_{1})^2&\leq \int \Delta d\left( \bar \pi+t\gamma \right)\otimes \left( \bar \pi+t\gamma \right)-\int \Delta d\bar\pi\otimes\bar\pi
            \\
            &= 2t \int \Delta d\bar\pi\otimes \gamma +t^2 \iint \Delta d\gamma \otimes \gamma,
        \end{aligned}
    \end{equation}
    for all such $t$. 

    We now tighten the right hand side of \eqref{eq:upperBoundDiff} by optimizing over the choice of $\gamma$ in the first term of the last line. To this end, define $\mathbf{C}\in\mathbb R^{N_0\times N_1}$ via $\mathbf{C}_{ij}=2\int \Delta\left( x^{(i)},y^{(j)},x,y \right)d\bar \pi(x,y)$ for $(i,j)\in [N_0]\times [N_1]$ and consider the linear program  
\begin{equation}
    \label{eq:discreteLP}
    \begin{aligned}
    \inf_{\mathbf{G}\in \mathbb R^{N_0\times N_1}} \quad \langle \mathbf{C},\mathbf{G}\rangle_{\mathrm F}& \\
    \text{subject to} \hspace{0.95em} \quad
    \mathbf{G}\mathbbm 1_{N_1} &= n_0-m_0, \\
    \mathbf{G}^{\intercal}\mathbbm 1_{N_0} &= n_1-m_1, \\
    \mathbf{G}_{ij}&\geq 0, \;\forall\;(i,j)\in\mathcal I(\bar \pi),
\end{aligned}
\end{equation}
where $\mathbbm 1_{N_i}$ is the vector of all $1$'s of length $N_i$ for $i=0,1$, and the vectors $n_0,m_0\in\mathbb R^{N_0},n_1,m_1\in\mathbb R^{N_1}$ contain the weights of $\nu_0,\mu_0,\nu_1,$ and $\mu_1$ respectively. The corresponding dual problem is given by 
\begin{equation}
    \label{eq:discreteLPDual}
\begin{aligned}
    \sup_{\substack{y\in\mathbb R^{N_0},z\in\mathbb R^{N_1}\\ (S_{ij})_{ (i,j)\in \mathcal I(\bar \pi)}}} \quad y^{\intercal}(n_0-m_0)&+ z^{\intercal}(n_1-m_1) \\
    \text{subject to} \hspace{2.65em} \quad
    y_i+z_j&=\mathbf{C}_{ij},\;\forall\;(i,j)\not\in\mathcal I(\bar \pi), \\
    y_i+z_j+S_{ij}&=\mathbf{C}_{ij},\;\forall\;(i,j)\in\mathcal I(\bar \pi), \\
    S_{ij}&\geq 0.
\end{aligned}
\end{equation}
Since $\mu_0,\mu_1$ are centered we have, for $(i,j)\in[N_0]\times [N_1]$,
\[
    \begin{aligned}
        \mathbf{C}_{ij}&= 2\int \|x^{(i)}-x\|^4 -2\|x^{(i)}-x\|^2\|y^{(j)}-y\|^2+ \|y^{(j)}-y\|^4d\bar \pi(x,y)
        \\
        &= 2\int \|x^{(i)}-x\|^4d\mu_0(x)+2\int\|y^{(j)}-y\|^4d\mu_1(y)-4M_2(\mu_1)\|x^{(i)}\|^2-4M_2(\mu_0)\|y^{(j)}\|^2
\\
            &+8\int \langle x^{(i)},x\rangle\|y\|^2d\bar \pi(x,y)+8\int \|x\|^2\langle y^{(j)},y\rangle d\bar \pi(x,y)-4\int\|y\|^2\|x\|^2d\bar \pi(x,y)
\\        
&-4\|x^{(i)}\|^2\|y^{(j)}\|^2-16\int\langle x^{(i)},x\rangle\langle y^{(j)},y\rangle d\bar \pi(x,y),  
    \end{aligned}
\]
where only the final two terms in the display depend simultaneously on $x^{(i)}$ and $y^{(j)}$. Adopting the auxiliary variables $r\in\mathbb R^{N_0},s\in\mathbb R^{N_1}$ given by 
\begin{equation}
    \label{eq:potentialConnection}
    \begin{aligned}
        r_i&=y_i-2\int \|x^{(i)}-x\|^4d\mu_0(x)-8\int \langle x^{(i)},x\rangle\|y\|^2d\bar \pi(x,y)+4M_2(\mu_1)\|x^{(i)}\|^2
        \\
        s_{j}&= z_j-2\int\|y^{(j)}-y\|^4d\mu_1(y)-8\int \|x\|^2\langle y^{(j)},y\rangle d\bar \pi(x,y)+4M_2(\mu_0)\|y^{(j)}\|^2\\&+4\int\|y\|^2\|x\|^2d\bar \pi(x,y)
    \end{aligned}
\end{equation}
for every $(i,j)\in[N_0]\times [N_1]$, we obtain the equivalent dual problem 
\begin{equation}
    \label{eq:discreteLPDualAlt}
\begin{aligned}
    \sup_{\substack{r\in\mathbb R^{N_0},s\in\mathbb R^{N_1}\\ (S_{ij})_{ (i,j)\in \mathcal I(\bar \pi)}}} \quad r^{\intercal}(n_0-m_0)&+ s^{\intercal}(n_1-m_1)+\varsigma_{\bar\pi} \\
    \text{subject to} \hspace{2.65em} \quad
    r_i+s_j&=\mathbf{R}_{ij},\;\forall\;(i,j)\not\in\mathcal I(\bar \pi), \\
    r_i+s_j+S_{ij}&=\mathbf{R}_{ij},\;\forall\;(i,j)\in\mathcal I(\bar \pi), \\
    S_{ij}&\geq 0, 
\end{aligned}
\end{equation}
where $\mathbf{R}_{ij}=-4\|x^{(i)}\|^2\|y^{(j)}\|^2-16\int\langle x^{(i)},x\rangle\langle y^{(j)},y\rangle d\bar \pi(x,y)$ for $(i,j)\in[N_0]\times [N_1]$, and 
\[
    \begin{aligned}
        \varsigma_{\bar\pi}&= \sum_{i=1}^{N_0}\left(2\int \|x^{(i)}-x\|^4d\mu_0(x)+8\int \langle x^{(i)},x\rangle\|y\|^2d\bar \pi(x,y)-4M_2(\mu_1)\|x^{(i)}\|^2  \right)\left( n_{0}-m_{0} \right)_i
\\
&+\sum_{j=1}^{N_1}\left(2\int\|y^{(j)}-y\|^4d\mu_1(y)+8\int \|x\|^2\langle y^{(j)},y\rangle d\bar \pi(x,y)-4M_2(\mu_0)\|y^{(j)}\|^2  \right)\left( n_{1}-m_{1} \right)_j,
    \end{aligned}
\]
the term involving $4\int\|y\|^2\|x\|^2d\bar \pi(x,y)$ is omitted, as $n_1-m_1$ sums to $0$.

If $(r,s)$ is feasible for \eqref{eq:discreteLPDualAlt}, $r_i+s_j\leq \mathbf{R}_{ij}$ for every $(i,j)\in[N_0]\times [N_1]$ with equality when $(i,j)\in \mathcal I(\bar \pi)$ (i.e. when $(x^{(i)},y^{(j)})\in\supp(\bar \pi)$). 
Recall from \cref{sec:GWDistance} that,
as $\bar \pi$ is optimal for $\mathsf D(\mu_0,\mu_1)$, it is also optimal for $\OT_{\mathbf{A}_{\bar\pi}}(\mu_0,\mu_1)$, where $\mathbf{A}_{\bar
\pi}=\frac 12 \int xy^{\intercal} d\bar \pi(x,y)$. It follows that $r^{\intercal}m_0+s^{\intercal}m_1=\int c_{\mathbf{A}_{\bar \pi}}d\bar \pi$ such that $(r,s)$ can be identified with a pair of OT potentials for  $\OT_{\mathbf{A}_{\bar\pi}}(\mu_0,\mu_1)$. As the objective in \eqref{eq:discreteLPDualAlt} is invariant to the transformation $(r,s)\mapsto(r+C,s-C)$ and any choice of OT potential for $\OT_{\mathbf{A}_{\bar\pi}}(\mu_0,\mu_1)$ admits a version which is bounded (see \cref{prop:discreteGWPotentials}), the feasible set in \eqref{eq:discreteLPDualAlt} can be restricted to a bounded set. Conclude that \eqref{eq:discreteLPDualAlt} (and hence \eqref{eq:discreteLPDual}) admit optimal solutions so that strong duality holds between the problems \eqref{eq:discreteLPDualAlt} and \eqref{eq:discreteLP} holds (i.e. their optimal values coincide).

Given these deliberations, it follows from \eqref{eq:upperBoundDiff} that  
\[ 
            \limsup_{t\downarrow 0}
            \frac{\mathsf D(\mu_{0,t},\mu_{1,t})^2-\mathsf D(\mu_{0},\mu_{1})^2}{t}\leq \varsigma_{\bar \pi}+\sup_{(\varphi_0,\varphi_1)\in \mathcal D_{\mathbf{A}_{\bar\pi}}}\left\{ \int \varphi_0d(\nu_0-\mu_0)+\int \varphi_1d(\nu_1-\mu_1) \right\}, 
    \]
    which can be further tightened by minimizing the right hand side  over all optimal couplings for $\mathsf D(\mu_{0},\mu_{1})^2$.  Alternatively, if $\mathbf{A}\in \argmin_{B_{\mathrm{F}}(M)}\left( \Phi_{(\mu_0,\mu_1)} \right)$, then any optimal coupling, $\pi^{\star}$, for $\OT_{\mathbf{A}}(\mu_0,\mu_1)$ with $\mathbf {A}=\frac{1}{2}\int xy^{\intercal}d\pi^{\star}(x,y)$ is optimal for $\mathsf D(\mu_{0},\mu_{1})^2$ by \cref{thm:VariationalMinimizers} (as aforementioned, any solution $\pi$ to $\mathsf D(\mu_{0},\mu_{1})^2$ also solves $\OT_{\mathbf{A}}(\mu_0,\mu_1)$ for $\mathbf{A}=\frac 12 \int
    xy^{\intercal}d\pi(x,y)$) yielding
\begin{equation}
    \label{eq:limsupRatioDiscrete}
    \begin{aligned}
        \limsup_{t\downarrow 0}\frac{\mathsf D(\mu_{0,t},\mu_{1,t})^2-\mathsf D(\mu_{0},\mu_{1})^2}{t}&\leq \int f_0d\rho_0+ \int  f_1d\rho_1
        \\
        &\hspace{-4em}+\inf_{\mathbf{A}\in\mathcal A}\left\{
            \inf_{\pi \in \bar \Pi^{\star}_{\mathbf{A}}} \int g_{0,\pi}d\rho_0+\int g_{1,\pi}d\rho_1+\sup_{(\varphi_0,\varphi_1)\in\bar {\mathcal D}_{\mathbf{A}}}\int \bar\varphi_0d\rho_0+\int \bar\varphi_1d\rho_1
        \right\},
    \end{aligned}
    \end{equation}  
upon expanding the expression for $\varsigma_{\bar \pi}$.

We conclude by proving the complementary inequality. Let $t_n\downarrow 0$ be arbitrary, and let $\pi_{t_n}$ be any GW plan for $\mathsf D(\mu_{0,t_n},\mu_{1,t_n})$. In what follows, we write $\mathbf{P}_{t_n}$ for the matrix with entries $\left( \mathbf{P}_{t_n} \right)_{ij}=\pi_{t_n}\left( \left\{ (x^{(i)},y^{(j)}) \right\} \right)$ for $(i,j)\in[N_0]\times [N_1]$ and $m_0,n_0,m_1,n_1$ for the vectors of weights of $\mu_0,\nu_0,\mu_1,\nu_1$ as in the first half of the proof.
With this, $\mathsf D(\mu_{0,t},\mu_{1,t})^2$ can be viewed as a perturbation of the (finite dimensional) quadratic program $\mathsf D(\mu_{0},\mu_{1})^2$ given in \eqref{eq:discreteQP}, where the equality constraints are of the form $\left\{ \mathbf{P} \in\mathbb R^{N_0\times N_1}: \langle\mathbf{\Lambda}_i,\mathbf{P}\rangle_{\mathrm F}=b_i(t)\text{ for }i\in[N_0+N_1] \right\}$ for some fixed matrices $(\mathbf{\Lambda}_i)_{i=1}^{N_0+N_1}\subset\mathbb R^{N_0\times N_1}$ and a vector $b(t)=(m_0+t(n_0-m_0),m_1+t(n_1-m_1))\in\mathbb
R^{N_0+N_1}$. In this setting, the set of matrix solutions, $S_{t}$, of $\mathsf D(\mu_{0,t},\mu_{1,t})^2$ for $t\in[0,1]$ satisfies 
\begin{equation}
    \label{eq:solnSetLipschitz}
S_t \subset S_0 + \gamma \underbrace{\|(m_{0,t},-m_{0,t},n_{0,t},-n_{0,t})-(m_{0},-m_{0},n_{0},-n_{0})\|}_{=t\|(m_0-n_0,n_0-m_0,m_1-n_1,n_1-m_1)\|}B_{\mathrm{F}}(1),
\end{equation}
for all $t$ sufficiently small, 
where $\gamma>0$ is a constant and the addition is understood in the sense of Minkowski (see Theorem 3 in \cite{Klatte1985Lipschitz}).       

By the Heine-Borel theorem, there exists a subsequence $t_{n'}$ along which $\mathbf{P}_{t_{n'}}\to \mathbf{P}\in[0,1]^{N_0\times N_1}$ and, evidently, $\mathbf{P}\in S_0$ by the previous arguments.\footnote{Indeed, $d(\mathbf{P},S_0)\leq \|\mathbf{P}-\mathbf{P}_{t_{n'}}\|_{\mathrm{F}}+d(\mathbf{P}_{t_{n'}},S_0)\to 0$ as $t_{n'}\downarrow 0$, where $d(\cdot,S_0)=\inf_{\mathbf{Q}\in S_0}\left\{ d(\cdot,\mathbf{Q}) \right\}$.} It follows from \eqref{eq:solnSetLipschitz} that, for every $n'$ sufficiently large, there exists some
$\mathbf{P}_{n'}^{0}\in S_0$ with $\|\mathbf{P}_{n'}^{0}-\mathbf{P}_{t_{n'}}\|_{\mathrm{F}}=O(t_{n'})$.

Let $(y_{n'},z_{n'},(S^{n'}_{ij})_{(i,j)\in\mathcal I(\pi_{n'})})$ denote a solution to \eqref{eq:discreteLPDual}, where the set $\mathcal I(\bar \pi)$ is replaced by $\mathcal I(\pi_{n'})$ (where $\pi_{n'}$ is the coupling with weights given by $\mathbf{P}_{n'}^0$), and the cost $\mathbf{C}$ is replaced by $\mathbf{C}_{n'}$ with $(\mathbf{C}_{n'})_{ij}=2\int \Delta\left( x^{(i)},y^{(j)},x,y \right)d \pi_{n'}(x,y)$ for $(i,j)\in [N_0]\times
[N_1]$ (note that all of the implications regarding $(y_{n'},z_{n'},(S^{n'}_{ij})_{(i,j)\in\mathcal I(\pi_{n'})})$ derived in the first half of the proof still hold, as $\pi_{n'}$ is optimal for $\mathsf D(\mu_0,\mu_1)^{2}$). For notational convenience, let $\mathbf{S}_{n'}\in\mathbb R^{N_0\times N_1}$ be such that 
\[
(\mathbf{S}_{n'})_{ij}=
\begin{cases}
    S_{ij}^{n'}=(\mathbf{C}_{n'})_{ij}-(y_{n'})_i-(z_{n'})_j,&\text{ if } (i,j)\in\mathcal I(\pi_{n'}),\\
    0,&\text{ if } (i,j)\not\in\mathcal I(\pi_{n'}),  
\end{cases} 
\]
and observe that $0=\langle \mathbf{S}_{n'},\mathbf{P}_{n'}^0\rangle_{\mathrm F}=\langle \mathbf{C}_{n'},\mathbf{P}_{n'}^0\rangle_{\mathrm F}-\langle y_{n'},m_0\rangle - \langle z_{n'},n_0\rangle$ by definition, and $\langle \mathbf{S}_{n'},\mathbf{B}\rangle_{\mathrm F}\geq 0$ for any $\mathbf{B}\in[0,\infty)^{N_0\times N_1}$ as $S_{ij}^{n'}\geq 0$. As such, 
\begin{equation}
    \label{eq:CompSlack}
\begin{aligned}
    \langle \mathbf{S}_{n'},\mathbf{P}_{t_{n'}}\rangle_{\mathrm F}&= \langle \mathbf{C}_{n'},\mathbf{P}_{t_{n'}}\rangle_{\mathrm F}-\langle y_{n'},m_0+t_{n'}(n_0-m_0)\rangle -\langle z_{n'},m_1+t_{n'}(n_1-m_1)\rangle
    \\
    &= \langle \mathbf{C}_{n'},\mathbf{P}_{t_n'}-\mathbf{P}_{n'}^0\rangle_{\mathrm F}-t_{n'}\langle y_{n'},n_0-m_0\rangle -t_{n'}\langle z_{n'},n_1-m_1\rangle\geq 0.
\end{aligned}
\end{equation}
Let $a:\mathbf{P}\in\mathbb R^{N_0\times N_1}\mapsto \sum_{\substack{i,k\in[N_0]\\j,l\in[N_1]}} \mathbf{P}_{ij}\Delta (x^{(i)},y^{(j)},x^{(k)},y^{(l)})\mathbf{P}_{kl}$ denote the quadratic form whose minimization underlies the discrete Gromov-Wasserstein distance. As $a$ is smooth with (Fr{\'e}chet) derivative $Da_{[\mathbf{P}]}(\mathrm{ Q})=2\sum_{\substack{i,k\in[N_0]\\j,l\in[N_1]}} \mathbf{P}_{ij}\Delta (x^{(i)},y^{(j)},x^{(k)},y^{(l)})\mathbf{Q}_{kl}$, we have   
\[
\begin{aligned}
    a(\mathbf{P}_{t_{n'}},\mathbf{P}_{t_{n'}})-a(\mathbf{P}_{n'}^0,\mathbf{P}_{n'}^0)-\langle \mathbf{C}_{n'},\mathbf{P}_{t_{n'}}-\mathbf{P}_{n'}^0\rangle_{\mathrm F}= O\left(\|\mathbf{P}_{t_n'}-\mathbf{P}_{n'}^0\|^2\right)=O\left( t_{n'}^2   \right),
\end{aligned}
\]
so that, from \eqref{eq:CompSlack}, 
\begin{equation}
\label{eq:lowerBoundRatio}
\begin{aligned}
    \mathsf D(\mu_{0,t_{n'}},\mu_{1,t_{n'}})^2-\mathsf D(\mu_0,\mu_1)^2&\geq a(\mathbf{P}_{t_{n'}},\mathbf{P}_{t_{n'}})-a(\mathbf{P}_{n'}^0,\mathbf{P}_{n'}^0)-\langle \mathbf{S}_{n'},\mathbf{P}_{t_{n'}}\rangle_{\mathrm F}\\
&
    = a(\mathbf{P}_{t_{n'}},\mathbf{P}_{t_{n'}})-a(\mathbf{P}_{n'}^0,\mathbf{P}_{n'}^0)-\langle \mathbf{C}_{n'},\mathbf{P}_{t_{n'}}-\mathbf{P}_{n'}^0\rangle_{\mathrm F}\\& +t_{n'}\langle y_{n'},n_0-m_0\rangle +t_{n'}\langle z_{n'},n_1-m_1\rangle
    \\&= t_{n'}\langle y_{n'},n_0-m_0\rangle +t_{n'}\langle z_{n'},n_1-m_1\rangle+o(t_{n'}).
\end{aligned}
\end{equation}
Recall from \eqref{eq:potentialConnection} and \eqref{eq:discreteLPDualAlt} and the surrounding discussion that $(y_{n'},z_{n'})$ satisfy 
\begin{equation}
    \label{eq:innerProductsLowerBound}
    \begin{aligned}
    &\langle y_{n'},n_0-m_0\rangle +\langle z_{n'},n_1-m_1\rangle
    \\
    &= \sup_{(\varphi_0,\varphi_1)\in\bar{\mathcal D}_{\mathbf{A}(\pi_{n'})}}\left\{\int \varphi_{0}d(\nu_0-\mu_0)+\int \varphi_{1}d(\nu_1-\mu_1)\right\}+\varsigma_{\pi_{n'}}
    \\
    &\geq \inf_{\pi\in\bar\Pi^{\star}}\left\{ \sup_{(\varphi_0,\varphi_1)\in\bar{\mathcal D}_{\mathbf{A}(\pi)}}\left\{\int \varphi_{0}d(\nu_0-\mu_0)+\int \varphi_{1}d(\nu_1-\mu_1)\right\}+\varsigma_{\pi}\right\},
    \end{aligned}
\end{equation}
where  $\mathbf{A}(\pi)=\frac 12 \int xy^{\intercal}d\pi$, and $\bar\Pi^{\star}$ denotes the set of optimal couplings for $\mathsf D(\mu_0,\mu_1)^2$. It follows from \eqref{eq:lowerBoundRatio} and \eqref{eq:innerProductsLowerBound} that 
\[
    \begin{aligned}
        &\liminf_{t_{n'}\downarrow 0}\frac{\mathsf D(\mu_{0,t_{n'}},\mu_{1,t_{n'}})^2-\mathsf D(\mu_0,\mu_1)^2}{t_{n'}}
    \\
    &\geq \inf_{\pi\in\bar\Pi^{\star}}\left\{ \sup_{(\varphi_0,\varphi_1)\in\bar{\mathcal D}_{\mathbf{A}(\pi)}}\left\{\int \varphi_{0}d(\nu_0-\mu_0)+\int \varphi_{1}d(\nu_1-\mu_1)\right\}+\varsigma_{\pi}\right\}. 
    \end{aligned}
\]
As this final lower bound is independent of the choice of subsequence, and the initial choice of sequence $t_n\downarrow 0$ was arbitrary, 
\[
    \begin{aligned}
    &\liminf_{t\downarrow 0}\frac{\mathsf D(\mu_{0,t},\mu_{1,t})^2-\mathsf D(\mu_0,\mu_1)^2}{t}
    \\
    &\geq \inf_{\pi\in\bar\Pi^{\star}}\left\{ \sup_{(\varphi_0,\varphi_1)\in\bar{\mathcal D}_{\mathbf{A}(\pi)}}\left\{\int \varphi_{0}d(\nu_0-\mu_0)+\int \varphi_{1}d(\nu_1-\mu_1)\right\}+\varsigma_{\pi}\right\},
\end{aligned}
\]
the right hand side can be rewritten in the same form as the right hand side of \eqref{eq:limsupRatioDiscrete} by expanding $\varsigma_{\pi}$ and applying the same reasoning as the first part of the proof, proving the claim. 
\end{proof}

\begin{lemma}
    \label{lem:LipschitzContinuityDiscrete}

    For any pairs $(\nu_0, \nu_1),(\rho_0, \rho_1)\in\mathcal P_{\mu_0}\times \mathcal P_{\mu_1}$, $\left|\mathsf D(\nu_0,\nu_1)^2-\mathsf D(\nu_0,\nu_1)^2\right|\leq C\|\nu_0\otimes \nu_1-\rho_0\otimes\rho_1\|_{\infty,\mathcal F^{\oplus}}$ for some constant $C>0$ which is independent of $(\nu_0, \nu_1),(\rho_0, \rho_1)$.  
\end{lemma}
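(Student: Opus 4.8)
\textbf{Proof plan for \cref{lem:LipschitzContinuityDiscrete}.}
The plan is to exploit the variational form \eqref{eq:GWVariational}, namely $\mathsf D(\nu_0,\nu_1)^2 = \mathsf S_1(\bar\nu_0,\bar\nu_1) + \mathsf S_2(\bar\nu_0,\bar\nu_1)$, and bound each of the two summands separately in terms of $\|\nu_0\otimes\nu_1 - \rho_0\otimes\rho_1\|_{\infty,\mathcal F^{\oplus}}$. The first observation is that every function appearing in the analysis (the moment-type functions $f_0,f_1$, the quadratic cost $c_{\mathbf{A}}$, and the bounded OT potentials from \cref{prop:discreteGWPotentials}) is, after rescaling by a constant depending only on $\|\mathcal X_0\|_\infty$ and $\|\mathcal X_1\|_\infty$, an element of $\mathcal F_0$ or $\mathcal F_1$. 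Hence differences of their integrals against $\nu_0\otimes\nu_1$ versus $\rho_0\otimes\rho_1$ are controlled by the $\mathcal F^\oplus$-seminorm, provided one can reduce everything to integrals of direct sums $g_0\oplus g_1$ (or products, which one handles via an add–subtract trick, see below).

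First I would handle $\mathsf S_2$. Fix $\mathbf{A}\in B_{\mathrm F}(M)$ and recall that $\OT_{\mathbf A}(\nu_0,\nu_1) = \int \varphi_0^\star\, d\nu_0 + \int \varphi_1^\star\, d\nu_1$ for a pair of OT potentials, which by \cref{prop:discreteGWPotentials} may be taken with $\|\varphi_i^\star\|_{\infty,\mathcal X_i}\le K$. Using that $(\varphi_0^\star,\varphi_1^\star)$ is dual-feasible (hence sub-optimal) for $\OT_{\mathbf A}(\rho_0,\rho_1)$, and conversely with the roles swapped, one gets the two-sided bound $|\OT_{\mathbf A}(\nu_0,\nu_1) - \OT_{\mathbf A}(\rho_0,\rho_1)| \le K\big|(\nu_0\otimes\nu_1 - \rho_0\otimes\rho_1)(\tfrac1K\varphi_0^\star\oplus\tfrac1K\varphi_1^\star)\big| + (\text{analogous term for the other pair})$, where I have used that $\int \varphi_i^\star d\nu_i - \int\varphi_i^\star d\rho_i$ can be written by tensoring with the constant-$1$ marginal: $\int\varphi_0^\star d\nu_0 = \int(\varphi_0^\star\oplus 0)\,d\nu_0\otimes\nu_1$, and $\tfrac1K\varphi_0^\star\oplus 0 \in \mathcal F^\oplus$. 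This gives $|\OT_{\mathbf A}(\nu_0,\nu_1) - \OT_{\mathbf A}(\rho_0,\rho_1)| \lesssim K \|\nu_0\otimes\nu_1 - \rho_0\otimes\rho_1\|_{\infty,\mathcal F^\oplus}$ uniformly in $\mathbf A\in B_{\mathrm F}(M)$. Since $\mathsf S_2(\bar\nu_0,\bar\nu_1) = \inf_{\mathbf A\in B_{\mathrm F}(M)}\Phi_{(\bar\nu_0,\bar\nu_1)}(\mathbf A)$ (the minimizers lie in $B_{\mathrm F}(M)$ by the discussion before \cref{thm:discreteGWStability}) and the penalty term $32\|\mathbf A\|_{\mathrm F}^2$ does not depend on the marginals, taking an infimum of a uniform bound yields $|\mathsf S_2(\bar\nu_0,\bar\nu_1) - \mathsf S_2(\bar\rho_0,\bar\rho_1)| \lesssim \|\bar\nu_0\otimes\bar\nu_1 - \bar\rho_0\otimes\bar\rho_1\|_{\infty,\mathcal F^\oplus}$. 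A subtlety here is that the centered measures $\bar\nu_i$, $\bar\rho_i$ are pushforwards under \emph{different} translations, so $c_{\mathbf A}$ integrated against $\bar\nu_0\otimes\bar\nu_1$ must be rewritten as an integral of a (uniformly bounded, hence rescaled-$\mathcal F^\oplus$) function against the uncentered $\nu_0\otimes\nu_1$; this is where the functions in $\mathcal G$-type form (compositions $c_{\mathbf A}(\cdot-\xi,\cdot-\zeta)$) come in, and one checks directly that on the compact supports these rescale into $\mathcal F_i$.

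Next I would bound $\mathsf S_1$. Here $\mathsf S_1(\bar\nu_0,\bar\nu_1) = \int\|x-x'\|^4 d\bar\nu_0\otimes\bar\nu_0 + \int\|y-y'\|^4 d\bar\nu_1\otimes\bar\nu_1 - 4M_2(\bar\nu_0)M_2(\bar\nu_1)$; the first two terms depend on only one marginal each (so they are controlled by the appropriate marginal seminorm, which is dominated by the $\mathcal F^\oplus$-seminorm by tensoring with constant $1$), and the cross term $M_2(\bar\nu_0)M_2(\bar\nu_1)$ is a product which one handles via the elementary identity $ab - a'b' = (a-a')b + a'(b-b')$ together with the fact that $M_2(\bar\nu_i)$ is bounded by $\|\mathcal X_i\|_\infty^2$ and is itself a (rescaled) $\mathcal F_i$-integral. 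One must again account for centering: $M_2(\bar\nu_0) = \int\|x - \mathbb E_{\nu_0}[X]\|^2 d\nu_0(x)$, and both $\mathbb E_{\nu_0}[X]$ (componentwise, via coordinate functions in $\mathcal F_0$) and the quadratic integrand depend Lipschitz-continuously on $\nu_0$ relative to the $\mathcal F_0$-seminorm on the compact support. Combining the $\mathsf S_1$ and $\mathsf S_2$ bounds and the triangle inequality gives the claim with $C$ depending only on $\|\mathcal X_0\|_\infty$, $\|\mathcal X_1\|_\infty$ (through $M$, $K$, and the various rescaling constants).

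\textbf{Main obstacle.} The routine parts are the duality-and-feasibility sandwich for $\OT_{\mathbf A}$ and the product-rule bookkeeping for $\mathsf S_1$. The genuinely delicate point is the \emph{centering}: because $\bar\nu_i$ and $\bar\rho_i$ are obtained by subtracting \emph{measure-dependent} means, one cannot directly compare integrals against $\bar\nu_0\otimes\bar\nu_1$ and $\bar\rho_0\otimes\bar\rho_1$ using the $\mathcal F^\oplus$-seminorm of the \emph{uncentered} measures. The fix — expressing every integrand as (a constant multiple of) a function of $(x,y)$ lying in $\mathcal F_0\oplus\mathcal F_1$ after absorbing the shifts, and showing that the shifts themselves vary Lipschitz-continuously with the measures in the relevant seminorm — is where the proof requires care, and is precisely the reason the function class $\mathcal F^\oplus$ (and later $\mathcal G$) is defined the way it is. I would make this rigorous by first proving the bound for the uncentered measures and a fixed cost/potential, then noting that all relevant shift vectors are coordinates integrated against $\mathcal F_i$ and hence controlled, and finally assembling via the triangle inequality.
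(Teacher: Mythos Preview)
Your approach is correct in outline but takes a genuinely different route from the paper. The paper does \emph{not} decompose via the variational form $\mathsf S_1+\mathsf S_2$ here; instead it exploits the fact that in the discrete setting $\mathsf D(\nu_0,\nu_1)^2$ is the optimal value of the finite-dimensional quadratic program \eqref{eq:discreteQP} and invokes a black-box Lipschitz stability result for such programs (Theorem~3 in \cite{Klatte1985Lipschitz}) to get $|\mathsf D(\nu_0,\nu_1)^2-\mathsf D(\rho_0,\rho_1)^2|\lesssim \|n_0-p_0\|+\|n_1-p_1\|$ directly in terms of the weight vectors. It then observes that, because $\mathcal F_i$ is the full unit $\ell^\infty$-ball on the finite set $\mathcal X_i$, one has $\|\nu_0\otimes\nu_1-\rho_0\otimes\rho_1\|_{\infty,\mathcal F^\oplus}=\|n_0-p_0\|_1+\|n_1-p_1\|_1$, and finishes by equivalence of norms in finite dimension. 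This bypasses centering entirely---no $\bar\nu_i$ ever appears---so the obstacle you flag simply does not arise.

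Your variational-form strategy is precisely what the paper does later for the semi-discrete and entropic settings (Lemmas~\ref{lem:semidiscreteS1Lipschitz}, \ref{lem:semidiscreteS2Lipschitz}, \ref{lem:entropicS1Lipschitz}, \ref{lem:entropicS2Lipschitz}), where no finite-dimensional QP structure is available. So your plan would work here too, and has the virtue of being uniform across all three regimes; but it is longer, and the centering step you correctly identify as delicate requires extending the discrete OT potentials to Lipschitz functions on a common domain (via the $c$-transform, as in \cref{prop:semidiscretePotentialProperties}) so that the shift $\mathbb E_{\nu_i}[X]-\mathbb E_{\rho_i}[X]$ can be absorbed. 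The paper's QP shortcut trades this generality for a two-line proof specific to the discrete case.
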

\begin{proof}
    Analogously to the proof of \cref{lem:GateauxDerivativeDiscrete},  $\mathsf D(\nu_0, \nu_1)^2$ and $\mathsf D(\nu_0, \nu_1)^2$ represent the optimal values of quadratic programs on $\mathbb R^{N_0\times N_1}$. Let $n_0,p_0,n_1,p_1$ denote the weight vectors for $\nu_0,\rho_0,\nu_1,\rho_1$ respectively. By Theorem $3$ in \cite{Klatte1985Lipschitz}, 
    \begin{equation}
    \label{eq:dicsreteLipschitz}
    \begin{aligned}
    |\mathsf D(\nu_0, \nu_1)^2-\mathsf D(\nu_0, \nu_1)^2|&\leq
    C'\|(n_0,-n_0,n_1,-n_1)-(p_0,-p_0,p_1,-p_1)\|
    \\
    &\leq \sqrt 2 C'\left(\|n_0-p_0\|+\|n_1-p_1\|\right),
    \end{aligned}
    \end{equation}
    for some constant $C'$ which is independent of $n_0,n_1,p_0,p_1$, as all vectors involved lie in a bounded set and result in nonempty feasible sets for the relevant quadratic programs. As $\mathcal F_0,\mathcal F_1$ are symmetric in the sense that if $f\in\mathcal F_0$, then $-f\in\mathcal F_0$, and similarly for $\mathcal F_1$, 
\[
    \|\nu_0\otimes \nu_1-\rho_0\otimes \rho_1\|_{\infty,\mathcal F^{\oplus}}=\sup_{f\in\mathcal F_0}\left|(\nu_0-\rho_0)(f)\right|+\sup_{f\in\mathcal F_1}\left|(\nu_1-\rho_1)(f)\right|=\|n_0-p_0\|_1+\|n_1-p_1\|_1.
\]As all norms on a finite dimensional space are equivalent, this display and \eqref{eq:dicsreteLipschitz} prove the claim.
\end{proof}

\begin{proof}[Proof of \cref{thm:discreteGWStability}]
    \cref{thm:discreteGWStability} follows directly from Lemmas \ref{lem:GateauxDerivativeDiscrete} and \ref{lem:LipschitzContinuityDiscrete}.   
\end{proof}

\subsubsection{Proof of \texorpdfstring{\cref{thm:discreteGWLimitDistribution}}{Theorem 3}}
\label{sec:proof:thm:discreteGWLimitDistribution}

The generic limit theorems follow from \cref{thm:discreteGWStability} by applying the delta method, see \cref{prop:unified}.

  To see that the empirical measure satisfies the desired condition, note that, in this setting, $\hat\mu_{0,n}$ and $\mu_0$ can be identified with vectors of weights $\hat m_{0,n}$ and $m_0\in\mathbb R^{N_0}$. By the central limit theorem,  
    \[\sqrt n \left(\hat m_{0,n}-m_0\right)\stackrel{d}{\to}N(0,\bsigma_{m_0}), 
    \]
    where $\bsigma_{m_0}$ is the multinomial covariance matrix
    \[
    (\bsigma_{\mu_0})_{ij}=\begin{cases}(m_0)_i(1-(m_0)_i),&\text{ if }i=j,
            \\
            -(m_0)_i(m_0)_j,&\text{otherwise.}
        \end{cases}
    \]
    
    Thus, $\sqrt{n}(\hat\mu_{0,n}-\mu_0)\stackrel{d}\to G_{\mu_0}$ in $\ell^{\infty}(\mathcal F_0)$ where $G_{\mu_0}$ is a $\mu_0$-Brownian bridge. An analogous result evidently holds for $\sqrt n  (\hat \mu_{1,n}-\mu_1)$. 

    As the $X_{0,i}$'s and $X_{1,i}$'s are independent, Example 1.4.6 in \cite{van1996weak} and Lemma 3.2.4 \cite{dudley2014uniform} imply that 
    \[
        \sqrt{n}\left(\hat\mu_{0,n}-\mu_0,\hat\mu_{1,n}-\mu_1\right)\stackrel{d}{\to} (G_{\mu_0},G_{\mu_1}),\text{ in }\ell^{\infty}(\mathcal F_0)\times\ell^{\infty}(\mathcal F_1). 
    \] 
As the map $(l_0,l_1)\in\ell^{\infty}(\mathcal F_0)\times\ell^{\infty}(\mathcal F_1)\mapsto l_0\otimes l_1\in\ell^{\infty}(\mathcal F^{\oplus})$ is continuous it follows from the continuous mapping theorem that 
    \[
        \sqrt{n}\left(\hat\mu_{0,n}\otimes \hat\mu_{1,n}-\mu_0\otimes \mu_1\right)\stackrel{d}{\to} G_{\mu_0\otimes \mu_1},\text{ in }\ell^{\infty}(\mathcal F^{\oplus}), 
    \]
    where $G_{\mu_0\otimes \mu_1}(f_0\oplus f_1)=G_{\mu_0}(f_0)+G_{\mu_1}(f_1)$ for every $f_0\oplus f_1\in\mathcal F^{\oplus}$.
\qed

\subsubsection{Proof of \texorpdfstring{\cref{cor:discreteGWLimitDistributionNull}}{Corollary 1}}
\label{sec:proof:cor:discreteGWLimitDistributionNull}

Throughout we assume without loss of generality that $d_0\leq d_1$ and let $\iota:x\in\mathbb R^{d_0}\to (x,0,\dots,0)\in\mathbb R^{d_1}$ denote an isometric embedding of $\mathbb R^{d_0}$ into $\mathbb R^{d_1}$.
\begin{lemma}
    \label{lem:innerProductPreserving}
   If $T_{\sharp} \bar\mu_0=\bar\mu_1$ for some isometry $T:\supp(\bar \mu_0)\to \supp(\bar\mu_1)$ then $T$ is inner product preserving. 
\end{lemma}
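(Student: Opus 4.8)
The claim is that an isometry $T:\supp(\bar\mu_0)\to\supp(\bar\mu_1)$ with $T_\sharp\bar\mu_0=\bar\mu_1$ automatically preserves inner products, i.e.\ $\langle T(x),T(x')\rangle=\langle x,x'\rangle$ for all $x,x'\in\supp(\bar\mu_0)$. The plan is to exploit the fact that both $\bar\mu_0$ and $\bar\mu_1$ are \emph{centered}, so the image of the support's mean under $T$ is again the origin, and then use the polarization identity together with the distance-preserving property.

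\textbf{Key steps.} First I would record the elementary identity $2\langle x,x'\rangle=\|x\|^2+\|x'\|^2-\|x-x'\|^2$, valid for all $x,x'\in\mathbb R^{d_0}$, and its analogue in $\mathbb R^{d_1}$. Since $T$ is an isometry, $\|T(x)-T(x')\|=\|x-x'\|$, so it remains to show $\|T(x)\|=\|x\|$ for every $x\in\supp(\bar\mu_0)$; given that, polarization immediately yields $\langle T(x),T(x')\rangle=\langle x,x'\rangle$. To obtain the norm-preservation, I would use centering: because $\bar\mu_0$ is centered, $\int x\,d\bar\mu_0(x)=0$, and because $T_\sharp\bar\mu_0=\bar\mu_1$ with $\bar\mu_1$ centered, $\int T(x)\,d\bar\mu_0(x)=\int y\,d\bar\mu_1(y)=0$ as well. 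Now expand $\|T(x)\|^2$ by integrating the polarization identity over $x'\sim\bar\mu_0$:
\[
\|T(x)\|^2=\int\Big(\tfrac12\|T(x)\|^2+\tfrac12\|T(x')\|^2-\tfrac12\|T(x)-T(x')\|^2\Big)d\bar\mu_0(x')+\Big\langle T(x),\int T(x')d\bar\mu_0(x')\Big\rangle,
\]
where the last inner product vanishes by centering of $\bar\mu_1$. Using $\|T(x)-T(x')\|=\|x-x'\|$ and the change of variables $T_\sharp\bar\mu_0=\bar\mu_1$ in the second term, this rearranges to express $\|T(x)\|^2$ purely in terms of $\|x-x'\|^2$ integrated against $\bar\mu_0$, plus the constant $\int\|y\|^2 d\bar\mu_1(y)=\int\|T(x')\|^2 d\bar\mu_0(x')$. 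The same computation performed in $\mathbb R^{d_0}$ gives $\|x\|^2$ in terms of $\int\|x-x'\|^2 d\bar\mu_0(x')$ plus the constant $\int\|x'\|^2 d\bar\mu_0(x')$; and these two constants coincide since $\int\|y\|^2 d\bar\mu_1(y)=\int\|T(x')\|^2 d\bar\mu_0(x')=\int\|x'\|^2 d\bar\mu_0(x')$ would need to be shown—more directly, one integrates $\|T(x)\|^2-\|x\|^2$ once more against $d\bar\mu_0(x)$ to see the constant drops out, forcing $\|T(x)\|=\|x\|$ for $\bar\mu_0$-a.e.\ $x$, hence for all $x\in\supp(\bar\mu_0)$ by continuity of $T$ (an isometry is continuous).

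\textbf{Main obstacle.} The only subtlety I anticipate is bookkeeping the two "constant" terms $M_2(\bar\mu_0)$ and $M_2(\bar\mu_1)$ carefully: one must verify they are equal, which follows because $T_\sharp\bar\mu_0=\bar\mu_1$ gives $M_2(\bar\mu_1)=\int\|T(x)\|^2 d\bar\mu_0(x)$, and then the argument closes by a short self-consistency computation (integrating the relation for $\|T(x)\|^2$ once more) rather than circularly assuming norm-preservation. A cleaner route that avoids this: fix any \emph{three} points and use that $T$ preserves all pairwise distances plus the barycentric condition $\sum$ (weighted by $\bar\mu_0$) to pin down $T(x)$ as an affine-isometric image; but since $T$ need not extend to an affine map on all of $\mathbb R^{d_0}$ a priori, I would stick with the integral/polarization argument above, which only uses the distance-preservation on $\supp(\bar\mu_0)$ and the centering of both measures.
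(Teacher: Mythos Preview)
Your approach is correct and genuinely different from the paper's. The paper proceeds by embedding $\supp(\bar\mu_0)$ isometrically into $\mathbb R^{d_1}$ (assuming $d_0\le d_1$), extending the composed map $S=T\circ\iota^{-1}$ to an isometric self-map of $\mathbb R^{d_1}$, and then invoking the Mazur--Ulam theorem to conclude $S(y)=Uy+b$ with $U$ orthogonal; centering of both measures forces $b=0$, and orthogonality of $U$ gives $\langle T(x),T(x')\rangle=\iota(x)^\intercal U^\intercal U\iota(x')=\langle x,x'\rangle$ immediately. Your route instead stays entirely on $\supp(\bar\mu_0)$, using polarization and the two centering conditions to first show $\|T(x)\|^2-\|x\|^2$ equals the constant $M_2(\bar\mu_0)-M_2(\bar\mu_1)$, and then integrating once more to see that constant vanishes. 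This is more elementary---it avoids both the isometry-extension step and the appeal to Mazur--Ulam---and works verbatim beyond the finitely-supported setting the paper is implicitly in. The paper's argument, on the other hand, is shorter once those tools are in hand and makes the affine structure of $T$ explicit.

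One small remark: your displayed equation is garbled as written (the right-hand side as stated actually equals $\langle T(x),\int T(x')\,d\bar\mu_0(x')\rangle$ twice over, not $\|T(x)\|^2$), but your surrounding text makes clear you intend the correct identity $\|T(x)\|^2=\int\|x-x'\|^2\,d\bar\mu_0(x')-M_2(\bar\mu_1)$, obtained by integrating the polarization identity against $\bar\mu_0$ in the $x'$ variable and using $\int T(x')\,d\bar\mu_0(x')=0$. With that corrected, the argument closes exactly as you describe.
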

\begin{proof}
    Let $S:\iota(\supp(\bar\mu_0))\to\supp(\bar\mu_1)$ be an isometry defined via $S=T\circ \iota^{-1}$ and extend $S$ to an isometric self-map on $\mathbb R^{d_1}$ which we denote by the same symbol. By the Mazur-Ulam theorem, $S(y)=Uy+b$ for some orthogonal matrix $U\in\mathbb R^{d_1}\times \mathbb R^{d_1}$ and $b\in\mathbb R^{d_1}$. Observe that 
    \[
        0=\mathbb E_{\bar \mu_1}[X]=\mathbb E_{\bar \mu_0}[T(X)]=\sum_{i=1}^N\bar \mu_0(\{x^{(i)}\})\left(U\iota(x^{(i)})+b\right)= b+U\sum_{i=1}^N\bar \mu_0(\{x^{(i)}\})\iota(x^{(i)}).
    \]
    As $\mathbb E_{\bar\mu_0}[X]=0$, $\sum_{i=1}^N\bar \mu_0(\{x^{(i)}\})\iota(x^{(i)})=\iota(\mathbb E_{\bar\mu_0}[X])=0$ so that $b=0$. Conclude that, for any $x,x'\in\supp(\bar\mu_0)$, $T(x)^{\intercal}T(x')=\iota(x)^{\intercal}U^{\intercal}U\iota(x')=x^{\intercal}x'$.
\end{proof}

\begin{lemma}
\label{lem:discretePotentialsNull}
   Suppose that $T_{\sharp}\bar \mu_0=\bar \mu_1$ for some isometry $T:\supp(\bar \mu_0)\to \supp(\bar \mu_1)$. Then, for any $\mathbf A\in\mathcal A$ and $\pi\in\bar\Pi^{\star}_{\mathbf A}$, $\pi$ is induced by an isometric alignment map $T_{\pi}$, $g_{0,\pi}(x)=8x^{\intercal}\mathbb E_{\bar \mu_0}[\|X\|^2X],g_{1,\pi}(y)=8y^{\intercal}\mathbb E_{\bar \mu_1}[\|Y\|^2Y]$, and every pair of OT potentials for $\mathsf{OT}_{\mathbf A}(\bar \mu_0,\bar\mu_1)$ is of the form 
   \[
        \varphi_0(x)=-2\|x\|^4-8x^{\intercal}\bsigma_{\bar \mu_0}x+h(x),\quad \varphi_1(y)=-2\|y\|^4-8y^{\intercal}\bsigma_{\bar \mu_1}y-h(T_{\pi}^{-1}(y))
   \]
   for some $h\in\mathcal H$.
\end{lemma}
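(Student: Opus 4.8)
The plan is to exploit the variational form \eqref{eq:GWVariational} together with the null hypothesis to pin down the structure of every optimal coupling and potential. First I would observe that since $T_\sharp\bar\mu_0=\bar\mu_1$ for an isometry $T$, the plan $\pi_T=(\Id,T)_\sharp\bar\mu_0$ satisfies $\mathsf D(\bar\mu_0,\bar\mu_1)=0$, so $\mathsf D(\bar\mu_0,\bar\mu_1)^2=\mathsf S_1(\bar\mu_0,\bar\mu_1)+\mathsf S_2(\bar\mu_0,\bar\mu_1)=0$. By \cref{lem:innerProductPreserving}, $T$ is inner-product preserving, which forces $M_2(\bar\mu_0)=M_2(\bar\mu_1)$, $M_4(\bar\mu_0)=M_4(\bar\mu_1)$, and more importantly makes $\mathsf S_1(\bar\mu_0,\bar\mu_1)=0$ after a short computation (the $\|x-x'\|^4$ terms match under $T\otimes T$ and the cross term equals $4M_2(\bar\mu_0)M_2(\bar\mu_1)$). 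Hence $\mathsf S_2(\bar\mu_0,\bar\mu_1)=0$ as well, and by \cref{thm:VariationalMinimizers}(2) any minimizer $\mathbf A$ of $\Phi_{(\bar\mu_0,\bar\mu_1)}$ over $B_{\mathrm F}(M)$ satisfies $2\mathbf A=\int xy^\intercal d\pi^\star$ for some $\pi^\star\in\Pi(\bar\mu_0,\bar\mu_1)$ solving \eqref{eq:GWPrimal}, i.e. $\int\Delta\,d\pi^\star\otimes\pi^\star=0$. By the footnote characterization (equality in $\int\Delta\,d\pi\otimes\pi\ge0$ forces $\pi$ to be induced by an isomorphism), every such $\pi^\star\in\bar\Pi^\star_{\mathbf A}$ is induced by an isometric alignment map $T_\pi$ with $(T_\pi)_\sharp\bar\mu_0=\bar\mu_1$; applying \cref{lem:innerProductPreserving} again, $T_\pi$ is inner-product preserving, so for $(X,Y)\sim\pi$ we have $Y=T_\pi(X)$ with $\|Y\|=\|X\|$ and $\mathbb E_\pi[\|Y\|^2X]=\mathbb E_{\bar\mu_0}[\|X\|^2X]$, giving $g_{0,\pi}(x)=8x^\intercal\mathbb E_{\bar\mu_0}[\|X\|^2X]$ and symmetrically $g_{1,\pi}(y)=8y^\intercal\mathbb E_{\bar\mu_1}[\|Y\|^2Y]$.

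Next I would compute the OT potentials for $\OT_{\mathbf A}(\bar\mu_0,\bar\mu_1)$. Since $\pi^\star$ is optimal for this OT problem and is induced by $T_\pi$, complementary slackness gives $\varphi_0(x)+\varphi_1(y)=c_{\mathbf A}(x,y)=-4\|x\|^2\|y\|^2-32 x^\intercal\mathbf A y$ for every $(x,y)$ in $\supp(\pi^\star)$, i.e. for $y=T_\pi(x)$, $x\in\supp(\bar\mu_0)$. Using $2\mathbf A=\int xy^\intercal d\pi^\star$ and $Y=T_\pi(X)$ inner-product preserving, one has $x^\intercal\mathbf A T_\pi(x')=\tfrac12\mathbb E[x^\intercal X\, x'^\intercal X]$ after transporting through $T_\pi$, so on the diagonal $x'=x$ this reads $x^\intercal\mathbf A T_\pi(x)=\tfrac12 x^\intercal\bsigma_{\bar\mu_0}x$ (since $\int XX^\intercal d\bar\mu_0=\bsigma_{\bar\mu_0}$, the measures being centered). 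Substituting, $c_{\mathbf A}(x,T_\pi(x))=-4\|x\|^4-16 x^\intercal\bsigma_{\bar\mu_0}x$, which must equal $\varphi_0(x)+\varphi_1(T_\pi(x))$. The natural ansatz is $\varphi_0(x)=-2\|x\|^4-8x^\intercal\bsigma_{\bar\mu_0}x+h(x)$ and $\varphi_1(y)=-2\|y\|^4-8y^\intercal\bsigma_{\bar\mu_1}y-h(T_\pi^{-1}(y))$; one checks the diagonal identity holds for any $h$, and then the constraint $\varphi_0\oplus\varphi_1\le c_{\mathbf A}$ at off-diagonal pairs $(x,y)$ translates, after the same algebraic substitutions (using $\|y\|=\|T_\pi^{-1}(y)\|$ and $x^\intercal\mathbf A y$ written in terms of $\bsigma_{\bar\mu_0}$ via the structure of $\mathbf A$), exactly into the inequality $h(x)-h(x')\le 2(\|x\|^2-\|x'\|^2)^2+8(x-x')^\intercal\bsigma_{\bar\mu_0}(x-x')$ defining $\mathcal H$, with $x'=T_\pi^{-1}(y)$. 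Thus the ansatz pairs are precisely the OT potentials indexed by $h\in\mathcal H$, and conversely any OT potential pair has this form since potentials are determined on the support up to the additive constant absorbed into $h$.

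The main obstacle I anticipate is the bookkeeping in the last step: carefully rewriting the off-diagonal dual constraint $\varphi_0(x)+\varphi_1(y)\le c_{\mathbf A}(x,y)$ in terms of $h$ and showing it is \emph{equivalent} to membership in $\mathcal H$ — this requires expanding $-4\|x\|^2\|y\|^2$ and $-32x^\intercal\mathbf A y$, using that $\mathbf A=\tfrac12\int x y^\intercal d\pi^\star$ with $\pi^\star$ induced by the inner-product-preserving map $T_\pi$ (so that $x^\intercal\mathbf A y=\tfrac12\,x^\intercal\bsigma_{\bar\mu_0}T_\pi^{-1}(y)$ using $Y^\intercal v=T_\pi^{-1}(Y)^\intercal v$ is the subtle point, since it needs $T_\pi$ orthogonal on the relevant span), and then completing the square. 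A secondary subtlety is ensuring that we are quantifying correctly over $\mathbf A\in\mathcal A$ and $\pi\in\bar\Pi^\star_{\mathbf A}$: I would note that under the null, $\bsigma_{\bar\mu_1}=T_\pi\bsigma_{\bar\mu_0}T_\pi^\intercal$ as quadratic forms agree on the supports, so the claimed expressions for $\varphi_0,\varphi_1$ are internally consistent regardless of which $(\mathbf A,\pi)$ pair is chosen, and the set $\mathcal H$ does not depend on that choice.
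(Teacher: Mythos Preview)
Your approach is essentially the same as the paper's: both establish that any $\pi\in\bar\Pi^\star_{\mathbf A}$ is a GW plan (via \cref{thm:VariationalMinimizers}(2)), hence induced by an inner-product-preserving isometry $T_\pi$ (via \cref{lem:innerProductPreserving}), which immediately gives the formulas for $g_{0,\pi},g_{1,\pi}$; then both substitute $\mathbf A=\tfrac12\int xT_\pi(x)^\intercal d\bar\mu_0$ into the dual constraint $\varphi_0\oplus\varphi_1\le c_{\mathbf A}$, use the identity $x^\intercal\mathbf A T_\pi(x')=\tfrac12 x^\intercal\bsigma_{\bar\mu_0}x'$, and reduce the off-diagonal constraint to precisely the defining inequality of $\mathcal H$. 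The paper packages the off-diagonal algebra via the bilinear form $(x-T_\pi^{-1}(y))^\intercal\mathbf A(T_\pi(x)-y)$ and the square-completion $-4\|x\|^2\|y\|^2=2(\|x\|^2-\|y\|^2)^2-2\|x\|^4-2\|y\|^4$, but the computation is the same as yours.

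One minor correction: your claim that $\mathsf S_1(\bar\mu_0,\bar\mu_1)=0$ is false (take $\bar\mu_0=\bar\mu_1=\tfrac12(\delta_{-1}+\delta_1)$ on $\mathbb R$, where $\mathsf S_1=12$). Fortunately you do not actually need this: \cref{thm:VariationalMinimizers}(2) applies to any minimizer $\mathbf A\in\mathcal A$ regardless of the value of $\mathsf S_2$, and yields directly that every $\pi\in\bar\Pi^\star_{\mathbf A}$ is optimal for \eqref{eq:GWPrimal}. Simply drop the sentence about $\mathsf S_1=0$ and invoke \cref{thm:VariationalMinimizers}(2) immediately.
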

\begin{proof}
    Given that $T_{\sharp}\bar \mu_0=\bar \mu_1$, we have that $\mathsf D(\bar \mu_0,\bar \mu_1)=0$ whereby $\pi$ is a GW plan if and only if it is induced by a Gromov-Monge map $T_{\pi}:\supp(\bar \mu_0)\to \supp(\bar \mu_1)$ which may differ from $T$. In any case, $g_{0,\pi}(x)=8x^{\intercal}\mathbb E_{(X,Y)\sim \pi}\left[\|Y\|^2X\right]=8x^{\intercal}\mathbb E_{\bar \mu_0}\left[\|T_{\pi}(X)\|^2X\right]=8x^{\intercal}\mathbb E_{\bar \mu_0}\left[\|X\|^2X\right]$ where we have used the
    fact that $T_{\pi}$ is inner product preserving and hence norm preserving (see \cref{lem:innerProductPreserving}). The same arguments apply to $g_{1,\pi}$.

    From \cref{thm:VariationalMinimizers}, $\mathbf A\in\mathcal A$ if and only if  $\mathbf{A}=\frac 12 \int xy^{\intercal}d\pi(x,y)=\frac{1}{2}\int zT_{\pi}(z)^{\intercal}d\bar\mu_0(z)$ for some GW plan $\pi$. The corresponding OT  potentials $(\varphi_{0,\pi},\varphi_{1,\pi})$ for $\mathsf{OT}_{\mathbf A}(\bar \mu_0,\bar\mu_1)$ satisfy 
    \[
        \varphi_{0,\pi}(x)+\varphi_{1,\pi}(y)\leq -4\|x\|^2\|y\|^2-32x^{\intercal}\mathbf Ay
    \]
    for every $(x,y)\in\supp(\bar \mu_0)\times \supp(\bar \mu_1)$ with equality $\pi$-a.e. Observe that 
    \[
        -4\|x\|^2\|y\|^2= 2\left(\|x\|^2-\|y\|^2\right)^2-2\|x\|^4-2\|y\|^4,
    \]
    whereas
    \[
        (x-T^{-1}_{\pi}(y))^{\intercal}\mathbf A(T_{\pi}(x)-y)= -x^{\intercal}\mathbf Ay+\frac{1}{2}x^{\intercal} \bsigma_{\bar \mu_0}x+\frac{1}{2}y^{\intercal}\bsigma_{\bar \mu_1}y-x^{\intercal}\mathbf{A}y, 
    \]
    noting that $x^{\intercal}\mathbf AT_{\pi}(x)=\frac 12 \int x^{\intercal}zT_{\pi}(z)^{\intercal}T_{\pi}(x)d\bar \mu_0(z)=\frac 12 \int x^{\intercal}zz^{\intercal}xd\bar \mu_0(z)=\frac{1}{2}x^{\intercal}\bsigma_{\bar \mu_0}x$ and that 
    \[
T^{-1}_{\pi}(y)^{\intercal}\mathbf AT_{\pi}(x)=\frac 12 \int y^{\intercal}T_{\pi}(z)z^{\intercal}xd\bar \mu_0(z)=\frac 12 x^{\intercal} \int zT_{\pi}(z)^{\intercal}d\bar \mu_0(z) y=x^{\intercal}\mathbf Ay.
    \]
    In sum, 
    \[
         \begin{aligned}
         {\varphi_{0,\pi}(x)+2\|x\|^4+8x^{\intercal} \bsigma_{\bar \mu_0}x}+{\varphi_{1,\pi}(y)+2\|y\|^4+8y^{\intercal}\bsigma_{\bar \mu_1}y}&\\&\hspace{-5em}\leq 2\left(\|x\|^2-\|y\|^2\right)^2+16 (x-T^{-1}_{\pi}(y))^{\intercal}\mathbf A(T_{\pi}(x)-y),
        \end{aligned}
    \] 
    and, setting $\varphi_{0,\pi}(x)=-2\|x\|^4-8x^{\intercal}\bsigma_{\bar \mu_0}x+h(x)$ and $\varphi_{1,\pi}(y)=-2\|y\|^4-8y^{\intercal}\bsigma_{\bar \mu_1}y+g(y)$ for some functions $h:\supp(\bar \mu_0)\to \mathbb R$ and $g:\supp(\bar \mu_1)\to \mathbb R$,      
   since $\pi=(\Id,T_{\pi})_{\sharp}{\bar \mu_0}$ we must have that $h(x)+g(T_{\pi}(x))=0$ for every $x\in\supp(\bar \mu_0)$ hence $g(y)=-h(T^{-1}_{\pi}(y))$. In order for $(\varphi_{0,\pi},\varphi_{1,\pi})$ to be OT potentials we further enforce that 
   \[
        h(x)-h(T^{-1}_{\pi}(y))\leq 2\left(\|x\|^2-\|y\|^2\right)^2+16 (x-T^{-1}_{\pi}(y))^{\intercal}\mathbf A(T_{\pi}(x)-y),
   \]
   for every $(x,y)\in\supp(\bar \mu_0)\times \supp(\bar \mu_1)$. As $T^{-1}_{\pi}(y)=\supp(\bar \mu_0)$ conclude that 
   \[
        h(x)-h(x')\leq 2\left(\|x\|^2-\|x'\|^2\right)^2+16 \underbrace{(x-x')^{\intercal}\mathbf A(T_{\pi}(x)-T_{\pi}(x'))}_{=\frac{1}{2}(x-x')^{\intercal}\bsigma_{\bar \mu_0}(x-x')},
   \]
   for every $x,x'\in\supp(\bar \mu_0)$ whereby $h\in\mathcal H$.
\end{proof}

\begin{proof}[Proof of \cref{cor:discreteGWLimitDistributionNull}]
   Applying \cref{thm:discreteGWLimitDistribution} and the result of \cref{lem:discretePotentialsNull},   
    \[
        \begin{aligned}
        \sqrt n \mathsf D(\hat\mu_{0,n},\hat\mu_{1,n})^2&
        \stackrel{d}{\to}
\inf_{\mathbf{A}\in\mathcal A}
            \inf_{\pi \in \bar \Pi^{\star}_{\mathbf{A}}} \sup_{(\varphi_0,\varphi_1)\in\bar {\mathcal D}_{\mathbf{A}}}\left\{\chi_{\mu_0\otimes \mu_1} \big(( f_0+g_{0,\pi}+\bar\varphi_0)\oplus (f_1+g_{1,\pi}+\bar\varphi_1)
        \right\},
    \end{aligned} 
    \]
    where $g_{0,\pi}(x)=8x^{\intercal}\mathbb E_{\bar \mu_0}[\|X\|^2X],g_{1,\pi}(y)=8y^{\intercal}\mathbb E_{\bar \mu_1}[\|Y\|^2Y]$ which are independent of $\pi$ and $\mathbf{A}$. Furthermore, for any $\mathbf A\in\mathcal A$ and $\pi\in\bar \Pi_{\mathbf A}$, $\pi$ is induced by an (isometric) alignment map $T_{\pi}:\supp(\bar \mu_0)\to \supp(\bar \mu_1)$  and every pair of OT potentials for $\mathsf{OT}_{\mathbf A}(\bar \mu_0,\bar\mu_1)$ is of the form 
   \[
        \varphi_0(x)=-2\|x\|^4-8x^{\intercal}\bsigma_{\bar \mu_0}x+h(x),\quad \varphi_1(y)=-2\|y\|^4-8y^{\intercal}\bsigma_{\bar \mu_1}y-h(T_{\pi}^{-1}(y))
   \]
   for some $h\in\mathcal H$.

   We now set $\psi_0(x)=-2\|x\|^4-8x^{\intercal}\bsigma_{\bar \mu_0}x,\psi_1(y)=-2\|y\|^4-8y^{\intercal}\bsigma_{\bar \mu_1}x$ and show that $f_0+g_0+\bar\psi_0$ is constant so that $f_1+g_1+\bar\psi_1$ is constant by symmetry. Then, $\sqrt n (\mu_{0,n}\otimes \mu_{1,n}-\mu_0\otimes \mu_1)(f_0+g_0+\bar\varphi_0\oplus f_1+g_1+\bar\varphi_1)=\sqrt n (\mu_{0,n}\otimes \mu_{1,n}-\mu_0\otimes \mu_1)(h(\cdot-\mathbb E_{\mu_0}[X])\oplus -h(T^{-1}(\cdot-\mathbb E_{\mu_1}[X])))$ so that 
   \[
        \chi_{\mu_0\otimes \mu_1} ( f_0+g_{0,\pi}+\bar\varphi_0\oplus f_1+g_{1,\pi}+\bar\varphi_1)=\chi_{\mu_0\otimes \mu_1}\big(h(\cdot-\mathbb E_{\mu_0}[X])\oplus -h(T^{-1}(\cdot-\mathbb E_{\mu_1}[X]))\big).
   \]

   To this end, we first expand $f_0(x)=2\int\|x-x'\|^4d\mu_0(x')-4\int \|x'-\mathbb E_{\mu_0}[X]\|^2\|x-\mathbb E_{\mu_0}[X]\|^2d\mu_0(x')$ (recall that $M_2(\bar \mu_0)=M_2(\bar \mu_1)$ in this case) using the formula 
   \[
   \begin{aligned}
        &2\int \|x-x'\|^4d\mu_0(x')\\&=2\int \left(\|x-\mathbb E_{\mu_0}[X]\|^2+2(x-\mathbb E_{\mu_0}[X])^{\intercal}(\mathbb E_{\mu_0}[X]-x')+\|\mathbb E_{\mu_0}[X]-x'\|^2\right)^2d\mu_0(x'),
        \\
        &=2\int \|x-\mathbb E_{\mu_0}[X]\|^4+4\left|(x-\mathbb E_{\mu_0}[X])^{\intercal}(\mathbb E_{\mu_0}[X]-x')\right|^2+\|\mathbb E_{\mu_0}[X]-x'\|^4d\mu_0(x')
        \\
        &+4\int \|x-\mathbb E_{\mu_0}[X]\|^2\|\mathbb E_{\mu_0}[X]-x'\|^2+2(x-\mathbb E_{\mu_0}[X])^{\intercal}(\mathbb E_{\mu_0}[X]-x')\|\mathbb E_{\mu_0}[X]-x'\|^2\\&\hspace{19em}+ 2(x-\mathbb E_{\mu_0}[X])^{\intercal}(\mathbb E_{\mu_0}[X]-x') \|x-\mathbb E_{\mu_0}[X]\|^2d\mu_0(x'),
        \end{aligned}
   \]
   and note that $\bar\psi_0(x)=-2\|x-\mathbb E_{\mu_0}[X]\|^4-8\int \left|(x-\mathbb E_{\mu_0}[X])^{\intercal}(\mathbb E_{\mu_0}[X]-x')\right|^2d\mu_0(x')$, and $g_0(x)=8(x-\mathbb E_{\mu_0}[X])^{\intercal}\mathbb E_{\mu_0}[\|X-\mathbb E_{\mu_0}[X]\|^2(X-\mathbb E_{\mu_0}[X])]+8\mathbb E_{\mu_0}[X]^{\intercal}\mathbb E_{\mu_0}[\|X-\mathbb E_{\mu_0}[X]\|^2(X-\mathbb E_{\mu_0}[X])]$, whence
   \begin{equation}
   \label{eq:expansionSumFcns}
   \begin{aligned}
        f_0+g_0+\bar \psi_0&=\int \|\mathbb E_{\mu_0}[X]-x'\|^4d\mu_0(x')+8\mathbb E_{\mu_0}[X]^{\intercal}\mathbb E_{\mu_0}[\|X-\mathbb E_{\mu_0}[X]\|^2(X-\mathbb E_{\mu_0}[X])]
        \\
        &+8\underbrace{\int\|x-\mathbb E_{\mu_0}[X]\|^2(x-\mathbb E_{\mu_0}[X])^{\intercal}(\mathbb E_{\mu_0}[X]-x')d\mu_0(x')}_{=0},
   \end{aligned}
   \end{equation}
   proving the claim.

   Conclude that
   \[
   \begin{aligned}
    \sqrt n\mathsf D(\mu_{0,n}, \mu_{1,n})^2&\stackrel{d}{\to} \inf_{\mathbf A\in\mathcal A}\inf_{\pi\in \bar{\Pi}_{\mathbf A}^{\star}}\sup_{h\in\mathcal H}\left\{\chi_{\mu_0\otimes \mu_1}\big(h(\cdot-\mathbb E_{\mu_0}[X])\oplus -h(T_{\pi}^{-1}(\cdot-\mathbb E_{\mu_1}[X]))\big)\right\}
   \\ 
    &=\inf_{T\in\bar{\mathcal T}}\sup_{h\in\mathcal H}\left\{\chi_{\mu_0\otimes \mu_1}\big(h(\cdot-\mathbb E_{\mu_0}[X])\oplus -h(T^{-1}(\cdot-\mathbb E_{\mu_1}[X]))\big)\right\},%
    \end{aligned} 
   \]
   proving the first claim.

   Specifying this result for the empirical measures, we have from  \cref{thm:discreteGWLimitDistribution} that $\chi_{\mu_0\otimes \mu_1}(f_0\oplus f_1)=G_{\mu_0}(f_0)+G_{\mu_1}(f_1)$ for every $f_0\oplus f_1\in\mathcal F^{\oplus}$. It remains to show that $G_{\mu_1}\big(-{h( T^{-1}(\cdot-\mathbb E_{\mu_1}[X]))}\big)=G_{\mu_0}\big(-h(\cdot-\mathbb E_{\mu_0}[X])\big)$ for any $T\in\bar{\mathcal T}$. To this end, note that if $f_1(\cdot-\mathbb E_{\mu_1}[X]),f_1'(\cdot-\mathbb E_{\mu_1}[X])\in\mathcal F_1$, 
   \[\begin{aligned}\Cov(G_{\mu_1}(f_1(\cdot-\mathbb E_{\mu_1}[X])),G_{\mu_1}(f_1'(\cdot-\mathbb E_{\mu_1}[X])))&
  \\ 
   &\hspace{-5em}=\Cov_{\bar \mu_0}(f_1\circ T,f_1'\circ T)
    \\ 
   &\hspace{-5em}=\Cov(G_{\mu_0}\big({f_1( T(\cdot-\mathbb E_{\mu_0}[X] ))}\big),G_{\mu_0}({f_1'( T(\cdot-\mathbb E_{\mu_0}[X] ))}\big)),
  \end{aligned} 
   \]  
   so that $\sqrt n\mathsf D(\hat \mu_{0,n}, \hat \mu_{1,n})^2\stackrel{d}{\to} \sup_{h\in\mathcal H}\left\{G_{\mu_0}(h(\cdot-\mathbb E_{\mu_0}[X]))+G_{\mu_0}'(-h(\cdot-\mathbb E_{\mu_0}[X]))\right\},$ where $G_{\mu_0}'$ is an independent copy of $G_{\mu_0}$. As $\mathcal H$ is symmetric in the sense that if $h\in\mathcal H$, then $-h\in\mathcal H$, this limit can be expressed as  $\sqrt 2\|G_{\mu_0}\|_{\infty,\bar{\mathcal H}}$. 
\end{proof}

\subsubsection{Proof of \texorpdfstring{\cref{thm:directEstimator}}{Theorem 5}}
\label{sec:proof:thm:directEstimator}

  We will prove that the direct estimation approach proposed in 
\cref{sec:directEstimation}  for estimating the limit $L_{\mu_0}$ in \cref{thm:discreteGWLimitDistribution} is consistent under the following, general, conditions. This enables us to treat both, by the same token, the empirical measures and the measures defined in \cref{sec:graphApplication} for testing graph isomorphisms.   

\begin{assumption}
   \label{assn:simplifiedForm}  
   The first set of assumptions regard the structure of  $\chi_{\mu_0\otimes \mu_1}$ from \cref{assn:weakConvergence}.
   \begin{itemize}
       \item  $\chi_{\mu_{0}\otimes \mu_1}(f_0\oplus f_1)=\chi_{\mu_0}(f_0)+\chi_{\mu_1}(f_1)$ for $f_0\oplus f_1\in\mathcal F^{\oplus}$, where $\chi_{\mu_0}$ and $\chi_{\mu_1}$ are tight 
 mean-zero Gaussian processes on $\mathcal F_0$ and $\mathcal F_1$,
    \item For any Gromov-Monge map $T$ between $\bar \mu_0$ and $\bar \mu_1$, $\chi_{\mu_1}(g_1(\cdot-\mathbb E_{\mu_0}[X]))=\chi_{\mu_0}(g_1(T(\cdot-\mathbb E_{\mu_1}[X])))$ whenever  $g_1(\cdot-\mathbb E_{\mu_0}[X])\oplus g_1(T(\cdot-\mathbb E_{\mu_1}[X]))\in\mathcal F^{\oplus}$,
    \item  There exists a  mean-zero Gaussian vector, $Z\in\mathbb R^{N_0}$, with  covariance $\bsigma_Z$ for which  $\chi_{\mu_0}(h) = Z^{\intercal}u_h$ for every $h\in \bar{\mathcal H}$, where $u_h=(h(\bar x^{(1)}),\dots,h(\bar x^{(N_0)}))\in\mathbb R^{N_0}$ for $(\bar x^{(i)})_{i=1}^{N_0}=\supp(\bar\mu_0)$.
   \end{itemize}
    The remaining assumptions pertain to the construction of the direct estimator. 
    \begin{itemize}
        \item $\mu_{0,n}$ and $\bsigma_{n}$ are estimators of $\mu_0$ and $\bsigma_Z$ constructed from samples $Y_1,\dots, Y_n$ from an auxiliary distribution $\nu$ on some (perhaps distinct) space,
        \item Conditionally on $Y_1,Y_2,\dots,$  $\mathbb E_{\mu_{0,n}}[X]\to \mathbb E_{\mu_0}[X]$, ${\bsigma}_{\bar \mu_{0,n}}\to \bsigma_{\bar \mu_0}$, and $\bsigma_{n}\to \bsigma_Z$ deterministically given almost every realization of $Y_1,Y_2,\dots$
    \end{itemize}
\end{assumption}

Under \cref{assn:simplifiedForm},  $L_{\mu_0}=\sqrt 2\|\chi_{\mu_0}\|_{\infty,\bar{\mathcal H}}=\sqrt 2\sup_{h\in\bar{\mathcal H}}Z^{\intercal} u_h$ as follows from the proof of \cref{cor:discreteGWLimitDistributionNull}. This suggests setting $(\bar x^{(i)})_{i=1}^{N_n}=\supp(\bar \mu_{0,n})$ and estimating $L_{\mu_0}$ as 
\begin{equation}
\begin{gathered}
\label{eq:directEstimatorGeneral}
    L_n =\sqrt{2} \sup_{u\in\mathcal H_n} Z_n^{\intercal} u\text{ for }Z_n\sim N(0,\Sigma_n)\text{ and } \\  
   {\mathcal H}_n=\big\{u\in\mathbb R^{N_n}\:|\: u_i-u_j\leq 2(\|\bar x^{(i)}\|^2-\|\bar x^{(j)}\|^2)^2
    +8(\bar x^{(i)}-\bar x^{(j)})^{\intercal}\bsigma_{\bar{\mu}_{0,n}}(\bar x^{(i)}-\bar x^{(j)}
    ),%
    i\neq j\in[N_{n}]\big\},
\end{gathered}
\end{equation}
\cref{alg:directEstimatorGeneral} summarizes the procedure for sampling from $L_n$.

\begin{algorithm}
\caption{Generate samples from $ L_n$}\label{alg:directEstimatorGeneral}
\begin{algorithmic}[1]
\Statex Given samples $Y_{1},\dots, Y_n$ from $\nu$,  
\State Let $(\bar x^{(i)})_{i=1}^{N_{n}}=\supp(\bar \mu_{0,n})$.   
\State Let  $b_n\in\mathbb R^{N_n(N_n-1)}$ and $\mathbf A\in\mathbb R^{N_n(N_n-1)\times N_n}$ be such that ${\mathcal H}_n=\{u\in\mathbb R^{N_n}:\mathbf Au\leq  b_n\} $.
\State Sample $Z_n\sim N(0,\bsigma_n)$ and solve $\sqrt 2\sup_{u\in{\mathcal H}_n}Z_n^{\intercal}u\sim  L_n$, padding $u$ by zeros if necessary. Repeat step $3$ to generate more samples. 
\end{algorithmic}
\end{algorithm}

We now establish the following generalization of \cref{thm:directEstimator}
\begin{theorem}
\label{thm:directEstimatorGeneralized}
In the setting of \cref{cor:discreteGWLimitDistributionNull} and \cref{assn:simplifiedForm}, if $\mathbb P(Z_n^{\intercal}\mathbbm 1_{N_0}= 0)=\mathbb P(Z^{\intercal}\mathbbm 1_{N_0}= 0)=1$ for all $n$ sufficiently large and $Z$ is not a point mass, then     
\[
\lim_{n\to \infty}\sup_{t\geq 0}\left|\mathbb P(L_n\leq t|Y_1,Y_2,\dots,Y_n)-\mathbb P( L_{\mu_0}\leq t)\right|= 0,
\]
for almost every realization of $Y_1,Y_2,\dots$   
\end{theorem}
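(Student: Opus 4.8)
The plan is to establish the conditional weak convergence $L_n \stackrel{d}{\to} L_{\mu_0}$ given almost every realization of $Y_1, Y_2, \dots$, and then upgrade this to the claimed uniform convergence of distribution functions using continuity of the limiting law. First I would recall that, by the proof of \cref{cor:discreteGWLimitDistributionNull} together with the first three bullets of \cref{assn:simplifiedForm}, the target limit admits the representation $L_{\mu_0} = \sqrt{2}\,\|\chi_{\mu_0}\|_{\infty,\bar{\mathcal H}} = \sqrt{2}\sup_{h\in\bar{\mathcal H}} Z^\intercal u_h$, where $Z\sim N(0,\bsigma_Z)$; equivalently, $L_{\mu_0} = \sqrt 2 \sup_{u \in \mathcal H_\infty} Z^\intercal u$ where $\mathcal H_\infty$ is the polyhedron describing $\bar{\mathcal H}$ in coordinates, built from $\mathbb E_{\mu_0}[X]$ and $\bsigma_{\bar\mu_0}$. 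The estimator $L_n$ is the analogous supremum over the perturbed polyhedron $\mathcal H_n$ against the random vector $Z_n \sim N(0,\bsigma_n)$.

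The core of the argument is a stability statement for the optimal value of the linear program $\sup_{u \in \mathcal H_n} Z_n^\intercal u$ as a function of both the cost vector $Z_n$ and the constraint data $(b_n, \mathbf A)$ defining $\mathcal H_n$. Here I would invoke the quantitative stability results for linear programs from \cite{Gisbert2019}: the optimal value is locally Lipschitz in the cost vector and in the right-hand side / constraint matrix, provided the feasible region stays nonempty with a bounded optimal face. Conditionally on $Y_1, Y_2, \dots$, the last bullet of \cref{assn:simplifiedForm} gives $\mathbb E_{\mu_{0,n}}[X] \to \mathbb E_{\mu_0}[X]$, $\bsigma_{\bar\mu_{0,n}} \to \bsigma_{\bar\mu_0}$, and $\bsigma_n \to \bsigma_Z$ deterministically for a.e.\ realization. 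Since $\mathcal H_n$ has nonempty interior (\cref{lem:Hnonempty}) and, crucially, $Z_n \perp \mathbbm 1_{N_0}$ almost surely, each instance of the LP has a bounded optimal value (the unboundedness of $\mathcal H_n$ is only along the $\mathbbm 1$ direction). Feeding the (deterministic) convergence of constraint data into the LP stability bound, one gets that $L_n$ and $\sqrt 2 \sup_{u\in\mathcal H_\infty} Z_n^\intercal u$ differ by a quantity tending to $0$ in probability. Combining with the fact that $Z_n \stackrel{d}{\to} Z$ (Gaussian with converging covariances) and that $Z \mapsto \sqrt 2 \sup_{u\in\mathcal H_\infty} Z^\intercal u$ is continuous, the continuous mapping theorem and Slutsky yield $L_n \stackrel{d}{\to} L_{\mu_0}$ conditionally, for a.e.\ realization. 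Some care is needed in discretizing along $n$ to pass from ``in probability over $Z_n$'' to the conditional-distribution statement; one handles the sequence $N_n$ of support sizes by the same reasoning — the polyhedron in $\mathbb R^{N_n}$ is padded with zeros as in \cref{alg:directEstimatorGeneral}, which does not change the optimal value.

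Finally, to obtain the \emph{uniform} convergence $\sup_{t\ge 0}|\mathbb P(L_n\le t\mid Y_1,\dots,Y_n) - \mathbb P(L_{\mu_0}\le t)| \to 0$, I would invoke Pólya's theorem (uniform convergence of CDFs follows from pointwise convergence when the limit is continuous). It therefore remains to show $L_{\mu_0}$ has a continuous distribution function. Since $Z$ is a non-degenerate mean-zero Gaussian vector (not a point mass), $h \mapsto Z^\intercal u_h$ is a Gaussian process on $\bar{\mathcal H}$, and $\|\chi_{\mu_0}\|_{\infty,\bar{\mathcal H}}$ is the supremum of a separable Gaussian process; standard anti-concentration / absolute continuity results for the supremum norm of a nondegenerate Gaussian process (or, concretely, the fact that the sup over the finitely-many extreme points of the bounded optimal face is a maximum of finitely many nondegenerate Gaussians) show the law of $L_{\mu_0}$ has no atoms, in particular at $0$ since $Z$ is not a point mass. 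This gives continuity of $t\mapsto \mathbb P(L_{\mu_0}\le t)$, and Pólya's theorem closes the argument.

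\textbf{Main obstacle.} I expect the principal difficulty to be the LP stability step: one must verify that the hypotheses of the stability theorem from \cite{Gisbert2019} genuinely apply uniformly along the sequence — in particular that the feasible sets $\mathcal H_n$ remain nonempty with optimal values controlled despite their unboundedness (resolved by the a.s.\ orthogonality $Z_n \perp \mathbbm 1_{N_0}$, possibly after the box-constraint regularization mentioned in the footnote to \cref{alg:directEstimator}, which leaves the optimal value unchanged), and that the constraint matrices $\mathbf A$, which depend on $n$ through the support points $(\bar x^{(i)})$ and hence on $\bsigma_{\bar\mu_{0,n}}$, converge in a sense strong enough to trigger the Lipschitz estimate. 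Handling the changing dimension $N_n$ cleanly — via the zero-padding identification — and making the ``deterministic for a.e.\ realization'' bookkeeping rigorous is the other place where care is required.
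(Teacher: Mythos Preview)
Your proposal is correct and follows essentially the same route as the paper: introduce the box-constrained auxiliary LP to make the feasible set bounded, invoke the Lipschitz stability result of \cite{Gisbert2019} to get uniform convergence of the optimal value as the constraint data $b_n\to b$, combine with $Z_n\stackrel{d}{\to}Z$ via the continuous mapping theorem, and finish with a P\'olya-type argument using continuity of the limiting CDF (the paper cites Proposition~12.1 in \cite{Davydov1998} for this). One small simplification relative to your ``main obstacle'': in the paper's parametrization the constraint matrix $\mathbf A$ encodes only the differences $u_i-u_j$ and is therefore \emph{fixed}; all $n$-dependence sits in the right-hand side $b_n$, so the stability step reduces to perturbing $(c,b)$ only.
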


The proof of \cref{thm:directEstimatorGeneralized} depends on the following technical lemmas.

\begin{lemma}
\label{lem:Hnonempty}
      $\mathcal H$, treated as a subset of $\mathbb R^{N_0}$, has empty interior if and only if $\mu_0$ is a point mass.
\end{lemma}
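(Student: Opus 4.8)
The plan is to reduce the statement to a single elementary positivity fact about the constants defining $\mathcal H$. First I would rewrite $\mathcal H$ in coordinates: under the identification $h \leftrightarrow u := (h(\bar x^{(1)}),\dots,h(\bar x^{(N_0)}))$ with $(\bar x^{(i)})_{i=1}^{N_0}=\supp(\bar\mu_0)$, the defining inequalities (which for $x=x'$ read $0\le 0$, and for $x\ne x'$ come in the symmetric pair $\pm(h(x)-h(x'))\le b_{xx'}$) show that $\mathcal H=\{u\in\mathbb R^{N_0}:u_i-u_j\le b_{ij}\text{ for all }i\ne j\}$, where $b_{ij}:=2(\|\bar x^{(i)}\|^2-\|\bar x^{(j)}\|^2)^2+8(\bar x^{(i)}-\bar x^{(j)})^{\intercal}\bsigma_{\bar\mu_0}(\bar x^{(i)}-\bar x^{(j)})$. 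Since $\bsigma_{\bar\mu_0}=\int zz^{\intercal}\,d\bar\mu_0(z)$ is positive semidefinite, $b_{ij}=b_{ji}\ge 0$, and in particular $0\in\mathcal H$ always.

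For the nontrivial direction -- the contrapositive of the ``only if'' -- I would assume $\mu_0$ is not a point mass, so $N_0\ge 2$, and show that $0$ lies in the interior of $\mathcal H$. The crux is that $b_{ij}>0$ for every $i\ne j$. To see this, set $V:=\mathrm{span}(\supp(\bar\mu_0))$; since $\bsigma_{\bar\mu_0}=\sum_k\bar\mu_0(\{\bar x^{(k)}\})\,\bar x^{(k)}(\bar x^{(k)})^{\intercal}$ with every weight strictly positive, its range is $V$ and hence its kernel is $V^\perp$, so $w^{\intercal}\bsigma_{\bar\mu_0}w>0$ for all nonzero $w\in V$. Applying this with $w=\bar x^{(i)}-\bar x^{(j)}$, which is a nonzero element of $V$ because $\bar x^{(i)}\ne\bar x^{(j)}$, yields $b_{ij}>0$. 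Putting $\delta:=\tfrac12\min_{i\ne j}b_{ij}>0$, any $u$ with $\|u\|_\infty<\delta$ obeys $u_i-u_j\le 2\|u\|_\infty<2\delta\le b_{ij}$ for all $i\ne j$, so $u\in\mathcal H$; thus the $\|\cdot\|_\infty$-ball of radius $\delta$ about the origin is contained in $\mathcal H$, giving nonempty interior.

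For the remaining direction, if $\mu_0$ is a point mass then $\bar\mu_0=\delta_0$, so $\supp(\bar\mu_0)=\{0\}$ and $N_0=1$; invoking the normalization noted after \cref{cor:discreteGWLimitDistributionNull} (membership in $\mathcal H$ and the value of the limiting variable are unaffected by adding constants, so one fixes $h$ to vanish at the unique support point) identifies $\mathcal H$ with the singleton $\{0\}\subset\mathbb R=\mathbb R^{N_0}$, which has empty interior.

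I expect the only genuine step to be the positivity $b_{ij}>0$, and within it the single real point is that a difference of two distinct support points of $\bar\mu_0$ cannot lie in $\ker\bsigma_{\bar\mu_0}$; this is immediate once that kernel is identified with the orthogonal complement of the span of the support, but it is the one place where the argument could go wrong were $\bsigma_{\bar\mu_0}$ built from points carrying zero weight. The polyhedral reformulation, the ball argument, and the point-mass case are routine.
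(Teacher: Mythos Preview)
Your argument for the substantive direction (not a point mass $\Rightarrow$ nonempty interior) is correct and coincides with the paper's: both reduce to showing $(\bar x^{(i)}-\bar x^{(j)})^{\intercal}\bsigma_{\bar\mu_0}(\bar x^{(i)}-\bar x^{(j)})>0$ for $i\neq j$, and then fit a box/ball into $\mathcal H$. The paper verifies the positivity by expanding $\bsigma_{\bar\mu_0}=\sum_k p_k\,\bar x^{(k)}(\bar x^{(k)})^{\intercal}$ and picking out the terms $z=\bar x^{(i)}$ and $z=\bar x^{(j)}$; your kernel/span argument is an equivalent route to the same fact.

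The converse direction is where you should be careful. Without the normalization you invoke, if $\mu_0$ is a point mass then $N_0=1$ and the only constraint in $\mathcal H$ is $0\le 0$, so $\mathcal H=\mathbb R$, which has \emph{nonempty} interior---the ``if'' part fails as literally stated. The paper's proof in fact addresses only the ``only if'' direction, and that is the only direction used downstream (to guarantee $\mathcal H_n$ has nonempty interior once $\hat\mu_{0,n}$ is supported on more than one point). Your appeal to the post-\cref{cor:discreteGWLimitDistributionNull} normalization to force $\mathcal H=\{0\}$ is not part of the definition of $\mathcal H$ and so does not rescue the statement; it is better to simply note, as the paper effectively does, that the relevant and provable content is the ``only if'' direction.
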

\begin{proof}
     For any $x,x'\in\supp(\bar \mu_0)$,
   \[
        (x-x')^{\intercal}\bsigma_{\bar \mu_0}(x-x')=\sum_{z\in\supp(\bar \mu_0)}\bar \mu_0(\{z\})(x-x')^{\intercal}zz^{\intercal}(x-x')= \sum_{z\in\supp(\bar \mu_0)}\bar \mu_0(\{z\})(z^{\intercal}x-z^{\intercal}x')^2,
   \]
   so that $(x-x')^{\intercal}\bsigma_{\bar \mu_0}(x-x')=0$ if and only if $x=x'$. To see this, observe that, {for distinct points $x,x'\in\supp(\bar\mu_0)$, one of $\|x\|^2-x^{\intercal}x'$ or $\|x'\|^2-x^{\intercal}x'$ is nonzero as otherwise $\|x\|^2=\|x'\|^2=x^{\intercal}x'$ which holds if and only if $x=x'$.} Let $C=8\min_{\substack{x,x'\in \supp(\bar \mu_0)\\x\neq x'}}(x-x')^{\intercal}\bsigma_{\bar\mu_0}(x-x')$, then any function, $h$, on $\supp(\bar\mu_0)$ taking values in $[a,a+C]$ for any $a\in\mathbb R$ is such that $h\in\mathcal H$.
\end{proof}

We proceed with the proof of \cref{thm:directEstimatorGeneralized} by setting some notation. For any $c\in\mathbb R^{N_0}$ and 
 any strictly positive vector $b=(b_{ij})_{\substack{i,j\in[N_0]\\i\neq j}} \in\mathbb R^{N_0(N_0-1)}_{>0}$, let $\mathrm{LP}(c,b)$ denote the linear program 
    \[
    \begin{aligned}
    \sup_{u\in \mathbb R^{N_0}} \quad \langle c,u\rangle\hspace{1em}& \\
    \text{subject to} \hspace{-0.95em} \qquad
    u_i-u_j &\leq b_{ij}, \forall i,j\in[N_0], i\neq j,
\end{aligned}
\]
$v(c,b)$ denote the corresponding optimal value, and $D= \mathbb R^{N_0}\times \mathbb R^{N_0(N_0-1)}_{>0}$ denote the set of all such pairs $(c,b)$. Remark that if $u$ satisfies the constraints of $\mathrm{LP}(c,b)$ then so too does $u+a$ for any $a\in\mathbb R^{N_0}$. As such,  $\mathrm{LP}(c,b)$ is unbounded and, unless $c^{\intercal}\mathbbm 1_{N_0}=0$, $v(c,b)=\infty$. On the other hand, if $c^{\intercal}\mathbbm 1_{N_0}=0$ we have that, for any feasible $u$, $\tilde u = u-u_1+a$ achieves the same objective value as $u$ for any fixed $a\in\mathbb R$. Observe that $\tilde u_1=a$ and $\tilde u_i\in [a-b_{1i},b_{i1}+a]$ for every $i\neq 1$. We thus consider the auxiliary problem $\mathrm{LP}_{\delta}(c,b)$ given by 
\[
    \begin{aligned}
    \sup_{u\in \mathbb R^{N_0}} \quad \langle c,u\rangle\hspace{1em}& \\
    \text{subject to} \hspace{-0.95em} \qquad
    u_i-u_j &\leq b_{ij}, \forall i,j\in[N_0], i\neq j,
    \\
    u_1&\leq \delta, -u_1\leq \delta, 
\end{aligned}
\] for any $\delta>0$. For any strictly positive $b$, this problem is feasible and bounded hence solvable and, if $c^{\intercal}\mathbbm 1_{N_0}=0$, its optimal value function $v_{\delta}(c,b)$ coincides with $v(c,b)$.

It is useful to note that this orthogonality condition is satisfied if $c\sim N(0,\bsigma_p)$, where $\bsigma_p$ is the covariance matrix in \eqref{eq:multinomialCovariance} and $p$ is a vector of probabilities.

\begin{lemma}
\label{lem:orthogonality}
Fix a nonnegative vector $p\in\mathbb R^{N_0}$ whose entries sum to $1$. If $Z\sim N(0, \bsigma_p)$, then $Z^{\intercal}\mathbbm 1_{N_0}=0$ almost surely.
\end{lemma}
\begin{proof}
   The $k$-th entry of the vector $\bsigma_p\mathbbm 1_{N_0}$ is given by $p_k(1-p_k)-\sum_{j\neq k}p_kp_j=p_k(1-1)=0$. Conclude that $Z^{\intercal}\mathbbm 1_{N_0}$ has mean and variance zero so that it takes the value $0$ with probability one. 
\end{proof}

By the previous discussion and the assumption that $\mathbb P(Z_n^{\intercal}\mathbbm 1_{N_0}\neq 0)=0$ for all $n$ sufficiently large, $v(Z_n, b_n)=v_{\delta}(Z_n,b_n)$ is almost surely finite for large $n$, where 
 $b_{n}=( b_{ij,n})_{\substack{i,j\in[N_0]\\i\neq j}}$ is the vector with 
\[
b_{ij,n}=2(\|\bar x^{(i)}\|^2-\|\bar x^{(j)}\|^2)^2+8(\bar x^{(i)}-\bar x^{(j)})^{\intercal}\bsigma_{\bar {\mu}_{0,n}}(\bar x^{(i)}-\bar x^{(j)}),i,j\in[N_0],i\neq j,
\]
which is known to be positive once $\mu_{0,n}$ is not a point mass (see \cref{lem:Hnonempty}), here $\bar x^{(i)} = x^{(i)}-\mathbb E_{\mu_{0,n}}[X]$ for ${i=1,\dots, N_0}$. We define $\bar b$ by analogy with $\mu_0$ in place of $\mu_{0,n}$ and remark that the same implications hold for $\mathrm{LP}(Z,\bar b)$ with $Z\sim N(0, \bsigma_{Z})$.

We now establish some technical lemmas which will be  useful in the sequel.

\begin{lemma}
\label{lem:primalDualSoln}
Fix $\delta>0$, $c\in\mathbb R^{N_0}$ and $\beta \in\mathbb R^{N_0(N_0-1)}_{>0}$ satisfying $\beta_{ij}\in (\underline{\beta},\overline{\beta})$ for some $0<\underline{\beta}< \overline{\beta}$ for $i,j\in[N_0],i\neq j$. Then any dual solution, $p^{\star}$, to  $\mathrm{LP}_{\delta}(c,\beta)$ satisfies 
$\|p^{\star}\|_1\leq \frac{\|c\|_1(\delta+\overline{\beta})}{\min\{\underline{\beta},\delta\}}$ and any primal solution $u^{\star}$ satisfies $\|u^{\star}\|_{\infty}\leq \delta +\overline{\beta}$. 
\end{lemma}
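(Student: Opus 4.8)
\textbf{Proof plan for Lemma~\ref{lem:primalDualSoln}.}
The strategy is to exploit the specific structure of $\mathrm{LP}_{\delta}(c,\beta)$: the constraint matrix has rows of the form $e_i-e_j$ together with $\pm e_1$, and the right-hand side $\beta$ (padded by $\delta,\delta$) is bounded above and below by the given constants. First I would record the primal bound. Since every primal feasible $u$ satisfies $-\delta\le u_1\le\delta$ and $u_i-u_1\le \beta_{i1}\le\overline{\beta}$, $u_1-u_i\le\beta_{1i}\le\overline{\beta}$ for $i\ne 1$, we get $|u_i|\le |u_1|+\overline{\beta}\le \delta+\overline{\beta}$ for all $i$; hence any feasible (in particular any optimal) $u^{\star}$ has $\|u^{\star}\|_{\infty}\le\delta+\overline{\beta}$. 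This part is immediate and needs no optimality.

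For the dual bound I would pass to LP duality. The dual of $\mathrm{LP}_{\delta}(c,\beta)$ is a minimization over nonnegative multipliers $(p_{ij})_{i\ne j}$ and $(q^+,q^-)$ attached to the box constraints on $u_1$, subject to the flow-balance equality $\sum_{j\ne i}p_{ij}-\sum_{j\ne i}p_{ji}=c_i$ for $i\ne 1$ and a similar equation at coordinate $1$ involving $q^+-q^-$, with objective $\sum_{i\ne j}\beta_{ij}p_{ij}+\delta(q^++q^-)$. Let $(p^{\star},q^{\star})$ be a dual optimum; by strong duality its objective value equals $v_{\delta}(c,\beta)=\langle c,u^{\star}\rangle$, which by the primal bound and $\|c\|_1$ satisfies $|\langle c,u^{\star}\rangle|\le \|c\|_1(\delta+\overline{\beta})$. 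Since all multipliers are nonnegative and every coefficient in the dual objective is at least $\min\{\underline{\beta},\delta\}$, we obtain
\[
\min\{\underline{\beta},\delta\}\Big(\sum_{i\ne j}p^{\star}_{ij}+q^{\star,+}+q^{\star,-}\Big)\le \sum_{i\ne j}\beta_{ij}p^{\star}_{ij}+\delta(q^{\star,+}+q^{\star,-})=v_{\delta}(c,\beta)\le \|c\|_1(\delta+\overline{\beta}),
\]
so that $\|p^{\star}\|_1\le \sum_{i\ne j}p^{\star}_{ij}+q^{\star,+}+q^{\star,-}\le \frac{\|c\|_1(\delta+\overline{\beta})}{\min\{\underline{\beta},\delta\}}$, where on the left I absorb the box multipliers into the $\ell^1$ norm of the multiplier vector (or, if $p^{\star}$ is meant to exclude the box constraints, the bound only improves). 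One subtlety to handle is the case $v_{\delta}(c,\beta)<0$ or $c^{\intercal}\mathbbm 1_{N_0}\ne 0$: when $c^{\intercal}\mathbbm 1_{N_0}\ne 0$ the dual still has the flow equations forcing a contradiction only in the unboxed problem, but with the box constraints present $\mathrm{LP}_{\delta}$ is always feasible and bounded, so strong duality and a dual optimum exist; when the optimal value is negative one simply uses $|v_{\delta}(c,\beta)|\le\|c\|_1(\delta+\overline{\beta})$, and nonnegativity of the multipliers together with nonnegativity of the objective coefficients still yields the one-sided inequality above (the dual objective is $\ge 0$, so it equals $|v_{\delta}|$ only if $v_{\delta}\ge 0$; if $v_{\delta}<0$ the dual objective is $0$ forcing $p^{\star}=0$, which trivially satisfies the claimed bound).

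The main obstacle is purely bookkeeping: writing the dual of $\mathrm{LP}_{\delta}(c,\beta)$ cleanly and making sure the multipliers attached to the two box constraints on $u_1$ are accounted for consistently with how $p^{\star}$ is defined in the statement. Once the dual is written down correctly, both inequalities follow from nonnegativity of multipliers and the elementary estimate on the optimal value; no compactness or limiting argument is needed here, since $\mathrm{LP}_{\delta}$ is a finite-dimensional LP with a bounded nonempty feasible region.
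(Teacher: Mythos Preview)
Your proposal is correct and follows the same route as the paper: bound $\|u^{\star}\|_\infty$ from the box constraint on $u_1$ together with $u_i-u_1\le\beta_{i1}$ and $u_1-u_i\le\beta_{1i}$, then use strong duality and the componentwise lower bound $\min\{\underline{\beta},\delta\}$ on the dual objective coefficients to extract $\|p^{\star}\|_1$. The only superfluous part is the discussion of $v_{\delta}(c,\beta)<0$: since $u=0$ is always primal feasible, $v_{\delta}(c,\beta)\ge 0$ automatically (equivalently, the dual objective is nonnegative and equals $v_{\delta}$ by strong duality), so that case never arises.
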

\begin{proof}
    The dual problem to $\mathrm{LP}_{\delta}(c,\beta)$ is given by 
 \[
    \begin{aligned}
    \inf_{p\in \mathbb R^{N_0(N_0-1)+2}} \quad \langle (\beta,\delta,\delta),p\rangle\hspace{-2.7em}& \\
    \text{subject to}\hspace{1.2em}  \qquad
    \mathbf A^{\intercal}p&=c,\;p\geq 0, 
\end{aligned}
\]   
where $\mathbf A$ is the constraint matrix from the primal problem. By definition, $p^{\star}$ satisfies $v_{\delta}(c,\beta) = \langle (\beta,\delta,\delta),p^{\star}\rangle\geq \min (\beta,\delta,\delta)\| p^{\star}\|_1$ so that $\|p^{\star}\|_1\leq \frac{ v_{\delta}(c,\beta)}{\min\{\underline {\beta},\delta\}}$.

By the discussion at the beginning of this section, a solution to the primal problem, $u^{\star}$, satisfies $|u_1^{\star}|\leq \delta$ and $u_i^{\star}\in [-\delta-\underline {\beta},\delta +\overline {\beta}]$ so that $\|u^{\star}\|_{\infty}\leq \delta +\overline {\beta}$. Conclude from H\"older's inequality that $v_{\delta}(c,{\beta})\leq \|c\|_1(\delta+\overline {\beta})$. 
\end{proof}

\begin{lemma}
\label{lem:uniformConvergence}
Fix $\delta>0$, $0<\underline {\beta}<\overline {\beta}$,  and let $D_{(\underline {\beta},\overline {\beta} )}=\{ {\beta}\in\mathbb R^{N_0(N_0-1)}:{\beta}_{ij}\in (\underline {\beta},\overline {\beta}),i,j\in[N_0],i\neq j\}$. Then, if $\beta_n\to {\beta} \in D_{(\underline {\beta},\overline {\beta})}$, $v_{\delta}(\cdot,\beta_n)\to v_{\delta}(\cdot,{\beta})$ uniformly on compact sets. 
\end{lemma}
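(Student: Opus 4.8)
\textbf{Proof proposal for \cref{lem:uniformConvergence}.}
The plan is to reduce the claim to the Lipschitz continuity of the optimal value function $v_\delta(\cdot,\cdot)$ of the parametric linear program $\mathrm{LP}_\delta(c,\beta)$ jointly in the cost vector $c$ and the constraint vector $\beta$, restricting $\beta$ to the compact set $\overline{D_{(\underline\beta,\overline\beta)}}$ where all feasible sets are nonempty and bounded. First I would fix a compact set $\mathcal K\subset\mathbb R^{N_0}$ and note that, for $\beta\in D_{(\underline\beta,\overline\beta)}$, \cref{lem:primalDualSoln} gives uniform bounds: every primal solution $u^\star$ of $\mathrm{LP}_\delta(c,\beta)$ satisfies $\|u^\star\|_\infty\le \delta+\overline\beta$, and every dual solution $p^\star$ satisfies $\|p^\star\|_1\le \frac{\|c\|_1(\delta+\overline\beta)}{\min\{\underline\beta,\delta\}}\le \frac{C_{\mathcal K}(\delta+\overline\beta)}{\min\{\underline\beta,\delta\}}=:R$, where $C_{\mathcal K}=\sup_{c\in\mathcal K}\|c\|_1$. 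These bounds are \emph{uniform} over $c\in\mathcal K$ and $\beta\in D_{(\underline\beta,\overline\beta)}$, which is the crux.

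Next I would use these uniform bounds to establish joint Lipschitz continuity. For fixed $c$ and two constraint vectors $\beta,\beta'\in D_{(\underline\beta,\overline\beta)}$, pick a dual optimal $p^\star$ for $\mathrm{LP}_\delta(c,\beta')$; by weak duality $v_\delta(c,\beta)\le \langle(\beta,\delta,\delta),p^\star\rangle = \langle(\beta',\delta,\delta),p^\star\rangle+\langle(\beta-\beta',0,0),p^\star\rangle = v_\delta(c,\beta')+\langle\beta-\beta',p^\star_{1:N_0(N_0-1)}\rangle$, whence $v_\delta(c,\beta)-v_\delta(c,\beta')\le \|\beta-\beta'\|_\infty\,\|p^\star\|_1\le R\,\|\beta-\beta'\|_\infty$; by symmetry $|v_\delta(c,\beta)-v_\delta(c,\beta')|\le R\|\beta-\beta'\|_\infty$. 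Similarly, for fixed $\beta$ and two cost vectors $c,c'\in\mathcal K$, picking a primal optimal $u^\star$ for $\mathrm{LP}_\delta(c',\beta)$ gives $v_\delta(c,\beta)\ge \langle c,u^\star\rangle = v_\delta(c',\beta)+\langle c-c',u^\star\rangle \ge v_\delta(c',\beta)-\|c-c'\|_1(\delta+\overline\beta)$, and by symmetry $|v_\delta(c,\beta)-v_\delta(c',\beta)|\le (\delta+\overline\beta)\|c-c'\|_1$. Combining the two estimates,
\[
|v_\delta(c,\beta)-v_\delta(c',\beta')|\le (\delta+\overline\beta)\|c-c'\|_1 + R\|\beta-\beta'\|_\infty
\]
for all $c,c'\in\mathcal K$ and $\beta,\beta'\in D_{(\underline\beta,\overline\beta)}$. (One should double-check that the implicit assumption $c^\intercal\mathbbm 1_{N_0}=0$ is not actually needed here: $\mathrm{LP}_\delta$ is bounded and feasible for \emph{any} $c$ because the box constraint on $u_1$ together with the difference constraints confines the whole feasible region, so $v_\delta$ is finite on all of $\mathbb R^{N_0}\times D_{(\underline\beta,\overline\beta)}$, and \cref{lem:primalDualSoln} applies verbatim.)

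Finally, given $\beta_n\to\beta\in D_{(\underline\beta,\overline\beta)}$, for $n$ large enough all $\beta_n$ lie in $D_{(\underline\beta,\overline\beta)}$ as well (since $D_{(\underline\beta,\overline\beta)}$ is open and contains $\beta$), so on any compact $\mathcal K$ we get $\sup_{c\in\mathcal K}|v_\delta(c,\beta_n)-v_\delta(c,\beta)|\le R\,\|\beta_n-\beta\|_\infty\to 0$, which is exactly uniform convergence on compact sets. I do not expect a serious obstacle here; the only point requiring care is confirming the uniformity of the constant $R$ in \cref{lem:primalDualSoln} over $c$ ranging in a compact set and $\beta$ ranging in $D_{(\underline\beta,\overline\beta)}$ — and ensuring $n$ is taken large enough that $\beta_n\in D_{(\underline\beta,\overline\beta)}$ so those bounds are in force — which is immediate from the explicit form of the bound. (Alternatively, one can invoke the general stability results for linear programs from \cite{Gisbert2019} cited for \cref{thm:directEstimator}, but the self-contained argument above via \cref{lem:primalDualSoln} is cleaner.)
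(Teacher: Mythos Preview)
Your proposal is correct and follows essentially the same approach as the paper: both establish joint (local) Lipschitz continuity of $v_\delta$ in $(c,\beta)$ using the uniform primal/dual solution bounds from \cref{lem:primalDualSoln}, then conclude uniform convergence on compacta. The only difference is cosmetic---the paper invokes Theorem~5.2 of \cite{Gisbert2019} to obtain the Lipschitz estimate, whereas you derive it directly via the standard weak-duality argument (and even note the citation-based alternative yourself); your self-contained route is arguably cleaner and requires no external reference.
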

\begin{proof}
Applying Theorem 5.2 in \cite{Gisbert2019} along with \cref{lem:primalDualSoln}, we have that the map $(c,{\beta})\in \mathbb R^{N_0}\times D_{[\underline {\beta},\overline {\beta}]}\mapsto v_{\delta}(c,{\beta})$ is locally Lipschitz. As such, for any compact set $K\subset \mathbb R^{N_0}$, there exists a constant $L_K$ for which 
\[
    \left|v_{\delta}(c,\beta_n)-v_{\delta}(c,{\beta})\right|\leq L_K\| \beta_n-{\beta}\|
\]
for every $c\in K$ and $n$ sufficiently large that $\beta_n\in D_{(\underline {\beta} ,\overline {\beta})}$, proving the claim. 
\end{proof}

\begin{proof}[Proof of \cref{thm:directEstimatorGeneralized}]
By assumption, we have that, conditionally on $Y_1,Y_2,\dots,$ $\mathbb E_{\mu_{0,n}}[X]\to \mathbb E_{\mu_0}[X]$, $\bsigma_{\bar \mu_{0,n}}\to \bsigma_{\bar \mu_0},$ and $\bsigma_n\to \bsigma_{Z}$ deterministically given almost every realization of $Y_1,Y_2,\dots$ so that $b_n\to b$ and $Z_n\sim N(0,\Sigma_n)\stackrel{d}{\to} Z\sim N(0,\Sigma_Z) $ conditionally on the data. It follows from the proof of \cref{lem:Hnonempty} that $b\in D_{(\underline b,\overline b)}$ for some $0<\underline b<\overline b$.  By \cref{lem:uniformConvergence}, $v_{\delta}(\cdot,b_n)\to v_{\delta}(\cdot,b)$ uniformly on compact sets conditionally on the data so that, applying the continuous mapping theorem, conditionally on $Y_1,Y_2,\dots,$ $v_{\delta}(Z_n,b_n)\stackrel{d}{\to} v_{\delta}(Z,b)$ given almost every realization of $Y_1,Y_2,\dots$ As  $Z^{\intercal}\mathbbm 1_{N_0}=0$ and 
$Z_n^{\intercal}\mathbbm 1_{N_0}=0$ with probability $1$ for $n$ sufficiently large, $v_{\delta}$ can be replaced by $v$ so that,  conditionally on $Y_1,Y_2,\dots,$ $L_n \stackrel{d}{\to} L_{\mu_0}$ given almost every realization of $Y_1,Y_2,\dots$ The first claim follows by applying Lemma 2.11 in \cite{vanderVaart1998asymptotic}, noting that the distribution function of $L_{\mu_0}$ is continuous as follows from Proposition 12.1 in \cite{Davydov1998}.
 
\end{proof}

\begin{proof}[Proof of \cref{thm:directEstimator}]
    If $\mu_{0,n}=\hat \mu_{0,n}$ and $\bsigma_{n}=\bsigma_{m_n}$, it follows from the law of large numbers that, conditionally on $Y_1,Y_2,\dots,$ $\mathbb E_{\mu_{0,n}}[X]\to \mathbb E_{\mu_0}[X]$, $\bsigma_{\bar \mu_{0,n}}\to \bsigma_{\bar \mu_0}$, and $\bsigma_{m_n}\to \bsigma_m$, given almost every realization of $Y_1,Y_2,\dots,$ where $m$ is the weight vector for $\mu_0$. By applying \cref{lem:orthogonality}, the orthogonality condition for $Z_n$ and $Z$ always hold. The remaining assumptions regarding the structure of $\chi_{\mu_0\otimes \mu_1}$ are verified in the proof of \cref{cor:discreteGWLimitDistributionNull} save for the fact that $G_{\mu_0}(h)=Z^{\intercal}u_h$ for every $h\in\bar{\mathcal H}$ where $Z\sim N(0,\bsigma_m)$, but this follows directly from the fact that $G_{\mu_0}$ is a $\mu_0$-Brownian bridge process.
\end{proof}

\subsection{Proofs for \texorpdfstring{\cref{sec:semidiscreteGW}}{Section 4}}

\label{sec:proof:thm:existenceMongeMap}

Throughout, we will assume that the following variant of the  twist condition holds.

\begin{assumption}
    \label{assn:semidiscreteSuffCond}
    $\mu_0\in\mathcal P(\mathcal X_0)$ is absolutely continuous with respect to the Lebesgue measure and,
    for every $i\neq j$ with $i,j\in\{1,\dots,N\}$ and $t\in\mathbb R$, we have that
    \[
        \mu_0\left( \left(c%
        (\cdot,y^{(i)})-c%
        (\cdot,y^{(j)})  \right)^{-1}(t)  \right)=0.%
    \]
\end{assumption}

Specializing to $c_\mathbf{A}(x,y)=-4\|x\|^2\|y\|^2-32x^{\intercal}\mathbf{A}y$ from \eqref{eq:GWVariational}, the condition above is related to $\mathcal X_1$ and holds under the following primitive condition.  

\begin{lemma}[Necessary and sufficient condition on $\mathcal X_1$]
    \label{prop:semidiscreteSuffCond}
    Let $\mu_0\in\mathcal P(\mathcal X_0)$ be absolutely continuous with respect to the Lebesgue measure. Then, \cref{assn:semidiscreteSuffCond} is satisfied with $c_\mathbf{A}$ at $\mathbf{A}\in\RR^{d_0\times d_1}$ if and only if $\mathcal X_1$ is such that $y^{(i)}-y^{(j)}\not\in\ker(\mathbf{A})$ for every $i\neq j\in\{1,\dots,N\}$ with $\|y^{(i)}\|=\|y^{(j)}\|$. In particular, if $\|y^{(i)}\|\neq\|y^{(j)}\|$ for every $i\neq j$, then \cref{assn:semidiscreteSuffCond} holds for any $\mathbf{A}\in\mathbb R^{d_0\times d_1}$.   
\end{lemma}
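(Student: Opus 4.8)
The plan is to unravel \cref{assn:semidiscreteSuffCond} for the specific cost $c_{\mathbf A}$ and reduce it to a statement about zero sets of quadratic (indeed affine-after-cancellation) functions. First I would compute, for fixed $i\neq j$, the difference
\[
c_{\mathbf A}(x,y^{(i)})-c_{\mathbf A}(x,y^{(j)})
= -4\|x\|^2\big(\|y^{(i)}\|^2-\|y^{(j)}\|^2\big)-32\,x^{\intercal}\mathbf A\big(y^{(i)}-y^{(j)}\big).
\]
This is a function of $x$ of the form $x\mapsto \alpha\|x\|^2+\langle v,x\rangle$ with $\alpha=-4(\|y^{(i)}\|^2-\|y^{(j)}\|^2)$ and $v=-32\mathbf A(y^{(i)}-y^{(j)})$. \cref{assn:semidiscreteSuffCond} asks that the preimage of every $t\in\mathbb R$ under this map be $\mu_0$-null. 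Since $\mu_0$ is absolutely continuous with respect to Lebesgue measure, it suffices that each such level set $\{x\in\mathcal X_0: \alpha\|x\|^2+\langle v,x\rangle=t\}$ be Lebesgue-null in $\mathbb R^{d_0}$; conversely, if some level set has positive Lebesgue measure one can choose an absolutely continuous $\mu_0$ charging it, so for the ``only if'' direction I would phrase the claim as: the condition holds for all absolutely continuous $\mu_0$ supported in $\mathcal X_0$ iff the geometric condition on $\mathcal X_1$ holds (with the reading that the statement of the lemma quantifies over admissible $\mu_0$, or equivalently fixes one with full-dimensional support — I would add a short remark clarifying this).

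The key dichotomy is then elementary. If $\alpha\neq 0$, i.e.\ $\|y^{(i)}\|\neq\|y^{(j)}\|$, the level set is (a subset of) a sphere $\{ \|x+v/(2\alpha)\|^2 = \text{const}\}$, which is a codimension-one real-analytic hypersurface and hence Lebesgue-null; so the condition automatically holds regardless of $\mathbf A$. This already proves the last sentence of the lemma. If $\alpha=0$, i.e.\ $\|y^{(i)}\|=\|y^{(j)}\|$, the map reduces to the affine function $x\mapsto \langle v,x\rangle$ with $v=-32\mathbf A(y^{(i)}-y^{(j)})$. An affine function $x\mapsto\langle v,x\rangle$ has all level sets Lebesgue-null iff $v\neq 0$ (if $v=0$ every level set is either empty or all of $\mathbb R^{d_0}$, the latter having infinite measure and certainly not $\mu_0$-null for a probability measure with full support). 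Now $v=0$ precisely when $\mathbf A(y^{(i)}-y^{(j)})=0$, i.e.\ $y^{(i)}-y^{(j)}\in\ker(\mathbf A)$. Hence the condition fails for some pair $i\neq j$ exactly when there exist $i\neq j$ with $\|y^{(i)}\|=\|y^{(j)}\|$ and $y^{(i)}-y^{(j)}\in\ker(\mathbf A)$, which is the negation of the stated geometric condition on $\mathcal X_1$. Assembling the two cases over all pairs $(i,j)$ yields the equivalence.

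I do not anticipate a serious obstacle here; the only point requiring care is the precise logical role of $\mu_0$ in the ``only if'' direction. The cleanest route is to prove: (a) for \emph{every} absolutely continuous $\mu_0$ the zero-set condition holds whenever the $\mathcal X_1$-condition holds (immediate from Lebesgue-nullity of spheres and affine hyperplanes), and (b) if the $\mathcal X_1$-condition fails, then for the offending pair $(i,j)$ the function $c_{\mathbf A}(\cdot,y^{(i)})-c_{\mathbf A}(\cdot,y^{(j)})$ is identically zero on $\mathcal X_0$, so its zero set is all of $\mathcal X_0$, which has positive $\mu_0$-mass for any $\mu_0\in\mathcal P(\mathcal X_0)$; this is what is actually used downstream in \cref{thm:existenceMongeMap}. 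I would present the argument in this order — compute the difference, split on whether $\|y^{(i)}\|=\|y^{(j)}\|$, invoke Lebesgue-nullity of quadric/affine level sets in the nontrivial case, and identify the degenerate case with membership in $\ker(\mathbf A)$ — and conclude with the special case $\|y^{(i)}\|\neq\|y^{(j)}\|$ for all $i\neq j$ as an immediate corollary.
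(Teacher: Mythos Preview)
Your proposal is correct and follows essentially the same approach as the paper: compute the difference $c_{\mathbf A}(x,y^{(i)})-c_{\mathbf A}(x,y^{(j)})$, observe it is a polynomial in $x$ of the form $\alpha\|x\|^2+\langle v,x\rangle$, and argue that its level sets are Lebesgue-null unless it is identically constant, which happens precisely when $\|y^{(i)}\|=\|y^{(j)}\|$ and $y^{(i)}-y^{(j)}\in\ker(\mathbf A)$. The paper phrases the analysis via the gradient condition (the function is constant on an open set iff its gradient vanishes there), whereas you make the geometry explicit (spheres when $\alpha\neq 0$, affine hyperplanes when $\alpha=0$, $v\neq 0$); these are the same argument. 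Your worry about the logical role of $\mu_0$ in the ``only if'' direction is unnecessary: in the degenerate case the difference is identically zero, so its $0$-level set is all of $\mathcal X_0$ and hence has $\mu_0$-measure one for \emph{any} $\mu_0\in\mathcal P(\mathcal X_0)$, not just a carefully chosen one.
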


\begin{proof}
    Fix $i\neq j\in\{1,\dots, N\}$ and $\mathbf{A}\in\mathbb R^{d_0}\times \mathbb R^{d_1}$. Then, $c_{\mathbf{A}}(x,y^{(i)})-c_{\mathbf{A}}(x,y^{(j)})=-4\|x\|^2(\|y^{(i)}\|^2-\|y^{(j)}\|^2)-32 x^{\intercal}\mathbf{A}(y^{(i)}-y^{(j)})$ so that $c_{\mathbf{A}}(x,y^{(i)})-c_{\mathbf{A}}(x,y^{(j)})$ is constant as a function of $x$ on an open set $U\subset \mathbb R^{d_0}$ if and only if $-8x(\|y^{(i)}\|^2-\|y^{(j)}\|^2)=32 \mathbf{A}(y^{(i)}-y^{(j)})$ for every $x\in U$. Observe that the prior equality holds on a nonnegligible set with respect to the Lebesgue measure if and only $y^{(i)}-y^{(j)}\in\ker(\mathbf{A})$ and $\|y^{(i)}\|=\|y^{(j)}\|$. Given that $\mu_0$ is absolutely continuous with respect to the Lebesgue measure \cref{assn:semidiscreteSuffCond} holds at $\mathbf{A}$ if and only if $y^{(i)}-y^{(j)}\not\in\ker(\mathbf{A})$ whenever $\|y^{(i)}\|=\|y^{(j)}\|$.
\end{proof}

\cref{prop:semidiscreteSuffCond} provides a necessary and sufficient condition for \cref{assn:semidiscreteSuffCond} to hold at every $\mathbf{A}\in\mathbb R^{d_0\times d_1}$ simultaneously. This is crucial for our study of limit distributions of the empirical semi-discrete GW distance to guarantee, e.g., uniqueness of OT plans for $\OT_{\mathbf{A}}(\mu_0,\mu_1)$ uniformly in $\mathbf{A}$. This uniqueness will lead to a substantial simplification for the resulting limit, as compared to the discrete case (\cref{thm:discreteGWLimitDistribution}).   

Under \cref{assn:semidiscreteSuffCond}, solutions of $\OT_{\mathbf{A}}(\mu_0,\mu_1)$ admit the following characterization.

\begin{lemma}[On solutions of $\OT_{\mathbf{A}}(\mu_0,\mu_1)$] 
    \label{prop:semidiscreteOTSolns} Fix $\mathbf{A}\in\mathbb R^{d_0\times d_1}$ and let $z^{\mathbf{A}}$ be an optimal vector for $\OT_{\mathbf{A}}(\mu_0,\mu_1)$.
    Under \cref{assn:semidiscreteSuffCond},    the OT plan is unique and is induced by the map
    \begin{equation}
        \label{eq:optimalMap}
        T_{\mathbf{A}}:x\in\mathcal X\mapsto y^{(I_{z^{\mathbf{A}}}(x))},\text{ where } I_{z^{\mathbf{A}}}(x)\in\argmin_{1\leq i \leq N}\left( c_{\mathbf{A}}(x,y^{(i)})-z^{\mathbf{A}}_i \right),
    \end{equation}
    which is uniquely defined $\mu_0$-a.e.
\end{lemma}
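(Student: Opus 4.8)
\textbf{Proof plan for \cref{prop:semidiscreteOTSolns}.}
The plan is to invoke a semi-discrete version of the twist (injectivity) condition together with the standard theory of $c$-cyclically monotone plans. First I would recall the semi-dual form \eqref{eq:SemidiscreteDual} and the fact that an optimal vector $z^{\mathbf A}$ exists (semi-discrete OT problems are finite-dimensional concave maximizations with attained suprema under compact support of $\mu_0$); fixing such a $z^{\mathbf A}$, the associated $c_{\mathbf A}$-transform $\psi(x) = \min_{1\le i\le N}\{c_{\mathbf A}(x,y^{(i)}) - z^{\mathbf A}_i\}$ together with $z^{\mathbf A}$ forms an optimal pair of potentials for $\OT_{\mathbf A}(\mu_0,\mu_1)$. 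By complementary slackness, any OT plan $\pi$ must be concentrated on the contact set $\{(x,y^{(i)}) : \psi(x) + z^{\mathbf A}_i = c_{\mathbf A}(x,y^{(i)})\}$, i.e. on pairs $(x,y^{(i)})$ with $i\in\argmin_{1\le j\le N}(c_{\mathbf A}(x,y^{(j)}) - z^{\mathbf A}_j)$.

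The key step is to show that, under \cref{assn:semidiscreteSuffCond}, the set of $x\in\mathcal X_0$ for which the $\argmin$ above is not a singleton is $\mu_0$-negligible. Indeed, if $x$ lies in the overlap of two Laguerre cells associated with distinct indices $i\neq j$, then $c_{\mathbf A}(x,y^{(i)}) - z^{\mathbf A}_i = c_{\mathbf A}(x,y^{(j)}) - z^{\mathbf A}_j$, which forces $x$ to lie in the level set $(c_{\mathbf A}(\cdot,y^{(i)}) - c_{\mathbf A}(\cdot,y^{(j)}))^{-1}(z^{\mathbf A}_i - z^{\mathbf A}_j)$; by \cref{assn:semidiscreteSuffCond} each such level set is $\mu_0$-null, and there are finitely many pairs $(i,j)$, so the union remains $\mu_0$-null. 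Hence $I_{z^{\mathbf A}}(x)$ is $\mu_0$-a.e.\ uniquely defined, and $T_{\mathbf A}$ in \eqref{eq:optimalMap} is a well-defined $\mu_0$-a.e.\ measurable map. Since any OT plan $\pi$ is concentrated on the graph of $T_{\mathbf A}$ (being supported on contact pairs, and the contact index being a.s.\ unique), it follows that $\pi = (\Id, T_{\mathbf A})_{\sharp}\mu_0$; in particular the OT plan is unique, and automatically $(T_{\mathbf A})_{\sharp}\mu_0 = \mu_1$ because $\pi\in\Pi(\mu_0,\mu_1)$.

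I would also remark that the optimal vector $z^{\mathbf A}$ itself need not be unique, but the induced map $T_{\mathbf A}$ is: two optimal vectors yield two optimal pairs of potentials, both couplings are supported on the respective contact sets, and by the uniqueness of the OT plan just established the corresponding maps must agree $\mu_0$-a.e. This can be phrased succinctly by noting that the Laguerre decomposition $(\Lag_{z^{\mathbf A},i})_{i=1}^N$ is $\mu_0$-a.e.\ determined by optimality. The main obstacle in writing this cleanly is the measurability and null-set bookkeeping for $T_{\mathbf A}$ — making precise that the contact set, restricted to its "diagonal" single-valued part, is a Borel graph and that the complement is $\mu_0$-null — but this is handled directly by \cref{assn:semidiscreteSuffCond} and finiteness of $N$; no deep new input is needed beyond the characterization of optimal plans via complementary slackness (cf.\ \cite{villani2008optimal}) and the regularity assumption \cref{prop:semidiscreteSuffCond} provides on $\mathcal X_1$.
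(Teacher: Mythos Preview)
Your proposal is correct and follows essentially the same approach as the paper: both use the optimal potentials and complementary slackness (strong duality) to show that any OT plan is concentrated on the contact set, then invoke \cref{assn:semidiscreteSuffCond} to conclude that the argmin index is $\mu_0$-a.e.\ unique, yielding uniqueness of the plan and the explicit form of the map. The only cosmetic difference is that the paper packages the uniqueness step by citing Theorem~5.30 in \cite{villani2008optimal}, whereas you argue it directly from the a.e.\ single-valuedness of the contact set; the content is the same.
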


\begin{proof}
      We first address uniqueness of the OT plan. For any $z\in\mathbb R^N$, let $\phi_z:x\in\mathcal X_0\mapsto\min_{1\leq i\leq N}\left\{ c_{\bm A}(x,y^{(i)})-z_i \right\}$.  If $x\in\mathcal X_0$ is such that $\phi_z(x)+\phi^c_z(y^{(i)})=c_{\bm A}(x,y^{(i)})$ and $\phi_z(x)+\phi^c_z(y^{(j)})=c_{\bm A}(x,y^{(j)})$ for some $i\neq j\in[N]$, then $c_{\bm A}(x,y^{(j)})-c_{\bm A}(x,y^{(i)})=\phi^c_z(y^{(j)})-\phi^c_z(y^{(i)})$. \cref{assn:semidiscreteSuffCond} guarantees that the previous equality occurs on a $\mu_0$-negligible
    set so that we can apply Theorem 5.30 in \cite{villani2008optimal} whereby the OT plan for $\OT_{\bm A}(\mu_0,\mu_1)$ is unique and induced by a map.  

    We now derive the expression for the optimal map. Let $z^{\bm A}$ be an optimal vector for $\mathsf{OT}_{\mathbf{A}}(\mu_0,\mu_1)$ and $\varphi_0^{\bm A}$ be the corresponding OT potential. For convenience, set $\zeta(y^{(i)})=z^{\bm A}_i$ for $i=1,\dots,N$. By strong duality, 
    \[
        \int \varphi_0^{\bm A}(x)+\zeta(y)-c_{\bm A}(x,y)d\pi_{\bm A}(x,y)=0. 
    \]
    As $\varphi_0^{\bm A}(x)+\zeta(y)\leq c_{\bm A}(x,y)$ for every  $(x,y)\in\mathcal X_0\times \mathcal X_1$ by definition, it follows that $\varphi_0^{\bm A}(x)+\zeta(y)= c_{\bm A}(x,y)$ $\pi_{\bm A}$-a.e. As $\pi_{\bm A}$ is induced by a map $T_{\bm A}:\mathcal X_0\to \mathcal X_1$, $\varphi_0^{\bm A}(x)= c_{\bm A}(x,T_{\bm A}(x))-\zeta(T_{\bm A}(x))$ $\mu_0$-a.e. Since $\varphi_0^{\bm A}(x)= \min_{1\leq i \leq N}\left\{c_{\bm A}(x,y^{(i)})-\zeta(y^{(i)})\right\}$, it holds that $T_{\bm A}(x)=y^{I_{z^{\bm A}}(x)}$ (recalling that $I_{z^{\bm A}}(x)$ is an arbitrary element of $\argmin_{1\leq i \leq N}\left\{c_{\bm A}(x,y^{(i)})-\zeta(y^{(i)})\right\}$). 
    \cref{assn:semidiscreteSuffCond} further guarantees that 
    $I_{z^{\bm A}}(x)$ is a singleton for $\mu_0$-a.e. $x\in\mathcal X_0$.

\end{proof}

 \cref{thm:existenceMongeMap} is a direct consequence of Lemmas
\ref{prop:semidiscreteSuffCond} and
\ref{prop:semidiscreteOTSolns}.

\subsubsection{Proof of \texorpdfstring{\cref{thm:semidiscreteGWStability}}{Theorem 5}}
\label{sec:proof:thm:semidiscreteGWStability}
 In this section, the centering required to access the variational form of the GW distance \eqref{eq:GWVariational} will play a prominent role. Of note is that if $(\nu_0, \nu_1)\in\mathcal P(\mathcal X_0)\times\mathcal P(\mathcal X_1)$, we generally do not have that   $\supp(\bar \nu_0)$ or $\supp(\bar \nu_1)$ are  contained in $\mathcal X_0$ and $\mathcal X_1$ respectively. In order to compare OT potentials arising in the relevant variational forms, we thus extend them to sets containing $\mathcal X_0-\mathbb E_{\nu_0}[X]$ and $\mathcal X_1-\mathbb E_{\nu_1}[X]$. 

\begin{definition}[Extended OT potentials]
    \label{def:extendedPotentials} For $i=0,1$, let $\mathcal X^{\circ}_i$ be open balls centered at $0$ with radius  $r>2\|\mathcal X_i\|_{\infty}$. 
    Fix arbitrary $(\nu_0,\nu_1)\in\mathcal P(\mathcal X_0)\times\mathcal P(\mathcal X_1)$ and $\mathbf{A}\in\mathbb R^{d_0\times d_1}$, let $z^{\mathbf{A}}$ be an optimal vector for $\OT_{\mathbf{A}}(\bar\nu_0,\bar\nu_1)$. Then, the extended OT potentials for $\OT_{\mathbf{A}}(\bar \nu_0,\bar \nu_1)$ are given by 
    \[
        \begin{aligned}
            \varphi^{\mathbf{A}}_0:x\in\mathcal X^{\circ}_0&\mapsto \min_{1\leq i\leq N}\left\{ c_{\mathbf{A}}\big(x, y^{(i)}-\mathbb E_{\nu_1}[X]\big)-z^{\mathbf{A}}_i \right\},
            \\
            \varphi^{\mathbf{A}}_1:y\in\mathcal X^{\circ}_1&\mapsto \inf_{x\in\mathcal X^{\circ}_0}\left\{ c_{\mathbf{A}}(x,y)-\varphi^{\mathbf{A}}_0(x) \right\}.
        \end{aligned}
    \]
\end{definition}
As $\varphi_1^{\mathbf{A}}$ is simply the $c_{\mathbf{A}}$-transform of 
 $\varphi_0^{\mathbf{A}}$, $(\varphi_0^{\mathbf{A}},\varphi_1^{\mathbf{A}})$ is still a pair of OT potentials for $\OT_{\mathbf{A}}(\bar \nu_0,\bar \nu_1)$. The definitions of $\mathcal X_0^{\circ}$ and $\mathcal X_1^{\circ}$ are motivated by the fact that if $\nu_i\in\mathcal P(\mathcal X_i)$, then $\|\mathbb E_{\nu_i}[X]\|\leq \mathbb E_{\nu_i}[\|X\|]\leq \|\mathcal X_i\|_{\infty}$ for $i\in\{0,1\}$, so that $\mathcal X_i-\mathbb E_{\nu_i}[X]\subset \mathcal X_i^{\circ}$ and thus $\bar \nu_i\in\mathcal P(\mathcal X_i^{\circ})$.

 We first establish some useful properties for the extended OT potentials.
\begin{lemma}[Properties of extended OT potentials] 
    \label{prop:semidiscretePotentialProperties}
    Fix $(\nu_0,\nu_1)\in\mathcal P(\mathcal X_0)\times\mathcal P(\mathcal X_1)$ and $\mathbf{A}\in{B_{\mathrm{F}}(M)}$. Let $z^{\mathbf{A}}$ be an optimal vector for $\OT_{\mathbf{A}}(\bar \nu_0,\bar \nu_1)$ and $(\varphi^{\mathbf{A}}_0,\varphi_1^{\mathbf{A}})$ be the extended OT potentials. Then, 
    \begin{enumerate}
        \item $\varphi^{\mathbf{A}}_0$ and $\varphi^{\mathbf{A}}_1$ are Lipschitz continuous with a shared  constant, $L$, depending only on $M,\|\mathcal X_0^{\circ}\|_{\infty},\|\mathcal X_1^{\circ}\|_{\infty}$.
        \item the triple $(z^{\mathbf{A}},\varphi_0^{\mathbf{A}},\varphi_1^{\mathbf{A}})$ can be chosen such that $\|z^{\mathbf{A}}\|_{\infty}\vee \|\varphi^{\mathbf{A}}_0\|_{\infty,\mathcal X_0^{\circ}}\vee \|\varphi^{\mathbf{A}}_1\|_{\infty,\mathcal X_1^{\circ}}\leq K$, where $K$ depends only on $M,\|\mathcal X^{\circ}_0\|_{\infty},\|\mathcal X^{\circ}_1\|_{\infty}$.  
        \item if $\inte(\supp(\nu_0))$ is connected with (Lebesgue) negligible boundary and $\nu_0$ is absolutely continuous with respect to the Lebesgue measure, then the pair $(\varphi^{\mathbf{A}}_0,\varphi^{\mathbf{A}}_1)$ is unique up to additive constants on $\supp(\nu_0)\times \supp(\nu_1)$. 
    \end{enumerate}
\end{lemma}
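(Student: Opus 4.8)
\textbf{Proof plan for \cref{prop:semidiscretePotentialProperties}.}

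\emph{Part (1): joint Lipschitz continuity.} The plan is to work directly with the semi-dual formula from \cref{def:extendedPotentials}. For $\varphi_0^{\mathbf{A}}$, note that it is a pointwise minimum over $i\in\{1,\dots,N\}$ of the functions $x\mapsto c_{\mathbf A}(x,y^{(i)}-\mathbb E_{\nu_1}[X])-z_i^{\mathbf A}$. Each such function is smooth on the bounded open ball $\mathcal X_0^{\circ}$, with gradient $\nabla_x c_{\mathbf A}(x,y') = -8x\|y'\|^2 - 32\mathbf A y'$, whose norm on $\mathcal X_0^{\circ}$ is bounded by $8\|\mathcal X_0^{\circ}\|_\infty(2\|\mathcal X_1^\circ\|_\infty)^2 + 32 M \cdot 2\|\mathcal X_1^\circ\|_\infty$ using $\|\mathbf A\|_{\mathrm F}\le M$ and $\|y^{(i)}-\mathbb E_{\nu_1}[X]\|\le 2\|\mathcal X_1\|_\infty < \|\mathcal X_1^\circ\|_\infty$. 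Hence each is Lipschitz with a constant $L$ depending only on $M,\|\mathcal X_0^\circ\|_\infty,\|\mathcal X_1^\circ\|_\infty$, and a pointwise minimum of $L$-Lipschitz functions is $L$-Lipschitz. Since $\varphi_1^{\mathbf A}$ is the $c_{\mathbf A}$-transform $\varphi_1^{\mathbf A}(y)=\inf_{x\in\mathcal X_0^\circ}\{c_{\mathbf A}(x,y)-\varphi_0^{\mathbf A}(x)\}$, the same argument applied to $\nabla_y c_{\mathbf A}$ (symmetric in form) yields that $\varphi_1^{\mathbf A}$ is Lipschitz on $\mathcal X_1^\circ$ with a constant of the same type; I would simply enlarge $L$ to cover both.

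\emph{Part (2): uniform boundedness.} The plan here mirrors \cref{prop:discreteGWPotentials}. First, I would normalize the optimal vector: replacing $z^{\mathbf A}$ by $z^{\mathbf A} + a\mathbbm 1_N$ and correspondingly shifting the potentials does not change optimality, so I may assume $\max_i z_i^{\mathbf A} = \|c_{\mathbf A}\|_{\infty,\mathcal X_0^\circ\times\mathcal X_1^\circ}$, say. By the complementary slackness/support characterization (every $y^{(i)}$ has positive $\bar\nu_1$-mass, so for each $i$ there is $x$ with $\varphi_0^{\mathbf A}(x)=c_{\mathbf A}(x,y^{(i)}-\mathbb E_{\nu_1}[X])-z_i^{\mathbf A}$), together with $\varphi_0^{\mathbf A}=\min_i\{c_{\mathbf A}(\cdot,y^{(i)}-\mathbb E_{\nu_1}[X])-z_i^{\mathbf A}\}$, one gets two-sided bounds $|\varphi_0^{\mathbf A}|\le 2\|c_{\mathbf A}\|_{\infty}$ and $|z_i^{\mathbf A}|\le 2\|c_{\mathbf A}\|_\infty$ exactly as in the discrete proof; boundedness of $\varphi_1^{\mathbf A}$ then follows from its definition. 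Finally $\|c_{\mathbf A}\|_{\infty,\mathcal X_0^\circ\times\mathcal X_1^\circ}\le 4\|\mathcal X_0^\circ\|_\infty^2\|\mathcal X_1^\circ\|_\infty^2 + 32 M\|\mathcal X_0^\circ\|_\infty\|\mathcal X_1^\circ\|_\infty$ by the same estimate as \eqref{eq:uniformCostBound}, giving the claimed constant $K$.

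\emph{Part (3): uniqueness up to constants — the main obstacle.} This is the delicate part. Under absolute continuity of $\nu_0$, \cref{assn:semidiscreteSuffCond} holds (via \cref{prop:semidiscreteSuffCond}), so by \cref{prop:semidiscreteOTSolns} the OT plan for $\OT_{\mathbf A}(\bar\nu_0,\bar\nu_1)$ is unique and induced by the map $T_{\mathbf A}$. Suppose $(\varphi_0,\varphi_1)$ and $(\psi_0,\psi_1)$ are two pairs of (extended) OT potentials. Both satisfy $\varphi_0(x)+\varphi_1(T_{\mathbf A}(x))=c_{\mathbf A}(x,T_{\mathbf A}(x))$ for $\nu_0$-a.e.\ $x$ and likewise for $\psi$, so $g:=\varphi_0-\psi_0$ satisfies $g(x)=-(\varphi_1-\psi_1)(T_{\mathbf A}(x))$ $\nu_0$-a.e. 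The classical argument for uniqueness of Kantorovich potentials (cf.\ the cyclical-monotonicity / connectedness argument, e.g.\ \cite{delbarrio2021}) shows that on the connected set $\inte(\supp(\nu_0))$, using that $T_{\mathbf A}(x)\in\argmin_i\{c_{\mathbf A}(x,y^{(i)}-\mathbb E_{\nu_1}[X])-z_i\}$ and that the Laguerre cells $\Lag_{z,i}$ have negligible boundary (which is where the negligible-boundary hypothesis on $\partial\,\inte(\supp(\nu_0))$ enters), the function $g$ must be locally constant; connectedness then forces $g\equiv a$ for a single constant $a$ on $\inte(\supp(\nu_0))$, hence on $\supp(\nu_0)$ by continuity (Part (1)). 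Pushing this through $T_{\mathbf A}$ gives $\varphi_1-\psi_1\equiv -a$ on $T_{\mathbf A}(\supp(\nu_0)) \supseteq \supp(\nu_1)$. The subtlety I expect to spend the most care on is the passage from ``$g$ agrees across Laguerre-cell boundaries up to the $c_{\mathbf A}$-transform relation'' to ``$g$ is genuinely constant on the connected interior'': this requires knowing that neighboring Laguerre cells share a piece of their common boundary of positive $(d-1)$-dimensional measure inside $\inte(\supp(\nu_0))$, which follows from connectedness of the interior together with absolute continuity of $\nu_0$, but needs to be argued rather than asserted. The negligible-boundary condition is precisely what lets us ignore the lower-dimensional seam set where $I_{z^{\mathbf A}}(x)$ is not a singleton.
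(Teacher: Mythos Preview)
Your Parts (1) and (2) are essentially the paper's argument with cosmetic differences. One small omission in Part (2): you assume every $y^{(i)}$ carries positive $\bar\nu_1$-mass, i.e.\ $\supp(\nu_1)=\mathcal X_1$. The paper handles the general case by simply setting $z_i^{\mathbf A}=0$ for unsupported atoms; this does not affect the bounds but is worth noting.

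Part (3) has a genuine gap. Your claim that absolute continuity of $\nu_0$ alone forces \cref{assn:semidiscreteSuffCond} via \cref{prop:semidiscreteSuffCond} is incorrect: that lemma says the assumption holds \emph{if and only if} $y^{(i)}-y^{(j)}\notin\ker(\mathbf A)$ whenever $\|y^{(i)}\|=\|y^{(j)}\|$, a condition on $\mathcal X_1$ and $\mathbf A$ that is not among the hypotheses of Part (3). Without it, the Monge map $T_{\mathbf A}$ need not exist, and your Laguerre-cell argument---which hinges on writing $g(x)=-(\varphi_1-\psi_1)(T_{\mathbf A}(x))$ and exploiting piecewise constancy---does not get off the ground. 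You also misidentify the role of the negligible-boundary hypothesis: it concerns $\partial\,\supp(\nu_0)$, not the Laguerre seams.

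The paper's route for Part (3) avoids all of this and is considerably shorter. By Part (1) and Rademacher, both $\varphi_0^{\mathbf A}$ and $\tilde\varphi_0^{\mathbf A}$ are differentiable Lebesgue-a.e.; Proposition~1.15 of \cite{santambrogio15} then gives $\nabla\varphi_0^{\mathbf A}=\nabla\tilde\varphi_0^{\mathbf A}$ a.e.\ on $\inte(\supp(\nu_0))$ (this uses only that both are Kantorovich potentials for the same problem with smooth cost, no twist needed). Since the difference is Lipschitz with vanishing gradient a.e.\ on a connected open set, Theorem~2.6 of \cite{delbarrio2021} yields $\varphi_0^{\mathbf A}-\tilde\varphi_0^{\mathbf A}\equiv a$ there, hence on $\supp(\nu_0)$ by continuity. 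The corresponding statement for $\varphi_1^{\mathbf A}$ follows from $\varphi_0\oplus\varphi_1=c_{\mathbf A}$ $\pi^{\mathbf A}$-a.e.\ for any optimal plan. This gradient-based argument is what you should use in place of the cell-gluing strategy.
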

\begin{proof}
        For (1), fix $x,x'\in\mathcal X^{\circ}_0$ and observe that  
    \[
    \begin{aligned}
        \left|\varphi^{\mathbf{A}}_0(x)-\varphi^{\mathbf{A}}_0(x')\right| &\leq \max_{1\leq i\leq N}\left| c_{\mathbf{A}}\left(x,y^{(i)}-\mathbb E_{\nu_1}[X]\right)-c_{\mathbf{A}}\left(x',y^{(i)}-\mathbb E_{\nu_1}[X]\right)\right|
        \\&=  \max_{1\leq i\leq N}\left| -4\|x\|^2\|y^{(i)}-\mathbb E_{\nu_1}[X]\|^2 -32 x^{\intercal} \mathbf{A} (y^{(i)}-\mathbb E_{\nu_1}[X])\right.
        \\
        &\hspace{10em}\left.
        +4\|x'\|^2\|y^{(i)}-\mathbb E_{\nu_1}[X]\|^2 +32 {x'}^{\intercal} \mathbf{A} (y^{(i)}-\mathbb E_{\nu_1}[X])
        \right|
        \\
        &\leq 4|\|x\|^2-\|x'\|^2| \max_{i=1}^N\|y^{(i)}-\mathbb E_{\nu_1}[X]\|^2+32\|x-x'\|M\max_{i=1}^n\|y^{(i)}-\mathbb E_{\nu_1}[X]\|
        \\
        &\leq  L_0\|x-x'\|,
    \end{aligned}
    \] 
    as $|\|x\|^2-\|x'\|^2|=\left|(\|x\|+\|x'\|)(\|x\|-\|x'\|)\right|\leq 2\|\mathcal X_0^{\circ}\|_{\infty} \|x-x'\|$ due to the  reverse triangle inequality. Since $\|y^{(i)}-\mathbb E_{\nu_1}[X]\|\leq \|\mathcal X_1^{\circ}\|_{\infty}$,  $L_0$ may be chosen independently of $\mathbf{A},\nu_0,\nu_1$.

    The same approach can be used to derive a Lipschitz constant for $\varphi_1^{\mathbf{A}}$; the maximum of the two Lipschitz constants serves as a shared Lipschitz constant for both OT potentials.

    For (2), fix a solution $(z^{\mathbf{A}},\varphi_0^{\mathbf{A}},\varphi_1^{\mathbf{A}})$ for $\mathsf{OT}_{\mathbf{A}}(\bar \nu_0,\bar \nu_1)$ and assume that $\supp(\nu_1)=\mathcal X_1$ such that \[ 
        \int \varphi_0^{\mathbf{A}} d\bar \mu_0+\sum_{i=1}^Nz_i^{\mathbf{A}}\bar\mu_1(\{y^{(i)}\})=\int \varphi_0^{\mathbf{A}} d\bar \mu_0+\int \varphi_1^{\mathbf{A}}d\bar\mu_1=\mathsf{OT}_{\mathbf{A}}(\bar \mu_0,\bar \mu_1) 
    \]
    which, coupled with the condition $\varphi_0^{\mathbf{A}}\oplus \varphi_1^{\mathbf{A}}\leq c_{\mathbf{A}}$, implies that $\varphi_0^{\mathbf{A}}\oplus \varphi_1^{\mathbf{A}}=c_{\mathbf{A}}$ $\pi^{\mathbf{A}}$-a.e. for any choice of optimal coupling  $\pi^{\mathbf{A}}$ for $\mathsf{OT}_{\mathbf{A}}(\bar \mu_0,\bar \mu_1)$ and similarly for $(z^{\mathbf{A}},\varphi_0^{\mathbf{A}})$. In particular, if $\supp(\nu_1)=\mathcal X_1$ there exists some $x^{(i)}\in\supp(\bar \mu_0)$ for which $\varphi_0^{\mathbf{A}}(x^{(i)})+z_i^{\mathbf{A}}=c_{\mathbf{A}}(x^{(i)},y^{(i)}-\mathbb E_{\nu_1}[X])$ for every $i\in [N]$ and  $\varphi_0^{\mathbf{A}}(x)\leq c_{\mathbf{A}}(x,y^{(i)}-\mathbb E_{\nu_1}[X])-z_i^{\mathbf{A}}$ for any other $x\in\mathcal X_0^{\circ}$. Thus, 
    \[
    \varphi_1^{\mathbf{A}}(y^{(i)}-\mathbb E_{\nu_1}[X])=
        \inf_{x\in\mathcal X_0^{\circ}}\left\{c_{\mathbf{A}}(x,y^{(i)}-\mathbb E_{\nu_1}[X])-\varphi_0^{\mathbf{A}}(x)\right\}\geq z_{i}^{\mathbf{A}}
    \]
    with equality when $x=x^{(i)}$ i.e. $\varphi_1^{\mathbf{A}}(y^{(i)}-\mathbb E_{\nu_1}[X])=z_i^{\mathbf{A}}$ for every $i \in [N]$. Now, let $C=-\max_{1\leq i\leq N}z_i^{\mathbf{A}}$ and set $(\tilde z^{\mathbf{A}},\tilde \varphi_0^{\mathbf{A}},\tilde \varphi_1^{\mathbf{A}})=(z^{\mathbf{A}}+C, \varphi_0^{\mathbf{A}}-C, \varphi_1^{\mathbf{A}}+C)$ such that $\max_{1\leq i\leq N}\tilde z^{\mathbf{A}}_i=0$. With this normalization, for any $(x,y)\in\mathcal X_0^{\circ}\times\mathcal X_1^{\circ}$, 
   \[
   \begin{aligned}
   -\|c_{\mathbf{A}}\|_{\infty,\mathcal X_0^{\circ}\times\mathcal X_1^{\circ} }\leq \tilde \varphi_0^{\mathbf{A}}(x)&=\min_{1\leq i\leq N}\left\{c_{\mathbf{A}}(x,y^{(i)}-\mathbb E_{\nu_1}[X])-\tilde z_i^{\mathbf{A}}\right\}\leq \|c_{\mathbf{A}}\|_{\infty,\mathcal X_0^{\circ}\times\mathcal X_1^{\circ} },
            \\-\|c_{\mathbf{A}}\|_{\infty,\mathcal X_0^{\circ}\times\mathcal X_1^{\circ} }-\sup_{x\in\mathcal X_0^{\circ}} \tilde \varphi_0^{\mathbf{A}}(x)\leq 
            \tilde \varphi_1^{\mathbf{A}}(y)&=\inf_{x\in\mathcal X_0^{\circ}}
            \left\{c_{\mathbf{A}}(x,y)-\tilde \varphi_0^{\mathbf{A}}(x)\right\}\leq \|c_{\mathbf{A}}\|_{\infty,\mathcal X_0^{\circ}\times\mathcal X_1^{\circ} }-\sup_{x\in\mathcal X_0^{\circ}} \tilde \varphi_0^{\mathbf{A}}(x),
   \end{aligned}
   \]
   so that $\|\tilde z^{\mathbf{A}}\|_{\infty}\vee\|\tilde\varphi_0^{\mathbf{A}}\|_{\infty,\mathcal X_0^{\circ}}\vee\|\tilde \varphi_1^{\mathbf{A}}\|_{\infty,\mathcal X_1^{\circ}}\leq 2\|c_{\mathbf{A}}\|_{\infty,\mathcal X_0^{\circ}\times\mathcal X_1^{\circ}}$ where, by the previous arguments, $\|\tilde z^{\mathbf{A}}\|_{\infty}\leq \|\tilde \varphi_1^{\mathbf{A}}\|_{\infty,\mathcal X_1^{\circ}}$.  $\|c_{\mathbf{A}}\|_{\infty,\mathcal X_0^{\circ}\times\mathcal X_1^{\circ}}$ can be bounded as $4\|\mathcal X_0^{\circ}\|^2_{\infty}\|\mathcal X_1^{\circ}\|^2_{\infty}+32M\|\mathcal X_0^{\circ}\|_{\infty}\|\mathcal X_1^{\circ}\|_{\infty}$ via the Cauchy-Schwarz inequality, proving the claim.

    If $\supp(\nu_1)\neq \mathcal X_1$, one may set $z^{\mathbf{A}}_i=0$ if $y^{(i)}\not \in \supp(\nu_1)$ and the previous arguments can be applied to achieve the same bound, though the correspondence between $\tilde \varphi_1^{\mathbf{A}}$ and $z^{\mathbf{A}}$ will only hold on $\supp(\bar \nu_1)$.

    For (3), suppose that $(\tilde \varphi_0^{\bm A},\tilde \varphi_1^{\bm A})$ is another pair of extended OT potentials.%
    By Rademacher's theorem, $\varphi_0^{\bm A}$ and $\tilde \varphi_0^{\bm A}$ are differentiable almost everywhere on $\mathcal X_0^{\circ}$ (in particular on $\inte(\supp(\nu_0))$) so that  Proposition 1.15 in \cite{santambrogio15} implies that $\nabla \varphi_0^{\mathbf A}=\nabla \tilde \varphi_0^{\mathbf A}$ almost everywhere on $\interior(\supp(\nu_0))$. As $\varphi_0^{\mathbf A}$ and $\tilde{\varphi}_0^{\mathbf A}$  are Lipschitz continuous, Theorem 2.6 in \cite{delbarrio2021} implies that $\varphi_0^{\mathbf A}-\tilde \varphi_0^{\mathbf A}=a$ on $\interior(\supp(\nu_0))$ for some constant $a\in\mathbb R$. This equality extends to $\supp(\nu_0)$ again due to Lipschitz continuity. 

    As for $\varphi_1^{\mathbf A}$ and $\tilde \varphi_1^{\mathbf A}$, it holds that $\varphi_0^{\mathbf A}(x)+\varphi_1^{\mathbf A}(y)=c_{\mathbf A}(x,y)$ $\pi^{\mathbf A}$-almost everywhere, where  $\pi^{\mathbf A}$ is any OT plan for $\mathsf{OT}_{\mathbf A}(\nu_0,\nu_1)$. $(\tilde \varphi_0^{\mathbf A},\tilde\varphi_1^{\mathbf A})$ satisfy the same equality so that 
    \[
        \varphi_0^{\mathbf A}(x)+\varphi_1^{\mathbf A}(y)=\tilde\varphi_0^{\mathbf A}(x)+a+\varphi_1^{\mathbf A}(y)=\tilde \varphi_0^{\mathbf A}(x)+\tilde \varphi_1^{\mathbf A}(y)
    \]
   $\pi^{\mathbf A}$- almost everywhere i.e. $\tilde \varphi_1^{\mathbf A}(y)=\varphi_1^{\mathbf A}(y)+a$ $\nu_1$-almost everywhere, proving the claim.   
\end{proof}

To compute the relevant directional derivative, we fix $\nu_0\otimes \nu_1\in\mathcal P_{\mu_0}\times\mathcal P_{\mu_1} $, set $\mu_{i,t}=\mu_i+t(\nu_i-\mu_i)$ for $i=0,1$ and $t\in[0,1]$, and decompose the limit as follows 
\[
\begin{aligned}
    \lim_{t\downarrow0}\frac{\mathsf D(\mu_{0,t},\mu_{1,t})^2-\mathsf D(\mu_{0},\mu_{1})^2}{t}&= \lim_{t\downarrow0}\frac{\mathsf D(\bar \mu_{0,t},\bar \mu_{1,t})^2-\mathsf D(\bar \mu_{0},\bar \mu_{1})^2}{t}
    \\
    &=\lim_{t\downarrow0}\frac{\mathsf S_1(\bar \mu_{0,t},\bar \mu_{1,t})-\mathsf S_1(\bar \mu_{0},\bar \mu_{1})}{t}+\frac{\mathsf S_2(\bar \mu_{0,t},\bar \mu_{1,t})-\mathsf S_2(\bar \mu_{0},\bar \mu_{1})}{t}
\end{aligned}
\]
as follows from the variational form. The following lemmas compute each limit separately. 

Throughout, all extended OT potentials are chosen as to satisfy the estimates from \cref{prop:semidiscretePotentialProperties}.

\begin{lemma}
\label{lem:semidiscreteGateauxD1}
As $t\downarrow 0$, we have that 
    \[
    \begin{aligned}
        \frac{\mathsf S_1(\bar\mu_{0,t},\bar\mu_{1,t})-\mathsf S_1(\bar\mu_0,\bar\mu_1)}{t}&\to 2\iint \|x-x'\|^4 d\mu_{0}(x)d(\nu_0-\mu_{0})(x')
        \\
        &+2\iint \|y-y'\|^4 d\mu_{1}(y)d(\nu_1-\mu_{1})(y')
        \\
        &-4 \iint \|x-\mathbb E_{\mu_0}[X]\|^2\|y-\mathbb E_{\mu_1}[X]\|^2d(\nu_0-\mu_0)(x)d\mu_1(y)
        \\
        &-4\iint \|x-\mathbb E_{\mu_0}[X]\|^2\|y-\mathbb E_{\mu_1}[X]\|^2d\mu_0(x)d(\nu_1-\mu_1)(y).
    \end{aligned} 
\]
\end{lemma}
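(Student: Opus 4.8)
The plan is to differentiate $\mathsf S_1(\bar\mu_{0,t},\bar\mu_{1,t})$ directly using its explicit formula from \eqref{eq:GWVariational}. Recall that
\[
\mathsf S_1(\bar\mu_{0,t},\bar\mu_{1,t})=\int\|x-x'\|^4d\bar\mu_{0,t}\otimes\bar\mu_{0,t}+\int\|y-y'\|^4d\bar\mu_{1,t}\otimes\bar\mu_{1,t}-4M_2(\bar\mu_{0,t})M_2(\bar\mu_{1,t}),
\]
so the first step is to track how centering interacts with the perturbation. Writing $e_{i}(t)=\mathbb E_{\mu_{i,t}}[X]=\mathbb E_{\mu_i}[X]+t\,\mathbb E_{\nu_i-\mu_i}[X]$, one has $\bar\mu_{i,t}=(\Id-e_i(t))_\sharp\mu_{i,t}$, and since $\mu_i$ is centered (WLOG, by translation invariance as used throughout the section), $e_i(0)=0$ and $e_i(t)=t\,\mathbb E_{\nu_i}[X]=O(t)$. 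The key observation is that each of the three terms above is a polynomial expression in $t$ once we substitute $\bar\mu_{i,t}=\mu_i+t(\nu_i-\mu_i)$ shifted by $-e_i(t)$; since $e_i(t)=O(t)$ appears quadratically or higher in the relevant places (e.g. $\int\|x-x'\|^4d\bar\mu_{0,t}\otimes\bar\mu_{0,t}$ where the shift cancels in differences $x-x'$, hence contributes nothing at all, and $M_2(\bar\mu_{0,t})=M_2(\mu_{0,t})-\|e_0(t)\|^2$ where $\|e_0(t)\|^2=O(t^2)$), the shift terms do not contribute to the first-order derivative.

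Concretely, the second step is: since $x-x'$ is invariant under a common translation, $\int\|x-x'\|^4d\bar\mu_{0,t}\otimes\bar\mu_{0,t}=\int\|x-x'\|^4d\mu_{0,t}\otimes\mu_{0,t}$, and expanding $\mu_{0,t}=\mu_0+t(\nu_0-\mu_0)$ in the product measure gives
\[
\int\|x-x'\|^4d\mu_{0,t}\otimes\mu_{0,t}=\int\|x-x'\|^4d\mu_0\otimes\mu_0+2t\iint\|x-x'\|^4d\mu_0(x)d(\nu_0-\mu_0)(x')+O(t^2),
\]
using symmetry of $\|x-x'\|^4$ in $(x,x')$. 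Dividing by $t$ and letting $t\downarrow0$ yields the first term of the claimed limit (and symmetrically the second). For the cross term, $M_2(\bar\mu_{i,t})=\int\|x-e_i(t)\|^2d\mu_{i,t}(x)$; expanding and using $\|e_i(t)\|^2=O(t^2)$ gives $M_2(\bar\mu_{i,t})=\int\|x\|^2d\mu_{i,t}+O(t^2)=M_2(\mu_i)+t\,\mathbb E_{\nu_i-\mu_i}[\|X\|^2]+O(t^2)$. Then the product rule applied to $-4M_2(\bar\mu_{0,t})M_2(\bar\mu_{1,t})$ produces $-4\big(\mathbb E_{\nu_0-\mu_0}[\|X\|^2]M_2(\mu_1)+M_2(\mu_0)\mathbb E_{\nu_1-\mu_1}[\|X\|^2]\big)$ at first order; rewriting $M_2(\mu_1)=\int\|y\|^2d\mu_1(y)=\iint\|y\|^2 d(\nu_0-\mu_0)(x)d\mu_1(y)$ does not hold directly since $\nu_0-\mu_0$ has mass zero — instead one uses $\mathbb E_{\nu_0-\mu_0}[\|X\|^2]M_2(\mu_1)=\iint\|x\|^2\|y\|^2d(\nu_0-\mu_0)(x)d\mu_1(y)$ and, since $\mu_0,\mu_1$ are centered so that $x=x-\mathbb E_{\mu_0}[X]$ and $y=y-\mathbb E_{\mu_1}[X]$, this matches the third claimed term; the fourth follows symmetrically.

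The only mild subtlety — and the closest thing to an obstacle — is bookkeeping the $O(t^2)$ error terms uniformly: one must check that all remainders are genuinely $O(t^2)$ with constants independent of $t\in[0,1]$, which follows because $\nu_i,\mu_i$ are compactly supported (so all moments appearing are finite and bounded) and $e_i(t)$ is linear in $t$ with bounded coefficient. Since everything in sight is a fixed polynomial in $t$ with coefficients given by finite integrals against compactly supported measures, the limit exists and equals the $t^1$-coefficient, which is precisely the stated expression. This is a routine computation and I would not expect it to require more than careful expansion; the substantive work in \cref{thm:semidiscreteGWStability} lies entirely in the $\mathsf S_2$ term, handled by the subsequent lemmas.
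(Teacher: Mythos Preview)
Your proposal is correct and follows essentially the same approach as the paper's proof: both use translation invariance of $\|x-x'\|^4$ to reduce the fourth-moment terms to $\int\|x-x'\|^4 d\mu_{i,t}\otimes\mu_{i,t}$ and expand linearly in $t$, and both handle the cross term $-4M_2(\bar\mu_{0,t})M_2(\bar\mu_{1,t})$ by a polynomial expansion in $t$. The only cosmetic difference is that the paper expands $\iint\|x-f_0(t)\|^2\|y-f_1(t)\|^2\,d\mu_{0,t}\otimes\mu_{1,t}$ directly as a double integral (noting that the cross inner-product terms integrate to zero against $\mu_i$), whereas you factor $M_2(\bar\mu_{i,t})=M_2(\mu_{i,t})-\|e_i(t)\|^2$ and apply the product rule; both routes yield the same first-order coefficient.
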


\begin{proof}
    For any probability measure $\eta$ on $\mathbb R^{d_i}$ ($i=0,1$) with finite fourth moment, 
\[
    \iint \|x-x'\|^4d\bar\eta(x)d\bar\eta(x)=\iint \|x-x'\|^4d\eta(x)d\eta(x),
\]
hence, for $i=0,1$, 
\begin{equation}
    \label{eq:S1Limit1}
    \begin{aligned}
    &\iint \|x-x'\|^4 d\bar\mu_{i,t}(x)d\bar\mu_{i,t}(x')-\iint \|x-x'\|^4 d\bar\mu_{i}(x)d\bar\mu_{i}(x')
    \\
    &=2t\iint \|x-x'\|^4 d\mu_{i}(x)d(\nu_i-\mu_{i})(x')+t^2\iint \|x-x'\|^4 d(\nu_i-\mu_{i})(x)d(\nu_i-\mu_{i})(x').
    \end{aligned}
\end{equation}
On the other hand, letting $f_i(t)=\mathbb E_{\mu_i}[X]+t\left( \mathbb E_{\nu_i}[X]-\mathbb E_{\mu_i}[X]\right)$ for $i=0,1$ and $t\in[0,1]$,
\[
    \begin{aligned}
        \iint \|x\|^2\|y\|^2 d\bar\mu_{0,t}(x)d\bar\mu_{1,t}(y)&=\iint \|x-f_0(t)\|^2\|y-f_1(t)\|^2 d\mu_{0,t}(x)d\mu_{1,t}(y)
        \\
        &= \iint \|x-f_0(t)\|^2\|y-f_1(t)\|^2 d\mu_{0}(x)d\mu_{1}(y)
        \\
        &+ t\iint \|x-f_0(t)\|^2\|y-f_1(t)\|^2 d(\nu_0-\mu_{0})(x)d\mu_{1}(y)
        \\
        &+ t\iint \|x-f_0(t)\|^2\|y-f_1(t)\|^2 d\mu_{0}(x)d(\nu_1-\mu_{1})(y)
        \\
        &+ t^2\iint \|x-f_0(t)\|^2\|y-f_1(t)\|^2 d(\nu_0-\mu_{0})(x)d(\nu_1-\mu_{1})(y).
    \end{aligned}
\]
As $\|x-f_i(t)\|^2=\|x-\mathbb E_{\mu_i}[X]\|^2-2t\langle x-\mathbb E_{\mu_i}[X],\mathbb E_{\nu_i}[X]-\mathbb E_{\mu_i}[X]\rangle+t^2\|\mathbb E_{\nu_i}[X]-\mathbb E_{\mu_i}[X]\|^2$ for $i=0,1$ and the term involving inner products integrates to $0$ w.r.t. $\mu_i$, 
\[
    \begin{aligned}
        \iint \|x\|^2\|y\|^2 d\bar\mu_{0,t}(x)d\bar\mu_{1,t}(y)&=\iint\|x\|^2\|y\|^2d\bar\mu_0(x)d\bar\mu_1(y) 
        \\
        &+t\iint \|x-\mathbb E_{\mu_0}[X]\|^2\|y-\mathbb E_{\mu_1}[X]\|^2d(\nu_0-\mu_0)(x)d\mu_1(y)
        \\
        &+t\iint \|x-\mathbb E_{\mu_0}[X]\|^2\|y-\mathbb E_{\mu_1}[X]\|^2d\mu_0(x)d(\nu_1-\mu_1)(y)
        \\
        &+o(t).
    \end{aligned}
\]
The above display and \eqref{eq:S1Limit1} prove the claim.
\end{proof}

\begin{lemma}
    \label{lem:semidiscreteS1Lipschitz}

    For any pairs $(\nu_0, \nu_1),(\rho_0, \rho_1)\in\mathcal P_{\mu_0}\times \mathcal P_{\mu_1}$, $\left|\mathsf S_{1}(\bar\nu_0,\bar\nu_1)-\mathsf S_{1}(\bar\rho_0,\bar\rho_1)\right|\leq C\|\nu_0\otimes \nu_1-\rho_0\otimes\rho_1\|_{\infty,\mathcal F^{\oplus}}$ for some constant $C>0$ which is independent of $(\nu_0, \nu_1),(\rho_0, \rho_1)$. 
\end{lemma}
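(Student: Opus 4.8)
The plan is to reduce everything to the translation invariance already recorded in the proof of \cref{lem:semidiscreteGateauxD1}, namely $\iint\|x-x'\|^4\dd\bar\eta(x)\dd\bar\eta(x')=\iint\|x-x'\|^4\dd\eta(x)\dd\eta(x')$ for any $\eta$ with finite fourth moment. Combining this with the variational form \eqref{eq:GWVariational} gives $\mathsf S_1(\bar\nu_0,\bar\nu_1)=I_0(\nu_0)+I_1(\nu_1)-4M_2(\bar\nu_0)M_2(\bar\nu_1)$, where $I_0(\eta)=\iint\|x-x'\|^4\dd\eta\otimes\eta$ and $I_1(\eta)=\iint\|y-y'\|^4\dd\eta\otimes\eta$, and likewise for $(\bar\rho_0,\bar\rho_1)$. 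Then $\mathsf S_1(\bar\nu_0,\bar\nu_1)-\mathsf S_1(\bar\rho_0,\bar\rho_1)$ splits into the three pieces $I_0(\nu_0)-I_0(\rho_0)$, $I_1(\nu_1)-I_1(\rho_1)$, and $-4\big(M_2(\bar\nu_0)M_2(\bar\nu_1)-M_2(\bar\rho_0)M_2(\bar\rho_1)\big)$, each of which I will bound by a constant multiple of $\|\nu_0\otimes\nu_1-\rho_0\otimes\rho_1\|_{\infty,\mathcal F^{\oplus}}$ with the constant depending only on $\|\mathcal X_0\|_{\infty}$, $\|\mathcal X_1\|_{\infty}$, $d_0$, $d_1$, $k$, and $M$.

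Two elementary preliminaries make the bounds go through. First, since $0\in\mathcal F_0$ and $0\in\mathcal F_1$, testing $\nu_0\otimes\nu_1-\rho_0\otimes\rho_1$ against $f_0\oplus0$ (resp.\ $0\oplus f_1$) and using that the marginals are probability measures yields $\sup_{f\in\mathcal F_0}|(\nu_0-\rho_0)(f)|\vee\sup_{f\in\mathcal F_1}|(\nu_1-\rho_1)(f)|\le\|\nu_0\otimes\nu_1-\rho_0\otimes\rho_1\|_{\infty,\mathcal F^{\oplus}}$. Second, fixing once and for all an element $h_0\in\mathcal G$ (which is nonempty), both $h_0$ and $\phi+h_0$ lie in $\mathcal F_0$ whenever $\|\phi\|_{\mathcal C^k(\mathcal X_0)}\le1$, so $|(\nu_0-\rho_0)(\phi)|\le2\sup_{f\in\mathcal F_0}|(\nu_0-\rho_0)(f)|$ for every such $\phi$; on the $\mathcal X_1$ side the analogue is even simpler, since membership in $\mathcal F_1$ only constrains the sup-norm over the finite set $\mathcal X_1$. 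Every function that will appear—$x\mapsto\int\|x-x'\|^4\dd\eta(x')$, $x\mapsto\|x\|^2$, $x\mapsto x_j$—is a polynomial, hence smooth, with $\mathcal C^k(\mathcal X_0)$-norm bounded by a constant depending only on $\|\mathcal X_0\|_{\infty}$ and $k$ (the ball $\mathcal X_0$ being bounded), and with restriction to $\mathcal X_1$ bounded by a constant depending only on $\|\mathcal X_1\|_{\infty}$; after rescaling by these universal constants the preceding observation applies.

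With these in hand, for $I_0(\nu_0)-I_0(\rho_0)$ I use the bilinear decomposition $\nu_0\otimes\nu_0-\rho_0\otimes\rho_0=(\nu_0-\rho_0)\otimes\nu_0+\rho_0\otimes(\nu_0-\rho_0)$, so the difference equals $(\nu_0-\rho_0)(\phi_{\nu_0})+(\nu_0-\rho_0)(\phi_{\rho_0})$ with $\phi_{\eta}(x)=\int\|x-x'\|^4\dd\eta(x')$, both summands controlled as above; $I_1(\nu_1)-I_1(\rho_1)$ is identical, carried out on $\mathcal X_1$. For the cross term I telescope, $M_2(\bar\nu_0)M_2(\bar\nu_1)-M_2(\bar\rho_0)M_2(\bar\rho_1)=M_2(\bar\nu_0)\big(M_2(\bar\nu_1)-M_2(\bar\rho_1)\big)+M_2(\bar\rho_1)\big(M_2(\bar\nu_0)-M_2(\bar\rho_0)\big)$, the prefactors being bounded by $\|\mathcal X_i\|_{\infty}^2$; and for each difference I use $M_2(\bar\eta)=\int\|x\|^2\dd\eta(x)-\|\mathbb E_{\eta}[X]\|^2$, so that $M_2(\bar\nu_0)-M_2(\bar\rho_0)=(\nu_0-\rho_0)(\|\cdot\|^2)-\big(\|\mathbb E_{\nu_0}[X]\|^2-\|\mathbb E_{\rho_0}[X]\|^2\big)$, the first part controlled directly and the second via $\|\mathbb E_{\nu_0}[X]\|^2-\|\mathbb E_{\rho_0}[X]\|^2=(\mathbb E_{\nu_0}[X]+\mathbb E_{\rho_0}[X])^{\intercal}(\mathbb E_{\nu_0}[X]-\mathbb E_{\rho_0}[X])$ together with $\|\mathbb E_{\nu_0}[X]-\mathbb E_{\rho_0}[X]\|\le\sqrt{d_0}\,\max_j|(\nu_0-\rho_0)(x\mapsto x_j)|$, and similarly on $\mathcal X_1$. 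Summing the three bounds and absorbing all the measure-independent constants into a single $C$ gives the claim. I expect the only point requiring any care to be the bookkeeping around the definition of $\mathcal F_0$—confirming that the relevant polynomials are, up to a fixed additive $\mathcal G$-element and a universal rescaling, admissible test functions—while the bilinear decomposition and the telescoping estimates are entirely routine.
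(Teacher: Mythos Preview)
Your proposal is correct and follows essentially the same route as the paper: bilinear decomposition for the quartic integrals, telescoping for the product $M_2(\bar\nu_0)M_2(\bar\nu_1)$, and then checking that each test function is (after rescaling) admissible for $\mathcal F_0$ or $\mathcal F_1$. One small simplification: your $h_0$ workaround is unnecessary, since $0\in\mathcal G\cup\{0\}$ means $B_{\mathcal C^k(\mathcal X_0)}\subset\mathcal F_0$ directly, so any $\phi$ with $\|\phi\|_{\mathcal C^k(\mathcal X_0)}\le1$ is already in $\mathcal F_0$ without adding and subtracting an element of $\mathcal G$.
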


\begin{proof}
      For $i=0,1$, we have that
    \[
    \begin{aligned}
        \iint \|x-x'\|^4d\bar \nu_i(x)d\bar \nu_i(x') &=\iint \|x-x'\|^4d \nu_i(x)d \nu_i(x')
        \\
        &=\iint \|x-x'\|^4d (\nu_i-\rho_i)(x)d \nu_i(x')
        +\iint \|x-x'\|^4d (\nu_i-\rho_i)(x)d \rho_i(x')
        \\
        &+\iint \|x-x'\|^4d \bar \rho_i(x)d \bar \rho_i(x'),
    \end{aligned}
    \]
    such that 
    \[
    \begin{aligned}
        &\left|\sum_{i=0}^1\iint \|x-x'\|^4d\bar \nu_i(x)d\bar \nu_i(x')-\iint \|x-x'\|^4d \bar \rho_i(x)d \bar \rho_i(x')\right|
        \\
        &\hspace{18em}=\sum_{i=0}^1 (\nu_i-\rho_i)\left(\int \|\cdot-x'\|^4d\nu_i(x')+\int \|\cdot-x'\|^4d\rho_i(x')\right).
    \end{aligned}
    \]
    Observe that $\xi_0:x\in\mathcal X_0\mapsto \int \|x-x'\|^4d\nu_0(x')+\int \|x-x'\|^4d\rho_0(x')$ is smooth with uniformly bounded derivatives of all orders so that there exists a positive constant $C_0$ which is independent of $\nu_0,\rho_0$ for which $C_0\xi_0\in\mathcal F_0$. Similarly, $\xi_1:x\in\mathcal X_1\mapsto \int \|x-x'\|^4d\nu_1(x')+\int \|x-x'\|^4d\rho_1(x')$ is uniformly bounded so that there exists a positive constant $C_1$ which is independent of $\nu_1,\rho_1$ for which $C_1\xi_1\in\mathcal F_1$. Conclude that 
    \begin{equation} 
    \label{eq:semidiscreteS1LipschitzBound}
    \begin{aligned}
        \left|\sum_{i=0}^1\iint \|x-x'\|^4d\bar \nu_i(x)d\bar \nu_i(x')-\iint \|x-x'\|^4d \bar \rho_i(x)d \bar \rho_i(x')\right|&\\
        &\hspace{-10em}=\left|C_0^{-1}(\nu_0-\rho_0)(C_0\xi_0)+C_1^{-1}(\nu_1-\rho_1)(C_1\xi_1)\right|
        \\
        &\hspace{-10em}\leq C_0^{-1}\left|(\nu_0-\rho_0)(C_0\xi_0)\right|+C_1^{-1}\left|(\nu_1-\rho_1)(C_1\xi_1)\right|
        \\
        &\hspace{-10em}\leq C_0^{-1}\sup_{f_0\in\mathcal F_0}\left|(\nu_0-\rho_0)(f_0)\right|+C_1^{-1}\sup_{f_1\in\mathcal F_1}\left|(\nu_1-\rho_1)(f_1)\right|
        \\
         &\hspace{-10em}\leq (C_0^{-1}+C_1^{-1})\|\nu_0\otimes \nu_1-\rho_0\otimes \rho_1\|_{\infty,\mathcal F^{\oplus}},
    \end{aligned}
    \end{equation}
    where we have used the fact that $0\in\mathcal F_1$ so that \begin{equation}
    \label{eq:classBound}
    \sup_{f_0\in\mathcal F_0}\left|(\nu_0-\rho_0)(f_0)\right|= \sup_{f_0\in\mathcal F_0}\left|(\nu_0-\rho_0)(f_0)+(\nu_1-\rho_1)(0)\right|\leq \|\nu_0\otimes \nu_1-\rho_0\otimes \rho_1\|_{\infty,\mathcal F^{\oplus}},
    \end{equation}
    the same implications hold for $\sup_{f_1\in\mathcal F_1}\left|(\nu_1-\rho_1)(f_1)\right|$.

    As for the other term,
    \[
    \begin{aligned}
        \int \|x\|^2d\bar\nu_0(x)\int \|x\|^2d\bar\nu_1(x)&= \int \|x-\mathbb E_{\nu_0}[X]\|^2d\nu_0(x)\int \|x-\mathbb E_{\nu_1}[X]\|^2d\nu_1(x)
        \\
        &= \int \|x-\mathbb E_{\nu_0}[X]\|^2d(\nu_0-\rho_0)(x)\int \|x-\mathbb E_{\nu_1}[X]\|^2d\nu_1(x)
        \\
        &+\int \|x-\mathbb E_{\nu_0}[X]\|^2d\rho_0(x)\int \|x-\mathbb E_{\nu_1}[X]\|^2d(\nu_1-\rho_1)(x)
        \\
        &+\int \|x-\mathbb E_{\nu_0}[X]\|^2d\rho_0(x)\int \|x-\mathbb E_{\nu_1}[X]\|^2d\rho_1(x),
    \end{aligned}
    \]
    where  
$\zeta_0:x\in\mathcal X_0\mapsto \|x-\mathbb E_{\mu_0}[X]\|^2$ is smooth with uniformly bounded derivatives of all orders so that there exists a positive constant $C_0'$ which is independent of $\nu_0,\rho_0$ for which $C_0'\zeta_0\in\mathcal F_0$. Similarly, $\zeta_1:x\in\mathcal X_1\mapsto \int \|x-\mathbb E_{\nu_1}[X]\|^2$ is uniformly bounded so that there exists a positive constant $C_1'$ which is independent of $\nu_1,\rho_1$ for which $C_1'\zeta_1\in\mathcal F_1$. Conclude that
\[
    \begin{aligned}
        &\left|\int \|x\|^2d\bar\nu_0(x)\int \|x\|^2d\bar\nu_1(x)-\int \|x\|^2d\bar\rho_0(x)\int \|x\|^2d\bar\rho_1(x)\right |
        \\
        &=\left|M_2(\bar \nu_1) \int \|x-\mathbb E_{\nu_0}[X]\|^2d(\nu_0-\rho_0)+\int \|x-\mathbb E_{\nu_0}[X]\|^2d\rho_0(x)\int \|x-\mathbb E_{\nu_1}[X]\|^2d(\nu_1-\rho_1)(x)\right|
        \\
        &\leq M_2(\bar \nu_1)\left|(\nu_0-\rho_0)(\zeta_0)\right|+\int \|x-\mathbb E_{\nu_0}[X]\|^2d\rho_0(x)\left|(\nu_1-\rho_1)(\zeta_1)\right| 
        \\
        &\leq \left(M_2(\bar \nu_1)(C_0')^{-1}+\int \|x-\mathbb E_{\nu_0}[X]\|^2d\rho_0(x)(C_1')^{-1}\right)\left\|\nu_0\otimes \nu_1-\rho_0\otimes \rho_1\right\|_{\infty,\mathcal F^{\oplus}},
    \end{aligned}
    \]
    where the final inequality is due to \eqref{eq:classBound}.
    This final constant can be bounded independently of $\rho_0,\rho_1,\nu_0,\nu_1$ so that the above display and \eqref{eq:semidiscreteS1LipschitzBound} yield the desired result. 
\end{proof}

\begin{lemma}
    \label{lem:semidiscreteOTGateaux}
  If \cref{assn:semidiscreteSuffCond} holds at $\mathbf{A}\in\mathbb R^{d_0\times d_1}$ and $\inte(\supp(\bar\mu_0))$ is connected with negligible boundary, then
    \[ 
    \begin{aligned}
    \lim_{t\downarrow 0} \frac{\mathsf{OT}_{\mathbf{A}}(\bar \mu_{0,t},\bar \mu_{1,t})-\mathsf{OT}_{\mathbf{A}}(\bar \mu_{0},\bar \mu_{1})}{t}&=\int \bar\varphi_{0}^{{\mathbf{A}}}+8\mathbb E_{(X,Y)\sim \pi^{\mathbf{A}}}[\|Y\|^2X]^{\intercal}(\cdot)d(\nu_0-\mu_0)
    \\
    &+\int \bar\varphi_{1}^{{\mathbf{A}}}+8\mathbb E_{(X,Y)\sim \pi^{\mathbf{A}}}[\|X\|^2Y]^{\intercal}(\cdot)d(\nu_1-\mu_1). 
    \end{aligned}
    \]
\end{lemma}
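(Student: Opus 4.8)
The plan is to separate two effects: the perturbation of the marginals $\mu_{0,t},\mu_{1,t}$ and the perturbation of the support caused by the centering. We may assume $\mu_0$ and $\mu_1$ are centered; the general statement then follows by translation invariance of $\mathsf{OT}$, which is precisely what converts $\varphi_i^{\mathbf{A}}$ into $\bar\varphi_i^{\mathbf{A}}$ in the conclusion. Writing $\delta_i\coloneqq\mathbb E_{\nu_i}[X]$, we have $\bar\mu_{i,t}=(\Id-t\delta_i)_\sharp\mu_{i,t}$, so pushing couplings forward through $(\Id-t\delta_0)\times(\Id-t\delta_1)$ gives $\mathsf{OT}_{\mathbf{A}}(\bar\mu_{0,t},\bar\mu_{1,t})=\mathsf{OT}_{c^{(t)}_{\mathbf{A}}}(\mu_{0,t},\mu_{1,t})$ with $c^{(t)}_{\mathbf{A}}(x,y)\coloneqq c_{\mathbf{A}}(x-t\delta_0,y-t\delta_1)$, a polynomial in $(x,y,t)$. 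I would then split the difference quotient according to
\[
\mathsf{OT}_{c^{(t)}_{\mathbf{A}}}(\mu_{0,t},\mu_{1,t})-\mathsf{OT}_{c_{\mathbf{A}}}(\mu_0,\mu_1)=\big[\mathsf{OT}_{c^{(t)}_{\mathbf{A}}}(\mu_{0,t},\mu_{1,t})-\mathsf{OT}_{c_{\mathbf{A}}}(\mu_{0,t},\mu_{1,t})\big]+\big[\mathsf{OT}_{c_{\mathbf{A}}}(\mu_{0,t},\mu_{1,t})-\mathsf{OT}_{c_{\mathbf{A}}}(\mu_0,\mu_1)\big],
\]
the first bracket being a pure cost perturbation and the second a pure marginal perturbation, both now involving only measures supported inside the fixed compact set $\mathcal X_0\times\mathcal X_1$ (so that no translation of the potentials is needed).

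For the cost-perturbation term I would sandwich it between $\int(c^{(t)}_{\mathbf{A}}-c_{\mathbf{A}})\,d\hat\pi_t$ and $\int(c^{(t)}_{\mathbf{A}}-c_{\mathbf{A}})\,d\tilde\pi_t$, where $\hat\pi_t$ and $\tilde\pi_t$ are OT plans for $\mathsf{OT}_{c_{\mathbf{A}}}(\mu_{0,t},\mu_{1,t})$ and $\mathsf{OT}_{c^{(t)}_{\mathbf{A}}}(\mu_{0,t},\mu_{1,t})$, respectively. Since $\mu_{i,t}\to\mu_i$ weakly, $c^{(t)}_{\mathbf{A}}\to c_{\mathbf{A}}$ uniformly on $\mathcal X_0\times\mathcal X_1$, and the OT plan for $\mathsf{OT}_{c_{\mathbf{A}}}(\mu_0,\mu_1)$ is unique under \cref{assn:semidiscreteSuffCond} (\cref{prop:semidiscreteOTSolns}), stability of optimal plans (Theorem 5.20 in \cite{villani2008optimal}) gives $\hat\pi_t,\tilde\pi_t\stackrel{w}{\to}\pi^{\mathbf{A}}$; combined with the uniform convergence $\tfrac1t(c^{(t)}_{\mathbf{A}}-c_{\mathbf{A}})\to\partial_t c^{(t)}_{\mathbf{A}}\big|_{t=0}$, this term equals $t\int\partial_t c^{(t)}_{\mathbf{A}}\big|_{t=0}\,d\pi^{\mathbf{A}}+o(t)$. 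A direct computation gives $\partial_t c^{(t)}_{\mathbf{A}}\big|_{t=0}(x,y)=8\delta_0^\intercal x\|y\|^2+8\|x\|^2\delta_1^\intercal y+32\delta_0^\intercal\mathbf{A}y+32x^\intercal\mathbf{A}\delta_1$; integrated against $\pi^{\mathbf{A}}\in\Pi(\mu_0,\mu_1)$, the last two terms vanish by centering, and, using $\delta_i=\int z\,d(\nu_i-\mu_i)(z)$, the first two equal $\int g_{0,\pi^{\mathbf{A}}}\,d(\nu_0-\mu_0)+\int g_{1,\pi^{\mathbf{A}}}\,d(\nu_1-\mu_1)$.

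For the marginal-perturbation term (fixed cost $c_{\mathbf{A}}$), feeding a pair of OT potentials $(\varphi_0^{\mathbf{A}},\varphi_1^{\mathbf{A}})$ for $\mathsf{OT}_{c_{\mathbf{A}}}(\mu_0,\mu_1)$ in semi-dual form (so that $\varphi_0^{\mathbf{A}}\oplus\varphi_1^{\mathbf{A}}\le c_{\mathbf{A}}$ holds on all of $\mathcal X_0\times\mathcal X_1$) as a feasible competitor in the dual of the $t$-problem yields the lower bound $\mathsf{OT}_{c_{\mathbf{A}}}(\mu_{0,t},\mu_{1,t})-\mathsf{OT}_{c_{\mathbf{A}}}(\mu_0,\mu_1)\ge t\big[\int\varphi_0^{\mathbf{A}}\,d(\nu_0-\mu_0)+\int\varphi_1^{\mathbf{A}}\,d(\nu_1-\mu_1)\big]$. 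For the matching upper bound I would take optimal potentials $(\varphi_0^t,\varphi_1^t)$ for the $t$-problem, normalized via \cref{prop:semidiscretePotentialProperties} to be uniformly bounded, and use them (feasibly) in the $t=0$ dual, which shows the difference quotient is at most $\int\varphi_0^t\,d(\nu_0-\mu_0)+\int\varphi_1^t\,d(\nu_1-\mu_1)$. The crux is then to pass to the limit here, i.e.\ to prove $\varphi_i^t\to\varphi_i^{\mathbf{A}}$: the uniform bounds together with the uniform Lipschitz estimate of \cref{prop:semidiscretePotentialProperties} let one extract, via the Arzel\`a--Ascoli theorem, a subsequential uniform limit $(\varphi_0^\star,\varphi_1^\star)$ that is feasible for and attains $\mathsf{OT}_{c_{\mathbf{A}}}(\mu_0,\mu_1)$ by continuity; uniqueness of OT potentials up to additive constants---which is exactly where the hypothesis that $\inte(\supp(\bar\mu_0))$ be connected with negligible boundary enters, through \cref{prop:semidiscretePotentialProperties}---then forces $(\varphi_0^\star,\varphi_1^\star)=(\varphi_0^{\mathbf{A}},\varphi_1^{\mathbf{A}})$ for the chosen normalization, and subsequence-independence upgrades this to full convergence. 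I expect this identification of the Arzel\`a--Ascoli limit of the perturbed potentials to be the main obstacle; once it is in hand, adding the two contributions gives the stated derivative, and the general uncentered case follows by translation invariance as noted at the outset.
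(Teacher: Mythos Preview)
Your proposal is correct and follows essentially the same strategy as the paper: split the increment into a ``centering/translation'' contribution and a ``pure marginal'' contribution, handle the first by sandwiching between OT plans and invoking stability plus uniqueness of the optimal plan under \cref{assn:semidiscreteSuffCond}, and handle the second by sandwiching between OT potentials and using Arzel\`a--Ascoli together with the uniqueness of potentials afforded by the connectedness hypothesis. The only cosmetic difference is that you recast the translation as a cost perturbation $c_{\mathbf A}\mapsto c_{\mathbf A}^{(t)}$ on fixed marginals, whereas the paper compares $\mathsf{OT}_{\mathbf A}(\bar\mu_{0,t},\bar\mu_{1,t})$ to $\mathsf{OT}_{\mathbf A}(\tilde\mu_{0,t},\tilde\mu_{1,t})$ via the map $\tau^t$; these are equivalent reformulations of the same computation.
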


\begin{proof}
    Fix $\mathbf{A}\in\mathbb R^{d_0\times d_1}$ and consider the limit 
    \[\begin{aligned}\lim_{t\downarrow 0}\frac{\mathsf{OT}_{\mathbf{A}}(\bar \mu_{0,t},\bar \mu_{1,t})-\mathsf{OT}_{\mathbf{A}}(\bar \mu_{0},\bar \mu_{1})}{t}&=\lim_{t\downarrow 0}\left(\frac{\mathsf{OT}_{\mathbf{A}}(\bar \mu_{0,t},\bar \mu_{1,t})-\mathsf{OT}_{\mathbf{A}}(\tilde \mu_{0,t},\tilde \mu_{1,t})}{t}\right.
    \\&\hspace{13em}\left.+\frac{\mathsf{OT}_{\mathbf{A}}(\tilde \mu_{0,t},\tilde \mu_{1,t})-\mathsf{OT}_{\mathbf{A}}(\bar \mu_{0},\bar \mu_{1})}{t}\right)
    \end{aligned}
    \]
    where $(\tilde \mu_{0,t},\tilde \mu_{1,t})=\left((\Id-\mathbb E_{\mu_0}[X])_{\sharp}\mu_{0,t},(\Id-\mathbb E_{\mu_1}[X])_{\sharp}\mu_{1,t}\right)$. We compute each limit on the right hand side separately.

    First, let $(\varphi_0^{\mathbf{A}},\varphi_1^{\mathbf{A}})$ and $(\varphi_{0,t}^{\mathbf{A}},\varphi_{1,t}^{\mathbf{A}})$ be OT potentials for $\mathsf{OT}_{\mathbf{A}}(\bar\mu_0,\bar\mu_1)$ and  $\mathsf{OT}_{\mathbf{A}}(\tilde\mu_{0,t},\tilde\mu_{1,t})$ satisfying the bounds from \cref{prop:semidiscretePotentialProperties} (though $(\tilde \mu_{0,t},\tilde \mu_{1,t})$ are not centered versions of measures in $\mathcal P(\mathcal X_0)\times \mathcal P(\mathcal X_1)$, they are supported in $\mathcal P(\mathcal X_0^{\circ})\times\mathcal P(\mathcal X_1^{\circ})$ so that all relevant results still hold for the OT potentials) and observe that  
    \begin{equation}
    \label{eq:semidiscreteGateauxCentredUpper}
    \begin{aligned}
       \mathsf{OT}_{\mathbf{A}}(\tilde\mu_{0,t},\tilde\mu_{1,t})-\mathsf{OT}_{\mathbf{A}}(\bar\mu_{0},\bar\mu_{1})&\leq \int \varphi_{0,t}^{\mathbf{A}}d \tilde\mu_{0,t}+\int\varphi_{1,t}^{\mathbf{A}}d\tilde \mu_{1,t}-\int \varphi_{0,t}^{\mathbf{A}}d \bar\mu_{0}-\int\varphi_{1,t}^{\mathbf{A}}d\bar\mu_{1}
       \\
       &=t\left(\int \bar \varphi_{0,t}^{\mathbf{A}}d (\nu_0-\mu_0)+\int\bar\varphi_{1,t}^{\mathbf{A}}d(\nu_1-\mu_1)\right),
    \end{aligned}
    \end{equation}
    where, for a pair of functions $(g_0,g_1)$, $(\bar g_0,\bar g_1)=(g_0(\cdot-\mathbb E_{\mu_0}[X]), g_1(\cdot-\mathbb E_{\mu_1}[X]))$. 

    Now, fix an arbitrary sequence $t_n\downarrow 0$ and a subsequence $t_{n'}\downarrow 0$. 
    By the Arz{\`e}la-Ascoli theorem, there is a further subsequence $t_{n''}\downarrow 0$ along which $(\varphi_{0,t_{n''}}^{{\mathbf{A}}},\varphi_{1,t_{n''}}^{{\mathbf{A}}})$ converges uniformly on $\mathcal X_0^{\circ}\times \mathcal X_1^{\circ}$ to a pair of continuous functions $(\varphi_0, \varphi_1)$\footnote{\label{footnote:AA} Indeed, these OT potentials extend uniquely to the closure of their domains whilst preserving the uniform equicontinuity and equiboundedness properties.} and, as $(\tilde\mu_{0,t},\tilde\mu_{1,t})$ converge weakly to $(\bar \mu_0,\bar\mu_1)$, Proposition 5.20 in \cite{villani2008optimal} and the dominated convergence theorem imply that
    \[
    \begin{aligned}
    \int \bar \varphi_{0,t_{n''}}^{{\mathbf{A}}} d\mu_{0,t_{n''}}+\int \bar \varphi_{1,t_{n''}}^{{\mathbf{A}}} d\mu_{1,t_{n''}}&\to \int \bar \varphi_0 d\mu_0 +\int\bar \varphi_1 d\mu_1
    \\ 
       \OT_{{\mathbf{A}}}(\tilde \mu_{0,t_{n''}},\tilde \mu_{1,t_{n''}})&\to \mathsf{OT}_{{\mathbf{A}}}(\bar \mu_0,\bar\mu_1).
        \end{aligned}
    \]
    As both terms on the left are equal, conclude that $\int \varphi_0 d\bar\mu_0 +\int \varphi_1 d\bar\mu_1=\mathsf{OT}_{{\mathbf{A}}}(\bar \mu_0,\bar\mu_1)$. As the pointwise inequality $ \varphi_{0,t_{n''}}^{{\mathbf{A}}} \oplus  \varphi_{1,t_{n''}}^{{\mathbf{A}}}\leq c_{\mathbf{A}}$ is preserved in the limit, conclude that 
    $(\varphi_0,\varphi_1)$ is optimal for $\mathsf{OT}_{{\mathbf{A}}}(\bar \mu_0,\bar \mu_1)$ {and that  $(\varphi_0,\varphi_1)=(\varphi_0^{{\mathbf{A}}}+a,\varphi_1^{{\mathbf{A}}}-a)$ for some $a\in\mathbb R$ by \cref{prop:semidiscretePotentialProperties}} . It follows that 
    \[
       \lim_{t_{n''}\downarrow 0}\int \bar\varphi_{0,t_{n''}}^{{\mathbf{A}}}d(\nu_0-\mu_0)+\int \bar\varphi_{1,t_{n''}}^{{\mathbf{A}}}d(\nu_1-\mu_1)=\int \bar\varphi_{0}^{{\mathbf{A}}}d(\nu_0-\mu_0)+\int \bar\varphi_{1}^{{\mathbf{A}}}d(\nu_1-\mu_1), 
    \]
    as $\nu_0-\mu_0$ and $\nu_1-\mu_1$ have total mass zero. As this final limit is independent of the choice of original subsequence, \eqref{eq:semidiscreteGateauxCentredUpper} reads 
    \[
    \limsup_{t\downarrow 0}\frac{\mathsf{OT}_{\mathbf{A}}(\tilde \mu_{0,t},\tilde \mu_{1,t})-\mathsf{OT}_{\mathbf{A}}(\bar \mu_{0},\bar \mu_{1})}{t}\leq \int \bar\varphi_{0}^{{\mathbf{A}}}d(\nu_0-\mu_0)+\int \bar\varphi_{1}^{{\mathbf{A}}}d(\nu_1-\mu_1). 
    \]
On the other hand,
    \begin{equation}
    \label{eq:semidiscreteGateauxCentredLower}
\frac{\mathsf{OT}_{\mathbf{A}}(\tilde \mu_{0,t},\tilde \mu_{1,t})-\mathsf{OT}_{\mathbf{A}}(\bar \mu_{0},\bar \mu_{1})}{t}\geq \int \bar\varphi_{0}^{{\mathbf{A}}}d(\nu_0-\mu_0)+\int \bar\varphi_{1}^{{\mathbf{A}}}d(\nu_1-\mu_1),
    \end{equation}
    which readily implies that 
\begin{equation}
\label{eq:semidiscreteGateauxCentred}
    \lim_{t\downarrow 0}\frac{\mathsf{OT}_{\mathbf{A}}(\tilde \mu_{0,t},\tilde \mu_{1,t})-\mathsf{OT}_{\mathbf{A}}(\bar \mu_{0},\bar \mu_{1})}{t}= \int \bar\varphi_{0}^{{\mathbf{A}}}d(\nu_0-\mu_0)+\int \bar\varphi_{1}^{{\mathbf{A}}}d(\nu_1-\mu_1).  
\end{equation}

As for the other limit, note that if $\tilde \pi\in \Pi(\tilde \mu_{0,t},\tilde \mu_{1,t})$, then $\tau^{-t}_{\sharp}\tilde \pi\in\Pi(\bar \mu_{0,t},\bar \mu_{1,t})$, and if $\bar \pi\in \Pi(\bar \mu_{0,t},\bar \mu_{1,t})$, then $\tau^{t}_{\sharp}\bar \pi\in\Pi(\tilde \mu_{0,t},\tilde \mu_{1,t})$, where $\tau^t=(\tau_0^{t},\tau_1^{t})=(\Id+t(\mathbb E_{\nu_0}[X]-\mathbb E_{\mu_0}[X]),\Id+t(\mathbb E_{\nu_1}[X]-\mathbb E_{\mu_1}[X]))$ for $t\in[-1,1]$. Hence, letting $\tilde \pi_t$ and $\bar \pi_t$ be OT plans for $\mathsf{OT}_{\mathbf{A}}(\tilde \mu_{0,t},\tilde \mu_{1,t})$ and $\mathsf{OT}_{\mathbf{A}}(\bar \mu_{0,t},\bar \mu_{1,t})$, we have    
\begin{equation}
   \label{eq:semidiscreteGateauxUncentredUpperLower} 
\begin{aligned}    
   \mathsf{OT}_{\mathbf{A}}(\bar \mu_{0,t},\bar \mu_{1,t})-\mathsf{OT}_{\mathbf{A}}(\tilde \mu_{0,t},\tilde \mu_{1,t})&\leq \int c_{\mathbf{A}}\circ\tau^{-t}-c_{\mathbf{A}} d\tilde \pi_t,
   \\\mathsf{OT}_{\mathbf{A}}(\bar \mu_{0,t},\bar \mu_{1,t})-\mathsf{OT}_{\mathbf{A}}(\tilde \mu_{0,t},\tilde \mu_{1,t})&\geq \int c_{\mathbf{A}}-c_{\mathbf{A}}\circ\tau^{t} d\bar \pi_t ,
\end{aligned} 
\end{equation}
by expanding the expressions for the cost functions, we obtain
    \begin{equation}
    \label{eq:costTranslationExpansion}
    \begin{aligned}
        c_{{\mathbf{A}}}\circ \tau^t(x,y)&= -4(\|x\|^2+2\langle x,t(\mathbb E_{\nu_0}[X]-\mathbb E_{\mu_0}[X])\rangle+t^2\|\mathbb E_{\nu_0}[X]-\mathbb E_{\mu_0}[X]\|^2)\\&\times(\|y\|^2+2\langle y,t(\mathbb E_{\nu_1}[X]-\mathbb E_{\mu_1}[X])\rangle+t^2\|\mathbb E_{\nu_1}[X]-\mathbb E_{\mu_1}[X]\|^2)
        \\
        &
        -32(x+t(\mathbb E_{\nu_0}[X]-\mathbb E_{\mu_0}[X]))^{\intercal}{\mathbf{A}}(y+t(\mathbb E_{\nu_1}[X]-\mathbb E_{\mu_1}[X]))
        \\
        &=c_{{\mathbf{A}}}(x,y)-8t\langle x,\mathbb E_{\nu_0}[X]-\mathbb E_{\mu_0}[X]\rangle\|y\|^2-8t\langle y,\mathbb E_{\nu_1}[X]-\mathbb E_{\mu_1}[X]\rangle\|x\|^2
        \\
        &-32t\left(\mathbb E_{\nu_0}[X]-\mathbb E_{\mu_0}[X]\right)^{\intercal}{\mathbf{A}}y-32tx^{\intercal}{\mathbf{A}}\left(\mathbb E_{\nu_1}[X]-\mathbb E_{\mu_1}[X]\right)+O(t^2), 
    \end{aligned}
    \end{equation}
    such that 
\[
\begin{aligned}    
   \limsup_{t\downarrow 0}\frac{\mathsf{OT}_{\mathbf{A}}(\bar \mu_{0,t},\bar \mu_{1,t})-\mathsf{OT}_{\mathbf{A}}(\tilde \mu_{0,t},\tilde \mu_{1,t})}{t}&\leq \int8\langle x,\mathbb E_{\nu_0}[X]-\mathbb E_{\mu_0}[X]\rangle\|y\|^2
   \\&\hspace{8em}+8\langle y,\mathbb E_{\nu_1}[X]-\mathbb E_{\mu_1}[X]\rangle\|x\|^2d\pi^{\mathbf{A}}(x,y),
   \\
   \end{aligned}
   \]
   \[   
   \begin{aligned}
   \liminf_{t\downarrow 0}\frac{\mathsf{OT}_{\mathbf{A}}(\bar \mu_{0,t},\bar \mu_{1,t})-\mathsf{OT}_{\mathbf{A}}(\tilde \mu_{0,t},\tilde \mu_{1,t})}{t}
   &\geq
   \int8\langle x,\mathbb E_{\nu_0}[X]-\mathbb E_{\mu_0}[X]\rangle\|y\|^2
   \\&\hspace{8em}+8\langle y,\mathbb E_{\nu_1}[X]-\mathbb E_{\mu_1}[X]\rangle\|x\|^2d\pi^{\mathbf{A}}(x,y), 
\end{aligned} 
\]
noting that $\tilde\pi_t$ and $\bar \pi_t$ both converge weakly to the unique (under \cref{assn:semidiscreteSuffCond}) OT plan $\pi^{\mathbf{A}}$ for $\mathsf{OT}_{\mathbf{A}}(\bar \mu_0,\bar \mu_1)$ as follows from Theorem 5.20 in \cite{villani2008optimal}, Lemma 5.2.1 in \cite{ambrosio2005}, and the fact that $\bar\mu_0$ and $\bar \mu_1$ are mean-zero. This result, along with \eqref{eq:semidiscreteGateauxCentred}, proves the claim. 
\end{proof}

\begin{lemma}
    \label{lem:semidiscreteGateauxD2}   Let  
    $\mathcal A=\argmin_{B_{\mathrm{F}}(M)}\Phi_{(\bar\mu_0,\bar\mu_1)}$.
    If $\bar \mu_0,\bar\mu_1$ satisfy \cref{assn:semidiscreteSuffCond} at every $\mathbf{A}\in\mathcal A$ and $\inte(\supp(\bar\mu_0))$ is connected with negligible boundary,      
    \[
    \begin{aligned}
        &\lim_{t\downarrow 0 }\frac{\mathsf S_2(\bar\mu_{0,t},\bar\mu_{1,t})-\mathsf S_2(\bar \mu_0,\bar \mu_1)}{t}= \inf_{\mathbf{A}\in\mathcal A}\left\{
             \int g_{0,\pi^{\mathbf{A}}}+\bar{\varphi}_0^{\mathbf{A}}d(\nu_0-\mu_0)+\int g_{1,\pi^{\mathbf{A}}}+\bar{\varphi}_1^{\mathbf{A}}d(\nu_1-\mu_1)
            \right\},
    \end{aligned}
    \]  
      where, for $(x,y)\in\mathbb R^{d_0}\times\mathbb R^{d_1} $, 
    \[
        \begin{aligned}
           g_{0,\pi}(x)&= 8\mathbb E_{(X,Y)\sim \pi}[\|Y\|^2X]^{\intercal}x,
            \\
            g_{1,\pi}(y)&=8\mathbb E_{(X,Y)\sim \pi}[\|X\|^2Y]^{\intercal}y,
        \end{aligned}
    \] 
$\bar\varphi_i^{\mathbf{A}}=\varphi_i^{\mathbf{A}}(\cdot-\mathbb E_{\mu_i}[X])$ for $i=0,1$, and, for $\mathbf{A}\in \mathcal A$, $\pi^{\mathbf{A}}$ is the unique OT plan for $\OT_{\mathbf{A}}(\bar \mu_0,\bar \mu_1)$, and $(\varphi_0^{\mathbf{A}},\varphi_1^{\mathbf{A}})$ is any choice of corresponding OT potentials.
\end{lemma}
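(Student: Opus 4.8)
The plan is to regard $\mathsf S_2$ as a parametrized infimum over the compact ball $B_{\mathrm F}(M)$ and to differentiate it by a Danskin-type argument, feeding in the right derivative of the inner OT problem for each fixed matrix supplied by \cref{lem:semidiscreteOTGateaux}. Recalling \eqref{eq:GWVariational} and the a priori bound stated just before \cref{thm:semidiscreteGWStability} (for any pair in $\mathcal P(\mathcal X_0)\times\mathcal P(\mathcal X_1)$ all minimizers of $\Phi_{(\bar\nu_0,\bar\nu_1)}$ lie in $B_{\mathrm F}(M)$), and noting $\mu_{i,t}\in\mathcal P(\mathcal X_i)$ for $t\in[0,1]$, I will write $v(t):=\mathsf S_2(\bar\mu_{0,t},\bar\mu_{1,t})=\inf_{\mathbf A\in B_{\mathrm F}(M)}\Phi_{(\bar\mu_{0,t},\bar\mu_{1,t})}(\mathbf A)$, with $\mathcal A=\argmin_{B_{\mathrm F}(M)}\Phi_{(\bar\mu_0,\bar\mu_1)}$, and let $h(\mathbf A)$ abbreviate the target right-hand side $\int(g_{0,\pi^{\mathbf A}}+\bar\varphi_0^{\mathbf A})\,d(\nu_0-\mu_0)+\int(g_{1,\pi^{\mathbf A}}+\bar\varphi_1^{\mathbf A})\,d(\nu_1-\mu_1)$, so the claim is $v'(0^+)=\inf_{\mathbf A\in\mathcal A}h(\mathbf A)$.

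First I would establish two preliminaries. (i) The value functions converge uniformly: $\sup_{\mathbf A\in B_{\mathrm F}(M)}|\Phi_{(\bar\mu_{0,t},\bar\mu_{1,t})}(\mathbf A)-\Phi_{(\bar\mu_0,\bar\mu_1)}(\mathbf A)|=\sup_{\mathbf A\in B_{\mathrm F}(M)}|\mathsf{OT}_{\mathbf A}(\bar\mu_{0,t},\bar\mu_{1,t})-\mathsf{OT}_{\mathbf A}(\bar\mu_0,\bar\mu_1)|\to0$ as $t\downarrow0$; via the dual \eqref{eq:OTDuality} this is a Kantorovich--Rubinstein-type estimate, since $\bar\mu_{i,t}\in\mathcal P(\mathcal X_i^{\circ})$ for every $t$ and the extended OT potentials can be taken uniformly bounded and uniformly Lipschitz over $\mathbf A\in B_{\mathrm F}(M)$ and over the marginals (\cref{prop:semidiscretePotentialProperties}(1)--(2)), so the OT values are controlled, uniformly in $\mathbf A$, by the $\mathsf W_1$-distance between marginals, which vanishes by compact support; in particular $v$ is continuous. (ii) Minimizers converge into $\mathcal A$: for $t_n\downarrow0$ with minimizers $\mathbf A_{t_n}$, compactness yields a subsequence $\mathbf A_{t_{n'}}\to\mathbf A^{\star}\in B_{\mathrm F}(M)$, and passing to the limit in $\Phi_{(\bar\mu_{0,t_{n'}},\bar\mu_{1,t_{n'}})}(\mathbf A_{t_{n'}})=v(t_{n'})$ using (i) and continuity of $\Phi_{(\bar\mu_0,\bar\mu_1)}$ forces $\Phi_{(\bar\mu_0,\bar\mu_1)}(\mathbf A^{\star})=v(0)$, i.e.\ $\mathbf A^{\star}\in\mathcal A$ (the $\Gamma$-convergence statement also used in \cref{app:gammaConvergence}).

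Given these, the upper bound is immediate: for fixed $\mathbf A^{\star}\in\mathcal A$ one has $\Phi_{(\bar\mu_0,\bar\mu_1)}(\mathbf A^{\star})=v(0)$, hence $\tfrac{v(t)-v(0)}{t}\le\tfrac{\mathsf{OT}_{\mathbf A^{\star}}(\bar\mu_{0,t},\bar\mu_{1,t})-\mathsf{OT}_{\mathbf A^{\star}}(\bar\mu_0,\bar\mu_1)}{t}$, which by \cref{lem:semidiscreteOTGateaux} (applicable since $\mathbf A^{\star}$ satisfies \cref{assn:semidiscreteSuffCond} by hypothesis) tends to $h(\mathbf A^{\star})$; optimizing over $\mathbf A^{\star}$ gives $\limsup_{t\downarrow0}\tfrac{v(t)-v(0)}{t}\le\inf_{\mathbf A\in\mathcal A}h(\mathbf A)$.

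The matching lower bound is the crux, since it requires analyzing the inner OT problem along a \emph{moving} matrix. For $t_n\downarrow0$ with minimizers $\mathbf A_{t_n}$, fix a subsequence with $\mathbf A_{t_{n'}}\to\mathbf A^{\star}\in\mathcal A$ as in (ii); from $v(0)\le\Phi_{(\bar\mu_0,\bar\mu_1)}(\mathbf A_{t_{n'}})$ and cancellation of the $32\|\cdot\|_{\mathrm F}^2$ term,
\[
v(t_{n'})-v(0)\;\ge\;\mathsf{OT}_{\mathbf A_{t_{n'}}}(\bar\mu_{0,t_{n'}},\bar\mu_{1,t_{n'}})-\mathsf{OT}_{\mathbf A_{t_{n'}}}(\bar\mu_0,\bar\mu_1),
\]
so by a routine subsequence argument it suffices to bound the $\liminf$ of the right side divided by $t_{n'}$ below by $h(\mathbf A^{\star})$. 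I would mimic the proof of \cref{lem:semidiscreteOTGateaux}, now tracking the drift of $\mathbf A$: split the difference (notation $\tilde\mu_{i,t}$, $\tau^t$ as there) into a perturbation part $\mathsf{OT}_{\mathbf A_{t_{n'}}}(\tilde\mu_{0,t_{n'}},\tilde\mu_{1,t_{n'}})-\mathsf{OT}_{\mathbf A_{t_{n'}}}(\bar\mu_0,\bar\mu_1)$ and a re-centering part $\mathsf{OT}_{\mathbf A_{t_{n'}}}(\bar\mu_{0,t_{n'}},\bar\mu_{1,t_{n'}})-\mathsf{OT}_{\mathbf A_{t_{n'}}}(\tilde\mu_{0,t_{n'}},\tilde\mu_{1,t_{n'}})$. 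The perturbation part is bounded below by $t_{n'}\big(\int\bar\psi_0^{t_{n'}}d(\nu_0-\mu_0)+\int\bar\psi_1^{t_{n'}}d(\nu_1-\mu_1)\big)$, where $(\psi_0^{t},\psi_1^{t})$ are extended OT potentials of $\mathsf{OT}_{\mathbf A_t}(\bar\mu_0,\bar\mu_1)$ normalized per \cref{prop:semidiscretePotentialProperties}; equi-Lipschitzness and equiboundedness let Arzel\`a--Ascoli extract a further subsequence with $(\psi_0^{t_{n'}},\psi_1^{t_{n'}})\to(\psi_0,\psi_1)$ uniformly, and since $c_{\mathbf A_{t_{n'}}}\to c_{\mathbf A^{\star}}$ uniformly on $\mathcal X_0^{\circ}\times\mathcal X_1^{\circ}$ the limit is a pair of OT potentials for $\mathsf{OT}_{\mathbf A^{\star}}(\bar\mu_0,\bar\mu_1)$, hence agrees with $(\varphi_0^{\mathbf A^{\star}},\varphi_1^{\mathbf A^{\star}})$ up to an additive constant by the uniqueness of \cref{prop:semidiscretePotentialProperties}(3), the constant being irrelevant as $\nu_i-\mu_i$ has zero total mass — this produces the $\bar\varphi_i^{\mathbf A^{\star}}$ terms. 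For the re-centering part, expanding $c_{\mathbf A_{t_{n'}}}\circ\tau^{\pm t_{n'}}$ as in \eqref{eq:costTranslationExpansion}, the term linear in $t_{n'}$ survives division by $t_{n'}$; since the OT plans for $\mathsf{OT}_{\mathbf A_{t_{n'}}}(\tilde\mu_{0,t_{n'}},\tilde\mu_{1,t_{n'}})$ converge weakly to the plan $\pi^{\mathbf A^{\star}}$, which is unique under \cref{assn:semidiscreteSuffCond} at $\mathbf A^{\star}$ (stability of OT plans, Theorem 5.20 of \cite{villani2008optimal}), this term tends to $8(\mathbb E_{\nu_0}[X]-\mathbb E_{\mu_0}[X])^{\intercal}\mathbb E_{\pi^{\mathbf A^{\star}}}[\|Y\|^2X]+8(\mathbb E_{\nu_1}[X]-\mathbb E_{\mu_1}[X])^{\intercal}\mathbb E_{\pi^{\mathbf A^{\star}}}[\|X\|^2Y]=\int g_{0,\pi^{\mathbf A^{\star}}}d(\nu_0-\mu_0)+\int g_{1,\pi^{\mathbf A^{\star}}}d(\nu_1-\mu_1)$. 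Summing the two contributions gives exactly $h(\mathbf A^{\star})$. The main obstacle is precisely this joint limit $t\downarrow0$, $\mathbf A_t\to\mathbf A^{\star}$: it is what forces the uniform-in-$\mathbf A$ potential estimates of \cref{prop:semidiscretePotentialProperties} and the stability/uniqueness of OT plans and potentials, but once these are in place no tool beyond those used for \cref{lem:semidiscreteOTGateaux} is needed.
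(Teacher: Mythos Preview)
Your proposal is correct and follows essentially the same route as the paper: both establish the upper bound by fixing $\mathbf A^{\star}\in\mathcal A$ and invoking \cref{lem:semidiscreteOTGateaux}, and both handle the lower bound by extracting a convergent subsequence of minimizers $\mathbf A_{t_{n'}}\to\mathbf A^{\star}\in\mathcal A$ (via uniform convergence of $\Phi_{(\bar\mu_{0,t},\bar\mu_{1,t})}$ and $\Gamma$-convergence), then splitting $\mathsf{OT}_{\mathbf A_{t_{n'}}}(\bar\mu_{0,t_{n'}},\bar\mu_{1,t_{n'}})-\mathsf{OT}_{\mathbf A_{t_{n'}}}(\bar\mu_0,\bar\mu_1)$ into a perturbation part (handled by Arzel\`a--Ascoli on the potentials plus the uniqueness from \cref{prop:semidiscretePotentialProperties}(3)) and a re-centering part (handled by expanding $c_{\mathbf A_{t_{n'}}}\circ\tau^{\pm t_{n'}}$ and using weak convergence of OT plans). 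The only cosmetic slip is that for the lower bound on the re-centering part you should track the OT plan $\bar\pi_{t_{n'}}$ for $\mathsf{OT}_{\mathbf A_{t_{n'}}}(\bar\mu_{0,t_{n'}},\bar\mu_{1,t_{n'}})$ rather than the $\tilde\mu$-version, but both converge to $\pi^{\mathbf A^{\star}}$ so nothing changes.
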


\begin{proof}
    Let $\mathbf{A}^{\star}\in\mathcal A$ be arbitrary. From \cref{lem:semidiscreteOTGateaux}, we have that 
\[
\begin{aligned}
\mathsf{S}_2(\bar \mu_{0,t},\bar\mu_{1,t})-\mathsf{S}_2(\bar \mu_{0},\bar\mu_{1})&\leq 
    \mathsf{OT}_{\mathbf{A}^{\star}}(\bar\mu_{0,t},\bar\mu_{1,t})-\mathsf{OT}_{\mathbf{A}^{\star}}(\bar\mu_{0},\bar\mu_{1})
    \\&=t\left(\int g_{0,\pi^{\mathbf{A}^{\star}}}+\bar\varphi_0^{\mathbf{A}^{\star}}d(\nu_0-\mu_0)+\int g_{1,\pi^{\mathbf{A}^{\star}}}+\bar\varphi_1^{\mathbf{A}^{\star}}d(\nu_1-\mu_1)\right)+o(t),
\end{aligned}
\]    
such that 
\begin{equation}
\label{eq:semidiscreteD2Upper}
\limsup_{t\downarrow 0}\frac{\mathsf{S}_2(\bar \mu_{0,t},\bar\mu_{1,t})-\mathsf{S}_2(\bar \mu_{0},\bar\mu_{1})}{t}\leq 
    \inf_{\mathbf{A}\in\mathcal A}\left\{\int g_{0,\pi^{\mathbf{A}}}+\bar\varphi_0^{\mathbf{A}}d(\nu_0-\mu_0)+\int g_{1,\pi^{\mathbf{A}}}+\bar\varphi_1^{\mathbf{A}}d(\nu_1-\mu_1)\right\}.
\end{equation}

For the opposite limit,  observe that the solutions of $\mathsf S_2(\bar \mu_{0,t},\bar \mu_{1,t})$ lie in {$B_{\mathrm{F}}(M)$} for every $t\in[0,1]$ as follows from \cref{thm:VariationalMinimizers} and let $\mathbf{A}_t$ be optimal for each corresponding problem.  By  \cref{prop:semidiscretePotentialProperties} and  equations \eqref{eq:semidiscreteGateauxCentredUpper}, \eqref{eq:semidiscreteGateauxCentredLower}, \eqref{eq:semidiscreteGateauxUncentredUpperLower}, and \eqref{eq:costTranslationExpansion}, 
\[
    \left|\mathsf{OT}_{\mathbf{A}}(\bar \mu_{0,t},\bar \mu_{1,t})-\mathsf{OT}_{\mathbf{A}}(\bar \mu_{0},\bar \mu_{1})\right|=O(t),
\] 
where the $O(t)$ term can be chosen to be independent of $\mathbf{A}$. Whence, $\mathbf{A}\in B_{\mathrm{F}}(M)\mapsto \|\mathbf{A}\|^2_{\mathrm F}+\mathsf{OT}_{\mathbf{A}}(\bar \mu_{0,t},\bar \mu_{1,t})$ converges uniformly to $\mathbf{A}\in B_{\mathrm{F}}(M) \mapsto \|\mathbf{A}\|^2_{\mathrm F}+\mathsf{OT}_{\mathbf{A}}(\bar \mu_{0},\bar \mu_{1})$ as $t\downarrow 0$. Extending these functions by $+\infty$ outside of $B_{\mathrm{F}}(M)$, we may apply Proposition 7.15  and Theorem 7.33 from \cite{rockafellar2009variational} to obtain that any cluster point of $(\mathbf{A}_{t})_{t\downarrow 0}$ is an element of $\mathcal A$  keeping in mind that these functions are continuous on $B_{\mathrm{F}}(M)$ (see \cref{thm:VariationalMinimizers}). 

Now, let $t_n\downarrow 0$ be arbitrary and fix a subsequence $t_{n'}$.
By the Bolzano-Weierstrass theorem, there exists a  further subsequence $t_{n''}\downarrow 0$ along which $\mathbf{A}_{t_{n''}}\to \mathbf{A}\in\mathcal A$. From equations \eqref{eq:semidiscreteGateauxCentredLower} and  \eqref{eq:semidiscreteGateauxUncentredUpperLower},
\[
\begin{aligned}
    \frac{\mathsf S_2(\bar \mu_{0,t_{n''}},\bar \mu_{1,t_{n''}})-\mathsf S_2(\bar \mu_{0},\bar \mu_{1})}{t_{n''}}&\geq \frac{\mathsf{OT}_{\mathbf{A}_{t_{n''}}}(\bar \mu_{0,t_{n''}},\bar \mu_{1,t_{n''}})-\mathsf{OT}_{\mathbf{A}_{t_{n''}}}(\bar \mu_{0},\bar \mu_{1})}{t_{n''}}
    \\
    &\geq \int\bar\varphi_0^{\mathbf{A}_{t_{n''}}}d(\nu_0-\mu_0)+\int\bar\varphi_1^{\mathbf{A}_{t_{n''}}}d(\nu_1-\mu_1)
    \\
    &+\int c_{\mathbf{A}_{t_{n''}}}- c_{\mathbf{A}_{t_{n''}}}\circ \tau^{t_{{n''}}}d \bar \pi_{t_{n''}},
\end{aligned}
\]
where $(\varphi_0^{\mathbf{A}_{t_{n''}}},\varphi_1^{\mathbf{A}_{t_{n''}}})$ are OT potentials for $\mathsf{OT}_{\mathbf{A}_{t_{n''}}}(\bar \mu_0,\bar \mu_1)$ and $\bar \pi_{t_{n''}}$ is an OT plan for $\mathsf{OT}_{\mathbf{A}_{t_{n''}}}(\bar \mu_{0,t_{n''}},\bar \mu_{1,t_{n''}})$. It remains to show that $\bar \pi_{t_{n''}}$ converges weakly to $\pi^{\mathbf{A}}$ weakly and that $(\bar \varphi_0^{\mathbf{A}_{t_{n''}}},\bar \varphi_1^{\mathbf{A}_{t_{n''}}})$ converges to $(\bar\varphi_0^{\mathbf{A}}+a,\bar\varphi_1^{\mathbf{A}}-a)$ uniformly on $\mathcal X_0\times \mathcal X_1$ along a further subsequence and some choice of $a\in\mathbb R$ to obtain that 
\[
    \liminf_{t\downarrow 0}\frac{\mathsf S_2(\bar \mu_{0,t},\bar \mu_{1,t})-\mathsf S_2(\bar \mu_{0},\bar \mu_{1})}{t}\geq \int g_{0,\pi^{\mathbf{A}}}+\bar\varphi_0^{\mathbf{A}}d(\nu_0-\mu_0)+\int g_{1,\pi^{\mathbf{A}}}+\bar\varphi_1^{\mathbf{A}}d(\nu_1-\mu_1), 
\]
recalling the expansion of the translated cost function from \eqref{eq:costTranslationExpansion}.
The above display, together with \eqref{eq:semidiscreteD2Upper}, yields the desired result.

Note that  $c_{\mathbf{A}_{t_{n''}}}$ converges uniformly to $c_{\mathbf{A}}$ on $\mathcal X_0\times \mathcal X_1$ and $(\bar \mu_{0,t_{n''}},\bar \mu_{1,t_{n''}})$ converges weakly to $(\bar \mu_{0},\bar \mu_{1})$ whereby $\pi^{\mathbf{A}_{t_{n'''}}}$ converges to $\pi^{\mathbf{A}}$ weakly along a further subsequence, $t_{n'''}$ 
(see Theorem 5.20 in \cite{villani2008optimal}).

 Applying the Arzel{\`a}-Ascoli theorem, we may also assume that $(\varphi_0^{\mathbf{A}_{t_{n'''}}},\varphi_1^{\mathbf{A}_{t_{n'''}}})$ converges uniformly to some pair of continuous functions $(\bar\varphi_0,\bar\varphi_1)$ on $\mathcal X_0^{\circ}\times \mathcal X_1^{\circ}$ (see also \cref{footnote:AA}). Applying the dominated convergence theorem and the weak convergence of the couplings, 
\[
\begin{aligned}
    \int \varphi_0^{\mathbf{A}_{t_{n'''}}}d\bar \mu_{0,t_{n'''}}+\int \varphi_1^{\mathbf{A}_{t_{n'''}}}d\bar \mu_{1,t_{n'''}}&\to\int \varphi_0d\bar{\mu}_0+\int \varphi_1d\bar{\mu}_1,
    \\
    \mathsf{OT}_{\mathbf{A}_{t_{n'''}}}(\bar \mu_{0,t_{n'''}},\bar\mu_{1,t_{n'''}})&\to \mathsf{OT}_{\mathbf{A}}(\bar \mu_{0},\bar \mu_{1}).
    \end{aligned}
\]
As the two terms on the left hand side are equal, $\int \varphi_0d\bar{\mu}_0+\int \varphi_1d\bar{\mu}_1=\mathsf{OT}_{\mathbf{A}}(\bar \mu_{0},\bar \mu_{1})$
and, 
as the inequalities $\varphi_0^{\mathbf{A}_{t_{n'''}}}\oplus\varphi_1^{\mathbf{A}_{t_{n'''}}}\leq c_{\mathbf{A}_{t_{n'''}}}$ are preserved in the limit,  $(\varphi_0,\varphi_1)$ is a pair of OT potentials for $\mathsf{OT}_{\mathbf{A}}(\bar \mu_0,\bar \mu_1)$ and hence coincide with $(\varphi_0^{\mathbf{A}},\varphi_1^{\mathbf{A}})$ up to constants, concluding the proof.  
\end{proof}

\begin{lemma}
    \label{lem:semidiscreteS2Lipschitz}
    For any pairs $(\nu_0, \nu_1),(\rho_0, \rho_1)\in\mathcal P_{\mu_0}\times \mathcal P_{\mu_1}$, $\left|\mathsf S_{2}(\bar\nu_0,\bar\nu_1)-\mathsf S_{2}(\bar\rho_0,\bar\rho_1)\right|\leq C\|\nu_0\otimes \nu_1-\rho_0\otimes\rho_1\|_{\infty,\mathcal F^{\oplus}}$ for some constant $C>0$ which is independent of $(\nu_0, \nu_1),(\rho_0, \rho_1)$.
\end{lemma}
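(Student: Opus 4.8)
The plan is to imitate the proof of \cref{lem:semidiscreteS1Lipschitz}, but to route the argument through optimal transport duality rather than a direct expansion, since $\mathsf S_2$ carries an infimum over the auxiliary matrix. First I would reduce to a compact set of matrices: by \cref{thm:VariationalMinimizers}, every minimizer $\mathbf{A}^{\star}$ of $\Phi_{(\bar\nu_0,\bar\nu_1)}$ satisfies $\|\mathbf{A}^{\star}\|_{\mathrm F}\le \tfrac12\sqrt{M_2(\bar\nu_0)M_2(\bar\nu_1)}$, and since $M_2(\bar\nu_i)\le M_2(\nu_i)\le \|\mathcal X_i\|_{\infty}^2$ for $\nu_i\in\mathcal P(\mathcal X_i)$, all minimizers of $\Phi_{(\bar\nu_0,\bar\nu_1)}$ and of $\Phi_{(\bar\rho_0,\bar\rho_1)}$ lie in $B_{\mathrm F}(M)$. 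Hence $\mathsf S_2(\bar\nu_0,\bar\nu_1)=\inf_{\mathbf{A}\in B_{\mathrm F}(M)}\Phi_{(\bar\nu_0,\bar\nu_1)}(\mathbf{A})$, likewise for $(\bar\rho_0,\bar\rho_1)$, and since the term $32\|\mathbf{A}\|_{\mathrm F}^2$ is common, $|\mathsf S_2(\bar\nu_0,\bar\nu_1)-\mathsf S_2(\bar\rho_0,\bar\rho_1)|\le \sup_{\mathbf{A}\in B_{\mathrm F}(M)}|\OT_{\mathbf{A}}(\bar\nu_0,\bar\nu_1)-\OT_{\mathbf{A}}(\bar\rho_0,\bar\rho_1)|$. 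It therefore suffices to bound the OT discrepancy uniformly in $\mathbf{A}\in B_{\mathrm F}(M)$.

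Next I would fix $\mathbf{A}\in B_{\mathrm F}(M)$ and take extended OT potentials $(\varphi_0^{\mathbf{A}},\varphi_1^{\mathbf{A}})$ for $\OT_{\mathbf{A}}(\bar\nu_0,\bar\nu_1)$ as in \cref{def:extendedPotentials}, chosen to satisfy the uniform Lipschitz bound (constant $L$) and sup bound (constant $K$) of \cref{prop:semidiscretePotentialProperties}(1)--(2). Because $\mathcal X_i^{\circ}$ has radius $>2\|\mathcal X_i\|_{\infty}$ and $\|\mathbb E_{\rho_i}[X]\|\le \|\mathcal X_i\|_{\infty}$, both $\supp(\bar\nu_i)$ and $\supp(\bar\rho_i)$ are contained in $\mathcal X_i^{\circ}$; since $\varphi_0^{\mathbf{A}}\oplus\varphi_1^{\mathbf{A}}\le c_{\mathbf{A}}$ on $\mathcal X_0^{\circ}\times\mathcal X_1^{\circ}$, this pair is dual-feasible for $\OT_{\mathbf{A}}(\bar\rho_0,\bar\rho_1)$ as well, whence by \eqref{eq:OTDuality},
\[
\OT_{\mathbf{A}}(\bar\nu_0,\bar\nu_1)-\OT_{\mathbf{A}}(\bar\rho_0,\bar\rho_1)\le \int\varphi_0^{\mathbf{A}}d(\bar\nu_0-\bar\rho_0)+\int\varphi_1^{\mathbf{A}}d(\bar\nu_1-\bar\rho_1).
\]
The remaining task is to bound each integral by the $\mathcal F^{\oplus}$-norm, and the main subtlety is that $\bar\nu_i-\bar\rho_i$ is \emph{not} $\nu_i-\rho_i$ pushed through a common map, because the recentering differs. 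I would split, for $i=0$,
\[
\int\varphi_0^{\mathbf{A}}d\bar\nu_0-\int\varphi_0^{\mathbf{A}}d\bar\rho_0=\int\varphi_0^{\mathbf{A}}(\cdot-\mathbb E_{\nu_0}[X])\,d(\nu_0-\rho_0)+\int\big[\varphi_0^{\mathbf{A}}(\cdot-\mathbb E_{\nu_0}[X])-\varphi_0^{\mathbf{A}}(\cdot-\mathbb E_{\rho_0}[X])\big]d\rho_0,
\]
and similarly for $i=1$. The function $x\mapsto\varphi_0^{\mathbf{A}}(x-\mathbb E_{\nu_0}[X])$ is exactly a member of the class $\mathcal G$ (take $\xi=\mathbb E_{\nu_0}[X]\in\mathcal X_0$, $\zeta=\mathbb E_{\nu_1}[X]$ with $\|\zeta\|\le\|\mathcal X_1\|_{\infty}$, $z=z^{\mathbf{A}}$ with $\|z^{\mathbf{A}}\|_{\infty}\le K$), hence lies in $\mathcal F_0$; likewise $y\mapsto K^{-1}\varphi_1^{\mathbf{A}}(y-\mathbb E_{\nu_1}[X])$ lies in $\mathcal F_1$ by the sup bound. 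The second (difference) terms are bounded by $L\|\mathbb E_{\nu_i}[X]-\mathbb E_{\rho_i}[X]\|$ via the uniform Lipschitz constant, and $\|\mathbb E_{\nu_i}[X]-\mathbb E_{\rho_i}[X]\|\lesssim \|\nu_0\otimes\nu_1-\rho_0\otimes\rho_1\|_{\infty,\mathcal F^{\oplus}}$ because each coordinate map, restricted to $\mathcal X_i$ and rescaled by a fixed constant, lies in $B_{\mathcal C^k(\mathcal X_0)}\subset\mathcal F_0$ (resp. in $\mathcal F_1$). Using $0\in\mathcal F_0$ and $0\in\mathcal F_1$ exactly as in \eqref{eq:classBound} to pass from $\sup_{f_i\in\mathcal F_i}|(\nu_i-\rho_i)(f_i)|$ to the joint norm, one obtains $\OT_{\mathbf{A}}(\bar\nu_0,\bar\nu_1)-\OT_{\mathbf{A}}(\bar\rho_0,\bar\rho_1)\le C\|\nu_0\otimes\nu_1-\rho_0\otimes\rho_1\|_{\infty,\mathcal F^{\oplus}}$ with $C$ depending only on $M$, $K$, $L$, $d_0$, $d_1$, $\|\mathcal X_0\|_{\infty}$, $\|\mathcal X_1\|_{\infty}$ — in particular independent of $\mathbf{A}$ and of the four measures. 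Swapping the roles of $(\nu_0,\nu_1)$ and $(\rho_0,\rho_1)$ yields the matching lower bound; taking the supremum over $\mathbf{A}\in B_{\mathrm F}(M)$ and combining with the first paragraph finishes the proof.

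I expect the centering mismatch described above to be the main obstacle: the extended potential pair is a function on the fixed enlarged domains $\mathcal X_i^{\circ}$, but the translation used to access the variational form is measure-dependent, so a single pair must be shown both to be dual-feasible for two distinct OT problems and to produce, upon recentering, functions that belong to the test-function class. This is exactly why $\mathcal F_0$ was engineered to contain $\mathcal G$ together with $B_{\mathcal C^k(\mathcal X_0)}$, and why the uniform Lipschitz/sup estimates of \cref{prop:semidiscretePotentialProperties} are needed; the rest of the argument is bookkeeping analogous to \cref{lem:semidiscreteS1Lipschitz}.
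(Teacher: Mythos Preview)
Your proposal is correct and follows essentially the same route as the paper's proof: reduce to $\sup_{\mathbf A\in B_{\mathrm F}(M)}|\OT_{\mathbf A}(\bar\nu_0,\bar\nu_1)-\OT_{\mathbf A}(\bar\rho_0,\bar\rho_1)|$, use the extended potentials from \cref{def:extendedPotentials} with the uniform Lipschitz and sup bounds of \cref{prop:semidiscretePotentialProperties}, split the centered integrals via the measure-dependent shifts, and absorb the shifted potentials into $\mathcal G\subset\mathcal F_0$ and (after rescaling) into $\mathcal F_1$. The only cosmetic differences are that the paper takes a separate pair of potentials for each of $\OT_{\mathbf A}(\bar\nu_0,\bar\nu_1)$ and $\OT_{\mathbf A}(\bar\rho_0,\bar\rho_1)$ to obtain the two-sided bound directly (rather than your symmetry swap), and shifts by $\mathbb E_{\rho_i}[X]$ rather than $\mathbb E_{\nu_i}[X]$; neither affects the argument.
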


\begin{proof}
 Observe that 
    \begin{equation}
    \label{eq:semidiscreteLipschitzD2}
        \left|\mathsf S_2(\bar \nu_0,\bar \nu_1)-\mathsf S_2(\bar \rho_0,\bar \rho_1)\right|\leq \sup_{\mathbf{A}\in B_{\mathrm{F}}(M)}\left|\mathsf{OT}_{\mathbf{A}}(\bar \nu_0,\bar \nu_1)-\mathsf{OT}_{\mathbf{A}}(\bar \rho_0,\bar \rho_1)\right|. 
    \end{equation}
    Let $(\varphi^{\mathbf{A}}_{0,\bar \nu},\varphi^{\mathbf{A}}_{1,\bar \nu})$ and $(\varphi^{\mathbf{A}}_{0,\bar \rho},\varphi^{\mathbf{A}}_{1,\bar \rho})$ be extended OT potentials for $\mathsf{OT}_{\mathbf{A}}(\bar \nu_0,\bar \nu_1)$ and $\mathsf{OT}_{\mathbf{A}}(\bar \rho_0,\bar \rho_1)$ respectively satisfying the estimates from \cref{prop:semidiscretePotentialProperties}. We have that 
    \[ 
    \begin{aligned}
\mathsf{OT}_{\mathbf{A}}(\bar \nu_0,\bar \nu_1)-\mathsf{OT}_{\mathbf{A}}(\bar \rho_0,\bar \rho_1)&\leq \int \varphi_{0,\bar \nu}^{\mathbf{A}} d(\bar \nu_0-\bar \rho_0)+\int \varphi_{1,\bar \nu}^{\mathbf{A}} d(\bar \nu_1-\bar \rho_1),\\ \mathsf{OT}_{\mathbf{A}}(\bar \nu_0,\bar \nu_1)-\mathsf{OT}_{\mathbf{A}}(\bar \rho_0,\bar \rho_1)
&\geq \int \varphi_{0,\bar \rho}^{\mathbf{A}} d(\bar \nu_0-\bar \rho_0)+\int \varphi_{1,\bar \rho}^{\mathbf{A}} d(\bar \nu_1-\bar \rho_1). 
    \end{aligned}
    \]
    Fix $i\in\{0,1\}$ and observe that, for any $L$-Lipschitz function $f_i:\mathcal X_i^{\circ}\to \mathbb R$, 
    \[
    \begin{aligned}
        \int f_i d(\bar \nu_i-\bar \rho_i)&=\int f_i(\cdot -\mathbb E_{\nu_i}[X]) d\nu_i-\int f_i(\cdot -\mathbb E_{\rho_i}[X]) d \rho_i
        \\
        &
        \leq \int f_i(\cdot -\mathbb E_{\rho_i}[X]) d(\nu_i- \rho_i)+L\|\mathbb E_{\nu_i}[X]-\mathbb E_{\rho_i}[X]\|,
        \\
        \int f_i d(\bar \nu_i-\bar \rho_i)&\geq \int f_i(\cdot -\mathbb E_{\rho_i}[X]) d(\nu_i- \rho_i)-L\|\mathbb E_{\nu_i}[X]-\mathbb E_{\rho_i}[X]\|. 
    \end{aligned}
    \]
    As the coordinate projections are smooth with uniformly bounded derivatives of all orders, there exists  a positive constant $C_1$ which is independent of $\nu_0,\nu_1,\rho_0,\rho_1$ such that $L\|\mathbb E_{\nu_i}[X]-\mathbb E_{\rho_i}[X]\|\leq L\sqrt{d_i}\max_{j=1}^{d_i}\left|\int x_j d(\nu_i-\rho_i)(x)\right|\leq C_1\|\nu_i-\rho_i\|_{\infty,\mathcal F_i}\leq C_1\|\nu_0\otimes\nu_1-\rho_0\otimes \rho_1\|_{\infty,\mathcal F^{\oplus}}$, where the final inequality follows from the fact that both $\mathcal F_0$ and $\mathcal F_1$ contain the zero function. As the extended OT potentials are $L$-Lipschitz (see \cref{prop:semidiscretePotentialProperties}), 
    \[
    \begin{aligned}
 \left |\mathsf{OT}_{\mathbf{A}}(\bar \nu_0,\bar \nu_1)-\mathsf{OT}_{\mathbf{A}}(\bar \rho_0,\bar \rho_1)\right|&\leq \left(\left|\int \varphi_{0,\bar \nu}^{\mathbf{A}}(\cdot -\mathbb E_{\rho_0}[X]) d(\nu_0- \rho_0)+\int \varphi_{1,\bar \nu}^{\mathbf{A}}(\cdot -\mathbb E_{\rho_1}[X]) d(\nu_1- \rho_1)\right|\right.
 \\&\hspace{1.4em}\left.\bigvee\left|\int \varphi_{0,\bar \rho}^{\mathbf{A}}(\cdot -\mathbb E_{\rho_0}[X]) d(\nu_0- \rho_0)+\int \varphi_{1,\bar \rho}^{\mathbf{A}}(\cdot -\mathbb E_{\rho_1}[X]) d(\nu_1- \rho_1)\right| \right)
 \\
 &+ 2C_1\|\nu_0\otimes\nu_1-\rho_0\otimes \rho_1\|_{\infty,\mathcal F^{\oplus}}.    
    \end{aligned}
    \]
    Observe that $\varphi_{0,\bar \nu}^{\mathbf{A}}(\cdot -\mathbb E_{\rho_0}[X]),\varphi_{0,\bar \rho}^{\mathbf{A}}(\cdot -\mathbb E_{\rho_0}[X])\in\mathcal F_0$ by construction and that $\varphi_{1,\bar \rho}^{\mathbf{A}}(\cdot -\mathbb E_{\rho_1}[X]) ,\varphi_{1,\bar \nu}^{\mathbf{A}}(\cdot -\mathbb E_{\rho_1}[X])$ are bounded on $\mathcal X_1^{\circ}$ and hence are elements of $\mathcal F_1$ up to some normalizing constant. It follows that the the term in brackets on the right hand side of the display can be bounded as $C_2\|\nu_0\otimes \nu_1-\rho_0\otimes \rho_1\|_{\infty,\mathcal F^{\oplus}}$ for some positive constant $C_2$ which is independent of $\mathbf{A},\nu_0,\nu_1,\rho_0,\rho_1$. Applying these bounds in \eqref{eq:semidiscreteLipschitzD2} proves the desired Lipschitz continuity of $\mathsf S_2$.
\end{proof}

\begin{proof}[Proof of \cref{thm:semidiscreteGWStability}] By applying the variational form \eqref{eq:GWVariational}, the desired G\^ateaux differentiability follows from 
    Lemmas \ref{lem:semidiscreteGateauxD1} and \ref{lem:semidiscreteGateauxD2} whereas the Lipschitz continuity follows from Lemmas \ref{lem:semidiscreteS1Lipschitz} and \ref{lem:semidiscreteS2Lipschitz}.   
    \end{proof}

\subsection{Proof of \texorpdfstring{\cref{thm:semidiscreteGWLimitDistribution}}{Theorem 5}}
\label{proof:thm:semidiscreteGWLimitDistribution}

Given \cref{thm:semidiscreteGWStability},
the first assertion  of \cref{thm:semidiscreteGWLimitDistribution} follows from the unified approach (\cref{prop:unified}).

Specializing to the empirical measures, the result will follow
upon showing that the function classes $\mathcal F_0$ and $\mathcal F_1$ are, respectively, $\mu_0$- and $\mu_1$-Donsker. Indeed, by applying the same argument as in the end of \cref{thm:discreteGWLimitDistribution}, Donskerness of each class and the independence assumption on the samples imply that $\sqrt n(\hat \mu_{0,n}-\mu_0)\otimes(\hat \mu_{1,n}-\mu_1)\stackrel{d}{\to}G_{\mu_0\otimes \mu_1}$ in $\ell^{\infty}(\mathcal F^{\oplus})$ where $G_{\mu_0\otimes \mu_1}(f_0\oplus f_1)=G_{\mu_0}(f_0)+G_{\mu_1}(f_1)$ and $G_{\mu_0}$, $G_{\mu_1}$ are, respectively, tight $\mu_0$- and $\mu_1$-Brownian bridge processes in $\ell^{\infty}(\mathcal F_0)$ and $\ell^{\infty}(\mathcal F_1)$. We begin with $\mathcal F_0$.

By Corollary 2.7.2 and the surrounding discussion in \cite{van1996weak}, the unit ball in $C^k(\mathcal X_0)$ is universally Donsker provided that  $k=\lfloor d/2\rfloor+1$. As unions and pairwise sums of Donsker classes are Donsker (see Example 2.10.7 in \cite{van1996weak}), it suffices to show that $\mathcal G$ is $\mu_0$-Donsker.

Let $x,\xi,\xi'\in\mathcal X_0$, $\zeta,\zeta'\in\mathbb R^{d_1}$ with $\|\zeta\|,\|\zeta'\|\leq \|\mathcal X_1\|_{\infty}$, $z,z'\in\mathbb R^N$ with $\|z\|_{\infty},\|z'\|_{\infty}\leq K$, and $\mathbf{A},\mathbf{A}'\in B_{\mathrm{F}}(M)$ be arbitrary. Observe that 
\[
\begin{aligned}
    &\left| \min_{1\leq i\leq N}\left\{c_{\mathbf{A}}\left(x-\xi,y^{(i)}-\zeta\right)-z_i\right\}-\min_{1\leq i\leq N}\left\{c_{\mathbf{A}'}\left(x-\xi',y^{(i)}-\zeta'\right)-z_i'\right\}\right|
    \\
    &\leq \max_{1\leq i\leq N}\left| -4\|x-\xi\|^2\|y^{(i)}-\zeta\|^2-32 (x-\xi)^{\intercal}\mathbf{A}(y^{(i)}-\zeta)-z_i\right.\\&\hspace{15em}\left. +4\|x-\xi'\|^2\|y^{(i)}-\zeta'\|^2+32 (x-\xi')^{\intercal}\mathbf{A}'(y^{(i)}-\zeta')+z_i' \right|. 
\end{aligned}
\]
By applying the reverse triangle inequality, we see that
\[
\begin{aligned}
    &\left|\|x-\xi\|^2\|y^{(i)}-\zeta\|^2-\|x-\xi'\|^2\|y^{(i)}-\zeta'\|^2\right|
    \\
    &\leq \left|\|x-\xi\|^2-\|x-\xi'\|^2\right|\|y^{(i)}-\zeta\|^2+\left|\|y^{(i)}-\zeta\|^2-\|y^{(i)}-\zeta'\|^2\right|\|x-\xi'\|^2
    \\
    &\leq (\|x-\xi\|+\|x-\xi'\|)\|\xi-\xi'\|\|y^{(i)}-\zeta\|^2+(\|y^{(i)}-\zeta\|+\|y^{(i)}-\zeta'\|)\|\zeta-\zeta'\|\|x-\xi'\|^2
    \\
    &\lesssim_{\mathcal X_0,\mathcal X_1}\|\xi-\xi'\|+\|\zeta-\zeta'\|,
\end{aligned}
\]
whereas
\[
\begin{aligned}
    &\left|(x-\xi)^{\intercal}\mathbf{A}(y^{(i)}-\zeta)-(x-\xi')^{\intercal}\mathbf{A}'(y^{(i)}-\zeta')\right|
    \\
    &\leq \left|(x-\xi)^{\intercal}(\mathbf{A}-\mathbf{A}')(y^{(i)}-\zeta)\right|+\left|(\xi'-\xi)^{\intercal}\mathbf{A}'(y^{(i)}-\zeta)\right|+\left|(x-\xi')^{\intercal}\mathbf{A}'(\zeta'-\zeta)\right|
    \\
    &\lesssim_{M,\mathcal X_0,\mathcal X_1} \left\|\mathbf{A}-\mathbf{A}'\right\|_{\mathrm{F}}+\left\|\xi'-\xi\right\|+\left\|\zeta'-\zeta\right\|,
\end{aligned}
\]
so that there exists a constant $C_{M,\mathcal X_0,\mathcal X_1}<\infty$ satisfying   
\[
\begin{aligned}
    &\left| \min_{1\leq i\leq N}\left\{c_{\mathbf{A}}\left(x-\xi,y^{(i)}-\zeta\right)-z_i\right\}-\min_{1\leq i\leq N}\left\{c_{\mathbf{A}'}\left(x-\xi',y^{(i)}-\zeta'\right)-z_i'\right\}\right|
    \\
    &\leq C_{M,\mathcal X_0,\mathcal X_1}\left(\left\|\mathbf{A}-\mathbf{A}'\right\|_{\mathrm{F}}+\left\|\xi'-\xi\right\|+\left\|\zeta'-\zeta\right\|+\|z-z'\|_{\infty}\right)
\end{aligned}
\]
whereby the function class $\mathcal G$ is Lipschitz with respect to the parameter $(\mathbf{A},\xi,\zeta,z)\in B_{\mathrm{F}}(M)\times \mathcal X_0\times B_{(\mathbb R^{d_1},\|\cdot\|)}(\|\mathcal X_1\|_{\infty})\times B_{(\mathbb R^N,\|\cdot\|_{\infty})}(K)\eqqcolon \mathcal Q$, where $B_{(\mathbb R^{d_1},\|\cdot\|)}(\|\mathcal X_1\|_{\infty})$ is the closed ball of radius $\|\mathcal X_1\|_{\infty}$ in $\mathbb R^{d_1}$ with respect to $\|\cdot\|$, and $B_{(\mathbb R^N,\|\cdot\|_{\infty})}(K)$ is defined analogously. Let $\tau:(\mathbf{A},\xi,\zeta,z)\in\mathcal Q\mapsto (A_{(\cdot),1},A_{(\cdot),2},\dots,A_{(\cdot),d_1},\xi,\zeta,z)\in\mathbb R^{d_0d_1+d_0+d_1+N}$, where $A_{(\cdot),i}$ is the $i$-th column of $\mathbf{A}$ for $i\in[d_1]$; we endow this latter Euclidean space with the norm $\|(u,v,w,z)\|_{\tau \mathcal Q}=\|u\|+\|v\|+\|w\|+\|z\|_{\infty}$ where $u\in\mathbb R^{d_0d_1}$, $v\in\mathbb R^{d_0}$, $w\in\mathbb R^{d_1}$, and $z\in\mathbb R^N$. 

By Theorem 2.7.11 in \cite{van1996weak}, $N_{[\;]}(2\epsilon C_{M,\mathcal X_0,\mathcal X_1},\mathcal G, L^2(\mu_0))\leq N(\epsilon,\tau \mathcal Q,\|\cdot\|_{\tau\mathcal Q})$. It suffices, therefore, to bound $N(\epsilon,\tau \mathcal Q,\|\cdot\|_{\tau\mathcal Q})$. To this end, observe that $\tau\mathcal Q\subset B_{\|\cdot\|_{\tau Q}}(M+\|\mathcal X_0\|_{\infty}+\|\mathcal X_1\|_{\infty}+K)$, the ball of radius $M+\|\mathcal X_0\|_{\infty}+\|\mathcal X_1\|_{\infty}+K$ in $\mathbb R^{d_0d_1+d_0+d_1+N}$ with respect to $\|\cdot\|_{\tau\mathcal Q}$.  
Let $S_{\epsilon}$ be an $\epsilon$-net for this ball with respect to  $\|\cdot\|_{\tau\mathcal Q}$ and note that 
$
    \cup_{x\in S_{\varepsilon}}(x+\frac \epsilon 2B_{\tau\mathcal Q}(1))\subset (M+\|\mathcal X_0\|_{\infty}+\|\mathcal X_1\|_{\infty}+K+\frac \epsilon 2)B_{\tau\mathcal Q}(1). 
$
As the sets on the left hand side of the previous expression are disjoint, \[|S_{\epsilon}|\left(\frac \epsilon 2\right)^{d_0d_1+d_0+d_1+N}\vol\left(B_{\tau \mathcal Q}(1)\right)\left(M+\|\mathcal X_0\|_{\infty}+\|\mathcal X_1\|_{\infty}+K+\frac \epsilon 2\right)^{d_0d_1+d_0+d_1+N}\vol\left(B_{\tau \mathcal Q}(1)\right)
\]
whereby $|S_{\epsilon}|\leq \left(\frac{2(M+\|\mathcal X_0\|_{\infty}+\|\mathcal X_1\|_{\infty}+K)}{\epsilon}+1\right)^{d_0d_1+d_0+d_1+N}$. 

In sum, $N(\epsilon,\tau \mathcal Q,\|\cdot\|_{\tau\mathcal Q})\leq\left(\frac{2(M+\|\mathcal X_0\|_{\infty}+\|\mathcal X_1\|_{\infty}+K)}{\epsilon}\bigvee 2+1\right)^{d_0d_1+d_0+d_1+N}$ so that $\mathcal G$ is $\mu_0$-Donsker provided that it admits a cover which is square integrable with respect to $\mu_0$ (see Theorem 3.7.38 in \cite{gine2021mathematical}). This last condition is evidently satisfied, as  $
    c_{\mathbf{A}}(x-\xi,y^{(i)}-\zeta)-z_i$ can be bounded by a constant independently of the choice of $(\mathbf{A},\xi,\zeta,z)\in\mathcal Q$, $x\in\mathcal X_0$, and $i\in[N]$.

As for $\mathcal F_1$, note that if $f,g\in\mathcal F_1$, then $\|f-g\|_{L^2(\mu_1)}^2=\sum_{i=1}^N(f(y^{(i)})-g(y^{(i)}))^2\mu_1(\{y^{(i)}\})\leq \|z_f-z_g\|_{\infty}^2$ where $z_f=(f(y^{(1)}),\dots,f(y^{(N)}))$ and $z_g$ is defined analogously. Thus, $N(\epsilon,\mathcal F_1,L^2(\mu_1))\leq N(\epsilon,\{z\in\mathbb R^N:\|z\|_{\infty}\leq K\},\|\cdot\|_{\infty})\leq \left(\frac{2K}{\epsilon}\bigvee 2+1\right)^N$, where the second inequality follows from the first part of the proof. Conclude from Theorem 2.5.2 in \cite{van1996weak} that $\mathcal F_1$ is $\mu_1$-Donsker.
\qed

\subsection{Proofs for \texorpdfstring{ \cref{sec:entropicGW}}{Section 5}}

\subsubsection{Proof of \texorpdfstring{Theorem 9}{\cref{thm:entropicGWStability}}}
\label{proof:thm:entropicGWStability}

We first show that the centered measures are $4$-sub-Weibull with a particular constant and establish a bound on the second moment of a $4$-sub-Weibull distribution. 

\begin{lemma} 
\label{lem:4suWeibullAssn}
Fix $\nu\in\mathcal P_{4,\sigma}(\mathbb R^{d})$. Then, $\bar \nu\in\mathcal P_{4,\bar \sigma}(\mathbb R^{d})$ for $\bar\sigma^2=\frac{2+\log 2}{\log 2}8\sigma^2$ and $M_2(\nu),M_2(\bar \nu)\leq 2\sigma$. 
\end{lemma}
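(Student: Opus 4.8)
The plan is to verify the two claims separately, both via elementary sub-Weibull manipulations. For the moment bound $M_2(\nu) \leq 2\sigma$, I would start from the defining inequality $\int e^{\|x\|^4/(2\sigma^2)}\,d\nu(x) \leq 2$. The key observation is that for any $t \geq 0$ one has $t^2 \leq \frac{\sigma^2}{?}\cdot$ (something) $\cdot e^{t^4/(2\sigma^2)}$; more precisely, using $e^u \geq u$ for $u \geq 0$ with $u = \|x\|^4/(2\sigma^2)$ gives $\|x\|^4 \leq 2\sigma^2 e^{\|x\|^4/(2\sigma^2)}$, hence $\int \|x\|^4\,d\nu \leq 4\sigma^2$ by the sub-Weibull bound. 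Then Jensen's inequality (or Cauchy--Schwarz) yields $M_2(\nu) = \int \|x\|^2\,d\nu \leq \sqrt{\int \|x\|^4\,d\nu} \leq 2\sigma$. The same argument applies verbatim to $\bar\nu$ once we know $\bar\nu$ is $4$-sub-Weibull with some finite parameter, giving $M_2(\bar\nu) \leq 2\bar\sigma$; but since we want the sharper $M_2(\bar\nu) \leq 2\sigma$, I would instead note $M_2(\bar\nu) = M_2(\nu) - \|\mathbb{E}_\nu[X]\|^2 \leq M_2(\nu) \leq 2\sigma$ directly, using that centering only decreases the second moment.

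For the sub-Weibull claim on $\bar\nu$, write $\bar X = X - \mathbb{E}_\nu[X]$ and let $m = \mathbb{E}_\nu[X]$. Then $\|\bar X\|^4 = \|X - m\|^4 \leq (\|X\| + \|m\|)^4 \leq 8(\|X\|^4 + \|m\|^4)$ by convexity of $t \mapsto t^4$ (the constant $2^{4-1} = 8$). Moreover $\|m\| = \|\mathbb{E}_\nu[X]\| \leq \mathbb{E}_\nu[\|X\|] \leq M_2(\nu)^{1/2} \leq \sqrt{2\sigma}$ by the previous step — wait, more carefully $\|m\|^4 \leq (\mathbb{E}_\nu\|X\|)^4 \leq \mathbb{E}_\nu[\|X\|^4] \leq 4\sigma^2$, so $\|m\|^4 \leq 4\sigma^2 = M_4(\nu)$-type bound. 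Hence $\|\bar X\|^4 \leq 8\|X\|^4 + 8\cdot 4\sigma^2 = 8\|X\|^4 + 32\sigma^2$. Therefore
\[
\int e^{\|\bar X\|^4/(2\bar\sigma^2)}\,d\nu \leq e^{32\sigma^2/(2\bar\sigma^2)}\int e^{8\|X\|^4/(2\bar\sigma^2)}\,d\nu = e^{16\sigma^2/\bar\sigma^2}\int e^{\|X\|^4/(2\sigma^2)\cdot(8\sigma^2/\bar\sigma^2)}\,d\nu.
\]
Choosing $\bar\sigma^2 = \frac{2+\log 2}{\log 2}\cdot 8\sigma^2$ makes $8\sigma^2/\bar\sigma^2 = \frac{\log 2}{2 + \log 2} \leq 1$, so by Jensen (since $u \mapsto u^{\log 2/(2+\log 2)}$ is concave) applied to the probability measure $e^{\|X\|^4/(2\sigma^2)}\,d\nu / Z$ where $Z = \int e^{\|X\|^4/(2\sigma^2)}d\nu \le 2$ — actually cleaner: by Hölder/Jensen, $\int (e^{\|X\|^4/(2\sigma^2)})^\theta d\nu \le (\int e^{\|X\|^4/(2\sigma^2)}d\nu)^\theta \le 2^\theta$ for $\theta = \frac{\log 2}{2+\log 2} \in (0,1)$. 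And $16\sigma^2/\bar\sigma^2 = \frac{2\log 2}{2+\log 2}$, so $e^{16\sigma^2/\bar\sigma^2} = 2^{2/(2+\log 2)}$. Combining, the integral is at most $2^{2/(2+\log 2)}\cdot 2^{\log 2/(2+\log 2)} = 2^{(2+\log 2)/(2+\log 2)} = 2$, as required.

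I would organize this as: (i) the elementary lemma $\int\|x\|^4 d\nu \le 4\sigma^2$ and $M_2(\nu) \le 2\sigma$; (ii) the bound $\|\bar X\|^4 \le 8\|X\|^4 + 32\sigma^2$; (iii) the exponential integral computation with the specific choice of $\bar\sigma$, tracking constants so the product of the two factors equals exactly $2$. The main obstacle is purely bookkeeping: getting the exponents $8\sigma^2/\bar\sigma^2$ and $16\sigma^2/\bar\sigma^2$ to conspire so that $2^{16\sigma^2/\bar\sigma^2} \cdot 2^{8\sigma^2/\bar\sigma^2} = 2$, which forces $24\sigma^2/\bar\sigma^2 = 1$ — but that contradicts the stated $\bar\sigma^2 = \frac{2+\log 2}{\log 2}8\sigma^2$ unless one is careful that the $2^{16\sigma^2/\bar\sigma^2}$ factor comes from $e^{32\sigma^2/(2\bar\sigma^2)}$ and the exponent there is in base $e$, not base $2$; so the correct accounting is $e^{16\sigma^2/\bar\sigma^2}\cdot 2^{8\sigma^2/\bar\sigma^2} \le 2$, i.e. $\frac{16\sigma^2}{\bar\sigma^2} + \frac{8\sigma^2}{\bar\sigma^2}\log 2 \le \log 2$, i.e. $\frac{8\sigma^2}{\bar\sigma^2}(2 + \log 2) \le \log 2$, giving $\bar\sigma^2 \ge \frac{2+\log 2}{\log 2}8\sigma^2$. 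That matches the statement. No genuine difficulty beyond this constant-chasing.
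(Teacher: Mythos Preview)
Your proposal is correct and follows essentially the same route as the paper: bound $\|\bar X\|^4 \le 8\|X\|^4 + 8\|m\|^4$ via convexity, control $\|m\|^4 \le M_4(\nu) \le 4\sigma^2$ using $e^u \ge u$ and the sub-Weibull condition, apply Jensen for the concave power $u \mapsto u^\theta$ with $\theta = 8\sigma^2/\bar\sigma^2 \le 1$, and then match constants so that $e^{16\sigma^2/\bar\sigma^2}\cdot 2^{8\sigma^2/\bar\sigma^2} = 2$; the moment bound $M_2(\nu) \le 2\sigma$ and $M_2(\bar\nu) \le M_2(\nu)$ is handled identically. Your final constant-chasing paragraph is exactly the computation the paper performs, so despite the exploratory detours in your write-up there is no substantive difference.
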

\begin{proof}
    As $\nu\in\mathcal P_{4,\sigma}(\mathbb R^d)$,
    \[
        \int e^{\frac{\|\cdot\|^4}{2\bar\sigma^2}}d\bar{\nu}=\int e^{\frac{\|\cdot-\mathbb E_{\nu}[X]\|^4}{2\bar\sigma^2}}d{\nu}\leq e^{\frac{8\|\mathbb E_{\nu}[X]\|^4}{2\bar\sigma^2}}\int e^{\frac{8\|\cdot\|^4}{2\bar\sigma^2}}d{\nu}\leq e^{\frac{8\|\mathbb E_{\nu}[X]\|^4}{2\bar\sigma^2}}\left(\int e^{\frac{\|\cdot\|^4}{2\sigma^2}}d{\nu}\right)^{\frac{8\sigma^2}{\bar\sigma^2}}\leq e^{\frac{32\sigma^2}{2\bar\sigma^2}}2^{\frac{8\sigma^2}{\bar\sigma^2}}= 2,            
    \]
    where the first inequality follows from the triangle inequality and the inequality $(|a|+|b|)^p\leq 2^{p-1}(|a|^p+|b|^p)$ for $a,b\in\mathbb R$ and $p\geq 1$, the second is due to Jensen's inequality, noting that $8\sigma^2\leq \bar \sigma^2$, and the final inequality follows from the $4$-sub-Weibull assumption and the fact that 
    \[
        2\geq \int e^{\frac{\|\cdot\|^4}{2\sigma^2}}d\nu\geq \int \frac{\|\cdot\|^4}{2\sigma^2}d\nu=M_4(\nu)/(2\sigma^2) 
    \]
    so that $     \|\mathbb E_{\nu}[X]\|^4\leq \mathbb E_{\nu}[\|X\|^4]\leq 4\sigma^2$ by Jensen's inequality. This proves the first claim. For the second claim, we have by Jensen's inequality that $M_2^2(\nu)\leq M_4(\nu)\leq 4\sigma^2$ as above. Conclude by noting that $M_2(\bar \nu)=M_2(\nu)-\|\mathbb E_{\nu}[X]\|^2\leq M_2(\nu)$.
\end{proof}

Consequently, we set $M=\sigma$ and analyze the  regularity of EOT potentials for $\mathsf{OT}_{\mathbf A,\varepsilon}(\nu_0,\nu_1)$ for $4$-sub-Weibull distributions with $\mathbf A\in B_{\mathrm F}(M)$. 
\begin{lemma}[Regularity of EOT potentials \cite{zhang2024gromov}]
    \label{prop:entropicGWPotentials}
    Fix $\sigma>0$. Then, for any choice of $(\nu_0,\nu_1)\in\mathcal P_{4,\sigma}(\mathbb R^{d_0})\times\mathcal P_{4,\sigma}(\mathbb R^{d_1})$ and $\mathbf{A}\in B_{\mathrm{F}}(M)$, there exists a pair of EOT potentials $(\varphi_0^{\mathbf{A},\varepsilon},\varphi_1^{\mathbf{A},\varepsilon})$ for $\mathsf{OT}_{\mathbf{A},\varepsilon}(\nu_0,\nu_1)$ satisfying the Schr{\"o}dinger system \eqref{eq:SchrodingerSystem} on $\mathbb R^{d_0}\times \mathbb R^{d_1}$. Furthermore, this pair is unique up to additive constants on $\mathbb R^{d_0}\times \mathbb R^{d_1}$ and, for every $k\in\mathbb N$, there exists a constant $K$ depending only on ${k,d_0,d_1,\sigma},$ and $\varepsilon$ for which
    \[   \varphi_i^{\mathbf{A},\varepsilon}\leq  K(1+\|\cdot\|^{2}),\quad -\varphi_i^{\mathbf{A},\varepsilon} \leq  K(1+\|\cdot\|^{4}), \quad
        |D^{\alpha}\varphi_i^{\mathbf{A},\varepsilon}|\leq  K(1+\|\cdot\|^{3k}),
    \]
    for every nonzero multi-index $\alpha\in\mathbb N_0^{d_i}$ with $|\alpha|\leq k$ and $i\in\{0,1\}$. 
\end{lemma}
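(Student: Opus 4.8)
The plan is to deduce this from the EOT regularity theory of \cite{zhang2024gromov} (Lemma~4 there), after checking that the family of costs $c_{\mathbf A}(x,y)=-4\|x\|^2\|y\|^2-32x^{\intercal}\mathbf Ay$, indexed by $\mathbf A\in B_{\mathrm F}(M)$ with $M=\sigma$, together with $4$-sub-Weibull marginals, satisfies the hypotheses of that result with constants uniform in $\mathbf A$. The first step is integrability: by Cauchy--Schwarz $|c_{\mathbf A}(x,y)|\le 4\|x\|^2\|y\|^2+32M\|x\|\|y\|$, and since $M_4(\nu_i)\le 4\sigma^2$ by \cref{lem:4suWeibullAssn} (a consequence of the $4$-sub-Weibull hypothesis), we get $c_{\mathbf A}\in L^1(\nu_0\otimes\nu_1)$. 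Hence the EOT duality \eqref{eq:EOTDual} applies, a pair of EOT potentials exists, and---because $c_{\mathbf A}$ is everywhere finite and continuous---the $\nu_0\otimes\nu_1$-a.s.\ defined potentials admit canonical everywhere-defined representatives solving the Schr\"odinger system \eqref{eq:SchrodingerSystem} pointwise (cf.\ \cite{nutz2021introduction}); uniqueness up to an additive constant on $\mathbb R^{d_0}\times\mathbb R^{d_1}$ is the standard EOT uniqueness statement.

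The second step is the growth bounds. After normalizing $(\varphi_0^{\mathbf A,\varepsilon},\varphi_1^{\mathbf A,\varepsilon})$ (e.g.\ fixing $\varphi_1^{\mathbf A,\varepsilon}$ at a reference point), one exploits the Schr\"odinger fixed-point identity
\[
\varphi_0^{\mathbf A,\varepsilon}(x)=-\varepsilon\log\int e^{(\varphi_1^{\mathbf A,\varepsilon}(y)-c_{\mathbf A}(x,y))/\varepsilon}\,d\nu_1(y),
\]
and its symmetric counterpart. A two-sided a priori bound $\varphi_i^{\mathbf A,\varepsilon}\le K(1+\|\cdot\|^2)$ and $-\varphi_i^{\mathbf A,\varepsilon}\le K(1+\|\cdot\|^4)$ follows by inserting the crude cost estimates into this identity and using the $4$-sub-Weibull tail of $\nu_1$ to control the normalizing integrals---the $\|\cdot\|^4$ coming from $-c_{\mathbf A}(x,y)=4\|x\|^2\|y\|^2+32x^{\intercal}\mathbf Ay$, the $\|\cdot\|^2$ from the biquadratic structure of $c_{\mathbf A}$. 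Differentiating the identity expresses $D^{\alpha}\varphi_0^{\mathbf A,\varepsilon}(x)$, via Fa\`a di Bruno and the Leibniz rule, as a polynomial in conditional moments $\mathbb E_{\pi^{\mathbf A,\varepsilon}}[P(Y)\mid X=x]$ of polynomials $P$ applied to the derivatives $D^{\beta}_x c_{\mathbf A}(x,Y)$; since $c_{\mathbf A}$ is quadratic in $x$ for fixed $y$, $D^{\beta}_x c_{\mathbf A}$ vanishes for $|\beta|\ge 3$ and otherwise is a polynomial of $y$-degree at most $2$ times one of $x$-degree at most $1$. Bounding the conditional moments of the Schr\"odinger kernel uniformly---which is where the a priori bounds on $\varphi_1^{\mathbf A,\varepsilon}$ and the $4$-sub-Weibull property re-enter---and iterating over $|\alpha|\le k$ yields $|D^{\alpha}\varphi_i^{\mathbf A,\varepsilon}|\le K(1+\|\cdot\|^{3k})$, the exponent $3k$ reflecting the compounding of degree-$3$ increments at each differentiation. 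Since every cost estimate used is uniform over $\|\mathbf A\|_{\mathrm F}\le M=\sigma$, the constant $K$ depends only on $k,d_0,d_1,\sigma,\varepsilon$.

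The main obstacle is the derivative bootstrap: making precise the uniform control of the Schr\"odinger kernel's conditional moments $\mathbb E_{\pi^{\mathbf A,\varepsilon}}[\|Y\|^m\mid X=x]$ and organizing the induction on $|\alpha|$ so that the polynomial degrees track correctly. This is exactly the content of Lemma~4 in \cite{zhang2024gromov}, so in the write-up I would isolate the uniform hypotheses on the family $(c_{\mathbf A})_{\mathbf A\in B_{\mathrm F}(M)}$ (pointwise bounds on $|c_{\mathbf A}|$ and on $|D^{\beta}_x c_{\mathbf A}|$, and the $4$-sub-Weibull moment control), verify them from the explicit form of $c_{\mathbf A}$ and \cref{lem:4suWeibullAssn}, and invoke that lemma rather than reproducing the bootstrap.
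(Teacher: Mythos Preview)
Your plan is correct in spirit---both you and the paper ultimately rest on Lemma~4 of \cite{zhang2024gromov}---but you miss one concrete step that the paper supplies. Lemma~4 there is stated for $\varepsilon=1$, so it cannot be invoked directly for general $\varepsilon>0$. The paper closes this gap with a scaling argument: for $\pi\in\Pi(\eta_0,\eta_1)$ one has
\[
\int c_{\mathbf A}\,d\pi+\varepsilon\,\mathsf D_{\mathrm{KL}}(\pi\|\eta_0\otimes\eta_1)
=\varepsilon\Bigl(\int c_{\mathbf A}\,d\pi^{\varepsilon}+\mathsf D_{\mathrm{KL}}(\pi^{\varepsilon}\|\eta_0^{\varepsilon}\otimes\eta_1^{\varepsilon})\Bigr),
\]
where $\pi^{\varepsilon}=(\varepsilon^{-1/4}\Id,\varepsilon^{-1/4}\Id)_{\sharp}\pi$ and likewise for the marginals; this exploits the homogeneity $c_{\mathbf A}(\varepsilon^{1/4}x,\varepsilon^{1/4}y)=\varepsilon\,c_{\mathbf A}(x,y)$. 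Hence $\mathsf{OT}_{\mathbf A,\varepsilon}(\eta_0,\eta_1)=\varepsilon\,\mathsf{OT}_{\mathbf A,1}(\eta_0^{\varepsilon},\eta_1^{\varepsilon})$, potentials transform as $\varepsilon^{-1}\varphi_i(\varepsilon^{1/4}\cdot)$, and the $4$-sub-Weibull parameter becomes $\sigma^2/\varepsilon$; the $\varepsilon=1$ bounds then pull back to the claimed ones with $K$ depending additionally on $\varepsilon$. This is cleaner than re-running the bootstrap with $\varepsilon$ carried through, which is what your sketch would otherwise require. A second, smaller point: the ``standard EOT uniqueness'' you cite gives uniqueness only $\nu_0\otimes\nu_1$-a.s.; the paper upgrades this to pointwise equality on all of $\mathbb R^{d_0}\times\mathbb R^{d_1}$ by plugging the a.s.\ identity into the Schr\"odinger equations and reading off $\varphi_0(x)=\varphi_0^{\mathbf A,\varepsilon}(x)+a$ for every $x$.
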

\begin{proof}
    Lemma 4 in \cite{zhang2024gromov} establishes that there exists a pair of EOT potentials $(\varphi_0^{\mathbf{A},1},\varphi_1^{\mathbf{A},1})$ for $\mathsf{OT}_{\mathbf{A},1}(\nu_0,\nu_1)$ satisfying the Schr\"odinger system on $\mathbb R^{d_0}\times \mathbb R^{d_1}$ and, for any fixed $k\in\mathbb N$, there exists a constant $K'$ depending only on ${k,d_0,d_1,\sigma}$ for which 
    \begin{equation}
    \label{eq:entropicOT1bound}
   \varphi_i^{\mathbf{A},1}\leq K'(1+\|\cdot\|^{2}),\quad -\varphi_i^{\mathbf{A},1} \leq K'(1+\|\cdot\|^{4}), \quad
        |D^{\alpha}\varphi_i^{\mathbf{A},1}|\leq  K'(1+\|\cdot\|^{3k})
    \end{equation}
    for every nonzero multi-index $\alpha\in\mathbb N_0^{d_i}$ with $|\alpha|\leq k$ and $i\in\{0,1\}$. To extend this result to the general case $\varepsilon >0$, observe that if $(\eta_0,\eta_1)\in\mathcal P(\mathbb R^{d_0})\times\mathcal P(\mathbb R^{d_1}) $ have finite fourth moments and  $\pi\in\Pi(\eta_0,\eta_1)$ with $\pi\ll\eta_0\otimes \eta_1$, then
        \[
            \int c_{\mathbf{A}}d\pi+\varepsilon\mathsf{D}_{\mathrm{KL}}(\pi\|\eta_0\otimes\eta_1)=\varepsilon\left(\int c_{\mathbf{A}}d\pi^{\varepsilon}+\mathsf{D}_{\mathrm{KL}}(\pi^{\varepsilon}\|\eta_0^{\varepsilon}\otimes\eta_1^{\varepsilon})\right),
        \]
        where $\pi^{\varepsilon}=(\varepsilon^{-1/4}\Id,\varepsilon^{-1/4}\Id)_{\sharp}\pi$ and similarly for $\eta_0^{\varepsilon}$ and $\eta_1^{\varepsilon}$. The above display implies that $\mathsf{OT}_{\mathbf{A},\varepsilon}(\eta_0,\eta_1)=\varepsilon\mathsf{OT}_{\mathbf{A},1}(\eta_0^{\varepsilon},\eta_1^{\varepsilon})$, that if $\pi_{\star}$ is optimal for $\mathsf{OT}_{\mathbf{A},\varepsilon}(\eta_0,\eta_1)$, then $\pi_{\star}^{\varepsilon}$ is optimal for $\mathsf{OT}_{\mathbf{A},1}(\eta_0^{\varepsilon},\eta_1^{\varepsilon})$, and $\frac{d\pi_{\star}}{d\eta_0\otimes \eta_1}=e^{\frac{\varphi_0\oplus\varphi_1-c_{\mathbf{A}}}{\varepsilon}}$ where $(\varphi_0,\varphi_1)$ are the associated EOT potentials whereas $\frac{d\pi_{\star}^{\varepsilon}}{d\eta_0^{\varepsilon}\otimes \eta_1^{\varepsilon}}=\frac{d\pi_{\star}}{d\eta_0\otimes \eta_1}\circ (\varepsilon^{1/4}\Id)=e^{{\varepsilon^{-1}(\varphi_0(\varepsilon^{1/4}\cdot)\oplus\varphi_1(\varepsilon^{1/4}\cdot))-c_{\mathbf{A}}}}$ so that $\varepsilon^{-1}(\varphi_0(\varepsilon^{1/4}\cdot),\varphi_1(\varepsilon^{1/4}\cdot))$ is optimal for $\mathsf{OT}_{\mathbf{A},1}(\eta_0^{\varepsilon},\eta_1^{\varepsilon})$. We also point out that if $\eta_0,\eta_1$ are $4$-sub-Weibull with parameter $\sigma^2$, then $\eta_0^{\varepsilon},\eta_1^{\varepsilon}$ are $4$-sub-Weibull with parameter $\sigma^2/\varepsilon$. The claimed estimates then follow readily from \eqref{eq:entropicOT1bound}.

        Finally, assume that $(\varphi_0,\varphi_1)$ is another pair of EOT potentials for $\mathsf{OT}_{\mathbf{A},\varepsilon}(\nu_0,\nu_1)$, then $(\varphi_0,\varphi_1)$ coincides with $(\varphi_0^{\mathbf{A},\varepsilon},\varphi_1^{\mathbf{A},\varepsilon})$ up to additive constants $\nu_0\otimes \nu_1$-a.s. Thus, there exists some $a\in\mathbb R$ for which 
        \[
            e^{-\frac{\varphi_0(x)}{\varepsilon}}=\int e^{\frac{\varphi_1-c_{\mathbf{A}}(x,\cdot)}\varepsilon}d\nu_1=\int e^{\frac{\varphi_1^{\mathbf{A},\varepsilon}-a-c_{\mathbf{A}}(x,\cdot)}\varepsilon}d\nu_1=e^{-\frac{\varphi_0^{\mathbf{A},\varepsilon}(x)+a}\varepsilon},
        \]
        for every $x\in\mathbb R^{d_0}$ so that $\varphi_0$ coincides with $\varphi_0^{\mathbf{A},\varepsilon}+a$ on $\mathbb R^{d_0}$. The same argument applies to $\varphi_1$.
\end{proof}

To prove \cref{thm:entropicGWStability}, we adopt a similar strategy to the semi-discrete case (\cref{thm:semidiscreteGWStability}). Namely, for $i\in\{0,1\}$, we set $\mu_{i,t}=\mu_i+t(\nu_i-\mu_i)$ for $t\in [0,1]$ and some pair $(\nu_0,\nu_1)\in\mathcal P_{4,\sigma}(\mathbb R^{d_0})\times\mathcal P_{4,\sigma}(\mathbb R^{d_1})$ and utilize the variational form $\mathsf D_{\varepsilon}(\bar \mu_0,\bar \mu_1)^2=\mathsf S_1(\bar \mu_1,\bar \mu_1)+\mathsf S_{2,\varepsilon}(\bar \mu_0,\bar \mu_1)$ from \eqref{eq:GWVariational}. We make precise that, since $\mu_0,\mu_1,\nu_0,\nu_1$ are all $4$-sub-Weibull with the same parameter $\sigma$, so too are $\mu_{0,t}$ and $\mu_{1,t}$ with the same parameter.

\begin{lemma}
\label{lem:entropicGateauxD1}
As $t\downarrow 0$, we have that 
    \[
    \begin{aligned}
        \frac{\mathsf S_1(\bar\mu_{0,t},\bar\mu_{1,t})-\mathsf S_1(\bar\mu_0,\bar\mu_1)}{t}&\to 2\iint \|x-x'\|^4 d\mu_{0}(x)d(\nu_0-\mu_{0})(x')
        \\
        &+2\iint \|y-y'\|^4 d\mu_{1}(y)d(\nu_1-\mu_{1})(y')
        \\
        &-4 \iint \|x-\mathbb E_{\mu_0}[X]\|^2\|y-\mathbb E_{\mu_1}[X]\|^2d(\nu_0-\mu_0)(x)d\mu_1(y)
        \\
        &-4\iint \|x-\mathbb E_{\mu_0}[X]\|^2\|y-\mathbb E_{\mu_1}[X]\|^2d\mu_0(x)d(\nu_1-\mu_1)(y).
    \end{aligned} 
\]
\end{lemma}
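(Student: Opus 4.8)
The plan is to follow, essentially verbatim, the proof of \cref{lem:semidiscreteGateauxD1}, since $\mathsf S_1$ is an explicit polynomial functional of the first and second moments of each marginal and involves no optimization whatsoever; the only role of the $4$-sub-Weibull hypothesis here is to guarantee that the relevant moments (up to order four) are finite and that Fubini applies.

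First I would recall from \eqref{eq:GWVariational} that for any $\eta\in\mathcal P(\mathbb R^d)$ with finite fourth moment one has $\iint\|x-x'\|^4\,d\bar\eta\otimes\bar\eta=\iint\|x-x'\|^4\,d\eta\otimes\eta$, by translation invariance of the integrand, and $M_2(\bar\eta)=\int\|x-\mathbb E_{\eta}[X]\|^2\,d\eta(x)$. Writing $f_i(t):=\mathbb E_{\mu_i}[X]+t(\mathbb E_{\nu_i}[X]-\mathbb E_{\mu_i}[X])=\mathbb E_{\mu_{i,t}}[X]$ for $i\in\{0,1\}$, this lets me express $\mathsf S_1(\bar\mu_{0,t},\bar\mu_{1,t})$ through the two ``self-interaction'' terms $\iint\|x-x'\|^4\,d\mu_{i,t}\otimes\mu_{i,t}$ and the product $4\big(\int\|x-f_0(t)\|^2 d\mu_{0,t}\big)\big(\int\|y-f_1(t)\|^2 d\mu_{1,t}\big)$.

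Next I would expand each piece in powers of $t$ using $\mu_{i,t}=\mu_i+t(\nu_i-\mu_i)$ together with $\|x-f_i(t)\|^2=\|x-\mathbb E_{\mu_i}[X]\|^2-2t\langle x-\mathbb E_{\mu_i}[X],\,\mathbb E_{\nu_i}[X]-\mathbb E_{\mu_i}[X]\rangle+t^2\|\mathbb E_{\nu_i}[X]-\mathbb E_{\mu_i}[X]\|^2$, exactly as in \eqref{eq:S1Limit1} and the display immediately following it. Every coefficient that appears is a finite integral: since $\mu_i,\nu_i$ are $4$-sub-Weibull with parameter $\sigma^2$ one has $M_4(\mu_i),M_4(\nu_i)\le 4\sigma^2$ (apply $t\le e^t$ inside the defining sub-Weibull integral), and $\|x-x'\|^4\le 8(\|x\|^4+\|x'\|^4)$ together with Fubini makes all product integrals absolutely convergent; moreover the inner-product term in the expansion of $\|x-f_i(t)\|^2$ integrates to zero against $\mu_i$ by centering, so it contributes nothing at order $t$. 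Dividing $\mathsf S_1(\bar\mu_{0,t},\bar\mu_{1,t})-\mathsf S_1(\bar\mu_0,\bar\mu_1)$ by $t$ and letting $t\downarrow 0$ (the $t^2$ and higher-order terms being $o(1)$) then yields precisely the four-line expression in the statement.

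The argument is entirely bookkeeping and I do not anticipate a genuine obstacle; the only point requiring (minor) care is the interchange of the order of integration and the verification that each integral appearing in the expansion is finite — and this is the sole place where the $4$-sub-Weibull assumption is invoked, via the uniform bound $M_4\le 4\sigma^2$. In particular, in contrast with the $\mathsf S_{2,\varepsilon}$ term treated in \cref{thm:entropicGWStability}, no control of EOT potentials, no weak convergence of optimal couplings, and no $\Gamma$-convergence is needed for this lemma.
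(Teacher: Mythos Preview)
Your proposal is correct and matches the paper's own proof essentially verbatim: the paper simply states that the argument of \cref{lem:semidiscreteGateauxD1} carries over, with the $4$-sub-Weibull assumption replacing compact support to justify finiteness of the relevant fourth-moment integrals, and omits further details. Your outline reproduces precisely those steps and correctly identifies the only point of care.
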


The proof of \cref{lem:entropicGateauxD1} follows the same lines as the proof of \cref{lem:semidiscreteGateauxD1}. The only distinction is that boundedness of all relevant integrals is justified using the $4$-sub-Weibull assumption rather than compact support. The proof is thus ommitted for brevity.

\begin{lemma}
    \label{lem:entropicS1Lipschitz}

    For any pairs $(\nu_0, \nu_1),(\rho_0, \rho_1)\in\mathcal P_{\mu_0}\times \mathcal P_{\mu_1}$, $\left|\mathsf S_{1}(\bar\nu_0,\bar\nu_1)-\mathsf S_{1}(\bar\rho_0,\bar\rho_1)\right|\leq C\|\nu_0\otimes \nu_1-\rho_0\otimes\rho_1\|_{\infty,\mathcal F^{\oplus}}$ for some constant $C>0$ which is independent of $(\nu_0, \nu_1),(\rho_0, \rho_1)$. 
\end{lemma}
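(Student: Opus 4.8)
The plan is to mirror the structure of the proof of \cref{lem:semidiscreteS1Lipschitz} almost verbatim, replacing every appeal to compactness of $\mathcal{X}_0,\mathcal{X}_1$ by the $4$-sub-Weibull estimates from \cref{lem:4suWeibullAssn}. Recall that $\mathsf S_1(\bar\nu_0,\bar\nu_1)$ decomposes, via translation invariance of the fourth-moment double integral, into
\[
\mathsf S_1(\bar\nu_0,\bar\nu_1)=\sum_{i=0}^1\iint\|x-x'\|^4\,d\nu_i(x)\,d\nu_i(x')-4M_2(\bar\nu_0)M_2(\bar\nu_1).
\]
First I would treat the double-integral terms: writing the telescoping identity
\[
\iint\|x-x'\|^4 d\nu_i\,d\nu_i-\iint\|x-x'\|^4 d\rho_i\,d\rho_i=(\nu_i-\rho_i)\!\left(\int\|\cdot-x'\|^4 d\nu_i(x')+\int\|\cdot-x'\|^4 d\rho_i(x')\right),
\]
and noting that the integrand $\xi_i(x)=\int\|x-x'\|^4 d\nu_i(x')+\int\|x-x'\|^4 d\rho_i(x')$ is a polynomial of degree $4$ in $x$ whose coefficients are moments of $\nu_i,\rho_i$ of order at most $4$. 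Since $\nu_i,\rho_i\in\mathcal P_{4,\sigma}$, those moments are bounded by a constant depending only on $\sigma$ (by the argument in \cref{lem:4suWeibullAssn}: $M_4\le 2\sigma^2$, $M_2\le 2\sigma$, $M_1,M_3\le\sqrt{M_2 M_4}$), so there is a constant $c_i>0$ depending only on $\sigma,d_i$ with $c_i\xi_i\in\mathcal F_i$ — here $\mathcal F_i$ is the sub-Weibull function class with $|f|\le 1+\|\cdot\|^4$ and controlled derivatives, and a degree-$4$ polynomial with uniformly bounded coefficients trivially satisfies these bounds after rescaling. Then $|(\nu_i-\rho_i)(\xi_i)|=c_i^{-1}|(\nu_i-\rho_i)(c_i\xi_i)|\le c_i^{-1}\sup_{f\in\mathcal F_i}|(\nu_i-\rho_i)(f)|\le c_i^{-1}\|\nu_0\otimes\nu_1-\rho_0\otimes\rho_1\|_{\infty,\mathcal F^{\oplus}}$, using that $0\in\mathcal F_j$ for $j\neq i$ so that a one-sided functional is dominated by the product-class sup-norm (exactly as in \eqref{eq:classBound}).

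Next I would handle the product term $4M_2(\bar\nu_0)M_2(\bar\nu_1)$. Expand $M_2(\bar\nu_i)=\int\|x-\mathbb E_{\nu_i}[X]\|^2 d\nu_i(x)$ and telescope in both factors:
\[
M_2(\bar\nu_0)M_2(\bar\nu_1)-M_2(\bar\rho_0)M_2(\bar\rho_1)=M_2(\bar\nu_1)\big(M_2(\bar\nu_0)-M_2(\bar\rho_0)\big)+M_2(\bar\rho_0)\big(M_2(\bar\nu_1)-M_2(\bar\rho_1)\big).
\]
Each difference $M_2(\bar\nu_i)-M_2(\bar\rho_i)$ should be rewritten, analogously to \cref{lem:semidiscreteS1Lipschitz}, as $(\nu_i-\rho_i)$ applied to the quadratic $\zeta_i(x)=\|x-\mathbb E_{\nu_i}[X]\|^2$ plus an error coming from the mismatch between $\mathbb E_{\nu_i}[X]$ and $\mathbb E_{\rho_i}[X]$; but in fact it is cleaner to write $M_2(\bar\nu_i)=M_2(\nu_i)-\|\mathbb E_{\nu_i}[X]\|^2$ and bound $|M_2(\nu_i)-M_2(\rho_i)|=|(\nu_i-\rho_i)(\|\cdot\|^2)|$ and $\big|\|\mathbb E_{\nu_i}[X]\|^2-\|\mathbb E_{\rho_i}[X]\|^2\big|\le(\|\mathbb E_{\nu_i}[X]\|+\|\mathbb E_{\rho_i}[X]\|)\,\|\mathbb E_{\nu_i}[X]-\mathbb E_{\rho_i}[X]\|$ separately. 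The function $\|\cdot\|^2$ and each coordinate projection $x\mapsto x_j$ lie in $\mathcal F_i$ up to a $\sigma,d_i$-dependent rescaling (they are polynomials of degree $\le 2$), so $|(\nu_i-\rho_i)(\|\cdot\|^2)|$ and $\|\mathbb E_{\nu_i}[X]-\mathbb E_{\rho_i}[X]\|\le\sqrt{d_i}\max_j|(\nu_i-\rho_i)(x_j)|$ are each $\lesssim_{\sigma,d_i}\|\nu_0\otimes\nu_1-\rho_0\otimes\rho_1\|_{\infty,\mathcal F^{\oplus}}$. The prefactors $M_2(\bar\nu_1)$, $M_2(\bar\rho_0)$, $\|\mathbb E_{\nu_i}[X]\|+\|\mathbb E_{\rho_i}[X]\|$ are all bounded by constants depending only on $\sigma$ by \cref{lem:4suWeibullAssn}. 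Collecting the bounds from both paragraphs and summing gives the claimed inequality with $C=C(\sigma,d_0,d_1)$ independent of the four measures.

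I do not expect any genuine obstacle here; the only point requiring a little care is verifying that the polynomial witness functions $\xi_i,\zeta_i$ and the coordinate maps, after a fixed rescaling, really lie in the sub-Weibull class $\mathcal F_i=\{f\in\mathcal C^{\bar k}:|f|\le1+\|\cdot\|^4,\ |D^\alpha f|\le1+\|\cdot\|^{3\bar k}\ \forall\,0<|\alpha|\le\bar k\}$ — this is immediate since a polynomial of degree at most $4$ with coefficients bounded by a $\sigma$-dependent constant has all these growth bounds up to an overall constant, and we can absorb that constant by scaling. The rest is bookkeeping identical to \cref{lem:semidiscreteS1Lipschitz}, so the write-up can be kept short, explicitly flagging that compactness of the supports there is replaced by the uniform moment bounds of \cref{lem:4suWeibullAssn} here.
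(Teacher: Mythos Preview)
Your proposal is correct and follows essentially the same approach as the paper: the paper's proof explicitly says that the argument of \cref{lem:semidiscreteS1Lipschitz} carries over verbatim, with compact-support bounds replaced by the $4$-sub-Weibull moment control, and then verifies that the witness functions $\xi_i,\zeta_i$ lie (after rescaling) in $\mathcal F_i$ --- exactly what you outline. Your alternative treatment of the product term via $M_2(\bar\nu_i)=M_2(\nu_i)-\|\mathbb E_{\nu_i}[X]\|^2$ is a harmless variant of the paper's telescoping and works just as well.
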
  
\begin{proof}
The same arguments from the proof of \cref{lem:semidiscreteS1Lipschitz} apply in this setting with the distinction that the relevant integrals can be bounded (uniformly in the choice of $\nu_0,\nu_1,\rho_0,\rho_1$) by appealing to the $4$-sub-Weibull assumption.

For completeness, we underscore that the functions \[
\begin{aligned}
\xi_0:x\in\mathbb R^{d_0}&\mapsto \int \|x-x'\|^4d\nu_0(x')+\int \|x-x'\|^4d\rho_0(x'),
\\
\xi_1:x\in\mathbb R^{d_1}&\mapsto \int \|x-x'\|^4d\nu_1(x')+\int \|x-x'\|^4d\rho_1(x'),
\\
\zeta_0:x\in\mathbb R^{d_0}&\mapsto \|x-\mathbb E_{\mu_0}[X]\|^2,
\\
\zeta_1:x\in\mathbb R^{d_1}&\mapsto \|x-\mathbb E_{\nu_1}[X]\|^2,
\end{aligned}
\]
are smooth and have derivatives (of all orders) which can be bounded as $1+\|\cdot\|^4$ up to a multiplicative constant which is independent of $\nu_0,\nu_1,\rho_0,\rho_1$. Conclude that $\xi_0,\zeta_0\in\mathcal F_0$ and $\xi_1,\zeta_1\in\mathcal F_1$ so that the remainder of the proof follows analogously to the proof of \cref{lem:semidiscreteS1Lipschitz}.   
\end{proof}

\begin{lemma}
    \label{lem:entropicOTGateaux}
If $(\mu_0,\mu_1)\in\mathcal P_{4,\sigma}(\mathbb R^{d_0})\times \mathcal P_{4,\sigma}(\mathbb R^{d_1})$ for some $\sigma>0$, then, for any $\mathbf{A}\in B_{\mathrm{F}}(M)$,
    \[ 
    \begin{aligned}
    \lim_{t\downarrow 0} \frac{\mathsf{OT}_{\mathbf{A},\varepsilon}(\bar \mu_{0,t},\bar \mu_{1,t})-\mathsf{OT}_{\mathbf{A},\varepsilon}(\bar \mu_{0},\bar \mu_{1})}{t}&=\int \bar\varphi_{0}^{{\mathbf{A},\varepsilon}}+8\mathbb E_{(X,Y)\sim \pi^{\mathbf{A},\varepsilon}}[\|Y\|^2X]^{\intercal}(\cdot)d(\nu_0-\mu_0)
    \\
    &+\int \bar\varphi_{1}^{{\mathbf{A},\varepsilon}}+8\mathbb E_{(X,Y)\sim \pi^{\mathbf{A},\varepsilon}}[\|X\|^2Y]^{\intercal}(\cdot)d(\nu_1-\mu_1). 
    \end{aligned}
    \]
\end{lemma}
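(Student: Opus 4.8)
The plan is to mirror the structure of the semi-discrete derivation (\cref{lem:semidiscreteOTGateaux}), replacing the semi-discrete-specific ingredients with the regularity and uniqueness results for entropic OT. As in that proof, I would first decompose
\[
\frac{\mathsf{OT}_{\mathbf{A},\varepsilon}(\bar \mu_{0,t},\bar \mu_{1,t})-\mathsf{OT}_{\mathbf{A},\varepsilon}(\bar \mu_{0},\bar \mu_{1})}{t}
=\frac{\mathsf{OT}_{\mathbf{A},\varepsilon}(\bar \mu_{0,t},\bar \mu_{1,t})-\mathsf{OT}_{\mathbf{A},\varepsilon}(\tilde \mu_{0,t},\tilde \mu_{1,t})}{t}+\frac{\mathsf{OT}_{\mathbf{A},\varepsilon}(\tilde \mu_{0,t},\tilde \mu_{1,t})-\mathsf{OT}_{\mathbf{A},\varepsilon}(\bar \mu_{0},\bar \mu_{1})}{t},
\]
where $\tilde\mu_{i,t}=(\Id-\mathbb E_{\mu_i}[X])_{\sharp}\mu_{i,t}$, so that the two pieces are a "fixed-support" perturbation and a pure-translation perturbation, handled separately.

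For the first (fixed-support) piece, I would use the EOT dual \eqref{eq:EOTDual}: for any pair of EOT potentials $(\varphi_{0,t}^{\mathbf{A},\varepsilon},\varphi_{1,t}^{\mathbf{A},\varepsilon})$ for $\mathsf{OT}_{\mathbf{A},\varepsilon}(\tilde\mu_{0,t},\tilde\mu_{1,t})$ satisfying the Schr\"odinger system, they remain feasible (in fact stay dual-feasible in the soft-constraint sense) for all nearby marginals, and the exponential penalty term is invariant under the translation-free perturbation since $\tilde\mu_{i,t}$ and $\bar\mu_i$ have the same support. This yields two-sided bounds
\[
\mathsf{OT}_{\mathbf{A},\varepsilon}(\tilde\mu_{0,t},\tilde\mu_{1,t})-\mathsf{OT}_{\mathbf{A},\varepsilon}(\bar\mu_0,\bar\mu_1)\ \{\leq,\geq\}\ t\Big(\int\bar\varphi_{0,t}^{\mathbf{A},\varepsilon}d(\nu_0-\mu_0)+\int\bar\varphi_{1,t}^{\mathbf{A},\varepsilon}d(\nu_1-\mu_1)\Big)
\]
up to an $O(t^2)$ quadratic correction from the KL term (one direction uses the potentials at parameter $t$, the other at parameter $0$). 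Then I would take $t_n\downarrow0$ along an arbitrary sequence, pass to a subsequence along which $(\varphi_{0,t_n}^{\mathbf{A},\varepsilon},\varphi_{1,t_n}^{\mathbf{A},\varepsilon})$ converges locally uniformly — here I invoke \cref{prop:entropicGWPotentials}, which gives uniform polynomial growth bounds on the potentials and their derivatives, hence local equicontinuity/equiboundedness (Arzel\`a–Ascoli), and crucially gives uniqueness up to additive constants of the limiting potentials. The limit must be an EOT potential pair for $\mathsf{OT}_{\mathbf{A},\varepsilon}(\bar\mu_0,\bar\mu_1)$ (stability of the Schr\"odinger system under weak convergence of marginals plus the growth bounds to justify dominated convergence), hence equals $(\varphi_0^{\mathbf{A},\varepsilon}+a,\varphi_1^{\mathbf{A},\varepsilon}-a)$; since $\nu_i-\mu_i$ has zero total mass, the constant $a$ drops and the limit of the integrals is independent of the subsequence, giving $\int\bar\varphi_0^{\mathbf{A},\varepsilon}d(\nu_0-\mu_0)+\int\bar\varphi_1^{\mathbf{A},\varepsilon}d(\nu_1-\mu_1)$.

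For the translation piece I would proceed exactly as in \cref{lem:semidiscreteOTGateaux}: write $\tau^t=(\Id+t(\mathbb E_{\nu_0}[X]-\mathbb E_{\mu_0}[X]),\Id+t(\mathbb E_{\nu_1}[X]-\mathbb E_{\mu_1}[X]))$, push couplings back and forth through $\tau^{\pm t}$, expand $c_{\mathbf{A}}\circ\tau^t$ as in \eqref{eq:costTranslationExpansion}, and use that the unique EOT plan $\pi^{\mathbf{A},\varepsilon}_t$ for $\mathsf{OT}_{\mathbf{A},\varepsilon}(\bar\mu_{0,t},\bar\mu_{1,t})$ converges weakly to the unique plan $\pi^{\mathbf{A},\varepsilon}$ for $\mathsf{OT}_{\mathbf{A},\varepsilon}(\bar\mu_0,\bar\mu_1)$ (stability of EOT plans under weak convergence of marginals, e.g.\ from the density formula $d\pi^{\star}/d\mu_0\otimes\mu_1=e^{(\varphi_0^{\star}\oplus\varphi_1^{\star}-c)/\varepsilon}$ and the potential convergence just established); the $O(t^2)$ remainder in the cost expansion is controlled uniformly by the $4$-sub-Weibull moment bounds of \cref{lem:4suWeibullAssn}. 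Assembling the two limits yields the stated derivative with $g_{0,\pi^{\mathbf{A},\varepsilon}}(x)=8\mathbb E_{\pi^{\mathbf{A},\varepsilon}}[\|Y\|^2X]^{\intercal}x$ and symmetrically for $g_{1,\pi^{\mathbf{A},\varepsilon}}$. The main obstacle is the convergence-of-potentials argument on the unbounded domain $\mathbb R^{d_0}\times\mathbb R^{d_1}$: unlike the semi-discrete case where compactness of $\mathcal X_0^\circ$ made Arzel\`a–Ascoli immediate, here I must leverage the polynomial growth estimates of \cref{prop:entropicGWPotentials} to get local uniform convergence and then argue that the tails contribute negligibly to all the integrals against the (sub-Weibull) signed measures $\nu_i-\mu_i$ — this uniform integrability/tail control, together with establishing that the limiting potential pair genuinely solves the limiting Schr\"odinger system, is where the real work lies.
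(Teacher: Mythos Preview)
Your overall strategy matches the paper's, and the translation piece is handled the same way. The gap is in the fixed-support piece. The claim that the exponential penalty is ``invariant since $\tilde\mu_{i,t}$ and $\bar\mu_i$ have the same support'' is both off-target (in this setting $(\nu_0,\nu_1)\in\mathcal P_{4,\sigma}$ need not lie in $\mathcal P_{\mu_0}\times\mathcal P_{\mu_1}$, so supports need not agree) and insufficient (shared support says nothing about the value of the penalty integral). If you perturb both marginals at once and plug the potentials $(\varphi_{0,t},\varphi_{1,t})$ for $\mathsf{OT}_{\mathbf A,\varepsilon}(\tilde\mu_{0,t},\tilde\mu_{1,t})$ into the dual at $(\bar\mu_0,\bar\mu_1)$, the residual is $\varepsilon\big[\int e^{(\varphi_{0,t}\oplus\varphi_{1,t}-c_{\mathbf A})/\varepsilon}d\bar\mu_0\otimes\bar\mu_1 - 1\big]$. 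Expanding $\bar\mu_i=\tilde\mu_{i,t}-t(\tilde\nu_i-\bar\mu_i)$ and using the Schr\"odinger system does kill the $t$-linear part, but the remaining $t^2\int e^{(\cdots)/\varepsilon}d(\tilde\nu_0-\bar\mu_0)\otimes(\tilde\nu_1-\bar\mu_1)$ requires integrating the density against the \emph{wrong} product measure; the available bounds $\varphi_i\leq K(1+\|\cdot\|^2)$ and $-c_{\mathbf A}(x,y)\lesssim\|x\|^2\|y\|^2$ do not obviously make this finite (let alone uniformly bounded in $t$) for small $\varepsilon$.

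The paper sidesteps this by perturbing one marginal at a time: it bounds $\mathsf{OT}_{\mathbf A,\varepsilon}(\tilde\mu_{0,t},\tilde\mu_{1,t})-\mathsf{OT}_{\mathbf A,\varepsilon}(\tilde\mu_{0,t},\bar\mu_1)$ and $\mathsf{OT}_{\mathbf A,\varepsilon}(\tilde\mu_{0,t},\bar\mu_1)-\mathsf{OT}_{\mathbf A,\varepsilon}(\bar\mu_0,\bar\mu_1)$ separately. Because the potentials of \cref{prop:entropicGWPotentials} satisfy the Schr\"odinger system on all of $\mathbb R^{d_0}\times\mathbb R^{d_1}$, one gets e.g.\ $\int e^{(\varphi_{0,t}\oplus\varphi_{1,t}-c_{\mathbf A})/\varepsilon}d\tilde\mu_{0,t}\otimes\bar\mu_1 = \int_{\mathbb R^{d_1}} 1\,d\bar\mu_1 = 1$ \emph{exactly} (integrate out the unchanged marginal first), so the penalty correction vanishes identically and the bounds reduce to clean linear terms like $t\int\bar\varphi_1^{(\tilde\mu_{0,t},\tilde\mu_{1,t})}d(\nu_1-\mu_1)$ with no remainder. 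After that, your Arzel\`a--Ascoli/dominated-convergence program (local uniform convergence of potentials to a solution of the limiting Schr\"odinger system, uniqueness up to constants, sub-Weibull tails to pass limits through the integrals) is exactly what the paper does.
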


\begin{proof}
    Fix $\mathbf{A}\in B_{\mathrm{F}}(M)$. Proceeding as in the semi-discrete case (\cref{lem:semidiscreteOTGateaux}), consider 
    \[\begin{aligned}\lim_{t\downarrow 0}\frac{\mathsf{OT}_{\mathbf{A},\varepsilon}(\bar \mu_{0,t},\bar \mu_{1,t})-\mathsf{OT}_{\mathbf{A},\varepsilon}(\bar \mu_{0},\bar \mu_{1})}{t}&=\lim_{t\downarrow 0}\left(\frac{\mathsf{OT}_{\mathbf{A},\varepsilon}(\bar \mu_{0,t},\bar \mu_{1,t})-\mathsf{OT}_{\mathbf{A},\varepsilon}(\tilde \mu_{0,t},\tilde \mu_{1,t})}{t}\right.
    \\&\hspace{11em}\left.+\frac{\mathsf{OT}_{\mathbf{A},\varepsilon}(\tilde \mu_{0,t},\tilde \mu_{1,t})-\mathsf{OT}_{\mathbf{A},\varepsilon}(\bar \mu_{0},\bar \mu_{1})}{t}\right)
    \end{aligned}
    \]
    where $(\tilde \mu_{0,t},\tilde \mu_{1,t})=\left((\Id-\mathbb E_{\mu_0}[X])_{\sharp}\mu_{0,t},(\Id-\mathbb E_{\mu_1}[X])_{\sharp}\mu_{1,t}\right)$. We compute each limit on the right hand side separately.

    First, let $(\varphi_{0}^{(\nu_0,\nu_1)},\varphi_{1}^{(\nu_0,\nu_1)})$ be some choice of EOT potentials for $\mathsf{OT}_{\mathbf{A},\varepsilon}(\nu_0,\nu_1)$ from \cref{prop:entropicGWPotentials} for any choice of $4$-sub-Weibull distributions $(\nu_0,\nu_1)$ and observe that  
    \begin{equation}
    \label{eq:entropicGateauxCentredUpper1}
    \begin{aligned}
    \mathsf{OT}_{\mathbf{A},\varepsilon}(\tilde\mu_{0,t},\tilde\mu_{1,t})-\mathsf{OT}_{\mathbf{A},\varepsilon}(\tilde\mu_{0,t},\bar\mu_{1})
       &\leq \int \varphi_{0}^{(\tilde \mu_{0,t},\tilde \mu_{1,t})}d\tilde \mu_{0,t}+\int \varphi_{1}^{(\tilde \mu_{0,t},\tilde \mu_{1,t})}d\tilde \mu_{1,t}-\int \varphi_{0}^{(\tilde \mu_{0,t},\tilde \mu_{1,t})}d\tilde \mu_{0,t}
       \\
       &-\int \varphi_{1}^{(\tilde \mu_{0,t},\tilde \mu_{1,t})}d\bar \mu_{1}+\varepsilon \int e^{\frac{\varphi_{0}^{(\tilde \mu_{0,t},\tilde \mu_{1,t})}\oplus\varphi_{1}^{(\tilde \mu_{0,t},\tilde \mu_{1,t})}-c_{\mathbf{A}}}\varepsilon} d\tilde \mu_{0,t}\otimes \bar \mu_1-\varepsilon
       \\
       &=t\int \bar\varphi_1^{(\tilde  \mu_{0,t},\tilde  \mu_{1,t})}d(\nu_1-\mu_1)
    \end{aligned}
    \end{equation}
    where we have used the fact that $\int e^{\frac{\varphi_{0}^{(\tilde \mu_{0,t},\tilde \mu_{1,t})}\oplus\varphi_{1}^{(\tilde \mu_{0,t},\tilde \mu_{1,t})}-c_{\mathbf{A}}}\varepsilon}d\tilde \mu_{0,t}\equiv 1$ on $\mathbb R^{d_1}$ (as the EOT potentials solve the relevant Shr\"odinger systems) and we recall the notation $(\bar g_0,\bar g_1)=(g_0(\cdot-\mathbb E_{\mu_0}[X]), g_1(\cdot-\mathbb E_{\mu_1}[X]))$. Similarly, 
    \begin{equation}
    \label{eq:entropicGateauxCentredUpper2}
    \begin{aligned}
    \mathsf{OT}_{\mathbf{A},\varepsilon}(\tilde\mu_{0,t},\tilde\mu_{1,t})-\mathsf{OT}_{\mathbf{A},\varepsilon}(\tilde\mu_{0,t},\bar\mu_{1})
       &\geq t\int \bar\varphi_1^{(\tilde  \mu_{0,t},\bar \mu_{1})}d(\nu_1-\mu_1),
       \\
       \mathsf{OT}_{\mathbf{A},\varepsilon}(\tilde\mu_{0,t},\bar\mu_{1})-\mathsf{OT}_{\mathbf{A},\varepsilon}(\bar\mu_{0},\bar\mu_{1})
       &\leq t\int \bar\varphi_0^{(\tilde  \mu_{0,t},\bar \mu_{1})}d(\nu_0-\mu_0),
       \\
       \mathsf{OT}_{\mathbf{A},\varepsilon}(\tilde\mu_{0,t},\bar\mu_{1})-\mathsf{OT}_{\mathbf{A},\varepsilon}(\bar\mu_{0},\bar\mu_{1})
       &\geq t\int \bar\varphi_0^{(\bar  \mu_{0},\bar \mu_{1})}d(\nu_0-\mu_0),
    \end{aligned}
    \end{equation}
    whereby 
    \begin{equation}
    \label{eq:entropicOTUpper}
    \begin{aligned}
        \left|\mathsf{OT}_{\mathbf{A},\varepsilon}(\tilde\mu_{0,t},\tilde\mu_{1,t})-\mathsf{OT}_{\mathbf{A},\varepsilon}(\bar\mu_{0},\bar\mu_{1})\right|\leq t\left( \left|\int \bar\varphi_0^{(\tilde  \mu_{0,t},\bar \mu_{1})}d(\nu_0-\mu_0)+\int \bar\varphi_1^{(\tilde  \mu_{0,t},\tilde  \mu_{1,t})}d(\nu_1-\mu_1)\right|\right.&
        \\
        &\hspace{-19em}\bigvee\left.\left|\int \bar\varphi_0^{(\bar  \mu_{0},\bar \mu_{1})}d(\nu_0-\mu_0)+\int \bar\varphi_1^{(\tilde  \mu_{0,t},\bar \mu_{1})}d(\nu_1-\mu_1)\right| \right)
    \end{aligned}
    \end{equation}
        It suffices, therefore to show that both terms in the absolute values converge to $\int \bar\varphi_0^{(\bar  \mu_{0},\bar \mu_{1})}d(\nu_0-\mu_0)+\int \bar\varphi_1^{(\bar  \mu_{0},\bar \mu_{1})}d(\nu_1-\mu_1)$ as $t\downarrow 0$.  

        To this end,  fix an arbitrary sequence $t_n\downarrow 0$ and a subsequence $t_{n'}\downarrow 0$. By the Arz{\`e}la-Ascoli theorem, there is a further subsequence $t_{n''}\downarrow 0$ along which $(\varphi_0^{(\tilde  \mu_{0,t_{n''}},\tilde \mu_{1,t_{n''}})},\varphi_1^{(\tilde  \mu_{0,t_{n''}},\tilde \mu_{1,t_{n''}})})\to (\psi_0,\psi_1)$  uniformly on compact sets (in particular pointwise) where $( \psi_0,\psi_1)$ is a pair of continuous functions. Given that the EOT potentials satisfy the estimates from \cref{prop:entropicGWPotentials} and that $\mu_0,\mu_1,\nu_0,\nu_1$ are $4$-sub-Weibull, we may apply the dominated convergence theorem to obtain that, for every $(x,y)\in\mathbb R^{d_0}\times \mathbb R^{d_1}$, 
        \[
        \begin{aligned} 
            e^{-\frac{\varphi_0^{(\tilde  \mu_{0,t_{n''}},\tilde \mu_{1,t_{n''}})}(x)}\varepsilon}=\int
             e^{\frac{\varphi_1^{(\tilde  \mu_{0,t_{n''}},\tilde \mu_{1,t_{n''}})}-c_{\mathbf{A}}(x,\cdot)}\varepsilon}d \tilde \mu_{1,t_{n''}}&\to  \int e^{\frac{\psi_1-c_{\mathbf{A}}(x,\cdot)}\varepsilon}d \bar \mu_{1}=e^{-\frac{\psi_0(x)}\varepsilon},\\  
            e^{-\frac{\varphi_1^{(\tilde  \mu_{0,t_{n''}},\tilde \mu_{1,t_{n''}})}(y)}\varepsilon}=\int e^{\frac{\varphi_0^{(\tilde  \mu_{0,t_{n''}},\tilde \mu_{1,t_{n''}})}-c_{\mathbf{A}}(\cdot,y)}\varepsilon}d \tilde \mu_{0,t_{n''}}&\to  \int e^{\frac{\psi_0-c_{\mathbf{A}}(\cdot,y)}\varepsilon}d \bar \mu_{0}=e^{-\frac{\psi_1(y)}\varepsilon},
        \end{aligned}
        \]
        so that $(\psi_0,\psi_1)$ solve the Schr\"odinger system for $\mathsf{OT}_{\mathbf{A},\varepsilon}(\bar \mu_0,\bar \mu_1)$ on $\mathbb R^{d_0}\times \mathbb R^{d_1}$. Conclude from \cref{prop:entropicGWPotentials} that $(\psi_0,\psi_1)$ coincides with $(\varphi_0^{(\bar  \mu_{0},\bar \mu_{1})},\varphi_1^{(\bar  \mu_{0},\bar \mu_{1})})$ up to additive constants whence $\int \bar\varphi_1^{(\tilde \mu_{0,t_{n''}},\tilde \mu_{1,t_{n''}})}d(\nu_1-\mu_1)\to \int \bar \varphi_1^{(\bar \mu_0,\bar \mu_1)}d(\nu_1-\mu_1)$. By an analogous argument it holds that $\int \bar\varphi_0^{(\tilde  \mu_{0,t_{n'''}},\bar \mu_{1})}d(\nu_0-\mu_0)\to\int \bar\varphi_0^{(\bar  \mu_{0},\bar \mu_{1})}d(\nu_0-\mu_0), 
        \int \bar\varphi_1^{(\tilde  \mu_{0,t_{n'''}},\bar \mu_{1})}d(\nu_1-\mu_1)\to \int \bar\varphi_1^{(\bar  \mu_{0},\bar \mu_{1})}d(\nu_1-\mu_1)$ up to passing to a further subsequence $t_{n'''}$. Conclude from equations \eqref{eq:entropicGateauxCentredUpper1} and \eqref{eq:entropicGateauxCentredUpper2} that 
        \[
            \lim_{n'''\to \infty}\frac{\mathsf{OT}_{\mathbf{A},\varepsilon}(\tilde \mu_{0,t_{n'''}},\tilde \mu_{1,t_{n'''}})-\mathsf{OT}_{\mathbf{A},\varepsilon}(\bar \mu_{0},\bar \mu_{1})}{t_{n'''}} = \int \bar\varphi_0^{(\bar \mu_0,\bar \mu_1)}d(\nu_0-\mu_0)+ \int \bar\varphi_1^{(\bar \mu_0,\bar \mu_1)}d(\nu_1-\mu_1). 
        \]
        As the original choice of subsequence was arbitrary and the limit is independent of this choice of subsequence, conclude that 
       \begin{equation}
       \label{eq:entropicOTLimit1}
       \lim_{t\downarrow 0}     
            \frac{\mathsf{OT}_{\mathbf{A},\varepsilon}(\tilde \mu_{0,t},\tilde \mu_{1,t})-\mathsf{OT}_{\mathbf{A},\varepsilon}(\bar \mu_{0},\bar \mu_{1})}{t} =\int \bar\varphi_0^{(\bar \mu_0,\bar \mu_1)}d(\nu_0-\mu_0)+ \int \bar\varphi_1^{(\bar \mu_0,\bar \mu_1)}d(\nu_1-\mu_1).
       \end{equation}

        On the other hand, adopting the notation $\tau^t$ from the proof of \cref{lem:semidiscreteOTGateaux} and letting $\bar \pi_t,\tilde \pi_t$ be optimal for $\mathsf{OT}_{\mathbf{A},\varepsilon}(\bar \mu_{0,t},\bar \mu_{1,t})$ and $\mathsf{OT}_{\mathbf{A},\varepsilon}(\tilde \mu_{0,t},\tilde \mu_{1,t})$ respectively, we have that 
        \begin{equation}
        \label{eq:entropicOTLower}
        \begin{aligned}
            \mathsf{OT}_{\mathbf{A},\varepsilon}(\bar \mu_{0,t},\bar \mu_{1,t})-\mathsf{OT}_{\mathbf{A},\varepsilon}(\tilde \mu_{0,t},\tilde \mu_{1,t})&\leq \int c_{\mathbf{A}}\circ \tau^{-t}-c_{\mathbf{A}} d\tilde \pi_t,
            \\ 
            \mathsf{OT}_{\mathbf{A},\varepsilon}(\bar \mu_{0,t},\bar \mu_{1,t})-\mathsf{OT}_{\mathbf{A},\varepsilon}(\tilde \mu_{0,t},\tilde \mu_{1,t})&\geq \int c_{\mathbf{A}}-c_{\mathbf{A}}\circ \tau^{t} d\bar \pi_t,
        \end{aligned}
        \end{equation}
        as $\mathsf{D}_{\mathrm{KL}}$ is translation invariant. 
        Using the characterization of the Radon-Nikodym derivative of the EOT plan (see \cref{sec:EOT}), we have
       \[
       \begin{aligned}
            &\int c_{\mathbf{A}}\circ \tau^{-t}-c_{\mathbf{A}} d\tilde \pi_t\\&=\int (c_{\mathbf{A}}\circ \tau^{-t}-c_{\mathbf{A}}) \frac{d\tilde \pi_t}{d\tilde \mu_{0,t}\otimes\tilde \mu_{1,t} }d\tilde \mu_{0,t}\otimes \tilde \mu_{1,t}
            \\
            &=\int \left((c_{\mathbf{A}}\circ \tau^{-t}-c_{\mathbf{A}}) e^{\frac{\tilde \varphi_{0,t}\oplus\tilde \varphi_{1,t}-c_{\mathbf{A}}}\varepsilon}\right)(\cdot-\mathbb E_{\mu_0}[X],\cdot-\mathbb E_{\mu_1}[X])d\left( \mu_{0}+t(\nu_0-\mu_0)\right)\otimes \left( \mu_{1}+t(\nu_1-\mu_1)\right),
        \end{aligned}
        \]
        where $(\tilde \varphi_{0,t},\tilde \varphi_{1,t})$ is a pair of EOT potentials for $\mathsf{OT}_{\mathbf{A},\varepsilon}(\tilde \mu_{0,t},\tilde \mu_{1,t})$. Arguing as in the first part of the proof, if we fix a sequence $t_n\downarrow 0$ and a subsequence $t_{n'}\downarrow 0$, there exists a further subsequence $t_{n''}\downarrow 0$ along which $\tilde \varphi_{0,t_{n''}}\oplus\tilde \varphi_{1,t_{n''}}\to \varphi_{0}\oplus\varphi_{1}$ pointwise on $\mathbb R^{d_0}\times \mathbb R^{d_1}$ where $(\varphi_{0},\varphi_{1})$ is a pair of EOT potentials for $\mathsf{OT}_{\mathbf{A},\varepsilon}(\bar\mu_{0},\bar \mu_{1})$ so that $e^{\frac{\tilde \varphi_{0,t_{n''}}\oplus\tilde \varphi_{1,t_{n''}}-c_{\mathbf{A}}}\varepsilon}\to e^{\frac{ \varphi_{0}\oplus\varphi_{1}-c_{\mathbf{A}}}\varepsilon} $ pointwise and we highlight that 
 $e^{\frac{ \varphi_{0}\oplus\varphi_{1}-c_{\mathbf{A}}}\varepsilon}=\frac{d\pi^{\mathbf{A}}}{d\bar\mu_0\otimes \bar \mu_1}$ $\bar\mu_0\otimes \bar \mu_1$-a.e. where 
 $\pi^{\mathbf{A}}$ is the EOT plan for $\mathsf{OT}_{\mathbf{A},\varepsilon}(\bar\mu_{0},\bar \mu_{1})$. From \eqref{eq:costTranslationExpansion}, 
        we have that
       \[
       \begin{aligned}
        t^{-1}\left(c_{{\mathbf{A}}}\circ \tau^{-t}(x,y)-c_{{\mathbf{A}}}(x,y)\right)
        &\to 8\langle x,\mathbb E_{\nu_0}[X]-\mathbb E_{\mu_0}[X]\rangle\|y\|^2+8\langle y,\mathbb E_{\nu_1}[X]-\mathbb E_{\mu_1}[X]\rangle\|x\|^2
        \\
        &+32\left(\mathbb E_{\nu_0}[X]-\mathbb E_{\mu_0}[X]\right)^{\intercal}{\mathbf{A}}y+32x^{\intercal}{\mathbf{A}}\left(\mathbb E_{\nu_1}[X]-\mathbb E_{\mu_1}[X]\right)
       \end{aligned}
       \]
       pointwise and, from the potential estimates, $t^{-1}\left(c_{{\mathbf{A}}}\circ \tau^{-t}(x,y)-c_{{\mathbf{A}}}(x,y)\right)e^{\frac{\tilde \varphi_{0,t}\oplus\tilde \varphi_{1,t}-c_{\mathbf{A}}}{\varepsilon}}$ is equiintegrable (in $t\in[0,1]$, $\mathbf{A}\in B_{\mathrm{F}}(M)$) with respect to any product of $4$-sub-Weibull distributions. Letting $h$ denote the limit in the above display, the dominated convergence theorem yields
       \[
       \begin{aligned}
            t_{n''}^{-1}\int (c_{{\mathbf{A}}}\circ \tau^{-t_{n''}}-c_{{\mathbf{A}}})d\tilde \pi_{t_{n''}}&\to\int \left(h\frac{d \pi^{\mathbf{A}}}{d\bar\mu_0\otimes \bar \mu_1}\right)(\cdot-\mathbb E_{\mu_0}[X],\cdot-\mathbb E_{\mu_1}[X])d\mu_0\otimes \mu_1
            \\
            &=\int hd\pi^{\mathbf{A}}
            \\
            &=\int 8\langle x,\mathbb E_{\nu_0}[X]-\mathbb E_{\mu_0}[X]\rangle\|y\|^2+8\langle y,\mathbb E_{\nu_1}[X]-\mathbb E_{\mu_1}[X]\rangle\|x\|^2d\pi^{\mathbf{A}}(x,y), 
       \end{aligned}
       \]
      as the marginals of $\pi^{\mathbf{A}}$ are mean-zero. Since the limit is independent of the choice of original subsequence, $t^{-1}\int (c_{{\mathbf{A}}}\circ \tau^{-t}-c_{{\mathbf{A}}})d\tilde \pi_{t}$ converges to the same limit. 
       The same arguments apply to $\int c_{\mathbf{A}}-c_{\mathbf{A}}\circ \tau^td\bar\pi_t$ so that 
       \begin{equation}
            \label{eq:entropicOTLimit2}
            \begin{aligned}
            \frac{\mathsf{OT}_{\mathbf{A},1}(\bar \mu_{0,t},\bar\mu_{1,t})-\mathsf{OT}_{\mathbf{A},1}(\tilde \mu_{0,t},\tilde\mu_{1,t})}{t}&\to \int 8\langle x,\mathbb E_{\nu_0}[X]-\mathbb E_{\mu_0}[X]\rangle\|y\|^2\\&\hspace{7em}+8\langle y,\mathbb E_{\nu_1}[X]-\mathbb E_{\mu_1}[X]\rangle\|x\|^2d\pi^{\mathbf{A}}(x,y).
            \end{aligned}
       \end{equation}
        Combining \eqref{eq:entropicOTLimit1} and \eqref{eq:entropicOTLimit2} proves the claim. 
\end{proof}

\begin{lemma}
    \label{lem:entropicS2Gateaux}
If $(\mu_0,\mu_1)\in\mathcal P_{4,\sigma}(\mathbb R^{d_0})\times \mathcal P_{4,\sigma}(\mathbb R^{d_1})$ for some $\sigma>0$ and $\mathcal A=\argmin_{B_{\mathrm{F}}(M)}\Phi_{(\bar \mu_0,\bar\mu_1)}$, then 
   \[
    \begin{aligned}
        &\lim_{t\downarrow 0 }\frac{\mathsf S_{2,\varepsilon}(\bar\mu_{0,t},\bar\mu_{1,t})-\mathsf S_{2,\varepsilon}(\bar \mu_0,\bar \mu_1)}{t}= \inf_{\mathbf{A}\in\mathcal A}\left\{
             \int g_{0,\pi^{\mathbf{A}}}+\bar{\varphi}_0^{\mathbf{A}}d(\nu_0-\mu_0)+\int g_{1,\pi^{\mathbf{A}}}+\bar{\varphi}_1^{\mathbf{A}}d(\nu_1-\mu_1)
            \right\},
    \end{aligned}
    \]   
    where 
    \[
        \begin{aligned}
           g_{0,\pi}&= 8\mathbb E_{(X,Y)\sim \pi}[\|Y\|^2X]^{\intercal}(\cdot),
            \\
            g_{1,\pi}&=8\mathbb E_{(X,Y)\sim \pi}[\|X\|^2Y]^{\intercal}(\cdot),
        \end{aligned}
    \]
    $\bar\varphi_i=\varphi_i(\cdot-\mathbb E_{\mu_i}[X])$ for $i=0,1$, and, for $\mathbf{A}\in \mathcal A$, $\pi^{\mathbf{A}}$ is the unique OT plan for $\OT_{\mathbf{A}}(\bar \mu_0,\bar \mu_1)$, and $(\varphi_0^{\mathbf{A}},\varphi_1^{\mathbf{A}})$ is any choice of  EOT potentials from \cref{prop:entropicGWPotentials}.
\end{lemma}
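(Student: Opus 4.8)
\textbf{Proof plan for \cref{lem:entropicS2Gateaux}.}
The plan is to mirror the structure of the proof of \cref{lem:semidiscreteGateauxD2}, replacing the appeals to semi-discrete structure with the regularity and uniqueness results for entropic OT (\cref{prop:entropicGWPotentials}) and the everywhere Fr\'echet differentiability of $\Phi_{(\bar\mu_0,\bar\mu_1)}$ in the entropic case (\cref{thm:entropicVariationalMinimizers}, invoked via \cref{sec:entropicGWVar}). First I would establish the upper bound: for any fixed $\mathbf{A}^\star\in\mathcal A$, one has $\mathsf S_{2,\varepsilon}(\bar\mu_{0,t},\bar\mu_{1,t})-\mathsf S_{2,\varepsilon}(\bar\mu_0,\bar\mu_1)\le \mathsf{OT}_{\mathbf{A}^\star,\varepsilon}(\bar\mu_{0,t},\bar\mu_{1,t})-\mathsf{OT}_{\mathbf{A}^\star,\varepsilon}(\bar\mu_0,\bar\mu_1)$, and applying \cref{lem:entropicOTGateaux} to the right-hand side gives, after dividing by $t$ and letting $t\downarrow 0$ and then taking the infimum over $\mathbf{A}^\star\in\mathcal A$,
\[
\limsup_{t\downarrow 0}\frac{\mathsf S_{2,\varepsilon}(\bar\mu_{0,t},\bar\mu_{1,t})-\mathsf S_{2,\varepsilon}(\bar\mu_0,\bar\mu_1)}{t}\le \inf_{\mathbf{A}\in\mathcal A}\Big\{\int g_{0,\pi^{\mathbf{A}}}+\bar\varphi_0^{\mathbf{A}}\,d(\nu_0-\mu_0)+\int g_{1,\pi^{\mathbf{A}}}+\bar\varphi_1^{\mathbf{A}}\,d(\nu_1-\mu_1)\Big\}.
\]

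For the complementary lower bound I would argue as follows. Let $\mathbf{A}_t\in B_{\mathrm{F}}(M)$ be a minimizer of $\mathbf{A}\mapsto \|\mathbf{A}\|_{\mathrm{F}}^2+\mathsf{OT}_{\mathbf{A},\varepsilon}(\bar\mu_{0,t},\bar\mu_{1,t})$, which exists by coercivity (\cref{thm:entropicVariationalMinimizers}). Using the uniform-in-$\mathbf{A}$ estimate $|\mathsf{OT}_{\mathbf{A},\varepsilon}(\bar\mu_{0,t},\bar\mu_{1,t})-\mathsf{OT}_{\mathbf{A},\varepsilon}(\bar\mu_0,\bar\mu_1)|=O(t)$ — which follows from the chain of inequalities \eqref{eq:entropicGateauxCentredUpper1}, \eqref{eq:entropicGateauxCentredUpper2}, \eqref{eq:entropicOTUpper}, \eqref{eq:entropicOTLower} together with the potential bounds of \cref{prop:entropicGWPotentials} and the $4$-sub-Weibull tails, the $O(t)$ being uniform in $\mathbf{A}\in B_{\mathrm F}(M)$ — I conclude that $\mathbf{A}\mapsto \|\mathbf{A}\|_{\mathrm F}^2+\mathsf{OT}_{\mathbf{A},\varepsilon}(\bar\mu_{0,t},\bar\mu_{1,t})$ converges uniformly on $B_{\mathrm F}(M)$ to $\mathbf{A}\mapsto \|\mathbf{A}\|_{\mathrm F}^2+\mathsf{OT}_{\mathbf{A},\varepsilon}(\bar\mu_0,\bar\mu_1)$ as $t\downarrow 0$. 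Extending by $+\infty$ outside $B_{\mathrm F}(M)$ and invoking Propositions 7.15 and Theorem 7.33 of \cite{rockafellar2009variational} (exactly as in \cref{lem:semidiscreteGateauxD2}), every cluster point of $(\mathbf{A}_t)_{t\downarrow 0}$ lies in $\mathcal A$. Then along an arbitrary sequence $t_n\downarrow 0$, pass to a subsequence $t_{n''}$ with $\mathbf{A}_{t_{n''}}\to\mathbf{A}\in\mathcal A$, and bound $\mathsf S_{2,\varepsilon}(\bar\mu_{0,t_{n''}},\bar\mu_{1,t_{n''}})-\mathsf S_{2,\varepsilon}(\bar\mu_0,\bar\mu_1)$ from below by $\mathsf{OT}_{\mathbf{A}_{t_{n''}},\varepsilon}(\bar\mu_{0,t_{n''}},\bar\mu_{1,t_{n''}})-\mathsf{OT}_{\mathbf{A}_{t_{n''}},\varepsilon}(\bar\mu_0,\bar\mu_1)$, which via \eqref{eq:entropicGateauxCentredUpper2} and \eqref{eq:entropicOTLower} is bounded below by a sum of three integrals involving $\bar\varphi_i^{\mathbf{A}_{t_{n''}}}$ (evaluated for the measures $(\tilde\mu_{0,t_{n''}},\bar\mu_1)$, $(\bar\mu_0,\bar\mu_1)$) and a cost-translation term $\int c_{\mathbf{A}_{t_{n''}}}-c_{\mathbf{A}_{t_{n''}}}\circ\tau^{t_{n''}}\,d\bar\pi_{t_{n''}}$.

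The key technical step — and what I expect to be the main obstacle — is showing that, along a further subsequence $t_{n'''}$, (i) the entropic OT plans $\pi^{\mathbf{A}_{t_{n'''}},\varepsilon}$ for $\mathsf{OT}_{\mathbf{A}_{t_{n'''}},\varepsilon}$ converge weakly to $\pi^{\mathbf{A},\varepsilon}$, and (ii) the EOT potentials $(\varphi_0^{\mathbf{A}_{t_{n'''}},\varepsilon},\varphi_1^{\mathbf{A}_{t_{n'''}},\varepsilon})$ converge pointwise (equivalently, uniformly on compacta) to $(\varphi_0^{\mathbf{A},\varepsilon}+a,\varphi_1^{\mathbf{A},\varepsilon}-a)$ for some $a\in\mathbb R$. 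For (ii) I would extract a convergent subsequence via the Arz\`ela--Ascoli argument from the proof of \cref{lem:entropicOTGateaux} (using the uniform growth/derivative bounds of \cref{prop:entropicGWPotentials}, which are uniform over $\mathbf{A}\in B_{\mathrm F}(M)$), then verify the limit solves the Schr\"odinger system for $\mathsf{OT}_{\mathbf{A},\varepsilon}(\bar\mu_0,\bar\mu_1)$ by passing to the limit in the system equations with dominated convergence — $c_{\mathbf{A}_{t_{n'''}}}\to c_{\mathbf{A}}$ uniformly on compacta since $\|\mathbf{A}_{t_{n'''}}-\mathbf{A}\|_{\mathrm F}\to 0$, and $(\bar\mu_{0,t_{n'''}},\bar\mu_{1,t_{n'''}})\to(\bar\mu_0,\bar\mu_1)$ weakly with uniform $4$-sub-Weibull tails providing the integrable envelope — and then uniqueness up to additive constants (\cref{prop:entropicGWPotentials}) pins down the limit. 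Then (i) follows since the Radon--Nikodym densities $e^{(\varphi_0^{\mathbf{A}_{t_{n'''}},\varepsilon}\oplus\varphi_1^{\mathbf{A}_{t_{n'''}},\varepsilon}-c_{\mathbf{A}_{t_{n'''}}})/\varepsilon}$ converge pointwise, with a $4$-sub-Weibull-integrable envelope, to the density of $\pi^{\mathbf{A},\varepsilon}$, yielding weak convergence of the plans. With (i) and (ii) in hand, the cost-translation term converges (using the expansion \eqref{eq:costTranslationExpansion} and equiintegrability, as in \cref{lem:entropicOTGateaux}) to $\int g_{0,\pi^{\mathbf{A}}}\,d(\nu_0-\mu_0)+\int g_{1,\pi^{\mathbf{A}}}\,d(\nu_1-\mu_1)$, and the potential integrals converge to $\int\bar\varphi_0^{\mathbf{A}}\,d(\nu_0-\mu_0)+\int\bar\varphi_1^{\mathbf{A}}\,d(\nu_1-\mu_1)$ (the additive constant $a$ dropping out since $\nu_i-\mu_i$ has total mass zero). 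This gives $\liminf_{t\downarrow 0}t^{-1}(\mathsf S_{2,\varepsilon}(\bar\mu_{0,t},\bar\mu_{1,t})-\mathsf S_{2,\varepsilon}(\bar\mu_0,\bar\mu_1))\ge$ the claimed infimum, and combining with the upper bound completes the proof.
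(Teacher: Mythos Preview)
Your proposal is correct and follows essentially the same route as the paper's proof: the upper bound via \cref{lem:entropicOTGateaux} at a fixed $\mathbf{A}^\star\in\mathcal A$, then the lower bound by tracking a minimizer $\mathbf{A}_t$, establishing uniform convergence of $\Phi_{(\bar\mu_{0,t},\bar\mu_{1,t}),\varepsilon}$ on $B_{\mathrm F}(M)$ via \eqref{eq:entropicOTUpper}--\eqref{eq:entropicOTLower}, extracting a limit $\mathbf{A}^\star\in\mathcal A$, bounding below with \eqref{eq:entropicGateauxCentredUpper2} and \eqref{eq:entropicOTLower}, and passing to the limit in the potential and cost-translation integrals using Arzel\`a--Ascoli, the Schr\"odinger system, and the uniqueness in \cref{prop:entropicGWPotentials}. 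The paper carries out exactly these steps, with the minor bookkeeping point that the two potential integrals in the lower bound involve EOT potentials for slightly different marginal pairs ($\bar\varphi_0^{\mathbf{A}_{t_{n'}},(\bar\mu_0,\bar\mu_1)}$ and $\bar\varphi_1^{\mathbf{A}_{t_{n'}},(\tilde\mu_{0,t_{n'}},\bar\mu_1)}$), each of which must be shown to converge separately---you allude to this but should make it explicit when writing the proof out.
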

\begin{proof}
    Let $\mathbf{A}^{\star}\in\mathcal A$ be arbitrary. By \cref{lem:entropicOTGateaux},
    \[
    \begin{aligned}
        \mathsf S_{2,\varepsilon}(\bar\mu_{0,t},\bar\mu_{1,t})-\mathsf S_{2,\varepsilon}(\bar\mu_{0},\bar\mu_{1})&\leq \mathsf{OT}_{\mathbf{A}^{\star},\varepsilon} (\bar\mu_{0,t},\bar\mu_{1,t})-\mathsf{OT}_{\mathbf{A}^{\star},\varepsilon}(\bar\mu_{0},\bar\mu_{1}) \\&= \int g_{0,\pi^{\mathbf{A}^{\star}}}+\bar{\varphi}_0^{\mathbf{A}^{\star}}d(\nu_0-\mu_0)+\int g_{1,\pi^{\mathbf{A}^{\star}}}+\bar{\varphi}_1^{\mathbf{A}^{\star}}d(\nu_1-\mu_1)+o(t).
    \end{aligned}
    \]
    It follows that
    \begin{equation}
    \label{eq:entropicD2Upper}
        \limsup_{t\downarrow 0} \frac{\mathsf S_{2,\varepsilon}(\bar\mu_{0,t},\bar\mu_{1,t})-\mathsf S_{2,\varepsilon}(\bar\mu_{0},\bar\mu_{1})}{t}\leq \inf_{\mathbf{A}\in\mathcal A}\left\{\int g_{0,\pi^{\mathbf{A} }}+\bar{\varphi}_0^{\mathbf{A}}d(\nu_0-\mu_0)+\int g_{1,\pi^{\mathbf{A}}}+\bar{\varphi}_1^{\mathbf{A}}(\nu_1-\mu_1)\right\}.
    \end{equation}
    
    To prove the complementary inequality, let $\mathbf{A}_t$ be optimal for $\mathsf S_{2,\varepsilon}(\bar\mu_{0,t},\bar\mu_{1,t})$ for every $t\in [0,1]$. By the estimates for the EOT potentials, equations \eqref{eq:entropicOTUpper} and \eqref{eq:entropicOTLower}, and the $4$-sub-Weibull assumption, $\mathbf{A}\in B_{\mathrm{F}}(M)\mapsto 32\|\mathbf{A}\|_{\mathrm{F}}^2+\mathsf{OT}_{\mathbf{A},\varepsilon}(\bar \mu_{0,t},\bar \mu_{1,t})$ converges uniformly to $\mathbf{A}\in B_{\mathrm{F}}(M)\mapsto 32\|\mathbf{A}\|_{\mathrm{F}}^2+\mathsf{OT}_{\mathbf{A},\varepsilon}(\bar \mu_{0},\bar \mu_{1})$ as $t\downarrow 0$. Proceeding as in the proof of \cref{lem:semidiscreteGateauxD2} we obtain that any cluster point of $(\mathbf{A}_t)_{t\downarrow 0}$ is an element of $\mathcal A$.

   Now, 
    fix a sequence $t_n\downarrow 0$ and an arbitrary subsequence $t_{n'}\downarrow 0$. By the Bolzano-Weierstrass theorem, $(\mathbf{A}_{t_{n'}})_{n'\in\mathbb N}$ admits a cluster point $\mathbf{A}^{\star}\in\mathcal A$. Observe that
    \[
    \begin{aligned}
        \frac{\mathsf S_{2,\varepsilon}(\bar\mu_{0,t_{n'}},\bar\mu_{1,t_{n'}})-\mathsf S_{2,\varepsilon}(\bar\mu_{0},\bar\mu_{1})}{t_{n'}}&\geq \frac{\mathsf{OT}_{\mathbf{A}_{t_{n'}},\varepsilon} (\bar\mu_{0,t_{n'}},\bar\mu_{1,t_{n'}})-\mathsf{OT}_{\mathbf{A}_{t_{n'}},\varepsilon}(\bar\mu_{0},\bar\mu_{1})}{t_{n'}} \\&\geq  \int \bar{\varphi}_0^{\mathbf{A}_{t_{n'}},(\bar\mu_0,\bar\mu_1)}d(\nu_0-\mu_0)+\int \bar{\varphi}_1^{\mathbf{A}_{t_{n'}},(\tilde \mu_{0,t_{n'}},\bar \mu_1)}d(\nu_1-\mu_1) 
        \\&+t_{n'}^{-1}\int c_{\mathbf{A}_{t_{n'}}}-c_{\mathbf{A}_{t_{n'}}}\circ \tau^{t_{n'}}d \bar\pi^{\mathbf{A}_{t_{n'}}}.
    \end{aligned}
    \]
    due to 
 \eqref{eq:entropicGateauxCentredUpper2} and \eqref{eq:entropicOTLower}.
It remains to show that the term above converges to $\int g_{0,\pi^{\mathbf{A}^{\star}}}d(\nu_0-\mu_0)+\int g_{1,\pi^{\mathbf{A}^{\star}}}d(\nu_1-\mu_1)$ and that $(\bar{\varphi}_0^{\mathbf{A}_{t_{n'}},(\bar\mu_0,\bar\mu_1)}, \bar{\varphi}_1^{\mathbf{A}_{t_{n'}},(\tilde \mu_{0,t_{n'}},\bar \mu_1)})$ converges to $(\bar{\varphi}_0^{\mathbf{A}^{\star},(\bar\mu_0,\bar\mu_1)}, \bar{\varphi}_1^{\mathbf{A}^{\star},(\bar\mu_0,\bar\mu_1)})$ pointwise (up to additive constants).  

Given \cref{prop:entropicGWPotentials}, the Arzel{\`a}-Ascoli theorem implies that there exists a subsequence $t_{n''}$ along which $(\varphi_0^{\mathbf{A}_{t_{n''}},(\bar\mu_{0,t_{n''}},\bar\mu_{1,t_{n''}})},\varphi_1^{\mathbf{A}_{t_{n''}},(\bar\mu_{0,t_{n''}},\bar\mu_{1,t_{n''}})})$ converges to a pair $(\varphi_0,\varphi_1)$ of continuous functions uniformly on compact sets (hence pointwise); similar implications hold for the other pairs of EOT potentials. Given that $c_{\mathbf{A}_{n''}}$ converges pointwise to $c_{\mathbf{A}^{\star}}$ and that $\mu_0,\mu_1,\nu_0,\nu_1$ are $4$-sub-Weibull it follows from the dominated convergence theorem that 
\[
\begin{aligned}
    e^{-\frac{\varphi_0^{\mathbf{A}_{t_{n''}},(\bar\mu_{0,t_{n''}},\bar\mu_{1,t_{n''}})}(x)}{\varepsilon}}=\int e^{\frac{\varphi_1^{\mathbf{A}_{t_{n''}},(\bar\mu_{0,t_{n''}},\bar\mu_{1,t_{n''}})}-c_{\mathbf{A}_{t_{n''}}}(x,\cdot)}{\varepsilon}}d\bar\mu_{1,t_{n''}}\to \int e^{\frac{\varphi_1-c_{\mathbf{A}^{\star}}(x,\cdot)}{\varepsilon}}d\bar\mu_1 =e^{-\frac{\varphi_0(x)}{\varepsilon}},
\\
e^{-\frac{\varphi_1^{\mathbf{A}_{t_{n''}},(\bar\mu_{0,t_{n''}},\bar\mu_{1,t_{n''}})}(y)}{\varepsilon}}=\int e^{\frac{\varphi_0^{\mathbf{A}_{t_{n''}},(\bar\mu_{0,t_{n''}},\bar\mu_{1,t_{n''}})}-c_{\mathbf{A}_{t_{n''}}}(\cdot,y)}{\varepsilon}}d\bar\mu_{0,t_{n''}}\to \int e^{\frac{\varphi_0-c_{\mathbf{A}^{\star}}(\cdot,y)}{\varepsilon}}d\bar\mu_0 =e^{-\frac{\varphi_1(y)}{\varepsilon}},
\end{aligned}
\]
so that $(\varphi_0,\varphi_1)$ is an optimal pair for $\mathsf{OT}_{\mathbf{A}^{\star}_{\varepsilon}}(\bar \mu_0,\bar\mu_1)$ and similarly for the other EOT potentials. Conclude that $ e^{\frac{\varphi_0^{\mathbf{A}_{t_{n''}},(\bar\mu_{0,t_{n''}},\bar\mu_{1,t_{n''}})}\oplus\varphi_1^{\mathbf{A}_{t_{n''}},(\bar\mu_{0,t_{n''}},\bar\mu_{1,t_{n''}})}-c_{\mathbf{A}_{t_{n''}}}}{\varepsilon}}\to e^{\frac{\varphi_0\oplus\varphi_1-c_{\mathbf{A}_{\star}}}{\varepsilon}}$ pointwise on $\mathbb R^{d_0}\times \mathbb R^{d_1}$ and we recall that $e^{\frac{\varphi_0\oplus\varphi_1-c_{\mathbf{A}_{\star}}}{\varepsilon}}=\frac{d\pi^{\mathbf{A}}}{d\bar\mu_0\otimes \bar \mu_1}$ $\bar\mu_0\otimes \bar \mu_1$-a.e. so that, arguing as in the proof of \cref{lem:entropicOTGateaux}, 
\[
    t_{n'}^{-1}\int c_{\mathbf{A}_{t_{n'}}}-c_{\mathbf{A}_{t_{n'}}}\circ \tau^{t_{n'}}d \bar\pi^{\mathbf{A}_{t_{n'}}}\to\int 8\langle x,\mathbb E_{\nu_0}[X]-\mathbb E_{\mu_0}[X]\rangle\|y\|^2+8\langle y,\mathbb E_{\nu_1}[X]-\mathbb E_{\mu_1}[X]\rangle\|x\|^2 d\pi^{\mathbf{A}^{\star}}(x,y). 
\]
Conclude that 
\[
    \begin{aligned}
        \frac{\mathsf S_{2,\varepsilon}(\bar\mu_{0,t_{n''}},\bar\mu_{1,t_{n''}})-\mathsf S_{2,\varepsilon}(\bar\mu_{0},\bar\mu_{1})}{t_{n''}}&\to   \int \bar{\varphi}_0^{\mathbf{A}^{\star},(\bar\mu_0,\bar\mu_1)}d(\nu_0-\mu_0)+\int \bar{\varphi}_1^{\mathbf{A}^{\star},(\bar\mu_0,\bar\mu_1)}d(\nu_1-\mu_1) 
        \\&+\int 8\langle x,\mathbb E_{\nu_0}[X]-\mathbb E_{\mu_0}[X]\rangle\|y\|^2+8\langle y,\mathbb E_{\nu_1}[X]-\mathbb E_{\mu_1}[X]\rangle\|x\|^2 d\pi^{\mathbf{A}^{\star}}(x,y),
        \\
        &\geq \inf_{\mathbf{A}\in\mathcal A}\left\{\int g_{0,\pi^{\mathbf{A}}}+\bar{\varphi}_0^{\mathbf{A}}d(\nu_0-\mu_0)+\int g_{1,\pi^{\mathbf{A}}}+\bar{\varphi}_1^{\mathbf{A}}(\nu_1-\mu_1)\right\}.
    \end{aligned}
    \]
    Given that this final lower bound is independent of the original choice of subsequence, 
    \[
        \liminf_{t\downarrow 0}\frac{\mathsf S_{2,\varepsilon}(\bar\mu_{0,t},\bar\mu_{1,t})-\mathsf S_{2,\varepsilon}(\bar\mu_{0},\bar\mu_{1})}{t}\geq\inf_{\mathbf{A}\in\mathcal A}\left\{\int g_{0,\pi^{\mathbf{A}}}+\bar{\varphi}_0^{\mathbf{A}}d(\nu_0-\mu_0)+\int g_{1,\pi^{\mathbf{A}}}+\bar{\varphi}_1^{\mathbf{A}}(\nu_1-\mu_1)\right\} 
    \]
    which, in combination with \eqref{eq:entropicD2Upper} proves the claim.
\end{proof}

\begin{lemma}
    \label{lem:entropicS2Lipschitz}
    For any pairs $(\nu_0, \nu_1),(\rho_0, \rho_1)\in\mathcal P_{4,\sigma}(\mathbb R^{d_0})\times \mathcal P_{4,\sigma}(\mathbb R^{d_1})$, $\left|\mathsf S_{2,\varepsilon}(\bar\nu_0,\bar\nu_1)-\mathsf S_{2,\varepsilon}(\bar\rho_0,\bar\rho_1)\right|\leq C\|\nu_0\otimes \nu_1-\rho_0\otimes\rho_1\|_{\infty,\mathcal F^{\oplus}}$ for some constant $C>0$ which is independent of $(\nu_0, \nu_1),(\rho_0, \rho_1)$.
\end{lemma}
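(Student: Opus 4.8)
\textbf{Proof proposal for \cref{lem:entropicS2Lipschitz}.}
The plan is to mirror the structure of the proof of \cref{lem:semidiscreteS2Lipschitz}, replacing the semi-discrete OT potentials by the entropic OT potentials from \cref{prop:entropicGWPotentials} and replacing compact-support bounds by $4$-sub-Weibull integrability. First I would note that, by \cref{lem:4suWeibullAssn}, $\bar \nu_0, \bar \rho_0$ are $4$-sub-Weibull with parameter $\bar \sigma^2$ on $\mathbb R^{d_0}$ and similarly for the index $1$, so all EOT problems featuring below have potentials satisfying the polynomial growth estimates of \cref{prop:entropicGWPotentials} with a constant $K$ depending only on $d_0,d_1,\sigma,\varepsilon$ (and the fixed regularity order $\bar k$). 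Since $\mathbf A \in B_{\mathrm F}(M)$, these constants are uniform in $\mathbf A$. As in the semi-discrete case, I would start from
\[
\left|\mathsf S_{2,\varepsilon}(\bar \nu_0,\bar \nu_1)-\mathsf S_{2,\varepsilon}(\bar \rho_0,\bar \rho_1)\right|\leq \sup_{\mathbf A\in B_{\mathrm F}(M)}\left|\mathsf{OT}_{\mathbf A,\varepsilon}(\bar \nu_0,\bar \nu_1)-\mathsf{OT}_{\mathbf A,\varepsilon}(\bar \rho_0,\bar \rho_1)\right|,
\]
which follows since $\mathsf S_{2,\varepsilon}$ is an infimum over $\mathbf A$ of $32\|\mathbf A\|_{\mathrm F}^2 + \mathsf{OT}_{\mathbf A,\varepsilon}$ and the minimizers lie in $B_{\mathrm F}(M)$.

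Next I would bound the entropic OT difference for fixed $\mathbf A$. Using the entropic dual \eqref{eq:EOTDual}, plugging the EOT potentials $(\varphi_{0,\bar\nu}^{\mathbf A,\varepsilon},\varphi_{1,\bar\nu}^{\mathbf A,\varepsilon})$ of $\mathsf{OT}_{\mathbf A,\varepsilon}(\bar\nu_0,\bar\nu_1)$ as a (suboptimal) competitor for $\mathsf{OT}_{\mathbf A,\varepsilon}(\bar\rho_0,\bar\rho_1)$ and vice versa, and recalling that the EOT potentials of $\mathsf{OT}_{\mathbf A,\varepsilon}(\bar\eta_0,\bar\eta_1)$ make $\int e^{(\varphi_0\oplus\varphi_1-c_{\mathbf A})/\varepsilon}\,d\bar\eta_0\otimes\bar\eta_1 = 1$, one gets the two-sided bound
\[
\begin{aligned}
\int \varphi_{0,\bar\rho}^{\mathbf A,\varepsilon}\,d(\bar\nu_0-\bar\rho_0)+\int \varphi_{1,\bar\rho}^{\mathbf A,\varepsilon}\,d(\bar\nu_1-\bar\rho_1)&\leq \mathsf{OT}_{\mathbf A,\varepsilon}(\bar\nu_0,\bar\nu_1)-\mathsf{OT}_{\mathbf A,\varepsilon}(\bar\rho_0,\bar\rho_1)\\
&\leq \int \varphi_{0,\bar\nu}^{\mathbf A,\varepsilon}\,d(\bar\nu_0-\bar\rho_0)+\int \varphi_{1,\bar\nu}^{\mathbf A,\varepsilon}\,d(\bar\nu_1-\bar\rho_1).
\end{aligned}
\]
Then, exactly as in \cref{lem:semidiscreteS2Lipschitz}, I would pass from integrals against $\bar\nu_i-\bar\rho_i$ to integrals against $\nu_i-\rho_i$ by writing $\int f_i\,d(\bar\nu_i-\bar\rho_i) = \int f_i(\cdot-\mathbb E_{\rho_i}[X])\,d(\nu_i-\rho_i) + \big(\int f_i\,d\bar\nu_i - \int f_i(\cdot-\mathbb E_{\rho_i}[X])\,d\nu_i\big)$, and control the correction term. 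Here the correction is no longer Lipschitz-in-mean but instead a modulus-of-continuity estimate: since $|\mathbb E_{\nu_i}[X]-\mathbb E_{\rho_i}[X]|$ is bounded by a constant times $\|\nu_0\otimes\nu_1-\rho_0\otimes\rho_1\|_{\infty,\mathcal F^\oplus}$ (coordinate projections lie in $\mathcal F_i$ up to a constant, since $|x_j|\leq 1 + \|x\|^4$ and $0\in\mathcal F_i$ for $i=0,1$), and since the EOT potentials have at most cubic-in-$\bar k$ derivative growth which is $\nu_i$- and $\rho_i$-integrable uniformly in $\mathbf A$, a first-order Taylor bound gives $\big|\int f_i\,d\bar\nu_i - \int f_i(\cdot-\mathbb E_{\rho_i}[X])\,d\nu_i\big| \lesssim |\mathbb E_{\nu_i}[X]-\mathbb E_{\rho_i}[X]| \lesssim \|\nu_0\otimes\nu_1-\rho_0\otimes\rho_1\|_{\infty,\mathcal F^\oplus}$. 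Finally, the main integrals $\int \varphi_{i,\bar\rho}^{\mathbf A,\varepsilon}(\cdot-\mathbb E_{\rho_i}[X])\,d(\nu_i-\rho_i)$ and their $\bar\nu$-analogues are bounded by $\|\nu_0\otimes\nu_1-\rho_0\otimes\rho_1\|_{\infty,\mathcal F^\oplus}$ up to a constant, because $\varphi_{i,\bar\nu}^{\mathbf A,\varepsilon}(\cdot-\mathbb E_{\rho_i}[X])$ and $\varphi_{i,\bar\rho}^{\mathbf A,\varepsilon}(\cdot-\mathbb E_{\rho_i}[X])$ lie in $\mathcal F_i$ up to a fixed normalizing constant — this is where the $4$-sub-Weibull potential estimates of \cref{prop:entropicGWPotentials} are used, together with the definition of $\mathcal F_i$ (growth $1+\|\cdot\|^4$ and derivative growth $1+\|\cdot\|^{3\bar k}$). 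Combining and taking the supremum over $\mathbf A\in B_{\mathrm F}(M)$ yields the claimed constant $C$, independent of the four measures.

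The main obstacle I anticipate is verifying that the (translated) EOT potentials genuinely land in the function class $\mathcal F_i$ after a single normalization. The growth bounds in \cref{prop:entropicGWPotentials} are two-sided but asymmetric ($\varphi_i^{\mathbf A,\varepsilon}\leq K(1+\|\cdot\|^2)$ from above but only $-\varphi_i^{\mathbf A,\varepsilon}\leq K(1+\|\cdot\|^4)$ from below), so $|\varphi_i^{\mathbf A,\varepsilon}|\leq K(1+\|\cdot\|^4)$ holds and matches the defining bound of $\mathcal F_i$, but one must also check the derivative bounds $|D^\alpha \varphi_i^{\mathbf A,\varepsilon}|\leq K(1+\|\cdot\|^{3\bar k})$ survive the shift by $\mathbb E_{\rho_i}[X]$ (they do, up to enlarging the constant, since $\|x-\mathbb E_{\rho_i}[X]\|^{3\bar k}\lesssim 1 + \|x\|^{3\bar k}$ with the implicit constant controlled by $M_1(\rho_i)$, which is bounded via $4$-sub-Weibull), and that the normalizing constant can be chosen uniformly in $\mathbf A$, $\nu_i$, $\rho_i$ — this is immediate since $\bar k$, $d_0$, $d_1$, $\sigma$, $\varepsilon$ are all fixed. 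Once this bookkeeping is in place, the rest is a direct transcription of the semi-discrete argument with "compact support" replaced by "$4$-sub-Weibull moment bounds," so I would state the shared steps briefly and only spell out the $4$-sub-Weibull integrability inputs in detail, omitting the remainder for brevity as is done after \cref{lem:entropicGateauxD1}.
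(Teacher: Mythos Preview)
Your overall strategy matches the paper's, but there is a genuine gap in the two-sided bound you write for the entropic OT difference. In the unregularized setting of \cref{lem:semidiscreteS2Lipschitz}, the dual constraint $\varphi_0\oplus\varphi_1\leq c_{\mathbf A}$ is measure-independent, so plugging the $(\bar\nu_0,\bar\nu_1)$-potentials as competitors for $(\bar\rho_0,\bar\rho_1)$ directly yields $\mathsf{OT}_{\mathbf A}(\bar\rho_0,\bar\rho_1)\geq \int\varphi_{0,\bar\nu}^{\mathbf A}\,d\bar\rho_0+\int\varphi_{1,\bar\nu}^{\mathbf A}\,d\bar\rho_1$. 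In the entropic dual \eqref{eq:EOTDual}, however, the penalty term $-\varepsilon\int e^{(\varphi_0\oplus\varphi_1-c_{\mathbf A})/\varepsilon}\,d\bar\rho_0\otimes\bar\rho_1+\varepsilon$ depends on the marginals. The Schr\"odinger identity $\int e^{(\varphi_{0,\bar\nu}\oplus\varphi_{1,\bar\nu}-c_{\mathbf A})/\varepsilon}\,d\bar\nu_0\otimes\bar\nu_1=1$ holds only against $\bar\nu_0\otimes\bar\nu_1$, not against $\bar\rho_0\otimes\bar\rho_1$, so after inserting the $\bar\nu$-potentials into the dual for $(\bar\rho_0,\bar\rho_1)$ you are left with the residual $-\varepsilon\big(\int e^{(\varphi_{0,\bar\nu}\oplus\varphi_{1,\bar\nu}-c_{\mathbf A})/\varepsilon}\,d\bar\rho_0\otimes\bar\rho_1-1\big)$, which is neither zero nor of a definite sign, and is not a direct-sum function that can be absorbed into $\|\nu_0\otimes\nu_1-\rho_0\otimes\rho_1\|_{\infty,\mathcal F^\oplus}$.

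The paper avoids this by inserting the intermediate measure $(\bar\nu_0,\bar\rho_1)$ and changing one marginal at a time:
\[
\mathsf{OT}_{\mathbf A,\varepsilon}(\bar\nu_0,\bar\nu_1)-\mathsf{OT}_{\mathbf A,\varepsilon}(\bar\rho_0,\bar\rho_1)
=\big(\mathsf{OT}_{\mathbf A,\varepsilon}(\bar\nu_0,\bar\nu_1)-\mathsf{OT}_{\mathbf A,\varepsilon}(\bar\nu_0,\bar\rho_1)\big)
+\big(\mathsf{OT}_{\mathbf A,\varepsilon}(\bar\nu_0,\bar\rho_1)-\mathsf{OT}_{\mathbf A,\varepsilon}(\bar\rho_0,\bar\rho_1)\big).
\]
For each one-marginal difference, the Schr\"odinger system for the potentials associated with the \emph{shared} marginal makes the exponential term integrate to $1$ against the new product measure (e.g., $\int e^{(\varphi_0^{(\bar\nu_0,\bar\nu_1)}\oplus\varphi_1^{(\bar\nu_0,\bar\nu_1)}-c_{\mathbf A})/\varepsilon}\,d\bar\nu_0\otimes\bar\rho_1=\int 1\,d\bar\rho_1=1$ since $\int e^{\cdots}\,d\bar\nu_0\equiv 1$ pointwise in $y$), yielding clean bounds of the form $\int\varphi_1^{(\bar\nu_0,\bar\nu_1)}\,d(\bar\nu_1-\bar\rho_1)$, etc. The remainder of your argument---the Taylor-expansion handling of the centering shift and the verification that the shifted potentials lie in $\mathcal F_i$ up to a uniform constant---matches the paper and is correct once this decomposition is in place.
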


\begin{proof}
      Observe that $|\mathsf S_{2,\varepsilon}(\bar \nu_0,\bar \nu_1)-\mathsf S_{2,\varepsilon}(\bar \rho_0,\bar \rho_1)|\leq \sup_{\mathbf{A}\in B_{\mathrm{F}}(M)}\left|\mathsf{OT}_{\mathbf{A},\varepsilon}(\bar \nu_0,\bar \nu_1)-\mathsf{OT}_{\mathbf{A},\varepsilon}(\bar \rho_0,\bar \rho_1)\right|$ and 
    \begin{equation}
    \label{eq:entropicOTLipschitzUpperLower}    
       \begin{aligned}
            \mathsf{OT}_{\mathbf{A},\varepsilon}(\bar \nu_0,\bar \nu_1)-\mathsf{OT}_{\mathbf{A},\varepsilon}(\bar \rho_0,\bar \rho_1)&= \mathsf{OT}_{\mathbf{A},\varepsilon}(\bar \nu_0,\bar \nu_1)- \mathsf{OT}_{\mathbf{A},\varepsilon}(\bar \nu_0,\bar \rho_1)+\mathsf{OT}_{\mathbf{A},\varepsilon}(\bar \nu_0,\bar \rho_1)-\mathsf{OT}_{\mathbf{A},\varepsilon}(\bar \rho_0,\bar \rho_1)
            \\
            &
            \leq \int \varphi_1^{\mathbf{A},(\bar \nu_0,\bar \nu_1)} d(\bar \nu_1-\bar \rho_1)+\int \varphi_0^{\mathbf{A},(\bar \nu_0,\bar \rho_1)} d(\bar \nu_0-\bar \rho_0) 
        \\
        \mathsf{OT}_{\mathbf{A},\varepsilon}(\bar \nu_0,\bar \nu_1)-\mathsf{OT}_{\mathbf{A},\varepsilon}(\bar \rho_0,\bar \rho_1)&\geq \int \varphi_0^{\mathbf{A},(\bar \nu_0,\bar \rho_1)} d(\bar \nu_0-\bar \rho_0)+\int \varphi_1^{\mathbf{A},(\bar \rho_0,\bar \rho_1)} d(\bar \nu_1-\bar \rho_1).
        \end{aligned}
    \end{equation}
    By a Taylor expansion, for any $x\in\mathbb R^{d_1}$, 
    \[
    \begin{aligned}
        \varphi_1^{\mathbf{A},(\bar \nu_0,\bar \nu_1)}(x-\mathbb E_{\nu_1}[X])&=\varphi_1^{\mathbf{A},(\bar \nu_0,\bar \nu_1)}(x-\mathbb E_{\rho_1}[X])
        \\
        &+\sum_{i=1}^{d_1}(\mathbb E_{\rho_1}[X]-\mathbb E_{\nu_1}[X])_i\partial_{i} \varphi_1^{\mathbf{A},(\bar \nu_0,\bar \nu_1)}(x-(1-c)\mathbb E_{\rho_1}[X]-c\mathbb E_{\nu_1}[X]),
    \end{aligned}
    \]
    for some $c\in(0,1)$. It follows from the uniform bounds on the $4$-th moments of $4$-sub-Weibull distributions established in \cref{lem:4suWeibullAssn} and the estimates on the derivatives on the EOT potentials (see \cref{prop:entropicGWPotentials}) that 
    \[
    \begin{aligned}
        \left|\varphi_1^{\mathbf{A},(\bar \nu_0,\bar \nu_1)}(x-\mathbb E_{\nu_1}[X])-\varphi_1^{\mathbf{A},(\bar \nu_0,\bar \nu_1)}(x-\mathbb E_{\rho_1}[X])\right|&
        \\
        &\hspace{-15em}= \left|\sum_{i=1}^{d_1}(\mathbb E_{\rho_1}[X]-\mathbb E_{\nu_1}[X])_i\partial_{i} \varphi_1^{\mathbf{A},(\bar \nu_0,\bar \nu_1)}(x-(1-c)\mathbb E_{\rho_1}[X]-c\mathbb E_{\nu_1}[X])\right|  
        \\
        &\hspace{-15em}\leq \sum_{i=1}^{d_1}\left|(\mathbb E_{\rho_1}[X]-\mathbb E_{\nu_1}[X])_i\right| K_1(1+\|x-(1-c)\mathbb E_{\rho_1}[X]-c\mathbb E_{\nu_1}[X]\|^{3})
        \\
        &\hspace{-15em}\leq  C_1(1+\|x\|^{3})\max_{i=1}^{d_1}\left|(\mathbb E_{\rho_1}[X]-\mathbb E_{\nu_1}[X])_i\right|,
    \end{aligned}
    \]where $K_1$ is the constant from \cref{prop:entropicGWPotentials} corresponding to $k=1$ and 
    $C_1$ is a constant depending only on $\sigma,d_0,d_1$, and $\varepsilon$. It is easy to see that, for $i\in[N]$, $(\mathbb E_{\rho_1}[X]-\mathbb E_{\nu_1}[X])_i=\int x_id(\rho_1-\nu_1)(x)\leq \|\nu_0\otimes \nu_1-\rho_0\otimes \rho_1\|_{\infty,\mathcal F^{\oplus}}$ given that $x_i\in\mathcal F_1$ and that $0\in\mathcal F_0$. 

    In sum, 
    \[
    \begin{aligned}
        \int \varphi_1^{\mathbf{A},(\bar \nu_0,\bar \nu_1)} d(\bar \nu_1-\bar \rho_1)&= \underbrace{\int \varphi_1^{\mathbf{A},(\bar \nu_0,\bar \nu_1)}(\cdot-\mathbb E_{\nu_1}[X])-\varphi_1^{\mathbf{A},(\bar \nu_0,\bar \nu_1)}(\cdot-\mathbb E_{\rho_1}[X]) d\nu_1}_{\leq C_1(1+M_3(\nu_1))\|\nu_0\otimes \nu_1-\rho_0\otimes \rho_1\|_{\infty,\mathcal F^{\oplus}}}
        \\
        &+\int \varphi_1^{\mathbf{A},(\bar \nu_0,\bar \nu_1)}(\cdot-\mathbb E_{\rho_1}[X]) d(\nu_1-\rho_1).
    \end{aligned}
    \]
    Analogous bounds hold for each of the integrals in \eqref{eq:entropicOTLipschitzUpperLower} whereby 
    \[
    \begin{aligned}
       &\left|\mathsf{OT}_{\mathbf{A},\varepsilon}(\bar \nu_0,\bar \nu_1)-\mathsf{OT}_{\mathbf{A},\varepsilon}(\bar \rho_0,\bar \rho_1)\right|\\
       &\leq \left(\left|\int \varphi_1^{\mathbf{A},(\bar \nu_0,\bar \nu_1)}(\cdot-\mathbb E_{\rho_1}[X]) d(\nu_1-\rho_1)+\int \varphi_0^{\mathbf{A},(\bar \nu_0,\bar \rho_1)}(\cdot-\mathbb E_{\rho_0}[X]) d(\nu_0-\rho_0)\right|\right.
       \\&\bigvee \left.\left|\int \varphi_0^{\mathbf{A},(\bar \nu_0,\bar \rho_1)}(\cdot-\mathbb E_{\rho_0}[X]) d(\nu_0-\rho_0)+\int \varphi_0^{\mathbf{A},(\bar \rho_0,\bar \rho_1)}(\cdot-\mathbb E_{\rho_1}[X]) d(\nu_1-\rho_1)\right|\right)
       \\
       &+C_2\|\nu_0\otimes \nu_1-\rho_0\otimes \rho_1\|_{\infty,\mathcal F^{\oplus}},
    \end{aligned}
    \]
    where $C_2$ depends only on $\sigma,d_0,d_1,\varepsilon$.
   As $ \varphi_0^{\mathbf{A},(\bar \nu_0,\bar \rho_1)}(\cdot-\mathbb E_{\rho_0}[X])\oplus \varphi_1^{\mathbf{A},(\bar \nu_0,\bar \nu_1)}(\cdot-\mathbb E_{\rho_1}[X])$ and $
        \varphi_0^{\mathbf{A},(\bar \nu_0,\bar \rho_1)}(\cdot-\mathbb E_{\rho_0}[X]) \oplus \varphi_0^{\mathbf{A},(\bar \rho_0,\bar \rho_1)}(\cdot-\mathbb E_{\rho_1}[X])$ are elements of $\mathcal F^{\oplus}$ up to scaling by a constant that does not depend on $\nu_0,\nu_1,\rho_0,\rho_1$, it follows that $\left|\mathsf{OT}_{\mathbf{A},\varepsilon}(\bar \nu_0,\bar \nu_1)-\mathsf{OT}_{\mathbf{A},\varepsilon}(\bar \rho_0,\bar \rho_1)\right|\leq C_3\|\nu_0\otimes \nu_1-\rho_0\otimes \rho_1\|_{\infty,\mathcal F^{\oplus}}$ for some $C_3$ which is independent of $\nu_0,\nu_1,\rho_0,\rho_1$.
\end{proof}

\begin{proof}[Proof of \cref{thm:entropicGWStability}]
    The claimed G{\^a}teaux differentiability follows from Lemmas \ref{lem:entropicGateauxD1} and \ref{lem:entropicS2Gateaux} whereas Lipschitz continuity follows from 
 Lemmas \ref{lem:entropicS1Lipschitz} and \ref{lem:entropicS2Lipschitz} by applying the variational form \eqref{eq:GWVariational}.
\end{proof}

\subsubsection{Proof of \texorpdfstring{\cref{thm:entropicGWLimitDistribution}}{Theorem 9}}

Given \cref{thm:entropicGWStability}, the proof of 
 the first statement in \cref{thm:entropicGWLimitDistribution} follows directly from \cref{prop:unified}. 
Under the assumption that $\varepsilon>16\sqrt{M_4(\bar\mu_0)M_4(\bar\mu_1)}$, Theorem 6 in \cite{rioux2023entropic} asserts that $\Phi_{(\bar\mu_0,\bar\mu_1)}$ is strictly convex so that it admits at most one minimizer, $\mathbf A^{\star}$. Recall from the discussion following \eqref{eq:Objective} that $\mathcal A$ is always non-empty so that $\mathcal A=\{\mathbf A^{\star}\}$ and the limit distribution simplifies to $\chi_{\mu_0\otimes \mu_1}((f_0+g_{0,\pi}+\bar\varphi_0^{\mathbf A^{\star}})\oplus(f_1+g_{1,\pi}+\bar\varphi_1^{\mathbf A^{\star}}))$ 

As for the statement regarding the empirical measures, it 
 will follow upon showing that $\mathcal F_0$ and $\mathcal F_1$ are, respectively, $\mu_0$- and $\mu_1$-Donsker and that the empirical measures are $4$-sub-Weibull with some parameter with probability approaching one. As noted in the proof of \cref{thm:discreteGWLimitDistribution}, Donskerness of the classes and the independence assumption imply that  $\sqrt n(\hat \mu_{0,n}-\mu_0)\otimes(\hat \mu_{1,n}-\mu_1)\stackrel{d}{\to}G_{\mu_0\otimes \mu_1}$ in $\ell^{\infty}(\mathcal F^{\oplus})$ where $G_{\mu_0\otimes \mu_1}(f_0\oplus f_1)=G_{\mu_0}(f_0)+G_{\mu_1}(f_1)$ and $G_{\mu_0}$, $G_{\mu_1}$ are, respectively, tight $\mu_0$- and $\mu_1$-Brownian bridge processes in $\ell^{\infty}(\mathcal F_0)$ and $\ell^{\infty}(\mathcal F_1)$. 

Donskerness of $\mathcal F_0$ and $\mathcal F_1$ with respect to any $4$-sub-Weibull distribution follows from a direct adaptation of Lemma E.24 in \cite{goldfeld24statistical} (see also Lemma 8 in \cite{rioux2022smooth}) and is thus omitted.

Fixing $i\in\{0,1\}$ and some $\tilde \sigma>\sigma$, the law of large numbers yields
\[
    \int e^{\frac{\|\cdot\|^4}{2\tilde \sigma^2}}d \hat \mu_{i,n}\to \int e^{\frac{\|\cdot\|^4}{2\tilde \sigma^2}}d \mu_{i}\leq 2^{\frac{\sigma^2}{\tilde \sigma^2}}<2
\]
a.s.
so that $\hat \mu_{i,n}$ is $4$-sub-Weibull with parameter $\tilde \sigma^2$ with probability approaching $1$.  

The proof of bootstrap consistency follows the same lines as the proof of Theorem 7 in \cite{goldfeld24statistical} and is thus omitted for brevity.  
\qed

\subsection{Proofs for \texorpdfstring{\cref{sec:graphApplication}}{Section}}

\subsubsection{Proof of \texorpdfstring{\cref{prop:permContinuous}}{Proposition 9}}
\label{proof:prop:permContinuous}
   If $\mathsf D(\mu_0,\mu_1)=0$, there exists an isometry $T:\supp(\mu_0)\to \supp(\mu_1)$ for which $T_{\sharp}\mu_0=\mu_1$.
     For any $i\in[N]$, let $T(-e_i)=y$ and observe that, by construction,  
     \[\frac{1}{(N-1)N(N+1)}\sum_{\substack{i,j=1\\i\neq j}}^N\eta_{\rho_{0,ij}}(-e_i)+\frac{1}{N+1}=\mu_0(T^{-1}(\{y\}))=\mu_1(y),
     \]
     where   
     \[
        \mu_1(y)=\frac{1}{(N-1)N(N+1)}\sum_{\substack{i,j=1\\i\neq j}}^N\eta_{\rho_{1,ij}}(y)+\frac{1}{N+1}\sum_{i=1}^N\delta_{-e_i}(y).
     \]
     Note that $\sum_{\substack{i,j=1\\i\neq j}}^N\eta_{\rho_{1,ij}}(y)\leq N(N-1)$ with equality if and only if $\eta_{\rho_{1,ij}}(y)=1$ for every $i,j\in[N]$ with $i<j$. 
     Thus, if $y \not \in (-e_i)_{i=1}^N$, it holds that 
     $
        \mu_1(y) \leq \frac{1}{N+1} 
     $ with equality if and only if 
     \[
       1= \eta_{\rho_{1,ij}}(y) = \int \mathbbm 1_{\{y\}}(-e_i+te_j)d\rho_{ij}(t) 
     \]
     for every $i,j\in[N],i<j$ i.e. $y=-e_i+se_j$ for some $s\in[0,\infty)$ and $\rho_{ij}(\{s\})=1$, evidently this equality cannot hold simultaneously for all such $i,j$. Consequently, $y \in (-e_i)_{i=1}^N$ so that, for every $i\in[N]$, $T(-e_i)=-e_{\sigma'(i)}$ for some permutation $\sigma'$ on $N$ elements. 
    
    By extending the isometry to an isometric self-map on $\mathbb R^N$, we have by the Mazur-Ulam theorem that $T(x)=Ux+b$ for some orthogonal matrix $U\in\mathbb R^{N\times N}$ and $b\in\mathbb R^N$. For any $i\in[N]$, 
   \[
        1=\|T(-e_i)\|^2= \|-Ue_i\|^2+2b^{\intercal}(U(-e_i))+\|b\|^2=\|e_i\|^2+2b^{\intercal}(T(-e_i)-b)+\|b\|^2=1-\|b\|^2-2b^{\intercal}e_{\sigma'(i)}.
   \]
   As the above equality holds for every $i\in[N]$ and $\sigma'$ is a permutation, it follows that $\|b\|^2=-2b_i$ for every $i\in[N]$ i.e. $b=-\frac 2 { N}(1,\dots, 1)$ or $b=(0,\dots,0)$. Let $t\in[0,\infty)$ and $i\neq j\in[N]$ be arbitrary and observe that 
   \[T(-e_i+te_j)=U(-e_i+te_j)+b=-e_{\sigma'(i)}-(U(-e_j)+b)+b=-e_{\sigma'(i)}+te_{\sigma'(j)}+b.\]
   Since $T_{\sharp}\mu_0=\mu_1$, for any Borel set $I\subset [0,\infty)$, 
   \[
       \mu_1(\cup_{t\in I}\{-e_{\sigma'(i)}+te_{\sigma'(j)}+b\})= \mu_0(\cup_{t\in I}T^{-1}(\{-e_{\sigma'(i)}+te_{\sigma'(j)}+b\}))
        =\mu_0(\cup_{t\in I}\{-e_{i}+te_{j}\}).
   \]
   If $b=-\frac{2}{N}(1,\dots,1)$, $\cup_{t\in I}\left\{-e_{\sigma'(i)}+te_{\sigma'(j)}+b\right\}$ has measure $0$ with respect to $\mu_1$ whereas $\mu_0(\cup_{t\in I}\left\{-e_i+t_{e_j}\right\})\neq 0$ for some set $I$ (e.g. $0\in I$). Conclude that $b=0$ and, for every measurable subset, $I$, of $[0,\infty)$ and every $i,j\in[N]$ with $i< j$, 
  \[
    \frac{\rho_{0,ij}(I)}{N(N+1)(N-1)}+\frac{1}{N+1}\delta_{0}(I)=\mu_1(\cup_{t\in I}\{-e_{\sigma'(i)}+te_{\sigma'(j)}\})= \frac{\rho_{1,\sigma'(i)\sigma'(j)}(I)}{N(N+1)(N-1)}+\frac{1}{N+1}\delta_{0}(I).  
  \]
  It follows that $\rho_{0,ij}=\rho_{1,\sigma'(i)\sigma'(j)}$ as measures on $\mathbb R$ or, equivalently, $\rho_{1,ij}=\rho_{0,\sigma(i)\sigma(j)}$ where $\sigma$ is the inverse of $\sigma'$  for every $i,j\in[N]$ with $i\neq j$
   
    Conversely, if there exists a permutation $\sigma$ for which $\rho_{1,ij}=\rho_{0,\sigma(i)\sigma(j)}$ for every $i,j\in[N]$ with $i\neq j$, consider the linear isometry $T:(x_1,\dots,x_N)\in\mathbb R^N\mapsto (x_{\sigma'(1)},\dots,x_{\sigma'(N)})\in\mathbb R^N$, where $\sigma'$ is the inverse of $\sigma$. For any Borel set $A\subset\mathbb R^N$,
    \[
        T_{\sharp}\mu_0(A)=\frac{1}{N(N+1)(N-1)}\sum_{\substack{i,j=1\\i\neq j}}^N\eta_{\rho_{0,ij}}(T^{-1}(A))+\frac{1}{N+1}\sum_{i=1}^N\delta_{-e_i}(T^{-1}(A)).
    \]
    It is clear that $\sum_{i=1}^N\delta_{-e_i}(T^{-1}(A))=\sum_{i=1}^N\delta_{-e_{\sigma'(i)}}(A)$, whereas
    \[
    \begin{aligned}
        \eta_{\rho_{0,ij}}(T^{-1}(A))=\int \mathbbm 1_{T^{-1}(A)}(-e_i+te_j)d\rho_{0,ij}(t)&=\int \mathbbm 1_{A}(-e_{\sigma'(i)}+te_{\sigma'(j)})d\rho_{0,ij}(t)
        \\&=\int \mathbbm 1_{A}(-e_{\sigma'(i)}+te_{\sigma'(j)})d\rho_{1,\sigma'(i)\sigma'(j)}(t)
        \\&=\eta_{\rho_{1,\sigma'(i)\sigma'(j)}}(A),
   \end{aligned} 
    \]
    the previous two displays imply that 
 $T_{\sharp}\mu_0=\mu_1$ whereby $\mathsf D(\mu_0,\mu_1)=0$.
\qed

\subsubsection{Proof of \texorpdfstring{\cref{thm:statsContinuousWeights}}{Theorem 12}} 
\label{proof:thm:statsContinuousWeights}
 For a set $A\subset \mathbb R^{N}$, let $\mathcal F(A,L)$ denote the set of convex $L$-Lipschitz functions on $A$.
 For an arbitrary $f\in \mathcal F([-1,K]^{N},1)$  
    \[
    \begin{aligned}
        \mu_{0,n}(f) &= \frac{1}{(N-1)N(N+1)} \sum_{\substack{i,j=1\\i\neq j}}^N \eta_{\hat \rho_{0,ij,n}}(f)+\frac{1}{N+1}\sum_{i=1}^Nf(-e_i),
        \\
        &=\frac{1}{(N-1)N(N+1)} \sum_{\substack{i,j=1\\i\neq j}}^N \frac{1}{n}\sum_{k=1}^nf(-e_i+\mathbf M_{ij}^{G_{0,k}}e_j)+\frac{1}{N+1}\sum_{i=1}^Nf(-e_i),
    \end{aligned} 
    \]
    whereas 
    \[
    \begin{aligned}
        \mu_{0}(f) &= \frac{1}{(N-1)N(N+1)} \sum_{\substack{i,j=1\\i\neq j}}^N \int f(-e_i+te_j)d\rho_{0,ij}(t)+\frac{1}{N+1}\sum_{i=1}^Nf(-e_i),
    \end{aligned} 
    \]
    thus 
    \begin{equation}
    \label{eq:empiricalProcessDiscrete}
    \begin{aligned}
        (\mu_{0,n}-\mu_0)(f)&=\frac{1}{(N-1)N(N+1)}\sum_{\substack{i,j=1\\i\neq j}}^N\left(\frac{1}{n}\sum_{k=1}^nf(-e_i+\mathbf M_{ij}^{G_{0,k}}e_j)- \int f(-e_i+te_j)d\rho_{0,ij}(t)\right)
        \\
&=\frac{1}{(N-1)N(N+1)}\sum_{\substack{i,j=1\\i\neq j}}^N(\hat \rho_{0,ij,n}-\rho_{0,ij})\left(f(-e_i+(\cdot)e_j)\right).
    \end{aligned} 
    \end{equation}
    In sum, 
    \[
    \begin{aligned}
\mathbb E\left[\sup_{f\in \mathcal F([-1,K]^N,1)}|(\mu_{0,n}-\mu_0)(f)|\right]&
\\
&\hspace{-4em}\leq\frac{1}{(N-1)N(N+1)}\sum_{\substack{i,j=1\\i\neq j}}^N\mathbb E\left[\sup_{f\in \mathcal F([-1,K]^N,1)}|(\hat \rho_{0,ij,n}-\rho_{0,ij})\left(f(-e_i+(\cdot)e_j)\right)|\right]
\end{aligned}
    \]  
     Since $t\in[0,K]\mapsto f(-e_i+te_j)$ is $1$-Lipschitz and convex\footnote{Indeed, for any $t,s\in[0,K]$, $|f(-e_i+te_j)-f(-e_i+se_j)|\leq \|te_j-se_j\|=L|t-s|$ and, for $\lambda \in[0,1]$, $f(-e_i+(t\lambda +(1-\lambda)s)e_j)=f(\lambda(-e_i+te_j)+(1-\lambda)(-e_i+se_j))\leq  \lambda f(-e_i+te_j)+(1-\lambda)f(-e_i+se_j)$. } the supremum on the right hand side can be replaced by a supremum over $\mathcal F([0,K],1)$, which admits an $L^{\infty}$ metric entropy scaling as 
     $N(\mathcal F([0,K],1),\epsilon,L^{\infty})\lesssim \epsilon^{-\frac{1}{2}}$ (see  Theorem 1 in \cite{sen2012covering}). Evidently, the same result if $\mu_{0,n}-\mu_0$ is replaced by $\mu_{1,n}-\mu_1$. The remainder of the proof for the expected rate of convergence follows by the same argument as the proof of Theorem 3 in \cite{zhang2024gromov} with only minor modifications. 

    The limit distribution results follow from the proofs of  \cref{thm:discreteGWStability} and  \cref{cor:discreteGWLimitDistributionNull} upon characterizing the weak limit of $\sqrt n(\mu_{0,n}-\mu_0)$; the arguments evidently also apply to $\sqrt n(\mu_{1,n}-\mu_1)$. We provide a general argument which applies to compactly supported weight distributions and specialize to the finitely discrete case at the end, as the stability results derived herein can only be applied in this case.%

    To this end, it was previously noted that $N(\mathcal F([0,K],1),\epsilon,L^{\infty})\lesssim \epsilon^{-\frac{1}{2}}$ so that, by Theorem 2.5.2 in \cite{vanderVaart1996}, $\mathcal F([0,K],1)$ is $\rho_{0,ij}$-Donsker for every $i,j\in[N],i<j$. As the samples from different edges are  independent, Example 1.4.6 in \cite{van1996weak} and Lemma 3.2.4 \cite{dudley2014uniform} imply that 
    \begin{equation}
    \label{eq:weakConvergenceProduct}
\sqrt n( (\hat \rho_{0,12,n},\dots, \hat \rho_{0,N(N-1),n})-(\rho_{0,12},\dots, \rho_{0,N(N-1)}))\stackrel{d}{\to} (G_{\rho_{0,12}},\dots,G_{\rho_{0,N(N-1)}})
   \text{ in $\prod_{\substack{i,j=1\\i<j}}^N\ell^{\infty}(\mathcal F([0,K],1))$},  
    \end{equation}
    where $G_{\rho_{0,ij}}$ is a $\rho_{0,ij}$-Brownian bridge process in $\ell^{\infty}(\mathcal F([0,K],1))$ for $i,j\in[N],i<j$ and all processes are independent.
   As the map 
    \[  
    \begin{aligned}
    &\Upsilon:(\eta_{12},\dots,\eta_{N(N-1)})\in \prod_{\substack{i,j=1\\i<j}}^N \ell^{\infty}\left(\textstyle{\mathcal F([0,K],1)}\right)
    \\
    &
    \mapsto \left(f\in \mathcal F([-1,K]^N,1/2)\mapsto\sum_{\substack{i,j=1\\i<j}}^N\eta_{ij}\left(f(-e_i+(\cdot)e_j)+f(-e_j+(\cdot)e_i)\right)\right)\in \ell^{\infty}(\mathcal F([-1,K]^N,1/2)), 
    \end{aligned}
    \]
    is continuous it follows from \eqref{eq:weakConvergenceProduct}, the representation of $(\mu_{0,n}-\mu_0)$ furnished in \eqref{eq:empiricalProcessDiscrete}, and the continuous mapping theorem that 
    \[
       \sqrt n(\mu_{0,n}-\mu_0)\stackrel{d}{\to} \frac{1}{(N-1)N(N+1)}\Upsilon\left(G_{\rho_{0,12}},\dots,G_{\rho_{0,N(N-1)}}\right) \text{ in } \ell^{\infty}(\mathcal F([-1,K]^N,1/2)).
    \]
    We highlight that, for any $f\in \ell^{\infty}(\mathcal F([-1,K]^N,1/2))$,
    \begin{equation}\label{eq:contiuouslyWeightedLimt}
        \Upsilon\left(G_{\rho_{0,12}},\dots,G_{\rho_{0,N(N-1)}}\right)(f)=N\left(0,\frac{\sum_{\substack{i,j=1\\i<j}}^N\Var_{\rho_{0,ij}}(f(-e_i+(\cdot)e_j)+f(-e_j+(\cdot)e_i))}{(N-1)^2N^2(N+1)^2}\right)
    \end{equation} in distribution
    since, for any $f\in\mathcal F([0,K],1)$, $G_{\rho_{0,ij}}(f)=N(0,\Var_{\rho_{0,ij}}(f))$ in distribution $(i,j\in[N],i<j)$. 

    In the case that each of the $\rho_{0,ij}$ are finitely discrete, we let $(w_{ij}^{(k)})_{k=1}^{K_{ij}}$ denote the set of support points and let $(p_{ij}^{(k)})_{k=1}^{K_{ij}}$ be the corresponding probabilities $p_{ij}^{(k)}=\rho_{0,ij}(w_{ij}^{(k)})$ for $k\in [K_{ij}]$ and any $i,j\in[N]$ with $i<j$. Then, for  any bounded measurable function $g$ on $\mathbb R$, 
    \[
        \Var_{\rho_{0,ij}}(g) = \begin{pmatrix}
            g(w_{ij}^{(1)})\\\vdots\\ g(w_{ij}^{(K_{ij})}) 
        \end{pmatrix}^{\intercal}
        \begin{pmatrix}
            p_{ij}^{(1)}(1-p_{ij}^{(1)})&- p_{ij}^{(1)}p_{ij}^{(2)}&\dots &- p_{ij}^{(1)}p_{ij}^{(K_{ij})} \\ 
            - p_{ij}^{(2)}p_{ij}^{(1)}&p_{ij}^{(2)}(1-p_{ij}^{(2)})&\dots &- p_{ij}^{(2)}p_{ij}^{(K_{ij})}
            \\
            \vdots&\vdots&\ddots&\vdots
            \\
            - p_{ij}^{(K_{ij})}p_{ij}^{(1)}& - p_{ij}^{(K_{ij})}p_{ij}^{(2)}&\dots &p_{ij}^{(K_{ij})}(1-p_{ij}^{(K_{ij})})
        \end{pmatrix}
        \begin{pmatrix}
            g(w_{ij}^{(1)})\\\vdots\\ g(w_{ij}^{(K_{ij})}) 
        \end{pmatrix} 
    \]
    so that, if $B_{ij}$ denotes the above multinomial covariance matrix, the variance in \eqref{eq:contiuouslyWeightedLimt} can be written in terms of a block diagonal matrix 
    \begin{equation}
    \label{eq:covarianceStruct}
       \check f^{\intercal}\bsigma_Z\check f\coloneqq  \frac{1}{(N-1)^2N^2(N+1)^2}\check f^{\intercal} \begin{pNiceMatrix} 
        \Block[borders={bottom,right}]{1-1}{B_{12}}&&&\\
           &\Block[borders={bottom,right,left,top}]{1-1}{B_{13}}&&\\
           &&\ddots&\\
           &&&\Block[borders={top,left}]{1-1}{B_{N(N-1)}},
        \end{pNiceMatrix}
        \check f,
    \end{equation}
    where $
\check f=\left(f_{12},\dots,f_{N(N-1)}\right),$ and 
\[f_{ij}=\left(f(-e_i+w_{ij}^{(1)}e_j)+f(-e_j+w_{ij}^{(1)}e_i),\dots, f(-e_i+w_{ij}^{(K_{ij})}e_j)+f(-e_j+w_{ij}^{(K_{ij})}e_i)\right),
\]
for $i,j\in[N]$ with $i<j$. The limit theorem then follows from \cref{thm:discreteGWLimitDistribution}.

It remains to show that \cref{assn:simplifiedForm} holds in this setting. The expression for $\chi_{\mu_0}(f_0)$ is given in \eqref{eq:contiuouslyWeightedLimt} for $f_0\in\mathcal f([-1,K]^N,1/2)$, an analogous expression holds for $\chi_{\mu_1}(f_1)$ for $f_1$ in the same space. Now, assume that $g_1(\cdot-\mathbb E_{\mu_1}[X])\in\mathcal f([-1,K]^N,1/2)$ and let $T$ be a Gromov-Monge map between $\bar\mu_0$ and $\bar \mu_1$. In particular, $T = S(\cdot+\mathbb E_{\mu_0}[X])-\mathbb E_{\mu_1}[X]$ where $S$ corresponds to a permutation of axes according to some permutation $\sigma'$ as follows from \cref{prop:permContinuous}. Then, letting $\sigma$ denote the inverse of $\sigma$,
\[
\begin{aligned}
&\chi_{\mu_1}(g_1(\cdot-\mathbb E_{\mu_1}[X])) 
\\&=  N\left(0,\frac{\sum_{\substack{i,j=1\\i<j}}^N\Var_{\rho_{1,ij}}(g_1(-e_i+(\cdot)e_j-\mathbb E_{\mu_1}[X])+g_1(-e_j+(\cdot)e_i-\mathbb E_{\mu_1}[X]))}{(N-1)^2N^2(N+1)^2}\right),
\\
&=N\left(0,\frac{\sum_{\substack{i,j=1\\i<j}}^N\Var_{\rho_{0,\sigma(i)\sigma(j)}}(g_1(-e_i+(\cdot)e_j-\mathbb E_{\mu_1}[X])+g_1(-e_j+(\cdot)e_i-\mathbb E_{\mu_1}[X]))}{(N-1)^2N^2(N+1)^2}\right).
\end{aligned}
\]
For any $t\in\mathbb R$, $-e_i+te_j-\mathbb E_{\mu_1}[X]=T(-e_{\sigma(i)}+t\sigma(j)-\mathbb E_{\mu_0}[X])$ so that 
\[
\begin{aligned}
&\Var_{\rho_{0,\sigma(i)\sigma(j)}}(g_1(-e_i+(\cdot)e_j-\mathbb E_{\mu_1}[X])+g_1(-e_j+(\cdot)e_i-\mathbb E_{\mu_1}[X])) 
\\&=\Var_{\rho_{0,\sigma(i)\sigma(j)}}(g_1(T(-e_{\sigma(i)}+(\cdot)e_{\sigma(j)}-\mathbb E_{\mu_0}[X]))+g_1(T(-e_{\sigma(j)}+(\cdot)e_{\sigma(i)}-\mathbb E_{\mu_0}[X]))),
\end{aligned}
\]
and 
\[
\begin{aligned}
&\chi_{\mu_1}(g_1(\cdot-\mathbb E_{\mu_1}[X])) 
\\
&=N\left(0,\frac{\sum_{\substack{i,j=1\\i<j}}^N\Var_{\rho_{0,ij}}(g_1(T(-e_{i}+(\cdot)e_{j}-\mathbb E_{\mu_0}[X]))+g_1(T(-e_{j}+(\cdot)e_{i}-\mathbb E_{\mu_0}[X])))}{(N-1)^2N^2(N+1)^2}\right),
\\
&=\chi_{\mu_0}((g_1\circ T)(\cdot-\mathbb E_{\mu_0}[X])),
\end{aligned}
\]
as desired.

Letting $\Sigma_Z$ be the covariance matrix in \eqref{eq:covarianceStruct}, and $\Sigma_n$ be the same covariance matrix with the edge distributions $\rho_{0,ij}$ replaced by empirical estimates thereof, it is easy to see that, conditionally on $G_{0,1},\dots, G_{0,n}$ and $G_{1,1},\dots, G_{1,n}$ $\mathbb E_{\mu_{0,n}}[X]\to \mathbb E_{\mu_0}[X],\Sigma_{\mu_{0,n}}\to \Sigma_{\mu_0}[X],$ and $\Sigma_n\to \Sigma_Z$ given almost every realization of the data. Moreover, it is easy to see that $\Sigma_Z\mathbbm 1_N=\Sigma_n\mathbbm 1_N=0$ with probability $1$ so that the required orthogonality condition holds. The fact that the proposed test is of asymptotic level $\alpha$ then follows form   \cref{thm:directEstimatorGeneralized}. Under the alternative, it is easy to see that the test statistic diverges to $+\infty$ so that the test is asymptotically consistent.
\qed

\section{Variational Formulation of Entropic Gromov-Wasserstein Distances}
\label{sec:entropicGWVar}

The GW variational form \eqref{eq:GWVariational} adapts to the entropic case, for any $\varepsilon\geq 0$, as follows (see Theorem 1 in \cite{zhang2024gromov}),
\begin{equation}
    \label{eq:entropicGWVariational}
    \mathsf D_{\varepsilon}(\mu_0,\mu_1)^2=
    \mathsf D_{\varepsilon}(\bar\mu_0,\bar\mu_1)^2=\mathsf S_{1}(\bar\mu_0,\bar\mu_1)+\mathsf S_{2,\varepsilon}(\bar\mu_0,\bar\mu_1),
\end{equation}
where $\mathsf S_1(\mu_0,\mu_1)$ is defined as in \eqref{eq:GWVariational}, while $\mathsf S_{2,\varepsilon}(\mu_0,\mu_1)$ is obtained by replacing $\OT_{\mathbf{A}}(\mu_0,\mu_1)$ in $\mathsf{S}_2(\mu_0,\mu_1)$ with $\OT_{\mathbf{A},\varepsilon}(\mu_0,\mu_1)\coloneqq \OT_{c_{\mathbf{A}},\varepsilon}(\mu_0,\mu_1)$, under the same $c_{\mathbf{A}}(x,y)=-4\|x\|^2\|y\|^2-32x^{\intercal}\mathbf{A}y$ cost function. Again, $\mathsf S_1$ is a constant whereas $\mathsf S_{2,\varepsilon}$ is a minimization problem with objective function, 
\begin{equation}
    \label{eq:entropicObjective}
    \Phi_{(\mu_0,\mu_1),\varepsilon}:\mathbf{A}\in\mathbb R^{d_0\times d_1}\mapsto 32\|\mathbf{A}\|^2_{\mathrm F}+\OT_{\mathbf{A},\varepsilon}(\mu_0,\mu_1),
\end{equation}
and, if $\pi^{\star}$ is optimal for \eqref{eq:entropicGWPrimal}, then $\mathbf{A}^{\star}=\frac{1}{2}\int xy^{\intercal}d\pi^{\star}(x,y)$ is optimal for \eqref{eq:entropicObjective} provided that $\mu_0,\mu_1$ are centered (see the proof of Theorem 1 in \cite{zhang2024gromov}). The implications of \cref{thm:VariationalMinimizers} carry over to the entropic case as follows. 

\begin{theorem}[On minimizers of \eqref{eq:entropicObjective}]
    \label{thm:entropicVariationalMinimizers}
    Fix $\varepsilon>0$. If $(\mu_0,\mu_1)\in\mathcal P_{4,\sigma}(\mathbb R^{d_0})\times \mathcal P_{4,\sigma}(\mathbb R^{d_1})$ are $4$-sub-Weibull with parameter $\sigma^2>0$, then 
    \begin{enumerate}
        \item $\Phi_{(\mu_0,\mu_1),\varepsilon}$ is locally Lipschitz continuous and coercive. Moreover, 
            $\Phi_{(\mu_0,\mu_1),\varepsilon}$ is Fr{\'e}chet differentiable at $\mathbf{A}\in\mathbb R^{d_0\times d_1}$ with $\left(D\Phi_{(\mu_0,\mu_1),\varepsilon}\right)_{[\mathbf{A}]}(\mathbf{B})=64\langle\mathbf{A}-\frac{1}{2}\int xy^{\intercal}d\pi_{\mathbf{A}}(x,y),\mathbf{B}\rangle_{\mathrm F}$, where $\pi_{\mathbf{A}}$ is the unique optimal coupling for $\mathsf{OT}_{\mathbf{A},\varepsilon}(\mu_0,\mu_1)$. 
        \item  If $\mathbf{A}^{\star}$ minimizes \eqref{eq:Objective}, then $2\mathbf{A}^{\star}=\int xy^{\intercal}d\pi^{\star}(x,y)\in {B_{\mathrm{F}}(\sqrt{M_2(\mu_0)M_2(\mu_1)})}$ for the EOT plan $\pi^{\star}$ for $\OT_{\mathbf{A}^{\star},\varepsilon}(\mu_0,\mu_1)$. If $\mu_0,\mu_1$ are centered, then $\pi^{\star}$ solves \eqref{eq:GWPrimal}.  
    \end{enumerate} 
\end{theorem}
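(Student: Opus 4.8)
\textbf{Proof plan for \cref{thm:entropicVariationalMinimizers}.}
The strategy is to mirror the proof of \cref{thm:VariationalMinimizers}, replacing the structural results for OT by their EOT counterparts. The crucial advantage in the entropic case is that, for $\varepsilon > 0$ and $4$-sub-Weibull marginals, the EOT problem $\mathsf{OT}_{\mathbf{A},\varepsilon}(\mu_0,\mu_1)$ admits a \emph{unique} optimal coupling $\pi_{\mathbf{A}}$ (a consequence of the strict convexity of the regularized objective, cf. \cref{sec:EOT}), so the ``uniqueness of cross-correlation matrix'' hypothesis needed in \cref{thm:VariationalMinimizers} (1) is automatic here. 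For the local Lipschitz and coercivity claims: the finiteness of $c_{\mathbf{A}} \in L^1(\mu_0 \otimes \mu_1)$ follows from the $4$-sub-Weibull assumption (since $\int \|x\|^2\|y\|^2 d\mu_0 \otimes \mu_1 \le \sqrt{M_4(\mu_0)M_4(\mu_1)} < \infty$ and $\int |x^\intercal \mathbf{A} y| d\mu_0 \otimes \mu_1 \le \|\mathbf{A}\|_{\mathrm F}\sqrt{M_2(\mu_0)M_2(\mu_1)}$). The envelope-type inequalities used in \cref{lem:FrechetDerivativeObjective} and \cref{prop:Lipschitz} — namely that perturbing $\mathbf{A}$ to $\mathbf{A}+\mathbf{H}$ changes the cost by $-32\int x^\intercal \mathbf{H} y\, d\pi$ for a feasible $\pi$ — hold verbatim for $\mathsf{OT}_{\mathbf{A},\varepsilon}$ because the KL term $\mathsf D_{\mathrm{KL}}(\pi \| \mu_0 \otimes \mu_1)$ does not depend on $\mathbf{A}$; thus the same two-sided bounds and \cref{lem:couplingFrobeniusNorm} give local Lipschitzness, and the lower bound on $\int c_{\mathbf{A}} d\pi$ combined with the (nonnegative) KL term and $32\|\mathbf{A}\|_{\mathrm F}^2$ gives coercivity exactly as before.

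For the Fr{\'e}chet differentiability in Point 1, I would first write, for any sequence $\mathbf{H}_n \to 0$, the squeeze
\[
-32\int x^\intercal \mathbf{H}_n y\, d\pi_{\mathbf{A}+\mathbf{H}_n} \le \mathsf{OT}_{\mathbf{A}+\mathbf{H}_n,\varepsilon}(\mu_0,\mu_1) - \mathsf{OT}_{\mathbf{A},\varepsilon}(\mu_0,\mu_1) \le -32\int x^\intercal \mathbf{H}_n y\, d\pi_{\mathbf{A}},
\]
so it suffices to show $\int xy^\intercal d\pi_{\mathbf{A}+\mathbf{H}_n}(x,y) \to \int xy^\intercal d\pi_{\mathbf{A}}(x,y)$. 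Weak convergence $\pi_{\mathbf{A}+\mathbf{H}_n} \stackrel{w}{\to} \pi_{\mathbf{A}}$ (along subsequences, hence for the full sequence by uniqueness of the EOT plan) can be obtained from stability of EOT solutions under cost perturbations — e.g.\ via $\Gamma$-convergence of the regularized functionals or the explicit Schr{\"o}dinger-system characterization, using that $c_{\mathbf{A}+\mathbf{H}_n} \to c_{\mathbf{A}}$ locally uniformly with a $4$-sub-Weibull-integrable envelope. To upgrade weak convergence of couplings to convergence of the \emph{second moments} $\int xy^\intercal d\pi$, I would invoke uniform integrability of $(x,y) \mapsto xy^\intercal$ against the family $\{\pi_{\mathbf{A}+\mathbf{H}_n}\}$, which holds because all these couplings have fixed marginals $\mu_0,\mu_1$ that are $4$-sub-Weibull (so $\int \mathbbm{1}_{\{\|x\|\|y\| > R\}}\|x\|\|y\|\, d\pi \le \int \mathbbm{1}_{\{\|x\|^2+\|y\|^2 > 2R\}}(\|x\|^2+\|y\|^2)\, d\pi$, bounded uniformly and vanishing as $R \to \infty$ by the fourth-moment control). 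This yields $\|\mathbf{H}_n\|_{\mathrm F}^{-1}|\mathsf{OT}_{\mathbf{A}+\mathbf{H}_n,\varepsilon} - \mathsf{OT}_{\mathbf{A},\varepsilon} + 32\int x^\intercal \mathbf{H}_n y\, d\pi_{\mathbf{A}}| \le 32\|\int xy^\intercal d\pi_{\mathbf{A}+\mathbf{H}_n} - \int xy^\intercal d\pi_{\mathbf{A}}\|_{\mathrm F} \to 0$, giving the claimed derivative after adding the trivial contribution $2\langle \mathbf{A},\cdot\rangle_{\mathrm F}$ from $32\|\mathbf{A}\|_{\mathrm F}^2$.

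Point 2 is then immediate: since $\Phi_{(\mu_0,\mu_1),\varepsilon}$ is everywhere Fr{\'e}chet differentiable (by Point 1) and coercive, a global minimizer $\mathbf{A}^\star$ exists and must satisfy $(D\Phi_{(\mu_0,\mu_1),\varepsilon})_{[\mathbf{A}^\star]} = 0$, i.e.\ $64\mathbf{A}^\star - 32\int xy^\intercal d\pi^\star = 0$ where $\pi^\star = \pi_{\mathbf{A}^\star}$ is the EOT plan; hence $2\mathbf{A}^\star = \int xy^\intercal d\pi^\star$, and \cref{lem:couplingFrobeniusNorm} (applied to this coupling in $\Pi(\mu_0,\mu_1)$) gives the norm bound $2\mathbf{A}^\star \in B_{\mathrm F}(\sqrt{M_2(\mu_0)M_2(\mu_1)})$. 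The final assertion — that $\pi^\star$ solves \eqref{eq:entropicGWPrimal} when $\mu_0,\mu_1$ are centered — follows from the entropic variational identity \eqref{eq:entropicGWVariational}: plugging the optimal $\mathbf{A}^\star$ and its unique EOT plan $\pi^\star$ into $\mathsf S_1 + \mathsf S_{2,\varepsilon}$ reconstructs $\|\Delta\|_{L^2(\pi^\star \otimes \pi^\star)}^2 + \varepsilon \mathsf D_{\mathrm{KL}}(\pi^\star \| \mu_0 \otimes \mu_1)$ exactly (the computation is identical to Section~5.1 of \cite{zhang2024gromov}, now carrying the KL term along unchanged), so $\pi^\star$ attains the infimum in \eqref{eq:entropicGWPrimal}. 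The main obstacle is the differentiability step — specifically, rigorously passing from weak convergence of the EOT couplings to convergence of their cross-correlation matrices — which requires the uniform-integrability argument powered by the $4$-sub-Weibull hypothesis rather than the compact-support argument available in \cref{lem:FrechetDerivativeObjective}; everything else transfers with only cosmetic changes. (I note the statement should presumably reference \eqref{eq:entropicObjective} rather than \eqref{eq:Objective} in Point 2, and \eqref{eq:entropicGWPrimal} rather than \eqref{eq:GWPrimal} in the last sentence; I would correct these when writing up.)
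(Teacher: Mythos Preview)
Your proposal is correct and follows the same overall architecture as the paper: the squeeze inequalities, local Lipschitz continuity, coercivity, and the stationarity argument for Point 2 are all handled exactly as in the paper (which indeed just points back to \cref{prop:Lipschitz} and observes that only finite fourth moments are needed). You also correctly flag the typos in the statement.

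The one place where your route genuinely diverges is the proof that $\int xy^\intercal d\pi_{\mathbf{A}+\mathbf{H}_n} \to \int xy^\intercal d\pi_{\mathbf{A}}$. You propose to (i) obtain weak convergence $\pi_{\mathbf{A}+\mathbf{H}_n} \stackrel{w}{\to} \pi_{\mathbf{A}}$ by a $\Gamma$-convergence/stability argument for the EOT functionals, and then (ii) upgrade to convergence of the cross-correlation via uniform integrability of $(x,y)\mapsto xy^\intercal$, which is immediate since all couplings share the same $4$-sub-Weibull marginals. The paper instead works through the dual side: it invokes the EOT potential estimates (Lemma~4 of \cite{zhang2024gromov}, restated in \cref{prop:entropicGWPotentials}), applies Arzel\`a--Ascoli to extract uniformly convergent subsequences of potentials, passes to the limit in the Schr\"odinger system via dominated convergence, and then uses the explicit density $d\pi_{\mathbf A}/d(\mu_0\otimes\mu_1)=e^{(\varphi_0\oplus\varphi_1-c_{\mathbf A})/\varepsilon}$ and DCT once more to conclude. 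Your argument is more self-contained for this theorem in isolation, since it avoids importing the potential regularity theory; the paper's approach is heavier here but develops the potential-convergence machinery that is reused verbatim in the later stability proofs (\cref{lem:entropicOTGateaux}, \cref{lem:entropicS2Gateaux}).
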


Compared with \cref{thm:VariationalMinimizers}, \cref{thm:entropicVariationalMinimizers} holds under the weaker assumption of $4$-sub-Weibull measures with the advantage that $\Phi_{(\mu_0,\mu_1),\varepsilon}$ is guaranteed to be continuously differentiable. These improvements are obtained by leveraging the strong structural properties of solutions to the EOT problem, see \cref{sec:proof:thm:entropicVariationalMinimizers} for details.

\subsection{Proof of \texorpdfstring{\cref{thm:entropicVariationalMinimizers}}{Theorem 2}}
\label{sec:proof:thm:entropicVariationalMinimizers}

\begin{lemma}
    \label{lem:entropicFrechetDerivativeObjective}
    $\Phi_{(\mu_0,\mu_1),\varepsilon}$ is Fr{\'e}chet differentiable at $\mathbf{A}\in\mathbb R^{d_0\times d_1}$ with $\left(D\Phi_{(\mu_0,\mu_1),\varepsilon}\right)_{[\mathbf{A}]}(\mathbf{B})=64\langle\mathbf{A}-\frac{1}{2}\int xy^{\intercal}d\pi_{\mathbf{A}}(x,y),\mathbf{B}\rangle_{\mathrm F}$, where $\pi_{\mathbf{A}}$ is the unique optimal coupling for $\mathsf{OT}_{\mathbf{A},\varepsilon}(\mu_0,\mu_1)$.
\end{lemma}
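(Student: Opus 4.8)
The plan is to mirror the proof of \cref{lem:FrechetDerivativeObjective}, replacing the weak-compactness argument for OT plans with the stronger structural facts available in the entropic setting. Write $\Phi_{(\mu_0,\mu_1),\varepsilon}(\mathbf{A})=32\|\mathbf{A}\|_{\mathrm F}^2+\OT_{\mathbf{A},\varepsilon}(\mu_0,\mu_1)$. The quadratic term $\|\cdot\|_{\mathrm F}^2$ is Fr{\'e}chet differentiable with derivative $2\langle\mathbf{A},\cdot\rangle_{\mathrm F}$, so it suffices to show $\mathbf{A}\mapsto \OT_{\mathbf{A},\varepsilon}(\mu_0,\mu_1)$ is Fr{\'e}chet differentiable with derivative $\mathbf{B}\mapsto -32\langle\int xy^\intercal d\pi_{\mathbf{A}}(x,y),\mathbf{B}\rangle_{\mathrm F}$, where $\pi_{\mathbf{A}}$ is \emph{the} unique EOT plan for $\mathsf{OT}_{\mathbf{A},\varepsilon}(\mu_0,\mu_1)$ (uniqueness holds by the discussion in \cref{sec:EOT}, since $c_{\mathbf{A}}\in L^1(\mu_0\otimes\mu_1)$ under the fourth-moment assumption). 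As in \eqref{eq:upperBound}--\eqref{eq:lowerBound}, for any $\mathbf{H}\in\mathbb R^{d_0\times d_1}$ the suboptimality of $\pi_{\mathbf{A}}$ in the primal for $\mathsf{OT}_{\mathbf{A}+\mathbf{H},\varepsilon}$ (noting the KL term does not depend on $\mathbf{A}$) gives
\[
\OT_{\mathbf{A}+\mathbf{H},\varepsilon}(\mu_0,\mu_1)-\OT_{\mathbf{A},\varepsilon}(\mu_0,\mu_1)\leq \int (c_{\mathbf{A}+\mathbf{H}}-c_{\mathbf{A}})\,d\pi_{\mathbf{A}}=-32\int x^\intercal\mathbf{H}y\,d\pi_{\mathbf{A}}(x,y),
\]
and symmetrically the reverse inequality with $\pi_{\mathbf{A}+\mathbf{H}}$ in place of $\pi_{\mathbf{A}}$. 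Subtracting the linear term, it remains to prove $\left\|\int xy^\intercal d\pi_{\mathbf{A}+\mathbf{H}}-\int xy^\intercal d\pi_{\mathbf{A}}\right\|_{\mathrm F}\to 0$ as $\|\mathbf{H}\|_{\mathrm F}\to0$, which sandwiches the difference quotient.

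The key step, and the main obstacle, is establishing this convergence of the cross-correlation matrices of the EOT plans. Fix an arbitrary sequence $\mathbf{H}_n\to 0$. Since $c_{\mathbf{A}+\mathbf{H}_n}\to c_{\mathbf{A}}$ uniformly on $L^2(\mu_0\otimes\mu_1)$-controlled sets (indeed $|c_{\mathbf{A}+\mathbf{H}_n}(x,y)-c_{\mathbf{A}}(x,y)|=32|x^\intercal\mathbf{H}_n y|\leq 32\|x\|\|y\|\|\mathbf{H}_n\|_{\mathrm F}$), by the stability theory for EOT (e.g. via convergence of the EOT potentials from the regularity estimates of \cref{prop:entropicGWPotentials}, or directly by lower-semicontinuity plus uniqueness) one has $\pi_{\mathbf{A}+\mathbf{H}_n}\stackrel{w}{\to}\pi_{\mathbf{A}}$; uniqueness of the EOT plan forces the whole sequence to converge, not merely a subsequence. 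To upgrade weak convergence to convergence of $\int xy^\intercal d\pi(x,y)$, I will invoke a uniform-integrability argument: the map $(x,y)\mapsto xy^\intercal$ is dominated by $\|x\|\|y\|$, and the second moments of the marginals $\mu_0,\mu_1$ are fixed and finite (moreover finite fourth moments give $\|x\|\|y\|\in L^2$), so $\{xy^\intercal\}$ is uniformly integrable along $(\pi_{\mathbf{A}+\mathbf{H}_n})_n$ because all these measures share the marginals $\mu_0\otimes\mu_1$. Weak convergence plus uniform integrability yields $\int xy^\intercal d\pi_{\mathbf{A}+\mathbf{H}_n}\to\int xy^\intercal d\pi_{\mathbf{A}}$, as needed. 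Since this holds along every sequence $\mathbf{H}_n\to0$, we conclude $\int xy^\intercal d\pi_{\mathbf{A}+\mathbf{H}}\to\int xy^\intercal d\pi_{\mathbf{A}}$ as $\mathbf{H}\to0$, and the difference-quotient bound gives Fr{\'e}chet differentiability with the claimed derivative $\left(D\Phi_{(\mu_0,\mu_1),\varepsilon}\right)_{[\mathbf{A}]}(\mathbf{B})=64\langle\mathbf{A}-\tfrac12\int xy^\intercal d\pi_{\mathbf{A}}(x,y),\mathbf{B}\rangle_{\mathrm F}$.

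One subtlety worth flagging: when the measures have unbounded support one cannot appeal to Theorem 5.20 of \cite{villani2008optimal} as in the compact case, so the stability of the EOT plan should instead be justified through the convergence of dual potentials (the potentials $(\varphi_0^{\mathbf{A}+\mathbf{H}_n,\varepsilon},\varphi_1^{\mathbf{A}+\mathbf{H}_n,\varepsilon})$ are equicontinuous and equibounded on compacts with the growth control of \cref{prop:entropicGWPotentials}, hence converge along subsequences to potentials for $\mathsf{OT}_{\mathbf{A},\varepsilon}$, forcing convergence of the densities $e^{(\varphi_0\oplus\varphi_1-c)/\varepsilon}$ and thus of the plans) — this is exactly the mechanism already deployed in the proof of \cref{lem:entropicOTGateaux}, so I would cite that argument rather than reproduce it. With Fr{\'e}chet differentiability in hand, the remaining assertions of \cref{thm:entropicVariationalMinimizers}(1) — local Lipschitz continuity and coercivity — follow verbatim as in \cref{prop:Lipschitz} (using \cref{lem:couplingFrobeniusNorm}, which applies to any coupling and hence to $\pi_{\mathbf{A}}$), and part (2) follows from part (1) since a differentiable coercive function attains its minimum at a critical point, together with the identification of the minimizing matrix with a GW plan as in \cref{thm:VariationalMinimizers}(2).
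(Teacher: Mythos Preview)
Your proposal is correct and follows the same overall architecture as the paper: the sandwich bounds from sub/super-optimality of $\pi_{\mathbf{A}}$ and $\pi_{\mathbf{A}+\mathbf{H}}$, reduction to $\int xy^\intercal d\pi_{\mathbf{A}+\mathbf{H}}\to\int xy^\intercal d\pi_{\mathbf{A}}$, and convergence of EOT potentials via Arzel\`a--Ascoli and the Schr\"odinger system (exactly the mechanism you flag from \cref{lem:entropicOTGateaux}). The only substantive difference is in the last step. The paper does not pass through weak convergence of $\pi_{\mathbf{A}+\mathbf{H}_n}$ plus uniform integrability of $xy^\intercal$; instead it writes $\int xy^\intercal d\pi_{\mathbf{A}+\mathbf{H}_n}=\int xy^\intercal e^{(\varphi_{0,n}\oplus\varphi_{1,n}-c_{\mathbf{A}+\mathbf{H}_n})/\varepsilon}\,d\mu_0\otimes\mu_1$ and applies dominated convergence directly against the \emph{fixed} reference measure $\mu_0\otimes\mu_1$, using the potential growth estimates and the $4$-sub-Weibull assumption to dominate the integrand. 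Your route is equally valid---the fixed marginals give a uniform $L^2$ bound on $\|x\|\|y\|$ via $\int\|x\|^2\|y\|^2\,d\pi\le\sqrt{M_4(\mu_0)M_4(\mu_1)}$, hence uniform integrability---but it is slightly more circuitous, since establishing weak convergence of the plans already requires the density convergence that the paper exploits directly.
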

\begin{proof}
    As noted in the proof of \cref{lem:FrechetDerivativeObjective}, $\|\cdot\|_{\mathrm{F}}^2$ is Fr{\'e}chet differentiable at $\mathbf{A}$ with derivative $2\langle \mathbf{A},\cdot\rangle_{\mathrm F}$.
    Now, for any $\mathbf{H}\in\mathbb R^{d_0\times d_1}$, we have that 
    \[
           \OT_{\mathbf{A}+\mathbf{H}}(\mu_0,\mu_1)-\OT_{\mathbf{A}}(\mu_0,\mu_1)\leq \int c_{\mathbf{A}+\mathbf{H}}d\pi_{\mathbf{A}}-\int c_{\mathbf{A}}d\pi_{\mathbf{A}}=-32\int x^{\intercal}\mathbf{H}yd\pi_{\mathbf{A}}(x,y), 
    \]
    for the unique EOT plan $\pi_{\mathbf{A}}$ for $\OT_{\mathbf{A},\varepsilon}(\mu_0,\mu_1)$.
    Similarly, 
    \[
        \OT_{\mathbf{A}+\mathbf{H}}(\mu_0,\mu_1)-\OT_{\mathbf{A}}(\mu_0,\mu_1)\geq -32\int x^{\intercal}\mathbf{H}yd\pi_{\mathbf{A}+\mathbf{H}}(x,y),
    \]
    for the optimal coupling $\pi_{\mathbf{A}+\mathbf{H}}$ for $\OT_{\mathbf{A}+\mathbf{H},\varepsilon}(\mu_0,\mu_1)$.
    Proceeding as in the proof of \cref{lem:FrechetDerivativeObjective}, it suffices to show that 
    \[
    \left\|\int xy^{\intercal}d\pi_{\mathbf{A}+\mathbf{H}}(x,y)-\int xy^{\intercal}d\pi_{\mathbf{A}}(x,y)\right\|_{\mathrm{F}}\to 0
    \]
    as $\mathbf{H}\downarrow 0$.

    To this end, consider an arbitrary sequence $\mathbf{H}_n$ converging to $0$ and let $(\varphi_{0,n},\varphi_{1,n})$ be EOT potentials for $\mathsf{OT}_{\mathbf{A}+\mathbf{H}_{n},\varepsilon}(\mu_0,\mu_1)$. By Lemma 4 in \cite{zhang2024gromov}, we may choose the EOT potentials as to satisfy the estimates 
            \[   \varphi_{i,n}\leq  K(1+\|\cdot\|^{2}),\quad -\varphi_{i,n} \leq  K(1+\|\cdot\|^{4}), \quad, i\in\{0,1\},
    \]
    for all $n$ sufficiently large, where $K$ is a constant that depends only on $\|\mathbf A\|_{\infty},\sigma,d_0,d_1$, $\varepsilon$ (though the cited result holds for $\varepsilon=1$, the proof of \cref{prop:entropicGWPotentials} shows how to extend the result to arbitrary $\varepsilon>0$). Applying the Arzel\`a-Ascoli theorem, for any subsequence $n'$ of $n$ there exists a further subsequence $n''$ along which $(\varphi_{0,n},\varphi_{1,n})$ converges to a pair of continuous functions $(\varphi_0,\varphi_1)$ uniformly on compact sets (in particular pointwise). Note also that $c_{\mathbf A+\mathbf H_{n''}}\to c_{\mathbf A}$ pointwise. It follows from the $4$-sub-Weibull condition and the dominated convergence theorem that  
\[
        \begin{aligned} 
            e^{-\frac{\varphi_{0,n''}(x)}\varepsilon}=\int
             e^{\frac{\varphi_{1,n''}-c_{\mathbf{A}+\mathbf{H}_{n''}}(x,\cdot)}\varepsilon}d \mu_{1}&\to  \int e^{\frac{\varphi_1-c_{\mathbf{A}}(x,\cdot)}\varepsilon}d  \mu_{1}=e^{-\frac{\varphi_0(x)}\varepsilon},
             \\  
            e^{-\frac{\varphi_{1,n''}(y)}\varepsilon}=\int
             e^{\frac{\varphi_{0,n''}-c_{\mathbf{A}+\mathbf{H}_{n''}}(\cdot,y)}\varepsilon}d \mu_{0}&\to  \int e^{\frac{\varphi_0-c_{\mathbf{A}}(\cdot,y)}\varepsilon}d  \mu_{0}=e^{-\frac{\varphi_1(y)}\varepsilon},
        \end{aligned}
        \]
        for $\mu_0\otimes \mu_1$-a.e. $(x,y)$
        so that $(\varphi_0,\varphi_1)$ solve the Schr\"odinger system for $\mathsf{OT}_{\mathbf{A},\varepsilon}( \mu_0, \mu_1)$. Given the characterization of the optimal coupling in terms of the EOT potentials from \cref{sec:EOT}, we obtain that 
        \[
            \begin{aligned}
                \int xy^{\intercal}d\pi_{\mathbf{A}+\mathbf{H}_{n''}}(x,y)&=\int xy^{\intercal}e^{\frac{\varphi_{0,n''}(x)+\varphi_{1,n''}(y)-c_{\mathbf{A}+\mathbf H_{n''}}(x,y)}\varepsilon}d\mu_0\otimes \mu_1(x,y),
                \\
                &\to \int xy^{\intercal}e^{\frac{\varphi_{0}(x)+\varphi_{1}(y)-c_{\mathbf{A}}(x,y)}\varepsilon}d\mu_0\otimes \mu_1(x,y),
                \\
                &= \int xy^{\intercal}d\pi_{\mathbf{A}}(x,y),
            \end{aligned}
        \]
        by applying the dominated convergence theorem; recalling the EOT potential estimates and the sub-Weibull assumption. Given that the limit is independent of the choice of original subsequence, we have that $\int xy^{\intercal}d\pi_{\mathbf{A}+\mathbf{H}}(x,y)\to \int xy^{\intercal}d\pi_{\mathbf A}(x,y)$ as $\mathbf H\to 0$ proving the claim.   
\end{proof}

\begin{proof}[Proof of \cref{thm:entropicVariationalMinimizers}]

Given \cref{lem:entropicFrechetDerivativeObjective},  the remainder of the proof of  \cref{thm:entropicVariationalMinimizers} (1) follows the same lines as the proof of     \cref{prop:Lipschitz}, noting that the proof of that result requires only that $\mu_0$ and $\mu_1$ have finite fourth moments. 
  
Given the coercivity of $\Phi_{(\mu_0,\mu_1),\varepsilon}$ and the fact that it is continuously differentiable, its minimizers must be stationary points.  
\end{proof}

\section{Convergence of minimizers of \texorpdfstring{$\Phi$}{Phi}}
\label{app:gammaConvergence} 
Let $\mathcal X_0$ and $\mathcal X_1$ be compact subsets of $\mathbb R^{d_0}$ and $\mathbb R^{d_1}$. We prove the following general result. 

\begin{proposition}
\label{prop:convergenceMinimizers}
    Let  $(\nu_{0,n})_{n\in\mathbb N}\subset \mathcal P(\mathcal X_0)$ and $ (\nu_{1,n})_{n\in\mathbb N}\subset \mathcal P(\mathcal X_1)$ converge weakly to $\nu_0$ and $\nu_1$ respectively. Then, if $\mathbf{A}_n\in \argmin_{\mathbb R^{d_0\times d_1}}\Phi_{(\nu_{0,n},\nu_{1,n})}$, any limit point of $(\mathbf A_n)_{n\in\mathbb N}$ minimizes $\Phi_{(\nu_0,\nu_1)}$.  
\end{proposition}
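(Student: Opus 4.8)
The natural strategy is to invoke $\Gamma$-convergence of the functionals $\Phi_{(\nu_{0,n},\nu_{1,n})}$ together with the equicoercivity guaranteed by \cref{thm:VariationalMinimizers}. First I would observe that all minimizers of $\Phi_{(\nu_{0,n},\nu_{1,n})}$ lie in the fixed compact set $B_{\mathrm{F}}(M)$ with $M = \frac{1}{2}\|\mathcal X_0\|_{\infty}\|\mathcal X_1\|_{\infty}$: indeed $M_2(\nu_{i,n}) \leq \|\mathcal X_i\|_{\infty}^2$ for every $n$, so \cref{thm:VariationalMinimizers}(2) places $\mathbf A_n \in B_{\mathrm{F}}\big(\frac12\sqrt{M_2(\nu_{0,n})M_2(\nu_{1,n})}\big) \subset B_{\mathrm{F}}(M)$. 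Hence $(\mathbf A_n)_{n\in\mathbb N}$ is bounded and any limit point is the limit of a subsequence $\mathbf A_{n_k} \to \mathbf A^{\star} \in B_{\mathrm{F}}(M)$. It then suffices to restrict attention to the (continuous, by \cref{thm:VariationalMinimizers}(1)) functionals on the compact set $B_{\mathrm{F}}(M)$.

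The key step is to show that $\mathbf A \in B_{\mathrm{F}}(M) \mapsto \Phi_{(\nu_{0,n},\nu_{1,n})}(\mathbf A)$ converges \emph{uniformly} to $\mathbf A \in B_{\mathrm{F}}(M) \mapsto \Phi_{(\nu_0,\nu_1)}(\mathbf A)$ as $n\to\infty$. Since the quadratic term $32\|\mathbf A\|_{\mathrm F}^2$ does not depend on the marginals, this reduces to showing $\sup_{\mathbf A \in B_{\mathrm F}(M)}|\OT_{\mathbf A}(\nu_{0,n},\nu_{1,n}) - \OT_{\mathbf A}(\nu_0,\nu_1)| \to 0$. For each fixed $\mathbf A$, pointwise convergence $\OT_{\mathbf A}(\nu_{0,n},\nu_{1,n}) \to \OT_{\mathbf A}(\nu_0,\nu_1)$ follows from stability of OT under weak convergence of the marginals (the costs $c_{\mathbf A}$ are continuous and the supports are contained in the fixed compact $\mathcal X_0\times\mathcal X_1$, so one may apply Theorem 5.20 in \cite{villani2008optimal}, or directly use the duality \eqref{eq:OTDuality}). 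To upgrade to uniformity, I would use the Lipschitz estimate from the proof of \cref{prop:Lipschitz}: for any probability measures on $\mathcal X_0,\mathcal X_1$ and any $\mathbf A, \mathbf A' \in B_{\mathrm F}(M)$,
\[
|\OT_{\mathbf A}(\eta_0,\eta_1) - \OT_{\mathbf A'}(\eta_0,\eta_1)| \leq 32\,\|\mathcal X_0\|_{\infty}\|\mathcal X_1\|_{\infty}\,\|\mathbf A - \mathbf A'\|_{\mathrm F},
\]
so the family $\{\mathbf A \mapsto \OT_{\mathbf A}(\nu_{0,n},\nu_{1,n})\}_{n}$ is equi-Lipschitz on the compact set $B_{\mathrm F}(M)$; equi-Lipschitz plus pointwise convergence yields uniform convergence.

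Given uniform convergence on $B_{\mathrm F}(M)$ together with continuity of all functionals (from \cref{thm:VariationalMinimizers}(1)), the conclusion is standard: if $\mathbf A_{n_k} \to \mathbf A^{\star}$ minimizes $\Phi_{(\nu_{0,n_k},\nu_{1,n_k})}$, then for any $\mathbf B \in B_{\mathrm F}(M)$,
\[
\Phi_{(\nu_0,\nu_1)}(\mathbf A^{\star}) = \lim_{k}\Phi_{(\nu_{0,n_k},\nu_{1,n_k})}(\mathbf A_{n_k}) \leq \lim_{k}\Phi_{(\nu_{0,n_k},\nu_{1,n_k})}(\mathbf B) = \Phi_{(\nu_0,\nu_1)}(\mathbf B),
\]
where the first equality combines $\mathbf A_{n_k}\to\mathbf A^{\star}$, continuity of $\Phi_{(\nu_0,\nu_1)}$, and the uniform bound $|\Phi_{(\nu_{0,n_k},\nu_{1,n_k})}(\mathbf A_{n_k}) - \Phi_{(\nu_0,\nu_1)}(\mathbf A_{n_k})| \leq \|\Phi_{(\nu_{0,n_k},\nu_{1,n_k})} - \Phi_{(\nu_0,\nu_1)}\|_{\infty,B_{\mathrm F}(M)} \to 0$. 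Since minimizers of $\Phi_{(\nu_0,\nu_1)}$ over $B_{\mathrm F}(M)$ coincide with its global minimizers (again by \cref{thm:VariationalMinimizers}(2)), $\mathbf A^{\star}$ minimizes $\Phi_{(\nu_0,\nu_1)}$. The main obstacle is the uniform-convergence step; I expect it to go through cleanly via the equi-Lipschitz argument above, but care is needed to ensure the Lipschitz constant is genuinely independent of $n$, which it is because it depends only on $\|\mathcal X_0\|_{\infty}$, $\|\mathcal X_1\|_{\infty}$, and $M$.
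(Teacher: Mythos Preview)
Your proposal is correct and follows essentially the same approach as the paper: restrict to the compact ball $B_{\mathrm F}(M)$ via \cref{thm:VariationalMinimizers}, use the shared Lipschitz constant in $\mathbf A$ together with pointwise convergence of $\OT_{\mathbf A}(\nu_{0,n},\nu_{1,n})$ (Theorem~5.20 in \cite{villani2008optimal}), and deduce convergence of minimizers. The only cosmetic difference is that the paper packages the last two steps through epi-convergence (Proposition~7.15 and Theorem~7.33 in \cite{rockafellar2009variational}, with Corollary~1.8.6 in \cite{bogachev2020real} for the equicontinuity-to-uniform step), whereas you prove uniform convergence on $B_{\mathrm F}(M)$ directly and finish with the elementary $\liminf/\limsup$ argument; the underlying content is identical.
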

\begin{proof}
    By \cref{thm:VariationalMinimizers}, the minimizers of all relevant problems are contained in $B_{\mathrm{F}}(M)$ for $M=\frac 12 \|\mathcal X_0\|_{\infty}\|\mathcal X_1\|_{\infty}$. Consider the functions 
    \[
    \begin{aligned}
        F_n(\mathbf A) &= \begin{cases}
                \Phi_{(\nu_{0,n},\nu_{1,n})}(\mathbf A),&\text{if } \mathbf A \in B_{\mathrm{F}}(M),
                \\
               +\infty,&\text{otherwise}, 
        \end{cases}
    \\ 
        F(\mathbf A) &= \begin{cases}
                \Phi_{(\nu_{0},\nu_{1})}(\mathbf A),&\text{if } \mathbf A \in B_{\mathrm{F}}(M),
                \\
               +\infty,&\text{otherwise}, 
        \end{cases}
    \end{aligned} 
    \]
    and note that, upon restriction to $B_{\mathrm{F}}(M)$, they are Lipschitz continuous with a shared Lipschitz constant (see the proof of \cref{prop:Lipschitz}). As $\mathsf{OT}_{\mathbf A}(\nu_{0,n},\nu_{1,n})$ converges to $\mathsf{OT}_{\mathbf A}(\nu_{0},\nu_{1})$ for any $\mathbf A\in B_{\mathrm{F}}(M)$ (see Theorem 5.20 in \cite{villani2008optimal}), it follows from Proposition 7.15 in \cite{rockafellar2009variational} and Corollary 1.8.6 in \cite{bogachev2020real} that $F_n$ epi-converges to $F$ relative to $B_{\mathrm F}(M)$. Explicitly, this means that $\liminf_{n\in\mathbb N}F_n(\mathbf A_n)\geq F(\mathbf A)$ for every sequence $B_{\mathrm{F}}(M)\supset\mathbf A_n\to \mathbf A$ and $\limsup_{n\in\mathbb N}F_n(\mathbf A_n')\leq F(\mathbf A)$ for some sequence $B_{\mathrm{F}}(M)\supset\mathbf A_n'\to \mathbf A$, for any choice of $\mathbf A\in B_{\mathrm F}(M)$. As $B_{\mathrm F}(M)$ is closed, both of these conditions extend verbatim with $\mathbb R^{d_0\times d_1}$ in place of $B_{\mathrm{F}}(M)$. With this, we may apply Theorem 7.33 in \cite{rockafellar2009variational} to obtain that any limit point of a sequence $\mathbf A_n\in \argmin_{\mathbb R^{d_0\times d_1}}F_n$ minimizes $F$ as desired. 
\end{proof}

As each of the empirical measures $\hat\rho_{k,ij,n}$ converge weakly to $\hat\rho_{k,ij}$  with probability $1$ for $k\in\{0,1\}$ and $i,j\in[N]$, $i<j$ (see e.g. Theorem 3 in \cite{varadarajan1958convergence}), it readily follows that $\mu_{0,n}$ and $\mu_{1,n}$ converge weakly to $\mu_0$ and $\mu_1$ with probability $1$.  

\section{Additional experiments}
\label{sec:extraExperiments}
\subsection{Initialization of the local solver for distributions on binary graphs}
\label{sec:warmstart}

This experiment illustrates the 
value obtained at each of the matrices $(\mathbf A_{T})_{T\in\mathcal T}$ when applying the exhaustive search procedure described in \cref{sec:computation}. 

First, we fix isomorphic distributions, $\nu_0,\nu_1$, on the set of all binary graphs with $7$ vertices, see Figure \ref{fig:warmstartPopulation} for the precise edge probabilities and the choice of permutation. Next, we sample $n\in\{10,25,50,500,5000\}$ graphs according to these distributions to construct the empirical estimators $\mu_{0,n}$ and $\mu_{1,n}$. Finally, we run \cref{alg:subgradient} at each matrix $(\mathbf A_{T})_{T\in\mathcal T}$ and store the resulting value, denoted $\Lambda_T$, for each of of the $5040$ permutations in $\mathcal T$ and for each of the pairs $(\mu_{0,n},\mu_{1,n})$ as well as population distributions  $(\mu_0,\mu_1)$.  These values are compiled in Figure \ref{fig:warmstart}.  
\begin{figure}[!htb]
    \centering
    \includegraphics[width=0.7\textwidth]{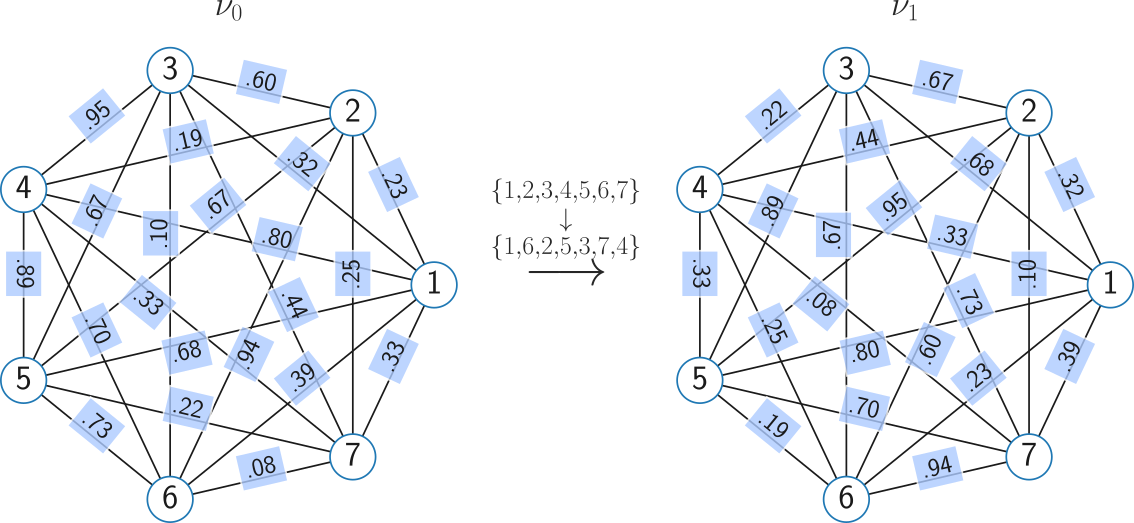}
    \caption{Underlying distributions for this experiment. First, edge probabilities for $\nu_0$ were sampled from the uniform distribution on $[0,1]$, then a random permutation was chosen to generate $\nu_1$ from $\nu_0$ as depicted. Edge probabilities are presented with two significant figures for ease of reading.}
    \label{fig:warmstartPopulation}
\end{figure}

\begin{figure}[!htb]
    \centering
    \includegraphics[width=0.8\textwidth]{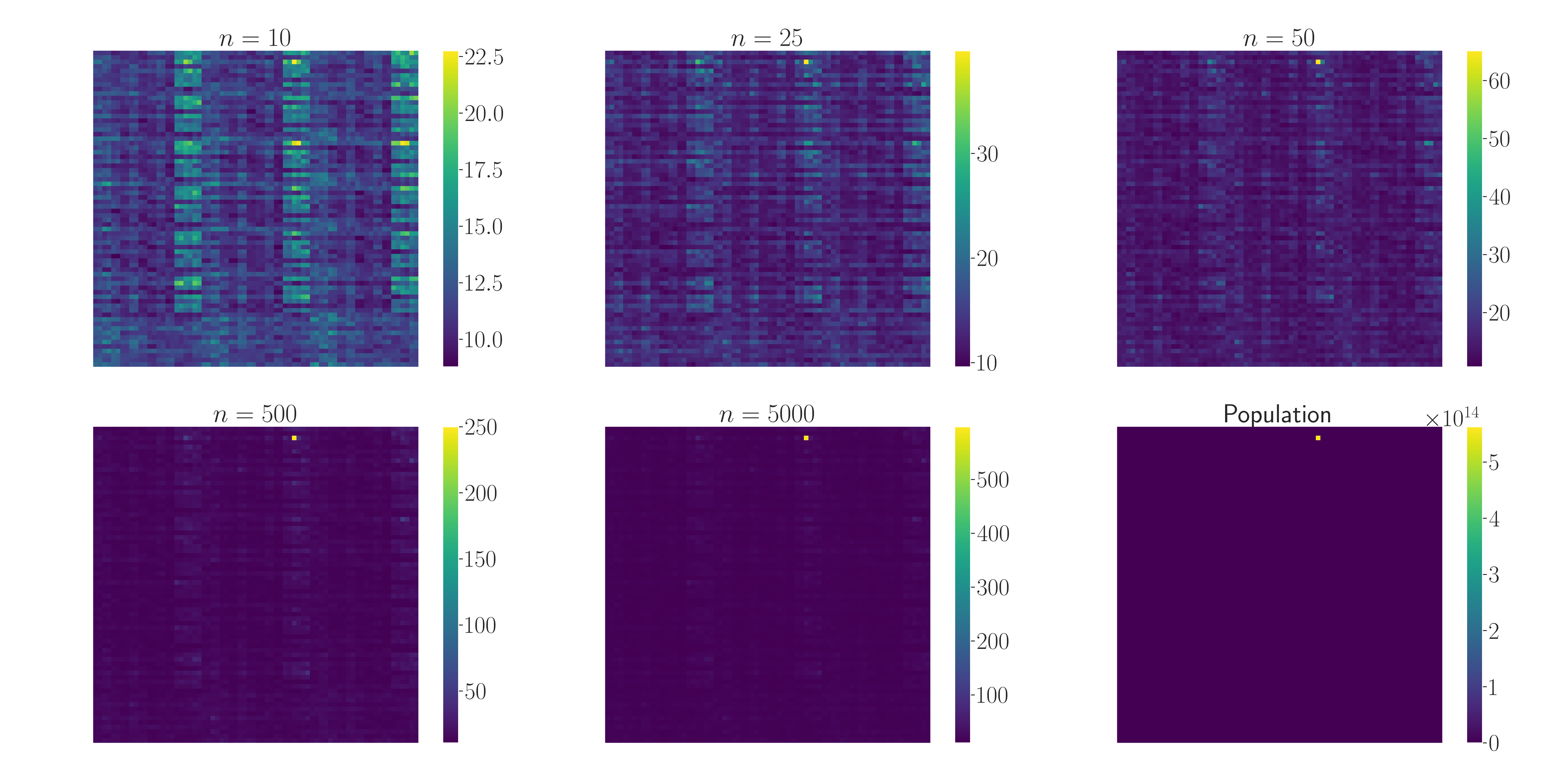}
    \caption{Each image corresponds to performing the described experiment with different numbers of samples or on the true population distributions $\nu_0$ and $\nu_1$ as illustrated by the title. Each pixel of the $70\times 72$ images above is the inverse of the value obtained starting from one of the $5040$ possible initializations. Apart from the experiment with $n=10$, the images contain a unique yellow pixel, indicating that the associated permutation results in a value significantly closer to the true value of $0$ than the other permutations and, indeed, corresponds to the permutation relating $\nu_0$ and $\nu_1$.}
    \label{fig:warmstart}
\end{figure}

We highlight that, for these experiments, $\argmin_{T\in\mathcal T}\Lambda_T=\{\bar T\}$, where $\bar T$ coincides with the true permutation, $T^{\star}$, even when only $10$ samples are used. On the other hand, the relaxed graph matching approacj, \cref{alg:relaxedGraph} fails to recover $T^{\star}$ for $n=10$, but succeeds otherwise.

The ratio $\frac{\min_{T\in\mathcal T\backslash \{\bar T\}}\Lambda_T}{\Lambda_{\bar T}}$ was computed for each experiment and is of the order $1.05,1.30,3.63,4.49,9.61,$ and $8.47\times 10^{12}$ for $n=10,25,50,500,5000$, and the exact distributions respectively. This illustrates that there is a substantial gap between the smallest and next smallest

(i.e. $\Lambda_{\bar T}$ is significantly smaller than $\Lambda_{T}$ for every $T\neq \bar T$). Finally, as $\nu_0$ and $\nu_1$ are isomorphic, $\mathsf D(\mu_{0,n},\mu_{1,n})$ converges to $0$ as $n\to \infty$, this is reflected by the values obtained for $\Lambda_{\bar T}$ as $n$ varies.

\subsubsection{Comparison between warm-start methods}
\label{sec:comparisonWarmStart}

As evidenced by Figure \ref{fig:type12errorBinary}, using relaxed graph matching to approximate the GW distance between distributions on graphs appears to result in inflated values when the number of samples is low. It is thus of interest to explore if the exhaustive search approach fares better. To this end, we repeat the procedure described in \cref{sec:testingBinary} with binary graphs on $6$ nodes so that both methods can be implemented. The distributions for this experiment are displayed in Figure \ref{fig:comparisonGraphs}.

\begin{figure}[!htb]
    \centering
    \includegraphics[width=0.7\textwidth]{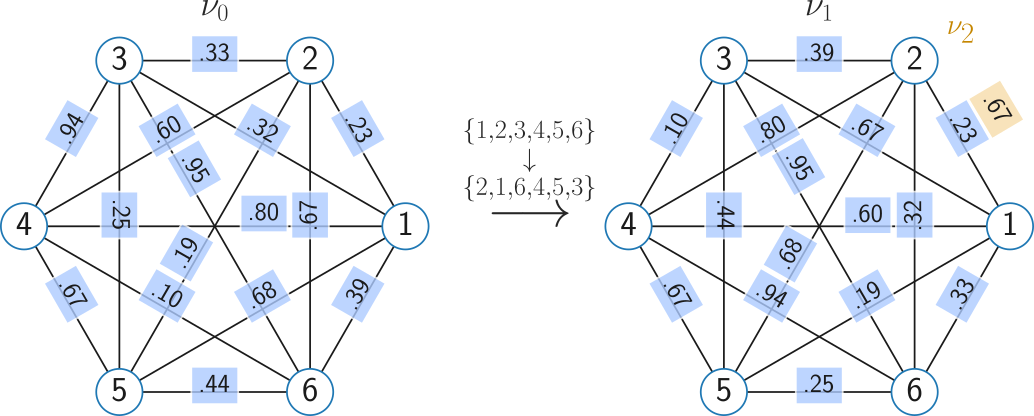}
    \caption{Distributions for \cref{sec:comparisonWarmStart}. $\nu_0$ is generated by   sampling $(p_{0,ij})_{\substack{i,j\in[N]\\i<j}}$ from the uniform distribution on $[0,1]$. $\nu_1$ is obtained from $\nu_0$ by a randomly chosen permutation. The modified probability for $\nu_2$ is highlighted in orange. Edge probabilities are presented with two significant figures for ease of reading.}
    \label{fig:comparisonGraphs}
\end{figure}

\begin{figure}[!htb]
    \centering
    \includegraphics[width=\textwidth]{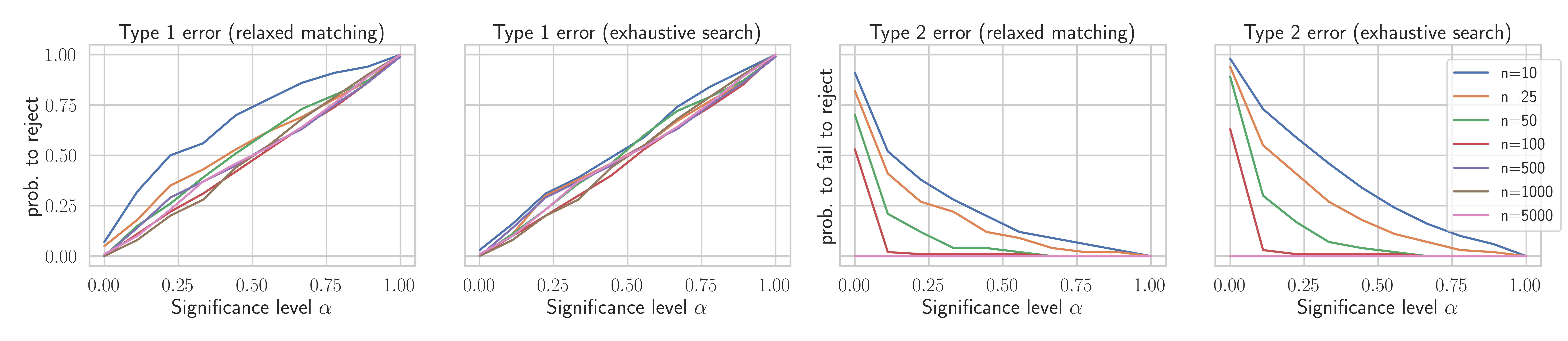}
    \caption{Comparison of the type 1 and type 2 error between the warm-start method based on relaxed graph matching and the exhaustive search for the experiment described in \cref{sec:comparisonWarmStart}.}
    \label{fig:type12comparison}
\end{figure}

It can be seen from Figure \ref{fig:type12comparison} that, as expected, the exhaustive search results in a better approximation of the GW distance than the relaxed graph matching approach. This is particularly evident when $n=10$ or $25$ as reflected by the behaviors of the type $1$ and type $2$ errors.

\section{The delta method}
\label{sec:deltaMethod}
Our approach to proving limit distributions for GW distances is based on the functional delta method \cite{shapiro1991asymptotic,dumbgen1993nondifferentiable,romisch2004,fang2019}. To introduce this approach, we treat probability measures $(\nu_0,\nu_1)\in\mathfrak P\subset \mathcal P(\mathbb R^{d_0})\times \mathcal P(\mathbb R^{d_1})$ as functionals on some function space $\mathcal F^{\oplus}=\mathcal F_0\oplus \mathcal F_1\coloneqq \{f_0\oplus f_1:f_0\in\mathcal F_0,f_1\in\mathcal F_1\}$, where $\mathcal F_i$ is a class of Borel measurable functions on $\mathbb R^{d_i}$ for $i\in\{0,1\}$. Here, we require $\mathfrak P$ and $\mathcal F^{\oplus}$ to be compatible in the sense that 
\[%
\|\nu_0\otimes \nu_1\|_{\infty,\mathcal F^{\oplus}}\coloneqq\sup_{(f_0, f_1)\in\mathcal F_0\times \mathcal F_1}|\nu_0\otimes \nu_1(f_0\oplus f_1)|<\infty,\quad \forall (\nu_0,\nu_1)\in\mathfrak P,
\]
whereby $\nu_0\otimes \nu_1$ can be identified with an element of $\ell^{\infty}(\mathcal F^{\oplus})$. 

With these preliminaries, we may specialize Proposition 1 in \cite{goldfeld24statistical} as follows.

\begin{proposition}[Proposition 1 in \cite{goldfeld24statistical}]
\label{prop:unified}
   For $i\in\{0,1\}$, let $\mathcal F_i$ be a class of Borel measurable functions on $\mathbb R^{d_i}$ with a finite envelope $F_i$ and set $\mathcal F^{\oplus}=\mathcal F_0\oplus \mathcal F_1$. Fix $(\mu_0,\mu_1)\in\mathcal P(\mathbb R^{d_0})\times \mathcal P(\mathbb R^{d_1})$ and let $\mathsf{L}$ be a functional on a subset $\mathfrak P$ of $\mathcal P(\mathbb R^{d_0})\times \mathcal P(\mathbb R^{d_1})$, where $\mathfrak P$ can be identified with a {convex subset of $\ell^{\infty}(\mathcal F^{\oplus})$}   and $\int F_0 d\nu_0+\int F_1 d\nu_1<\infty$ for every $(\nu_0,\nu_1)\in\mathfrak P$. Suppose, furthermore, that
   \begin{enumerate}[label=(\alph*)]
       \item $(\mu_{0,n},\mu_{1,n}):\Omega\to \mathfrak P$, $n\in\NN$, are pairs of random probability measures with values in $\mathfrak P$ such that $\sqrt n(\mu_{0,n}-\mu_0)\otimes \sqrt n(\mu_{1,n}-\mu_1)\stackrel{d}{\to} G_{\mu_0\otimes \mu_1}$ in $\ell^{\infty}(\mathcal F^{\oplus})$,  where $G_{\mu_0\otimes \mu_1}$ is a tight random variable in $\ell^{\infty}(\mathcal F^{\oplus})$.
       \item  there exists a finite constant $C$ for which $|\mathsf{L}(\nu_0,\nu_1)-\mathsf{L}(\rho_0,\rho_1)|\leq C\|\nu_0\otimes \nu_1-\rho_0\otimes \rho_1\|_{\infty,\mathcal F^{\oplus}}$ for any pairs $(\nu_0,\nu_1),(\rho_0,\rho_1)\in\mathfrak P$.
       \item for any pair $(\nu_0,\nu_1)\in\mathfrak P$, the limit $\lim_{t\downarrow 0}t^{-1}\big(\mathsf{L}(\mu_0+t(\nu_0-\mu_0),\mu_1+t(\nu_1-\mu_1))-\mathsf{L}(\mu_0,\mu_1)\big)\eqqcolon \mathsf{L}'_{(\mu_0,\mu_1)}(\nu_0-\mu_0,\nu_1-\mu_1)$ exists.
   \end{enumerate}
   Then, %
   $\sqrt n\big(\mathsf{L}(\mu_{0,n},\mu_{1,n})-\mathsf{L}(\mu_{0},\mu_{1})\big)\stackrel{d}{\to}\mathsf{L}'_{(\mu_0,\mu_1)}(G_{\mu_0\otimes \mu_1})$.
\end{proposition}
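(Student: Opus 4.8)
The plan is to realize the assertion as a standard instance of the functional delta method, except that the functional $\mathsf L$ is only \emph{directionally} (Hadamard) differentiable and is defined only on the convex set $\mathfrak P$, identified with a subset of the Banach space $\mathbb D:=\ell^\infty(\mathcal F^{\oplus})$. Assumptions (b) and (c) are precisely the two inputs needed to certify the relevant differentiability, and (a) supplies the limiting process. Throughout, one should read the process in (a) as the element of $\mathbb D$ obtained by restricting $\sqrt n(\mu_{0,n}\otimes\mu_{1,n}-\mu_0\otimes\mu_1)$ to $\mathcal F^{\oplus}$, on which it coincides with the stated ``$\sqrt n(\mu_{0,n}-\mu_0)\otimes\sqrt n(\mu_{1,n}-\mu_1)$''.

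First I would fix the embedding. By the envelope-integrability hypothesis, $|(\nu_0\otimes\nu_1)(f_0\oplus f_1)|\le\int F_0\,d\nu_0+\int F_1\,d\nu_1<\infty$, so the map $\iota:(\nu_0,\nu_1)\mapsto\nu_0\otimes\nu_1$, viewed as a functional on $\mathcal F^{\oplus}$, embeds $\mathfrak P$ into $\mathbb D$; write $\widetilde{\mathfrak P}=\iota(\mathfrak P)$ (convex, by hypothesis), regard $\mathsf L$ as $\widetilde{\mathsf L}:\widetilde{\mathfrak P}\to\mathbb R$, and set $\theta=\iota(\mu_0,\mu_1)$. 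A one-line computation shows that $\iota$ sends the convex path $t\mapsto(\mu_0+t(\nu_0-\mu_0),\mu_1+t(\nu_1-\mu_1))$ to the line segment $t\mapsto\theta+t\,(\iota(\nu_0,\nu_1)-\theta)$, so (c) states exactly that the one-sided Gâteaux derivative $\widetilde{\mathsf L}'_\theta(h)$ exists for every feasible direction $h=\iota(\nu_0,\nu_1)-\theta$, and it equals $\mathsf L'_{(\mu_0,\mu_1)}(\nu_0-\mu_0,\nu_1-\mu_1)$. I would extend $\widetilde{\mathsf L}'_\theta$ by positive homogeneity to the feasible cone $\mathcal C=\{t(\widetilde\nu-\theta):t\ge0,\ \widetilde\nu\in\widetilde{\mathfrak P}\}$ — this is consistent, as one sees by reparametrising the defining limit, and legitimate since $\theta+s\cdot t(\widetilde\nu-\theta)=(1-st)\theta+st\widetilde\nu\in\widetilde{\mathfrak P}$ whenever $st\le1$ — and let $\mathbb D_0=\overline{\mathcal C}$, a closed cone.

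The crux, and the step I expect to be the main obstacle, is upgrading (c) from Gâteaux to Hadamard directional differentiability while respecting the restricted domain; this is where (b) does all the work. I would first show $\widetilde{\mathsf L}'_\theta$ is $C$-Lipschitz on $\mathcal C$: for $h,h'\in\mathcal C$ and small $s>0$ (so $\theta+sh,\theta+sh'\in\widetilde{\mathfrak P}$), (b) gives $s^{-1}|\widetilde{\mathsf L}(\theta+sh)-\widetilde{\mathsf L}(\theta+sh')|\le C\|h-h'\|_{\infty,\mathcal F^{\oplus}}$, and letting $s\downarrow0$ yields the Lipschitz bound; hence $\widetilde{\mathsf L}'_\theta$ extends uniquely to a $C$-Lipschitz (in particular Borel) map $\widetilde{\mathsf L}'_\theta:\mathbb D_0\to\mathbb R$, which we keep calling $\mathsf L'_{(\mu_0,\mu_1)}$. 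Then, for any $s_n\downarrow0$, $h_n\to h\in\mathbb D_0$ with $\theta+s_nh_n\in\widetilde{\mathfrak P}$, and any $h'\in\mathcal C$ (so $\theta+s_nh'\in\widetilde{\mathfrak P}$ for large $n$), a triangle inequality combined with the displayed Lipschitz estimate and (c) gives $\limsup_n|s_n^{-1}(\widetilde{\mathsf L}(\theta+s_nh_n)-\widetilde{\mathsf L}(\theta))-\widetilde{\mathsf L}'_\theta(h)|\le 2C\|h-h'\|_{\infty,\mathcal F^{\oplus}}$; infimising over $h'\in\mathcal C$, which is dense in $\mathbb D_0$, makes the right-hand side vanish. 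That is precisely Hadamard directional differentiability of $\widetilde{\mathsf L}$ at $\theta$ tangentially to $\mathbb D_0$, with derivative $\widetilde{\mathsf L}'_\theta$. The two delicate bookkeeping points here are that convexity of $\mathfrak P$ must be used to keep every perturbed point inside $\widetilde{\mathfrak P}$, and that the limiting direction — a priori an arbitrary element of $\mathbb D$ — must be shown to lie in $\mathbb D_0$, where the derivative is defined.

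Finally I would close the loop. Put $X_n:=\iota(\mu_{0,n},\mu_{1,n})$, which is $\widetilde{\mathfrak P}$-valued, so $\sqrt n(X_n-\theta)\in\mathcal C\subset\mathbb D_0$ for every $n$; since $\mathbb D_0$ is closed and $\sqrt n(X_n-\theta)\stackrel{d}{\to}G_{\mu_0\otimes\mu_1}$ with a tight Borel limit, the portmanteau theorem gives $\Prob(G_{\mu_0\otimes\mu_1}\in\mathbb D_0)\ge\limsup_n\Prob(\sqrt n(X_n-\theta)\in\mathbb D_0)=1$. Invoking the functional delta method for Hadamard directionally differentiable maps on subsets of normed spaces (e.g.\ \cite{shapiro1991asymptotic,romisch2004,fang2019}, or the almost-sure-representation mechanism behind Theorem~3.9.4 in \cite{van1996weak}: choose $\widetilde X_n\stackrel{d}{=}X_n$ and $\widetilde G\stackrel{d}{=}G_{\mu_0\otimes\mu_1}$ on one space with $\sqrt n(\widetilde X_n-\theta)\to\widetilde G$ a.s., apply the conclusion of the previous paragraph pathwise with $s_n=n^{-1/2}$ and $h_n=\sqrt n(\widetilde X_n-\theta)$, and transfer back via equality in law, using that $\widetilde{\mathsf L}$ and $\widetilde{\mathsf L}'_\theta$ are continuous hence Borel), one obtains $\sqrt n(\widetilde{\mathsf L}(X_n)-\widetilde{\mathsf L}(\theta))\stackrel{d}{\to}\widetilde{\mathsf L}'_\theta(G_{\mu_0\otimes\mu_1})$. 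Unwinding $\iota$ gives $\widetilde{\mathsf L}(X_n)=\mathsf L(\mu_{0,n},\mu_{1,n})$, $\widetilde{\mathsf L}(\theta)=\mathsf L(\mu_0,\mu_1)$, and $\widetilde{\mathsf L}'_\theta=\mathsf L'_{(\mu_0,\mu_1)}$, which is exactly the stated convergence.
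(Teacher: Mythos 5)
Your proof is correct, and it follows essentially the same route as the source the paper relies on: the paper does not reprove this proposition (it is imported verbatim from \cite{goldfeld24statistical}), and your argument --- identifying $\mathfrak P$ with its image in $\ell^{\infty}(\mathcal F^{\oplus})$, using the Lipschitz bound (b) to upgrade the one-sided G\^ateaux derivative from (c) to Hadamard directional differentiability tangentially to the closed cone generated by $\iota(\mathfrak P)-\theta$, and then applying the extended functional delta method via an almost-sure representation --- is precisely the proof strategy of that reference (and of the delta-method literature the paper cites). The only cosmetic point you leave implicit is that (b) also guarantees $\widetilde{\mathsf L}$ is well defined on the image $\iota(\mathfrak P)$ when two pairs in $\mathfrak P$ have the same restriction to $\mathcal F^{\oplus}$; this is a one-line observation and not a gap.
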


\end{document}